\newcommand{\AUTHOR}{Taichi Uemura}
\newcommand{\TITLE}{A General Framework for the Semantics of Type Theory}
\newcommand{\argu}{{-}} 
\newcommand{\s}[1]{\mathsf{#1}}
\newcommand{\cat}[1][C]{\mathcal{#1}}
\newcommand{\catI}[1][D]{\cat[#1]}
\newcommand{\obj}[1][A]{#1}
\newcommand{\objI}[1][B]{\obj[#1]}
\newcommand{\objII}[1][C]{\obj[#1]}
\newcommand{\objIII}[1][D]{\obj[#1]}
\newcommand{\objIV}[1][E]{\obj[#1]}
\newcommand{\arr}[1][f]{#1}
\newcommand{\arrI}[1][g]{\arr[#1]}
\newcommand{\arrII}[1][h]{\arr[#1]}
\newcommand{\map}[1][f]{\arr[#1]}
\newcommand{\mapI}[1][g]{\map[#1]}
\newcommand{\mapII}[1][h]{\map[#1]}
\newcommand{\fun}[1][F]{#1} 
\newcommand{\funI}[1][G]{\fun[#1]}
\newcommand{\funII}[1][H]{\fun[#1]}
\newcommand{\trans}[1][\sigma]{#1} 
\newcommand{\transI}[1][\tau]{\trans[#1]}
\newcommand{\transII}[1][\varphi]{\trans[#1]}
\newcommand{\psh}[1][P]{\fun[#1]} 
\newcommand{\bcat}[1][S]{\cat[#1]}
\newcommand{\bcatI}[1][T]{\bcat[#1]}
\newcommand{\bobj}[1][I]{\obj[#1]}
\newcommand{\bobjI}[1][J]{\bobj[#1]}
\newcommand{\bobjII}[1][K]{\bobj[#1]}
\newcommand{\bobjIII}[1][L]{\bobj[#1]}
\newcommand{\barr}[1][u]{\arr[#1]}
\newcommand{\barrI}[1][v]{\barr[#1]}
\newcommand{\barrII}[1][w]{\barr[#1]}
\newcommand{\dfib}[1][A]{\obj[#1]} 
\newcommand{\dfibI}[1][B]{\dfib[#1]}
\newcommand{\dfibII}[1][C]{\dfib[#1]}
\newcommand{\pr}[1][p]{\fun[#1]} 
\newcommand{\prI}[1][q]{\pr[#1]}
\newcommand{\el}[1][a]{#1} 
\newcommand{\elI}[1][b]{\el[#1]}
\newcommand{\darr}[1][m]{\arr[#1]} 
\newcommand{\darrI}[1][n]{\darr[#1]}
\newcommand{\model}[1][S]{\bcat[#1]}
\newcommand{\modelI}[1][T]{\model[#1]}
\newcommand{\twocat}[1][A]{\mathfrak{#1}}
\newcommand{\twocatI}[1][B]{\twocat[#1]}
\newcommand{\idxtwocat}[1][I]{\twocat[#1]}
\newcommand{\idx}[1][i]{\obj[#1]}
\newcommand{\iarr}[1][u]{\arr[#1]}
\newcommand{\iarrI}[1][v]{\iarr[#1]}
\newcommand{\tth}[1][T]{\mathbb{#1}} 
\newcommand{\tthI}[1][S]{\tth[#1]}
\newcommand{\theory}[1][K]{\fun[#1]}
\newcommand{\thmap}[1][m]{\map[#1]} 
\newcommand{\thmapI}[1][n]{\thmap[#1]}
\newcommand{\const}[1]{\mathrm{#1}}
\newcommand{\catconst}[1]{\mathbf{#1}}
\newcommand{\twocatconst}[1]{\mathfrak{#1}}
\newcommand{\univ}[1][U]{\mathscr{#1}}
\newcommand{\Cat}{\twocatconst{Cat}}
\newcommand{\RMCat}{\twocatconst{Rep}} 
\newcommand{\rep}{\const{r}}
\newcommand{\DFib}{\catconst{DFib}}
\DeclareMathOperator{\dom}{\mathbf{dom}}
\DeclareMathOperator{\cod}{\mathbf{cod}}
\newcommand{\To}{\Rightarrow}
\newcommand{\adj}{\dashv}
\newcommand{\unit}{\eta}
\newcommand{\counit}{\varepsilon}
\newcommand{\radj}{\delta} 
\newcommand{\proj}{\pi}
\newcommand{\Mod}{\twocatconst{Mod}}
\newcommand{\iM}{\model[I]}       
\newcommand{\sM}{\mathbf{M}}
\DeclareMathOperator*{\colim}{colim}
\DeclareMathOperator*{\plim}{plim}
\DeclareMathOperator*{\pcolim}{pcolim}
\newcommand{\op}{\const{op}}
\newcommand{\I}{\mathbb{I}}
\newcommand{\sig}{\s{sig}}
\newcommand{\ctx}{\s{ctx}}
\newcommand{\smalltype}{{*}}
\newcommand{\largetype}{{\Box}}
\newcommand{\sRM}{\tth[R]}      
\newcommand{\under}{/}
\newcommand{\Sect}{\const{Sect}}
\newcommand{\Theory}{\catconst{Th}}
\newcommand{\iL}{\const{L}}     
\newcommand{\Set}{\catconst{Set}}
\newcommand{\dem}{\const{dem}}
\newcommand{\judg}{\mathcal{J}}
\newcommand{\act}{\cdot} 
\newcommand{\id}{\const{id}}
\newcommand{\poly}{\const{P}} 
\newcommand{\inc}{\fun[I]} 
\newcommand{\Ty}{U}
\newcommand{\El}{E}
\newcommand{\typeof}{\partial}
\newcommand{\ctxof}{\mathrm{ft}}
\newcommand{\heart}{\heartsuit}
\newcommand{\cst}[1][c]{\el[#1]} 
\newcommand{\cstI}[1][d]{\cst[#1]}
\newcommand{\idxcat}[1][I]{\cat[#1]}
\newcommand{\idxcatI}[1][J]{\idxcat[#1]}
\newcommand{\inj}{\iota}
\newcommand{\diag}{\Delta} 
\newcommand{\thgat}{\tth[G]}
\newcommand{\ev}{\const{ev}}
\newcommand{\twoCAT}{2\twocatconst{CAT}}
\author{\AUTHOR}
\title{\TITLE}
\theoremstyle{definition}
\newtheorem{definition}{Definition}[section]
\theoremstyle{plain}
\newtheorem{theorem}[definition]{Theorem}
\newtheorem{proposition}[definition]{Proposition}
\newtheorem{lemma}[definition]{Lemma}
\newtheorem{corollary}[definition]{Corollary}
\newtheorem{question}[definition]{Question}
\theoremstyle{remark}
\newtheorem{remark}[definition]{Remark}
\newtheorem{example}[definition]{Example}
\Crefname{diagram}{Diagram}{Diagrams}
\begin{document}

\maketitle

\begin{abstract}
  We propose an abstract notion of a type theory to unify the
  semantics of various type theories including Martin-L\"{o}f type
  theory, two-level type theory and cubical type theory. We establish
  basic results in the semantics of type theory: every type theory has
  a bi-initial model; every model of a type theory has its internal
  language; the category of theories over a type theory is
  bi-equivalent to a full sub-2-category of the 2-category of models
  of the type theory.
\end{abstract}

\section{Introduction}
\label{sec:introduction}

One of the key steps in the semantics of type theory and logic is to
establish a correspondence between \emph{theories} and
\emph{models}. Every theory generates a model called its
\emph{syntactic model}, and every model has a theory called its
\emph{internal language}. Classical examples are: simply typed
\(\lambda\)-calculi and cartesian closed categories
\parencite{lambek1986higher}; generalized algebraic theories and
contextual categories \parencite{cartmell1978generalised}; extensional
Martin-L\"{o}f theories and locally cartesian closed categories
\parencite{seely1984locally}; first-order theories and hyperdoctrines
\parencite{seely1983hyperdoctrines}; higher-order theories and
elementary toposes \parencite{lambek1986higher}. Recently, homotopy
type theory \parencite{hottbook} is expected to provide an internal
language for what should be called ``elementary
\((\infty, 1)\)-toposes''. As a first step, \textcite{kapulkin2017internal}
showed that there is an equivalence between intensional Martin-L{\"o}f
theories and finitely complete \((\infty,1)\)-categories.

As there exist correspondences between theories and models for almost
all type theories and logics, it is natural to ask if one can define a
general notion of a type theory or logic and establish correspondences
between theories and models uniformly. First, we clarify what we
informally mean by ``type theory'', ``logic'', ``theory'' and
``model''. By a \emph{type theory} or \emph{logic} we mean a formal
system for deriving judgments which is specified by a collection of
inference rules. For example, first-order logic is a logic and has
inference rules for logical connectives and quantifiers such as
\(\land\) and \(\forall\). For a type theory or logic \(\tth\), by a
\emph{\(\tth\)-theory} or \emph{theory over \(\tth\)} we mean a set of
constants and axioms written in \(\tth\), while by a \emph{model of
  \(\tth\)} we mean a mathematical structure that admits an
interpretation of the inference rules of \(\tth\). For example, a
first-order theory is a theory over first-order logic and consists of
type constants, term constants, predicate constants and axioms, while
a hyperdoctrine is a model of first-order logic and interprets, for
instance, the universal quantifier \(\forall\) as a right adjoint.

A successful syntactic approach to defining general type theories and
logics is a \emph{logical framework} such as the Edinburgh Logical
Framework \parencite{harper1993framework} and Martin-L{\"o}f's logical
framework \parencite{nordstrom1990programming}. A logical framework is
a special kind of type theory such that a variety of type theories are
encoded by a theory over the logical framework which is also called a
signature. However, a logical framework often lacks a good notion of a
model of a signature. Models of a signature may not even form a
category \parencite{capriotti2016models}.

In this paper we propose an abstract notion of a type theory from a
semantic point of view and establish a correspondence between theories
and models. Our notion of a type theory includes a wide range of type
theories: Martin-L\"{o}f type theory
\parencite{martin-lof1975intuitionistic}; two-level type theory
\parencite{altenkirch2016extending,annenkov2017two-level,voevodsky2013simple}
; cubical type theory \parencite{cohen2016cubical}. Roughly speaking,
our type theories are the type theories that admit semantics based on
\emph{categories with families (cwfs)}
\parencite{dybjer1996internal}. Our contribution is to establish a
correspondence between theories and cwf-like models for a wide variety
of type theories.

Our notion of a type theory is inspired by the notion of a
\emph{natural model} of homotopy type theory given by
\textcite{awodey2018natural}. He pointed out that a category with
families is the same thing as a \emph{representable map} of
presheaves\footnote{This fact is also observed independently by
  \textcite{fiore2012discrete}.} and that type and term constructors
are modeled by algebraic operations on presheaves. Thus, a cwf-model
of a type theory is a diagram in a presheaf category in which some
maps are required to be representable. In other words, a cwf-model is
a functor \(\fun\) from a category \(\tth\) to a presheaf category
such that some arrows in \(\tth\) are marked `representable' and
\(\fun\) sends representable arrows in \(\tth\) to representable maps
of presheaves. The category \(\tth\) is considered to encode
derivations as arrows, and the functor \(\fun\) is considered to
interpret derivations as maps of presheaves. From this observation, we
\emph{define} a type theory to be a category with representable arrows
and a model of a type theory to be a functor to a presheaf category
that sends representable arrows to representable maps of presheaves.

For a type theory \(\tth\) in this sense, a \emph{\(\tth\)-theory} is
also defined as a functor from \(\tth\) but to the category of
sets. The intuition behind this definition is different from that of a
model of \(\tth\): for a model of \(\tth\), the values at arrows are
relevant; for a \(\tth\)-theory, the values at objects are
relevant. Objects in \(\tth\) are domains and codomains of derivations
and thought of as \emph{judgment forms}. We identify a \(\tth\)-theory
\(\theory\) with the assignment to each object \(\obj \in \tth\) of the
set of closed derivations of judgment form \(\obj\) that are derivable
using inference rules of \(\tth\) and constants of \(\theory\).

With these definitions of a type theory, a model of a type theory and
a theory over a type theory, we establish a correspondence between
theories and models in a purely categorical way. For a type theory
\(\tth\), the models of \(\tth\) form a \(2\)-category \(\Mod_{\tth}\)
and the theories over \(\tth\) form a category \(\Theory_{\tth}\). We
construct a \(2\)-functor
\(\iL_{\tth} : \Mod_{\tth} \to \Theory_{\tth}\) (regarding
\(\Theory_{\tth}\) as a locally discrete \(2\)-category) which assigns
to each model of \(\tth\) its \emph{internal language}. We have two
main results. The first main result is that the \(2\)-functor
\(\iL_{\tth}\) has a left bi-adjoint \(\sM_{\tth}\)
(\cref{thm:internal-language-adjunction}) which assigns to each theory
over \(\tth\) its \emph{syntactic model}. It will turn out that the
left bi-adjoint \(\sM_{\tth}\) is locally an equivalence and thus
induces a bi-equivalence between \(\Theory_{\tth}\) and the
bi-essential image of \(\sM_{\tth}\). The second main result is a
characterization of the models of \(\tth\) that belong to the
bi-essential image of \(\sM_{\tth}\). We introduce a notion of a
\emph{democratic model} of \(\tth\), generalizing the notion of a
democratic cwf \parencite{clairambault2014biequivalence}, and show
that the bi-essential image of \(\sM_{\tth}\) is precisely the class
of democratic models. Consequently, we have a bi-equivalence between
the locally discrete \(2\)-category \(\Theory_{\tth}\) and the full
sub-\(2\)-category \(\Mod_{\tth}^{\dem} \subset \Mod_{\tth}\)
consisting of democratic models (\cref{thm:bi-equivalence-th-mod}).

A logical framework is still useful to construct concrete examples of
our type theories. We introduce a logical framework whose signatures
can be identified with type theories in our sense. This logical
framework is semantically motivated and thus designed to have a nice
\(2\)-category of models of a signature. At the same time, this
logical framework is sufficiently expressive to encode various type
theories including Martin-L{\"o}f type theory, two-level type theory,
and cubical type theory as promised.

\subsection{Organization}
\label{sec:organization}

In \cref{sec:natural-models-type} we review natural models of type
theories. Natural models are described in terms of presheaves, but we
will work with \emph{discrete fibrations} instead of presheaves. In
\cref{sec:type-theories-their} we introduce a notion of a category
equipped with a class of representable arrows and call it a
\emph{representable map category}. A type theory is then defined to be
a (small) representable map category. We also define the
\(2\)-category \(\Mod_{\tth}\) of models of a type theory \(\tth\).

The rest of the paper splits into two branches independent of each
other. One branch (\cref{sec:logical-framework}) is devoted to giving
examples of our type theories. We introduce a logical framework whose
signatures can be identified with representable map categories. We
construct a \emph{syntactic representable map category} from a
signature of the logical framework and show that the syntactic
representable map category of a signature has an appropriate universal
property (\cref{thm:syntactic-LF-category}). Using the universal
property, we concretely describe the \(2\)-category of models of a
type theory defined in the logical framework
(\cref{thm:models-of-syntactic-lf-category}).

On the other branch
(\cref{sec:bi-initial-models,sec:internal-languages}), we develop the
semantics of our type theories. In \cref{sec:bi-initial-models} we
construct a \emph{bi-initial model} of a type theory
(\cref{thm:initial-model}). We also introduce the notion of a
democratic model here. In \cref{sec:internal-languages} we define the
category \(\Theory_{\tth}\) of theories over a type theory \(\tth\)
and show the main results. Using bi-initial models we construct the
left bi-adjoint of the internal language \(2\)-functor
\(\iL_{\tth} : \Mod_{\tth} \to \Theory_{\tth}\)
(\cref{thm:internal-language-adjunction}). We then show that this
bi-adjunction induces a bi-equivalence
\(\Mod_{\tth}^{\dem} \simeq \Theory_{\tth}\)
(\cref{thm:bi-equivalence-th-mod}).

\subsection{Related Work}
\label{sec:related-work}

\Textcite{bauer2020general} propose a general definition of syntax and
rules for dependent type theories. Properties on sets of rules such as
admissibility of substitution are proved at a high level of
generality. Such properties are not in the scope of our semantic
framework because we take substitution for granted as part of the
structure of a model of a type theory. The author's PhD thesis
\parencite[Chapter 4]{uemura2021thesis} contains a generalization of
\citeauthor{bauer2020general}'s approach as syntactic counterparts of
representable map categories.

Our style of the semantics of type theories is a variant of the
\emph{functorial semantics} initiated by
\textcite{lawvere2004functorial}. The original work is the functorial
semantics of algebraic theories in which an algebraic theory is
identified with a category of some sort and a model of an algebraic
theory is identified with a set-valued functor from that category. A
noticeable difference is that a model in our functorial semantics of
type theories is a functor valued in presheaves over a category
instead of sets. The base category plays the role of the category of
contexts and substitutions and is essential to interpretation of
context extensions.

One limitation of our framework is that non-trivial operations on
contexts are not allowed. Thus, type theories with ``dual-contexts''
\parencite[e.g.][]{licata2018internal,pfenning2001judgmental,shulman2018brouwer}
or modal type theories
\parencite[e.g.][]{birkedal2020modal,gratzer2021multimodal} are not
covered by our definition. The framework of
\textcite{licata2017fibrational} is suitable for defining simple type
theories with operations on contexts, but its dependently typed
version \parencite{licata2019fibrational} has not been finished.

After the manuscript of this paper had been written, Hoang Kim Nguyen
and the author have developed a theory of an \(\infty\)-categorical
generalization of type theories called \emph{\(\infty\)-type theories}
\parencite{nguyen2022type-arxiv}. The results of
\cref{sec:bi-initial-models,sec:internal-languages} are subsumed by
analogous results on \(\infty\)-type theories. Nevertheless, it is still
worth presenting the \(1\)-categorical case because all the
constructions in this paper are explicit while the
\(\infty\)-categorical proofs are non-constructive in the current
foundations for \(\infty\)-category theory using quasicategories
\parencite{lurie2009higher,cisinski2019higher}. Also, a logical
framework for \(\infty\)-type theories has not yet been developed.

\section{Preliminaries}
\label{sec:preliminaries}

We fix terminology and notation on categories and \(2\)-categories.

\begin{enumerate}
\item We refer the reader to \textcite{kelly1974review} for basic
  concepts of \(2\)-category theory.
\item In general we use prefix \mbox{``\(2\)-''} for strict
  \(2\)-categorical notions and prefix \mbox{``bi-''} or
  \mbox{``pseudo-''} for weak \(2\)-categorical notions: the
  composition of \(1\)-cells in a \(2\)-category is associative up to
  equality, while that in a bi-category is associative only up to
  coherent isomorphism; a \(2\)-functor preserves composition of
  \(1\)-cells on the nose, while a pseudo-functor does only up to
  coherent isomorphism. An exception is that pseudo-(co)limits satisfy
  strict \(2\)-categorical universal properties.
\item Let \(P\) be some property on a functor. We say a \(2\)-functor
  \(\fun : \twocat \to \twocatI\) is \emph{locally \(P\)} if the
  functor \(\fun : \twocat(\obj, \obj') \to \twocatI(\fun\obj, \fun\obj')\)
  satisfies \(P\) for all objects \(\obj, \obj' \in \twocat\). For
  example, \(\fun\) is locally fully faithful if the functor
  \(\fun : \twocat(\obj, \obj') \to \twocatI(\fun\obj, \fun\obj')\) is fully
  faithful for all objects \(\obj, \obj' \in \twocat\).
\item Let \(\fun : \twocat \to \twocatI\) be a \(2\)-functor. We say
  \(\fun\) is \emph{bi-essentially surjective on objects} if, for any
  object \(\objI \in \twocatI\), there exists an object
  \(\obj \in \twocat\) such that \(\fun\obj\) is equivalent to \(\objI\) in
  \(\twocatI\). We say \(\fun\) is a \emph{bi-equivalence} if it is
  bi-essentially surjective on objects, locally essentially surjective
  on objects and locally fully faithful.
\item A \(2\)-functor \(\fun : \twocat \to \twocatI\) is said to have
  a \emph{left bi-adjoint} if, for any object \(\objI \in \twocatI\),
  there exist an object \(\funI\objI\in \twocat\) and a \(1\)-cell
  \(\unit_{\objI} : \objI \to \fun \funI\objI\) such that, for any object
  \(\obj \in \twocat\), the composite
  \[
    \begin{tikzcd}
      \twocat(\funI\objI, \obj)
      \arrow[r,"\fun"] &
      \twocatI(\fun\funI\objI, \fun\obj)
      \arrow[r,"\unit_{\objI}^{*}"] &
      \twocatI(\objI, \fun\obj)
    \end{tikzcd}
  \]
  is an equivalence of categories. The \(1\)-cell
  \(\unit_{\objI} : \objI \to \fun\funI\objI\) is called the \emph{unit}. For an object
  \(\obj \in \twocat\), we have a \(1\)-cell
  \(\counit_{\obj} : \funI\fun\obj \to \obj\) called the \emph{counit} such that
  \(\fun \counit_{\obj} \circ \unit_{\fun\obj}\) is isomorphic to the identity
  on \(\fun\obj\).
\item One can show that if a \(2\)-functor \(\fun\) has a left bi-adjoint
  and the unit and counit are equivalences, then \(\fun\) is a
  bi-equivalence.
\item We say a category \(\cat\) is \emph{contractible} if the
  unique functor \(\cat \to 1\) into the terminal category is an
  equivalence. In other words, \(\cat\) has some object and, for
  any objects \(\obj, \objI \in \cat\), there exists a unique arrow \(\obj
  \to \objI\).
\item An object \(\obj\) of a \(2\)-category \(\twocat\) is
  \emph{bi-initial} if the category \(\twocat(\obj, \objI)\) is
  contractible for any object \(\objI \in \twocat\).
\item For a category \(\cat\), we denote by \(|\cat|\) the
  largest groupoid contained in \(\cat\), that is, the
  subcategory of \(\cat\) consisting of all the objects and the
  isomorphisms.
\item A \emph{cartesian category} is a category that has finite
  limits. A \emph{cartesian functor} between cartesian categories is a
  functor that preserves finite limits.
\item We fix a Grothendieck universe \(\univ\). By ``small'' we mean
  ``\(\univ\)-small''.
\item \(\Set\) denotes the category of small sets. \(\Cat\) denotes
  the \(2\)-category of small categories.
\item For a functor \(\theory : \cat \to \Set\) and an arrow
  \(\arr : \obj \to \obj'\) in \(\cat\), we denote by \((\arr \act \argu)\) the
  map \(\theory(\arr) : \theory(\obj) \to \theory(\obj')\). For a contravariant functor
  \(\psh : \cat^{\op} \to \Set\), we denote by \((\argu \act \arr)\) the map
  \(\psh(\arr) : \psh(\obj') \to \psh(\obj)\) for an arrow \(\arr : \obj \to \obj'\) in
  \(\cat\). We use similar notation for pseudo-functors
  \(\cat \to \Cat\).
\end{enumerate}

\section{Natural Models of Type Theory}
\label{sec:natural-models-type}

We review natural models of dependent type theory
\parencite{awodey2018natural}. Natural models are described in terms
of presheaves and representable natural transformations, but we prefer
to work with discrete fibrations instead of presheaves. While
presheaves are intuitive and convenient to describe concrete examples
of models of a type theory, discrete fibrations are convenient for the
study of the \(2\)-category of models of a type theory. Concretely, a
model of a type theory will simply be a \(\Cat\)-valued \(2\)-functor
(\cref{rem:base-of-model}). This section is mostly devoted to
rephrasing the theory of natural models in terms of discrete
fibrations. \Cref{prop:dfib-rep-pushforward} might be new to the
reader: the pushforward along a representable map of discrete
fibrations is given by the pullback along the right adjoint of the
representable map.

\subsection{Discrete Fibrations}
\label{sec:discrete-fibrations}

\begin{definition}
  A \emph{discrete fibration} is a functor \(\pr : \dfib \to \bcat\) such
  that, for any object \(\el \in \dfib\) and arrow \(\barr : \bobjI \to \pr(\el)\) in
  \(\bcat\), there exists a unique arrow \(\darr : \elI \to \el\) such that
  \(\pr(\darr) = \barr\). Such a unique arrow is denoted by
  \(\overline{\barr}_{\el} : \barr^{*}\el \to \el\) or \(\el \act \barr \to \el\). When
  \(\pr : \dfib \to \bcat\) is a discrete fibration, we say \(\dfib\) is a
  discrete fibration over \(\bcat\) and refer to the functor \(\pr\)
  as \(\pr_{\dfib}\). For a discrete fibration \(\dfib\) over \(\bcat\), a
  discrete fibration \(\dfibI\) over \(\bcatI\) and a functor
  \(\fun : \bcat \to \bcatI\), a \emph{map \(\dfib \to \dfibI\) of discrete
    fibrations over \(\fun\)} is a functor \(\map : \dfib \to \dfibI\) such that
  \(\pr_{\dfibI} \circ \map = \fun \circ \pr_{\dfib}\). A map of discrete fibrations over
  the identity functor on \(\bcat\) is called a map of discrete
  fibrations \emph{over \(\bcat\)}. For discrete fibrations \(\dfib\)
  and \(\dfibI\) over \(\bcat\), we denote by \(\DFib_{\bcat}(\dfib, \dfibI)\)
  the class of maps \(\dfib \to \dfibI\) of discrete fibrations over
  \(\bcat\). For a small category \(\bcat\), we will refer to the
  category of small discrete fibrations over \(\bcat\) and their
  maps as \(\DFib_{\bcat}\).
\end{definition}

We recall some basic properties on discrete fibrations.

\begin{proposition}
  \label{prop:discrete-fibration-char}
  For a functor \(\pr : \dfib \to \bcat\), the following are equivalent.
  \begin{enumerate}
  \item \label{item:23} \(\pr\) is a discrete fibration.
  \item \label{item:24} The diagram
    \[
      \begin{tikzcd}
        {\dfib^{\to}}
        \arrow[r,"{\pr^{\to}}"]
        \arrow[d,"\cod"'] &
        {\bcat^{\to}}
        \arrow[d,"\cod"]\\
        \dfib
        \arrow[r,"\pr"'] &
        {\bcat}
      \end{tikzcd}
    \]
    is a pullback.
  \item \label{item:25} For any object \(\el \in \dfib\), the functor
    \(\dfib/\el \to \bcat/\pr(\el)\) induced by \(\pr\) is an
    isomorphism.
  \end{enumerate}
\end{proposition}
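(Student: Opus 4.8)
The plan is to prove the two equivalences \ref{item:23}$\Leftrightarrow$\ref{item:24} and \ref{item:23}$\Leftrightarrow$\ref{item:25} by unwinding what it means for the relevant comparison functor to be an isomorphism of categories, and in each case reducing that isomorphism to the existence-and-uniqueness of lifts that \emph{is} condition~\ref{item:23}. Both equivalences follow the same pattern, so I would set them up in parallel: an isomorphism of categories is a bijection on objects together with a bijection on morphisms, the object bijection will always be exactly \ref{item:23}, and only the morphism bijection requires an argument.

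For \ref{item:23}$\Leftrightarrow$\ref{item:24}, first I would describe the pullback $\dfib \times_{\bcat} \bcat^{\to}$ of $\pr$ along $\cod$ explicitly: an object is a pair $(\el, \barr)$ with $\el \in \dfib$ and $\barr : \bobjI \to \pr(\el)$ an arrow of $\bcat$, and a morphism $(\el, \barr) \to (\el', \barr')$ is a pair consisting of $\darr : \el \to \el'$ in $\dfib$ and a commuting square in $\bcat$ whose codomain leg is $\pr(\darr)$. The canonical comparison $\dfib^{\to} \to \dfib \times_{\bcat} \bcat^{\to}$ induced by $\cod$ and $\pr^{\to}$ sends an arrow $\darr : \elI \to \el$ to $(\el, \pr(\darr))$, and \ref{item:24} asserts precisely that this comparison is an isomorphism. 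Its bijectivity on objects says that for every $(\el, \barr : \bobjI \to \pr(\el))$ there is a unique $\darr : \elI \to \el$ with $\pr(\darr) = \barr$, which is verbatim condition~\ref{item:23}.

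For \ref{item:23}$\Leftrightarrow$\ref{item:25} I would similarly observe that the induced functor $\dfib/\el \to \bcat/\pr(\el)$ sends $\darr : \elI \to \el$ to $\pr(\darr) : \pr(\elI) \to \pr(\el)$, so that its bijectivity on objects is again the lifting property of \ref{item:23}, now read off at the fixed target $\el$ and quantified over all $\el$. In both cases the converse implications \ref{item:24}$\Rightarrow$\ref{item:23} and \ref{item:25}$\Rightarrow$\ref{item:23} use only this object-level bijection and are immediate.

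The main obstacle, and the only step that is not pure definition-chasing, is the morphism-level bijectivity of each comparison. Given lifts $\darr : \elI \to \el$ and $\darr' : \elI' \to \el'$ of base arrows together with a compatible base morphism $\barrI$ between their domains, I would produce the required $\dfib$-morphism by lifting $\barrI$ against the discrete fibration to obtain $\bar{\barrI}_{\elI'} : \barrI^{*}\elI' \to \elI'$, and then use \emph{uniqueness of lifts into the codomain of $\darr'$} to identify $\barrI^{*}\elI'$ with $\elI$: both $\darr' \circ \bar{\barrI}_{\elI'}$ and the corresponding composite built from $\darr$ are arrows into the codomain of $\darr'$ lying over one and the same base arrow, so by the uniqueness clause of \ref{item:23} they coincide, which forces the domains to agree and the commuting square to hold. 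Uniqueness of the lifted morphism is then immediate from the same uniqueness clause. I expect this domain-matching argument to be the subtle point, since it is where uniqueness of lifts is used in a slightly indirect way; everything else is a routine transcription of the pullback and slice constructions.
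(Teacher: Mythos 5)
Your proof is correct: the paper offers no argument at all (it declares this proposition ``immediate from the definition''), and your unwinding --- object-level bijectivity of each comparison functor being verbatim the unique-lifting condition, plus the domain-matching use of uniqueness of lifts for morphism-level bijectivity --- is exactly the routine verification the paper leaves implicit. In particular, your identification of the subtle step (lifting the base arrow, then invoking uniqueness of lifts into \(\el'\) to force \(\barrI^{*}\elI' = \elI\) and commutativity simultaneously) is the standard and correct way to fill that gap.
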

\begin{proof}
  The implications \(\labelcref{item:24} \To \labelcref{item:25}\) and
  \(\labelcref{item:25} \To \labelcref{item:23}\) are immediate. To
  see \(\labelcref{item:23} \To \labelcref{item:24}\), suppose that
  \(\pr\) is a discrete fibration. By definition, the functor
  \(\dfib^{\to} \to \dfib \times_{\bcat} \bcat^{\to}\) is bijective on
  objects. To see that this functor is also fully faithful, let
  \(\arr_{1} : \el_{1} \to \el_{1}'\) and
  \(\arr_{2} : \el_{2} \to \el_{2}'\) be objects in \(\dfib^{\to}\), let
  \(\arrI : \el_{1}' \to \el_{2}'\) be an arrow in \(\dfib\), let
  \(\barr : \pr(\el_{1}) \to \pr(\el_{2})\) be an arrow in \(\bcat\),
  and suppose that
  \(\pr(\arr_{2}) \circ \barr = \pr(\arrI) \circ \pr(\arr_{1})\). We have to
  show that there exists a unique arrow
  \(\widehat{\barr} : \el_{1} \to \el_{2}\) in \(\dfib\) such that
  \(\pr(\widehat{\barr}) = \barr\) and
  \(\arr_{2} \circ \widehat{\barr} = \arrI \circ \arr_{1}\). Such a
  \(\widehat{\barr}\) must be
  \(\overline{\barr}_{\el_{2}} : \barr^{*} \el_{2} \to \el_{2}\) by the
  condition \(\pr(\widehat{\barr}) = \barr\), and it indeed satisfies
  \(\arr_{2} \circ \overline{\barr}_{\el_{2}} = \arrI \circ \arr_{1}\) since
  both \(\arr_{2} \circ \overline{\barr}_{\el_{2}}\) and
  \(\arrI \circ \arr_{1}\) have the same codomain and are sent by
  \(\pr\) to \(\pr(\arr_{2}) \circ \barr = \pr(\arrI) \circ \pr(\arr_{1})\).
\end{proof}

\begin{proposition}
  \label{prop:dfib-conservative}
  A discrete fibration \(\pr : \dfib \to \bcat\) is faithful and reflects
  isomorphisms: an arrow \(\arr : \el \to \el'\) in \(\dfib\) is an isomorphism
  whenever \(\pr(\arr)\) is.
\end{proposition}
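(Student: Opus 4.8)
The plan is to read off both properties directly from the unique-lifting clause in the definition of a discrete fibration, invoking nothing beyond it; alternatively one could route through the slice isomorphisms of \cref{prop:discrete-fibration-char}, but the direct argument is shortest.

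For faithfulness, I would take two arrows $\arr, \arrI : \el \to \el'$ of $\dfib$ with $\pr(\arr) = \pr(\arrI)$, name this common arrow $\barr : \pr(\el) \to \pr(\el')$, and observe that $\arr$ and $\arrI$ are both arrows into $\el'$ lying over $\barr$. Since the definition guarantees a \emph{unique} lift $\bar{\barr}_{\el'}$ of $\barr$ with codomain $\el'$, we get $\arr = \arrI = \bar{\barr}_{\el'}$. This step is entirely formal.

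For reflecting isomorphisms, suppose $\pr(\arr)$ is invertible and write $\barr := \pr(\arr)$ with inverse $\barr^{-1} : \pr(\el') \to \pr(\el)$. The idea is to manufacture the inverse of $\arr$ by lifting $\barr^{-1}$: applying the definition to the object $\el$ and the arrow $\barr^{-1}$ produces a unique arrow $\arrI : (\barr^{-1})^{*}\el \to \el$ with $\pr(\arrI) = \barr^{-1}$. I would then verify $\arrI$ is a two-sided inverse of $\arr$ by a uniqueness argument: since $\pr(\arr \circ \arrI) = \barr \circ \barr^{-1} = \id_{\pr(\el')} = \pr(\id_{\el'})$, both $\arr \circ \arrI$ and $\id_{\el'}$ are lifts of $\id_{\pr(\el')}$ with codomain $\el'$, hence equal; symmetrically $\pr(\arrI \circ \arr) = \barr^{-1} \circ \barr = \id_{\pr(\el)} = \pr(\id_{\el})$ forces $\arrI \circ \arr = \id_{\el}$.

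The only point needing care—and the mild obstacle—is the bookkeeping of domains and codomains: the lifted arrow $\arrI$ \emph{a priori} has domain $(\barr^{-1})^{*}\el$ rather than $\el'$, so the composite $\arr \circ \arrI$ is not obviously an endomorphism of $\el'$. The resolution is that the uniqueness identification $\arr \circ \arrI = \id_{\el'}$ simultaneously equates domains, giving $(\barr^{-1})^{*}\el = \el'$; only after this observation are both composites well-typed as endomorphisms and the two equalities follow purely from uniqueness of lifts of identity arrows, with no further computation.
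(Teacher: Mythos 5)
Your proof is correct and matches the paper's intent exactly: the paper states this proposition without proof, introducing it with ``the following are immediate from the definition,'' and your direct unique-lifting argument---including the careful point that the uniqueness of lifts of \(\id_{\pr(\el')}\) into \(\el'\) yields \(\arr \circ \arrI = \id_{\el'}\) and thereby identifies \((\barr^{-1})^{*}\el\) with \(\el'\), after which \(\arrI \circ \arr\) is well-typed---is precisely that immediate argument spelled out.
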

\begin{proof}
  Let \(\arr, \arrI : \el \to \el'\) be arrows in \(\dfib\) such that
  \(\pr(\arr) = \pr(\arrI)\). Then both \(\arr\) and \(\arrI\) must be
  equal to
  \(\overline{\pr(\arr)}_{\el'} : \pr(\arr)^{*}\el' \to \el'\), and thus
  \(\pr\) is faithful. A morphism \(\arr : \el \to \el'\) in \(\dfib\)
  is an isomorphism if and only if it is the terminal object in
  \(\dfib/\el'\). Therefore, by \cref{item:25} of
  \cref{prop:discrete-fibration-char}, \(\pr\) reflects isomorphisms.
\end{proof}

For a small category \(\bcat\), the category \(\DFib_{\bcat}\) is
equivalent to the category of presheaves over \(\bcat\): for a
presheaf \(\psh\) over \(\bcat\), its category of elements
\(\int_{\bcat}\psh\) together with the projection
\(\int_{\bcat}\psh \to \bcat\) is a discrete fibration over
\(\bcat\); for a discrete fibration \(\dfib\) over \(\bcat\), we have
a presheaf \(\bobj \mapsto \dfib(\bobj)\) where \(\dfib(\bobj)\) denotes the fiber
\(\pr_{\dfib}^{-1}(\bobj)\). A representable presheaf \(\bcat(\argu, \bobj)\)
corresponds to the slice category \(\bcat/\bobj\) with domain functor
\(\bcat/\bobj \to \bcat\). We say a discrete fibration \(\dfib\) over
\(\bcat\) is \emph{representable} if it is isomorphic to
\(\bcat/\bobj\) for some \(\bobj \in \bcat\). We call the functor
\(\bcat \ni \bobj \mapsto \bcat/\bobj \in \DFib_{\bcat}\) the
\emph{Yoneda embedding}. The Yoneda Lemma for discrete fibrations is
formulated as follows.

\begin{theorem}[The Yoneda Lemma]
  Let \(\bcat\) be a category and \(\dfib\) a discrete fibration over
  \(\bcat\). For any object \(\bobj \in \bcat\), the map
  \[
    \DFib_{\bcat}(\bcat/\bobj, \dfib) \ni \map \mapsto
    \map(\id_{\bobj}) \in \dfib(\bobj)
  \]
  is bijective. \qed
\end{theorem}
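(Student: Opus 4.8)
The plan is to exhibit an explicit inverse to the evaluation map
$\ev : \DFib_{\bcat}(\bcat/\bobj, \dfib) \to \dfib(\bobj)$, $\map \mapsto \map(\id_{\bobj})$. First I would record that $\ev$ is well-defined: the object $\id_{\bobj}$ of $\bcat/\bobj$ has domain $\bobj$, and since $\map$ is a map of discrete fibrations over $\bcat$ we have $\pr_{\dfib}(\map(\id_{\bobj})) = \bobj$, so $\map(\id_{\bobj})$ indeed lies in the fiber $\dfib(\bobj)$. The whole argument then rests on the third characterization of discrete fibrations in \cref{prop:discrete-fibration-char}.

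For the inverse, given $\el \in \dfib(\bobj)$, i.e. $\pr_{\dfib}(\el) = \bobj$, that characterization supplies an isomorphism $\bar{\pr}_{\el} : \dfib/\el \xrightarrow{\sim} \bcat/\bobj$ induced by $\pr_{\dfib}$. Let $\phi_{\el}$ be its inverse and let $\map_{\el}$ be the composite of $\phi_{\el}$ with the canonical domain projection $\dfib/\el \to \dfib$. Concretely $\map_{\el}$ sends an object $\barr : \bobjI \to \bobj$ of $\bcat/\bobj$ to the source $\barr^{*}\el = \el \act \barr$ of the unique cartesian lift $\bar{\barr}_{\el}$, and a morphism to the corresponding lift. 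Since the evident square relating the two domain projections to $\pr_{\dfib}$ and to $\bar{\pr}_{\el}$ commutes, one reads off immediately that $\pr_{\dfib} \circ \map_{\el}$ equals the domain functor, so $\map_{\el}$ is a map of discrete fibrations over $\bcat$.

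It then remains to check the two round trips. For $\ev(\map_{\el}) = \el$, observe that the object $\id_{\el}$ of $\dfib/\el$ maps to $\id_{\bobj}$ under $\bar{\pr}_{\el}$, so by the isomorphism $\phi_{\el}(\id_{\bobj}) = \id_{\el}$, whose domain is $\el$. Conversely, given a map $\map$ over $\bcat$, set $\el := \map(\id_{\bobj})$. Each object $\barr : \bobjI \to \bobj$ of $\bcat/\bobj$ carries a canonical morphism $\barr \to \id_{\bobj}$, namely the arrow $\barr$ itself, and applying $\map$ yields a morphism $\map(\barr) \to \el$, that is, an object of $\dfib/\el$; this assignment defines a functor $\tilde{\map} : \bcat/\bobj \to \dfib/\el$ lifting $\map$ along the domain projection. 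Because $\map$ is over $\bcat$, a direct computation gives $\bar{\pr}_{\el} \circ \tilde{\map} = \id_{\bcat/\bobj}$, hence $\tilde{\map} = \phi_{\el}$ and therefore $\map = \map_{\el}$.

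The genuine content is entirely packaged in the uniqueness of cartesian lifts, equivalently in the slice-category isomorphism of \cref{prop:discrete-fibration-char}; once that is invoked, functoriality of $\map_{\el}$ and both identities are essentially forced. The step I expect to be the main obstacle is reconstructing a general map $\map$ from its value $\map(\id_{\bobj})$: one must verify that $\tilde{\map}$ is honestly functorial, and that being a map \emph{over} $\bcat$ is precisely what pins $\bar{\pr}_{\el} \circ \tilde{\map}$ down to the identity rather than to some other automorphism of $\bcat/\bobj$, which is what lets us conclude $\tilde{\map} = \phi_{\el}$ from the isomorphism.
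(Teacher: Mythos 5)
Your proof is correct, and all the steps check out: the inverse $\el \mapsto \map_{\el}$ built from the slice isomorphism $\dfib/\el \cong \bcat/\pr_{\dfib}(\el)$ of \cref{prop:discrete-fibration-char}, the observation that $\id_{\bobj}$ is terminal in $\bcat/\bobj$ (which makes $\tilde{\map}$ automatically functorial), and the computation $\bar{\pr}_{\el} \circ \tilde{\map} = \id_{\bcat/\bobj}$ pinning down $\tilde{\map} = \phi_{\el}$. The paper states this lemma without proof, and your argument is precisely the intended one given the toolkit the paper sets up, so there is nothing to compare against and nothing to amend.
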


By the Yoneda Lemma, we identify an element \(\el \in \dfib(\bobj)\) with the
corresponding map \(\bcat/\bobj \to \dfib\) of discrete fibrations over
\(\bcat\). We also recall the following criterion for representability.

\begin{proposition}
  \label{prop:representability}
  Let \(\bcat\) be a category and \(\dfib\) a discrete fibration over
  \(\bcat\). Then \(\dfib\) is representable if and only if it has a
  terminal object. More precisely, for any object \(\el \in \dfib\), the
  corresponding map \(\bcat/\pr_{\dfib}(\el) \to \dfib\) of discrete
  fibrations over \(\bcat\) is an isomorphism if and only if \(\el\)
  is the terminal object. \qed
\end{proposition}

Discrete fibrations are stable under ``base change''.

\begin{proposition}
  \label{prop:dfib-base-change}
  Let \(\pr_{\dfib} : \dfib \to \bcat\) be a discrete fibration.
  \begin{enumerate}
  \item If
    \[
      \begin{tikzcd}
        \dfib'
        \arrow[r,"\overline{\fun}"]
        \arrow[d,"\pr_{\dfib'}"'] &
        \dfib \arrow[d,"\pr_{\dfib}"] \\
        \bcat'
        \arrow[r,"\fun"'] &
        \bcat
      \end{tikzcd}
    \]
    is a pullback of categories, then
    \(\pr_{\dfib'} : \dfib' \to \bcat'\) is a discrete fibration called
    the \emph{base change of \(\dfib\) along \(\fun\)} and denoted by
    \(\fun^{*}\dfib\).
  \item If \(\trans : \fun \To \funI : \bcat' \to \bcat\) is a natural
    transformation and
    \[
      \begin{tikzcd}
        \dfib'_{1}
        \arrow[r,"\overline{\fun}"]
        \arrow[d,"\pr_{\dfib'_{1}}"'] &
        \dfib \arrow[d,"\pr_{\dfib}"] \\
        \bcat'
        \arrow[r,"\fun"'] &
        \bcat
      \end{tikzcd}
      \quad
      \begin{tikzcd}
        \dfib'_{2}
        \arrow[r,"\overline{\funI}"]
        \arrow[d,"\pr_{\dfib'_{2}}"'] &
        \dfib \arrow[d,"\pr_{\dfib}"] \\
        \bcat'
        \arrow[r,"\funI"'] &
        \bcat
      \end{tikzcd}
    \]
    are pullbacks, then there exists a unique pair \((\trans^{*}_{\dfib},
    \overline{\trans}_{\dfib})\) consisting of a map \(\trans^{*}_{\dfib} : \dfib'_{2} \to
    \dfib'_{1}\) of discrete fibrations over \(\bcat'\) and a natural
    transformation \(\overline{\trans}_{\dfib} : \overline{\fun} \trans^{*}_{\dfib} \To
    \overline{\funI}\) such that \(\pr_{\dfib}\overline{\trans}_{\dfib} = \trans
    \pr_{\dfib'_{2}}\).
    \[
      \begin{tikzcd}
        \dfib'_{2}
        \arrow[dr,bend left,"\overline{\funI}",""'{name=bG}]
        \arrow[d,"\trans^{*}_{\dfib}"]
        \arrow[dd,bend right,"\pr_{\dfib_{2}'}"'] &
        [2ex] \\
        \dfib'_{1}
        \arrow[r,"\overline{\fun}"',""{name=bF}]
        \arrow[from=bF,to=bG,Rightarrow,"\overline{\trans}_{\dfib}"']
        \arrow[d,"\pr_{\dfib_{1}'}"] &
        \dfib \arrow[d,"\pr_{\dfib}"] \\
        \bcat'
        \arrow[r,bend right,"\fun"'] &
        \bcat
      \end{tikzcd}
      =
      \begin{tikzcd}
        \dfib'_{2}
        \arrow[dr,bend left,"\overline{\funI}"]
        \arrow[dd,bend right,"\pr_{\dfib_{2}'}"'] & \\
        & \dfib \arrow[d,"\pr_{\dfib}"] \\
        \bcat'
        \arrow[r,bend right,"\fun"',""{name=F}]
        \arrow[r,bend left,"\funI",""'{name=G}]
        \arrow[from=F,to=G,Rightarrow,"\trans"'] &
        \bcat
      \end{tikzcd}
    \]
  \end{enumerate}
\end{proposition}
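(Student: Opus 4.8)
The plan is to handle the two parts separately, in each case reducing the claim to the unique lifting property that defines a discrete fibration, together with the description of a pullback of categories as the category of compatible pairs.

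For the first part I would argue directly. An object of the pullback $\dfib'$ is a pair $(\bobj, \el)$ with $\bobj \in \bcat'$, $\el \in \dfib$ and $\fun(\bobj) = \pr_{\dfib}(\el)$, with $\pr_{\dfib'}$ sending it to $\bobj$ and $\overline{\fun}$ sending it to $\el$; an arrow is a compatible pair of arrows. Given such an object and an arrow $\barr : \bobjI \to \bobj$ in $\bcat'$, the arrow $\fun(\barr) : \fun(\bobjI) \to \fun(\bobj) = \pr_{\dfib}(\el)$ has, since $\dfib$ is a discrete fibration, a unique lift $\bar{\fun(\barr)}_{\el} : \fun(\barr)^{*}\el \to \el$; pairing it with $\barr$ yields an arrow over $\barr$ into $(\bobj, \el)$, and this is the unique such arrow because both of its components are forced. (Alternatively, using \cref{prop:discrete-fibration-char}\ref{item:24}: since $(\argu)^{\to}$ preserves pullbacks, the arrow square for $\pr_{\dfib'}$ is a pullback by pasting it with the one for $\pr_{\dfib}$.)

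For the second part I would build $(\trans^{*}, \overline{\trans})$ from the cartesian lifts along the components of $\trans$. An object of $\dfib'_{2} = \funI^{*}\dfib$ is a pair $(\bobj, \el)$ with $\funI(\bobj) = \pr_{\dfib}(\el)$, and the component $\trans_{\bobj} : \fun(\bobj) \to \funI(\bobj) = \pr_{\dfib}(\el)$ has a unique lift $\bar{(\trans_{\bobj})}_{\el} : \trans_{\bobj}^{*}\el \to \el$ with $\pr_{\dfib}(\trans_{\bobj}^{*}\el) = \fun(\bobj)$. I set $\trans^{*}(\bobj, \el) := (\bobj, \trans_{\bobj}^{*}\el) \in \dfib'_{1}$ and $\overline{\trans}_{(\bobj, \el)} := \bar{(\trans_{\bobj})}_{\el}$, so that $\pr_{\dfib}\overline{\trans} = \trans\,\pr_{\dfib'_{2}}$ holds componentwise by construction. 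The action of $\trans^{*}$ on an arrow of $\dfib'_{2}$ is then forced to lie over the same $\bcat'$-arrow and to be compatible with $\overline{\trans}$, and uniqueness of lifts in $\dfib$ pins it down; functoriality of $\trans^{*}$ and naturality of $\overline{\trans}$ likewise follow from uniqueness of lifts, since by \cref{prop:dfib-conservative} two arrows of $\dfib$ with the same image under $\pr_{\dfib}$ and the same codomain coincide.

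The step I expect to carry the real content is \emph{uniqueness} of the pair $(\trans^{*}, \overline{\trans})$, not its construction, which is entirely dictated by the cartesian lifts. Here I would first record the general fact that a map of discrete fibrations over a fixed base is determined by its action on objects: if two such maps agree on objects, then for any arrow $\darr$ their images lie over the same base arrow with equal codomain, hence agree by the discrete fibration property of the target, which is a discrete fibration over $\bcat'$ by the first part. Granting this, let $(\funII, \transI)$ be any pair with the stated properties. Since $\funII$ lies over $\bcat'$, on an object $(\bobj, \el)$ it has the form $(\bobj, \el')$, and $\transI_{(\bobj, \el)} : \el' \to \el$ lies over $\trans_{\bobj}$; uniqueness of lifts forces $\el' = \trans_{\bobj}^{*}\el$ and $\transI_{(\bobj, \el)} = \bar{(\trans_{\bobj})}_{\el}$, so $\funII$ and $\transI$ agree with $\trans^{*}$ and $\overline{\trans}$ on objects, whence $\funII = \trans^{*}$ by the observation above and therefore $\transI = \overline{\trans}$. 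The main obstacle is thus purely one of bookkeeping: keeping the pullback descriptions of $\dfib'_{1}$ and $\dfib'_{2}$ explicit enough that ``forced on objects'' genuinely propagates to ``forced on arrows'', for which the faithfulness of discrete fibrations from \cref{prop:dfib-conservative} is the essential tool.
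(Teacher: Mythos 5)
Your proof is correct: the paper itself offers no argument beyond declaring the proposition straightforward, and your element-wise verification --- unique cartesian lifts over $\fun(\barr)$ for part 1, lifts of the components $\trans_{\bobj}$ for part 2, with naturality of $\trans$ plus uniqueness of lifts pinning down $\trans^{*}$ on arrows, and the observation that a map of discrete fibrations over a fixed base is determined by its action on objects (using part 1 to know $\dfib'_{1}$ is a discrete fibration over $\bcat'$) handling uniqueness --- is exactly the routine argument being alluded to. One cosmetic correction: the fact you attribute to \cref{prop:dfib-conservative}, that two arrows of $\dfib$ with the same image under $\pr_{\dfib}$ and the same codomain coincide, is the defining unique-lifting property of a discrete fibration rather than a consequence of faithfulness (faithfulness would require the domains to agree as well), so cite the definition there instead.
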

\begin{proof}
  These are special cases of the base change of fibrations
  \parencite[e.g.][Lemma 1.5.1 and Lemma
  1.7.10]{jacobs1999categorical}.
\end{proof}

\begin{corollary}
  \label{cor:dfib-functorial}
  The assignment \(\bcat \mapsto \DFib_{\bcat}\) determines a pseudo-functor
  from \(\Cat\) to the \(2\)-category of large categories that is
  contravariant on both \(1\)-cells and \(2\)-cells. More precisely, a
  functor \(\fun : \bcat' \to \bcat\) is mapped to the base change
  functor \(\fun^{*} : \DFib_{\bcat} \to \DFib_{\bcat'}\), and a natural
  transformation \(\trans : \fun \To \funI : \bcat' \to \bcat\) is
  mapped to the natural transformation
  \(\trans^{*} : \funI^{*} \To \fun^{*}\) determined by
  \cref{prop:dfib-base-change}.
\end{corollary}
\begin{proof}
  Let \(\DFib \subset \Cat^{\to}\) denote the full sub-\(2\)-category spanned
  by the discrete fibrations. \Cref{prop:dfib-base-change} implies
  that the codomain functor \(\DFib \to \Cat\) is a so-called
  \(2\)-fibration \parencite{hermida1999fib,buckley2014fibred} fibred
  in categories. Then the claim is a special case of
  \parencite[Theorem 2.2.11]{buckley2014fibred}.
\end{proof}

\subsection{Representable Maps of Discrete Fibrations}
\label{sec:repr-map-discr}

\begin{definition}
  \label{def:representable-map}
  Let \(\map : \dfib \to \dfibI\) be a map of discrete fibrations over a category
  \(\bcat\). We say \(\map\) is \emph{representable} if it has a right
  adjoint as a functor \(\dfib \to \dfibI\).
\end{definition}

\begin{remark}
  Representable maps of presheaves are usually defined by the
  equivalent condition of \cref{cor:representable-map} below
  \parencite[\href{https://stacks.math.columbia.edu/tag/0023}{Tag
    0023}]{stacks-project}. \Cref{def:representable-map} is a more
  natural definition when working with discrete fibrations.
\end{remark}

\begin{remark}
  \label{rem:repr-obj-and-repr-map}
  For a discrete fibration \(\dfib\) over a category \(\bcat\), the
  following are \emph{not} equivalent in general.
  \begin{enumerate}
  \item \label{item:1}
    The discrete fibration \(\dfib\) is representable.
  \item \label{item:2}
    The unique map \(\dfib \to \bcat\) of discrete fibrations over
    \(\bcat\) is representable.
  \end{enumerate}
  It follows from \cref{cor:representable-map} below that if \(\bcat\)
  has a terminal object then \labelcref{item:2} is equivalent to that
  the discrete fibration \(\dfib\) over \(\bcat\) is representable by,
  say, \(\bobj\) and \(\bcat\) has products with \(\bobj\). In
  particular, if \(\bcat\) has finite products then \labelcref{item:1}
  and \labelcref{item:2} are equivalent.
\end{remark}

\begin{proposition}
  For a category \(\bcat\), the identity maps of discrete fibrations
  over \(\bcat\) are representable and representable maps of
  discrete fibrations over \(\bcat\) are closed under composition.
\end{proposition}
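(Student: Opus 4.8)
The plan is to reduce the statement to two elementary facts about adjoint functors, because by \cref{def:representable-map} representability of a map of discrete fibrations over \(\bcat\) means precisely that its underlying functor has a right adjoint. So nothing about discrete fibrations as such is really at issue; the content is entirely about how the existence of right adjoints behaves under the identity and under composition.

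First I would treat the identity maps. For a discrete fibration \(\dfib\) over \(\bcat\), the identity functor \(\id_{\dfib}\) is its own right adjoint, with identity unit and counit, so \(\id_{\dfib}\) is representable.

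For closure under composition, I would take representable maps \(\map : \dfib \to \dfibI\) and \(\mapI : \dfibI \to \dfibII\) of discrete fibrations over \(\bcat\) and argue in two steps. The first step is to check that the composite is again a map of discrete fibrations over \(\bcat\); this is immediate from \(\pr_{\dfibII} \circ \mapI \circ \map = \pr_{\dfibI} \circ \map = \pr_{\dfib}\). The second step is to produce a right adjoint to \(\mapI \circ \map\): writing \(\map \adj R\) and \(\mapI \adj R'\) for the right adjoints supplied by the hypothesis, I would invoke the standard fact that right adjoints compose in the reverse order, so that \(R \circ R'\) is right adjoint to \(\mapI \circ \map\). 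Hence \(\mapI \circ \map\) is representable.

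I do not expect any genuine obstacle. The only subtlety worth flagging is that \cref{def:representable-map} requires merely a right adjoint to the underlying functor, with no compatibility with the projections to \(\bcat\); consequently there is nothing to verify about the right adjoints \(R\) and \(R'\) living over \(\bcat\), and the whole argument is just the composition of adjunctions together with self-adjointness of the identity.
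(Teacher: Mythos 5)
Your proposal is correct and matches the paper's proof, which simply says ``By definition'': since \cref{def:representable-map} asks only for a right adjoint to the underlying functor, the claim reduces to the identity being self-adjoint and right adjoints composing, exactly as you argue. Your remark that no compatibility of the right adjoints with the projections to \(\bcat\) needs checking is the right observation and is precisely why the paper can dispose of the proof in one line.
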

\begin{proof}
  By definition.
\end{proof}

\begin{proposition}
  \label{prop:representable-map}
  Let \(\map : \dfib \to \dfibI\) be a map of discrete fibrations over a category
  \(\bcat\). For objects \(\elI \in \dfibI\) and \(\el \in \dfib\) and an arrow
  \(\darr : \map\el \to \elI\) in \(\dfibI\), the following are equivalent.
  \begin{enumerate}
  \item \label{item:18} \(\darr : \map\el \to \elI\) is the terminal
    object in the comma category \((\map \downarrow \elI)\).
  \item \label{item:19} The square
    \[
      \begin{tikzcd}
        \bcat/\pr_{\dfib}(\el) \arrow[r,"\el"]
        \arrow[d,"\bcat/\pr_{\dfibI}(\darr)"'] &
        \dfib \arrow[d,"\map"] \\
        \bcat/\pr_{\dfibI}(\elI) \arrow[r,"\elI"'] &
        \dfibI
      \end{tikzcd}
    \]
    is a pullback.
  \end{enumerate}
\end{proposition}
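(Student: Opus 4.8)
The plan is to transport both conditions down into the base category $\bcat$ via the Yoneda Lemma and the defining uniqueness property of discrete fibrations, whereupon they become verbatim the same statement. Write $d = \pr_{\dfibI}(\darr) : \pr_{\dfib}(\el) \to \pr_{\dfibI}(\elI)$, which makes sense because $\map$ is a map over $\bcat$, so $\pr_{\dfibI}(\map\el) = \pr_{\dfib}(\el)$; the left vertical map of the square is then the functor induced by post-composition with $d$. I would first record that the square commutes: since $\darr : \map\el \to \elI$ is an arrow of the discrete fibration $\dfibI$ lying over $d$, uniqueness of cartesian lifts forces $\darr = \bar{d}_{\elI}$ and hence $\map\el = \elI\act d$. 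Under the Yoneda Lemma the two composites $\bcat/\pr_{\dfib}(\el) \to \dfibI$ both correspond to the single element $\map\el = \elI\act d \in \dfibI(\pr_{\dfib}(\el))$, so commutativity is automatic.

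Next I would compute the pullback $P$ of $\map$ along $\elI$ fiberwise (pullbacks in $\DFib_{\bcat}$, being pullbacks of presheaves, are computed fiberwise): for $\bobjI \in \bcat$,
\[
  P(\bobjI) = \{\, (x, v) \mid x \in \dfib(\bobjI),\ v : \bobjI \to \pr_{\dfibI}(\elI),\ \map(x) = \elI\act v \,\}.
\]
The comparison map out of $\bcat/\pr_{\dfib}(\el)$ sends $w : \bobjI \to \pr_{\dfib}(\el)$ to $(\el\act w,\ d\circ w)$. This lands in $P(\bobjI)$ because $\map$ preserves base change: $\map$ carries the cartesian lift $\bar{w}_{\el}$ to an arrow over $w$ with codomain $\map\el$, which must be $\bar{w}_{\map\el}$ by uniqueness, so $\map(\el\act w) = (\map\el)\act w = (\elI\act d)\act w = \elI\act(d\circ w)$.

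It remains to match the universal-arrow property with bijectivity of this comparison on every fiber, and here I would invoke the dictionary supplied by discreteness. Given $\el' \in \dfib(\bobjI)$, an arrow $\darr' : \map\el' \to \elI$ is, by uniqueness of cartesian lifts, exactly the datum of its projection $v' = \pr_{\dfibI}(\darr') : \bobjI \to \pr_{\dfibI}(\elI)$ subject to $\map\el' = \elI\act v'$; likewise an arrow $g : \el' \to \el$ is exactly $w = \pr_{\dfib}(g) : \bobjI \to \pr_{\dfib}(\el)$ with $\el' = \el\act w$. By faithfulness of $\dfibI$ (\cref{prop:dfib-conservative}), the triangle $\darr\circ\map g = \darr'$ holds if and only if its projection $d\circ w = v'$ holds in $\bcat$. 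Consequently the assertion that $(\el,\darr)$ is the universal arrow from $\map$ to $\elI$ says precisely that for every $(\el', v') \in P(\bobjI)$ there is a unique $w$ with $\el\act w = \el'$ and $d\circ w = v'$, which is exactly bijectivity of the comparison $\bcat(\bobjI, \pr_{\dfib}(\el)) \to P(\bobjI)$; naturality in $\bobjI$ is routine.

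The verifications that $\map$ preserves base change and that the comparison is natural are immediate from the uniqueness property of discrete fibrations, so I expect them to cost nothing. The one step demanding genuine care — and the conceptual heart of the argument — is the faithful translation of the one-sided universal property into the base $\bcat$: one must check that both the existence-and-uniqueness of the filler $g$ and the commutativity of the defining triangle descend to, and are reflected from, their images over $\bcat$. This is exactly where \cref{prop:dfib-conservative} (faithfulness) together with uniqueness of cartesian lifts does the essential work, and is the place where an error would most easily creep in.
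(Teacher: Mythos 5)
Your proof is correct, but it takes a different route from the paper's. The paper's argument never descends to the base: it invokes \cref{item:25} of \cref{prop:discrete-fibration-char} to replace the representables \(\bcat/\pr_{\dfib}(\el)\) and \(\bcat/\pr_{\dfibI}(\elI)\) by the slice categories \(\dfib/\el\) and \(\dfibI/\elI\), observes that the pullback of \(\map\) along the forgetful functor \(\dfibI/\elI \to \dfibI\) is the comma category \((\map \downarrow \elI)\), and concludes that the square is a pullback exactly when the canonical functor \(\dfib/\el \ni \darrI \mapsto \darr \circ \map(\darrI) \in (\map \downarrow \elI)\) is an isomorphism --- which is verbatim the universal-arrow condition. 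You instead transport both conditions down to \(\bcat\): you compute the pullback fiberwise as a presheaf pullback, and you use uniqueness of lifts plus faithfulness (\cref{prop:dfib-conservative}) to encode arrows \(\map\el' \to \elI\) and \(\el' \to \el\) by their projections, reducing everything to a bijectivity statement on fibers. Your dictionary is sound in both directions (the reflection of the triangle identity from \(d \circ w = v'\) is exactly where faithfulness is needed, as you note), so the argument goes through. What the paper's approach buys is brevity and the avoidance of any fiberwise or naturality bookkeeping, at the cost of trusting the identification of the pullback with a comma category; what yours buys is a fully explicit element-level verification that makes visible precisely which axioms of discrete fibrations (existence and uniqueness of lifts, faithfulness, preservation of the base-change action by maps over \(\bcat\)) carry each step. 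One small economy you could have taken: the equality \(\map(\el \act w) = (\map\el) \act w\) and the correspondence between arrows in the total category and base arrows satisfying a restriction equation are both instances of the single statement that a map of discrete fibrations over \(\bcat\) is the same thing as a natural transformation of the associated presheaves, which would have compressed your middle two paragraphs.
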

\begin{proof}
  By \cref{item:25} of \cref{prop:discrete-fibration-char},
  \cref{item:19} is equivalent to that the square
  \[
    \begin{tikzcd}
      \dfib/\el
      \arrow[r, "\dom"]
      \arrow[d] &
      \dfib
      \arrow[d,"\map"] \\
      \dfibI/\elI
      \arrow[r, "\dom"'] &
      \dfibI
    \end{tikzcd}
  \]
  is a pullback, where the left functor sends
  \((\darrI : \el' \to \el) \in \dfib/\el\) to
  \((\darr \circ \map\darrI : \map\el' \to \elI) \in \dfibI/\elI\). This is
  equivalent to that the map
  \(\dfib/\el \ni (\darrI : \el' \to \el) \mapsto (\darr \circ \map\darrI) :
  \map\el' \to \elI) \in \map^{*}(\dfibI/\elI)\) of discrete fibrations
  over \(\dfib\) is an isomorphism. By \cref{prop:representability},
  this is equivalent to that \(\darr \circ \map(\id_{\el}) = \darr\) is
  the terminal object in \(\map^{*}(\dfibI/\elI) \cong (\map \downarrow \elI)\).
\end{proof}

\begin{corollary}
  \label{cor:representable-map}
  Let \(\map : \dfib \to \dfibI\) be a map of discrete fibrations over a
  category \(\bcat\). Then \(\map\) is representable if and only if,
  for any object \(\bobj \in \bcat\) and element
  \(\elI : \bcat/\bobj \to \dfibI\), the pullback \(\elI^{*}\dfib\) is a
  representable discrete fibration over \(\bcat\). More precisely, the
  right adjoint \(\radj : \dfibI \to \dfib\) and the counit
  \(\counit\) of \(\map\) fit into the pullback square
  \[
    \begin{tikzcd}
      \bcat/\pr_{\dfib}(\radj(\elI))
      \arrow[r, "\radj(\elI)"]
      \arrow[d, "\bcat/\pr_{\dfibI}(\counit_{\elI})"'] &
      \dfib
      \arrow[d, "\map"] \\
      \bcat/\bobj
      \arrow[r, "\elI"'] &
      \dfibI
    \end{tikzcd}
  \]
  for any element \(\elI : \bcat/\bobj \to \dfibI\).
\end{corollary}
\begin{proof}
  Recall that the right adjoint and the counit of \(\map\) assign the
  terminal object of \((\map \downarrow \elI)\) to each element
  \(\elI \in \dfibI\). Then apply \cref{prop:representable-map}.
\end{proof}

\begin{definition}
  Let
  \[
    \begin{tikzcd}
      \dfib' \arrow[r,"\mapI"] \arrow[d,"\map'"'] &
      \dfib \arrow[d,"\map"] \\
      \dfibI' \arrow[r,"\mapII"'] &
      \dfibI
    \end{tikzcd}
  \]
  be a square of categories that commutes up to isomorphism, and
  suppose that \(\map\) and \(\map'\) have right adjoints \(\radj\) and
  \(\radj'\) respectively. We say this square satisfies the
  \emph{Beck-Chevalley condition} if the canonical natural
  transformation \(\mapI \radj' \To \radj \mapII\) defined by the following
  diagram is an isomorphism.
  \[
    \begin{tikzcd}
      &
      \dfib' \arrow[rr,"\mapI"]
      \arrow[dr,"\map'"] & &
      \dfib \arrow[rr,equal,""'{name=e2}]
      \arrow[dr,"\map"']
      \arrow[dl,phantom,"\cong" description] &
      \arrow[from=e2,to=d,Rightarrow,"\unit"] &
      \dfib \\
      \dfibI' \arrow[ur,"\radj'"]
      \arrow[rr,equal,""{name=e1}] &
      \arrow[from=u,to=e1,Rightarrow,"\counit'"] &
      \dfibI' \arrow[rr,"\mapII"'] & &
      \dfibI \arrow[ur,"\radj"'] &
    \end{tikzcd}
  \]
  Here \(\counit'\) is the counit of the adjunction \(\map' \adj
  \radj'\) and \(\unit\) is the unit of \(\map \adj \radj\).
\end{definition}

\begin{corollary}
  \label{cor:representable-map-pullback}
  \label{cor:rep-pullback-stable}
  Let
  \[
    \begin{tikzcd}
      \dfib' \arrow[r,"\mapI"] \arrow[d,"\map'"'] &
      \dfib \arrow[d,"\map"] \\
      \dfibI' \arrow[r,"\mapII"'] &
      \dfibI
    \end{tikzcd}
  \]
  be a commutative square of discrete fibrations over a category
  \(\bcat\) where \(\map\) is representable. If this square is a
  pullback, then \(\map'\) is representable and this square satisfies
  the Beck-Chevalley condition.
\end{corollary}
\begin{proof}
  Let \(\radj : \dfibI \to \dfib\) denote the right adjoint of
  \(\map\). Let \(\elI : \bcat/\bobj \to \dfibI'\) be an arbitrary
  element. Since \(\dfib' \cong \mapII^{*}\dfib\), we have
  \(\elI^{*} \dfib' \cong (\mapII\elI)^{*}\dfib\). Thus, by
  \cref{cor:representable-map}, \(\elI^{*}\dfib'\) is representable by
  \(\pr_{\dfib}(\radj(\mapII\elI))\). Let \(\radj'(\elI)\) denote the
  composite
  \(\bcat/\pr_{\dfib}(\radj(\mapII\elI)) \cong \elI^{*}\dfib' \to
  \dfib'\). Then, again by \cref{cor:representable-map}, \(\radj'\)
  determines a right adjoint of \(\map'\). By construction, we have
  \(\mapI\radj' \cong \radj\mapII\), and thus the Beck-Chevalley condition
  is satisfied.
\end{proof}

\begin{remark}
  The converse of \cref{cor:representable-map-pullback} also holds: if
  \(\map\) and \(\map'\) are representable and if the square satisfies
  the Beck-Chevalley condition, then the square is a pullback. See
  \parencite[Proposition 3.1.15]{uemura2021thesis} for a proof.
\end{remark}

\subsection{Modeling Type Theory}
\label{sec:modeling-type-theory}

A representable map \(\map : \dfib \to \dfibI\) of discrete fibrations over
\(\bcat\) is considered to be a model of dependent type theory. We
think of objects \(\bobj \in \bcat\) as \emph{contexts}, elements
\(\elI \in \dfibI(\bobj)\) as \emph{types over \(\bobj\)} and elements \(\el \in \dfib(\bobj)\)
as \emph{terms over \(\bobj\)}. For a term \(\el \in \dfib(\bobj)\), the type of
\(\el\) is \(\map(\el) \in \dfibI(\bobj)\). The representability of \(\map\) is used for
modeling \emph{context extensions}.

\begin{definition}
  Let \(\map : \dfib \to \dfibI\) be a representable map of discrete
  fibrations over \(\bcat\). We denote by
  \(\radj^{\map} : \dfibI \to \dfib\) the right adjoint to \(\map\) and
  by \(\counit^{\map}\) the counit of the adjunction
  \(\map \adj \radj^{\map}\). For an object \(\bobj \in \bcat\) and an
  element \(\elI \in \dfibI(\bobj)\), we write \(\{\elI\}^{\map}\) for
  the object \(\pr_{\dfib}(\radj^{\map}_{\elI}) \in \bcat\). Let
  \(\proj^{\map}_{\elI} = \pr_{\dfibI}(\counit^{\map}_{\elI}) :
  \{\elI\}^{\map} \to \bobj\). We call \(\{\elI\}^{\map}\) the
  \emph{context extension of \(\elI\) with respect to \(\map\)}. By
  \cref{cor:representable-map}, these fit into the pullback
  \[
    \begin{tikzcd}
      \bcat/\{\elI\}^{\map} \arrow[r,"\radj^{\map}_{\elI}"]
      \arrow[d,"\proj^{\map}_{\elI}"'] &
      \dfib \arrow[d,"\map"] \\
      \bcat/\bobj \arrow[r,"\elI"'] &
      \dfibI.
    \end{tikzcd}
  \]
\end{definition}

Syntactically, the context extension \(\dfibI(\bobj) \ni \elI \mapsto \{\elI\}^{\map}
\in \bcat\) models the rule for extending a context by a type
\[
  \inferrule
  {\bobj \vdash \elI \ \s{Type}}
  {\bobj, x : \elI \vdash \ctx}
  \ \text{(\(x\) is a fresh variable)}
\]
and \(\proj_{\elI}^{\map}\) corresponds to the context morphism
\((\bobj, x : \elI) \to \bobj\) and \(\radj_{\elI}^{\map}\) corresponds to the term
\(\bobj, x : \elI \vdash x : \elI\).

We demonstrate how to model type constructors using representable maps
of discrete fibrations. First we recall the notions of a pushforward
and a polynomial functor.

\begin{definition}
  Let \(\cat\) be a cartesian category. We say an arrow
  \(\arr : \obj \to \objI\) in \(\cat\) is \emph{exponentiable} if the
  pullback functor \(\arr^{*} : \cat/\objI \to \cat/\obj\) has a right
  adjoint. When \(\arr\) is exponentiable, the right adjoint of
  \(\arr^{*}\) is called the \emph{pushforward} or \emph{dependent
    product} along \(\arr\) and denoted by \(\arr_{*}\).
\end{definition}

\begin{definition}
  \label{def:polynomial}
  For an exponentiable arrow \(\arr : \obj \to \objI\) in a cartesian
  category \(\cat\), we define a functor
  \(\poly_{\arr} : \cat \to \cat\) called the \emph{polynomial functor
    associated with \(\arr\)} to be the composite
  \[
    \begin{tikzcd}
      \cat \arrow[r,"\obj^{*}"] &
      \cat/\obj \arrow[r,"\arr_{*}"] &
      \cat/\objI \arrow[r,"\objI_{!}"] &
      \cat,
    \end{tikzcd}
  \]
  where \(\obj^{*}\) is the pullback functor along \(\obj \to 1\) and
  \(\objI_{!}\) is the forgetful functor.
\end{definition}

Since the category of discrete fibrations over a category
\(\bcat\) is equivalent to the category of presheaves over
\(\bcat\), the pushforward along an arbitrary map exists. But the
pushforward along a representable map has a simple description.

\begin{lemma}
  \label{lem:dfib-cancelation}
  Let
  \[
    \begin{tikzcd}
      \dfibI \arrow[r,"\map"] \arrow[dr,"\prI"'] &
      \dfib \arrow[d,"\pr"] \\
      & \bcat
    \end{tikzcd}
  \]
  be a commutative triangle of categories and suppose that \(\pr\) is
  a discrete fibration. Then \(\map\) is a discrete fibration if and
  only if \(\prI\) is. Consequently, the isomorphism
  \((\Cat/\bcat)/\dfib \cong \Cat/\dfib\) restricts to an isomorphism
  \((\DFib_{\bcat})/\dfib \cong \DFib_{\dfib}\).
\end{lemma}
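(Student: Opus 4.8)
The plan is to reduce both implications to the slice-category characterization of discrete fibrations, namely \cref{item:25} of \cref{prop:discrete-fibration-char}, which says that a functor is a discrete fibration exactly when each of its induced functors on slices is an isomorphism of categories. Write $\prI = \pr \circ \map$ for the composite. For each object $\elI \in \dfibI$ the triangle induces a factorization of slice functors
\[
  \dfibI/\elI \xrightarrow{\ \map\ } \dfib/\map\elI \xrightarrow{\ \pr\ } \bcat/\prI(\elI),
\]
whose composite is precisely the slice functor induced by $\prI$, by functoriality of the slice construction. Since $\pr$ is assumed to be a discrete fibration, the second factor $\dfib/\map\elI \to \bcat/\pr(\map\elI) = \bcat/\prI(\elI)$ is an isomorphism for every $\elI$.

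The remaining argument is then pure two-out-of-three for isomorphisms. The functor $\map$ is a discrete fibration iff the first factor $\dfibI/\elI \to \dfib/\map\elI$ is an isomorphism for all $\elI$, and $\prI$ is a discrete fibration iff the composite $\dfibI/\elI \to \bcat/\prI(\elI)$ is an isomorphism for all $\elI$. Because the second factor is always an isomorphism, the composite is an isomorphism iff the first factor is. Hence $\map$ is a discrete fibration iff $\prI$ is, which is the claim.

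Alternatively, one can argue directly from the unique lifting property. The direction ``$\map$ and $\pr$ discrete fibrations $\Rightarrow$ $\prI$ discrete fibration'' is just closure of discrete fibrations under composition: given an arrow $\barr : \bobj \to \prI(\elI)$ in $\bcat$, lift it first along $\pr$ to an arrow into $\map\elI$, then along $\map$ to an arrow into $\elI$, and chase uniqueness. The subtler direction is the cancellation ``$\prI$ and $\pr$ discrete fibrations $\Rightarrow$ $\map$ discrete fibration'': given $\elI \in \dfibI$ and an arrow $\darr : \el \to \map\elI$ in $\dfib$, one lifts $\pr(\darr)$ along $\prI$ to a unique $\darrI : \elI' \to \elI$, and must then check that $\map(\darrI) = \darr$. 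This is the one place that needs care: $\map(\darrI)$ and $\darr$ are two arrows into $\map\elI$ with equal $\pr$-image, so the uniqueness clause in the definition of the discrete fibration $\pr$ forces them to coincide (and in particular forces their domains to agree); uniqueness of $\darrI$ itself then comes from the uniqueness clause for $\prI$. Since with the slice formulation this obstacle dissolves entirely into the two-out-of-three argument, I would present that version.
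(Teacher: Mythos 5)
Your proof is correct, and both of your arguments (the slice-functor factorization and the direct lifting argument) go through. It differs from the paper's proof only in which characterization from \cref{prop:discrete-fibration-char} it invokes: the paper's proof is the one-line ``By \cref{item:24} of \cref{prop:discrete-fibration-char}'', i.e.\ it uses the arrow-category characterization, where a functor \(\pr\) is a discrete fibration exactly when the square \(\cod \circ \pr^{\to} = \pr \circ \cod\) is a pullback; the triangle then gives a pasting of two such squares, the right-hand one a pullback since \(\pr\) is a discrete fibration, and the pullback pasting lemma yields precisely the stated two-sided cancellation. You instead use \cref{item:25}, the slicewise-isomorphism characterization, and replace pullback pasting by two-out-of-three for isomorphisms of categories. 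The two arguments are structurally parallel — in each case one reduces to a characterization admitting a cancellation principle — but yours has the mild advantage of needing only the trivial fact that isomorphisms satisfy two-out-of-three, while the paper's is more compact because a single pasting handles all objects at once rather than quantifying over each \(\elI \in \dfibI\). Your closing remark about where care is needed in the bare-hands version (checking \(\map(\darrI) = \darr\) via the uniqueness clause for \(\pr\)) is exactly right, and your decision to present the slice version, where that bookkeeping dissolves, is sound.
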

\begin{proof}
  By \cref{item:24} of \cref{prop:discrete-fibration-char}, this
  follows from the cancellation property of pullback squares.
\end{proof}

\begin{proposition}
  \label{prop:dfib-rep-pushforward}
  Let \(\map : \dfib \to \dfibI\) be a representable map of discrete
  fibrations over a category \(\bcat\). The pushforward along \(\map\)
  is given by the base change along the right adjoint
  \(\radj^{\map} : \dfibI \to \dfib\) to \(\map\).
\end{proposition}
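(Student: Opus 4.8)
The plan is to reduce the claim to a statement purely about base change of discrete fibrations along the adjoint pair $\map \adj \radj^{\map}$. First I would record the standard identification $\DFib_{\bcat}/\dfib \simeq \DFib_{\dfib}$: by \cref{lem:dfib-cancelation}, a map $\dfibII \to \dfib$ of discrete fibrations over $\bcat$ is the same thing as a discrete fibration over the total category $\dfib$, and conversely one recovers a discrete fibration over $\bcat$ by composing with $\pr_{\dfib}$ and using that discrete fibrations are closed under composition. Under this identification, and the analogous one over $\dfibI$, the pullback functor $\map^{*} : \DFib_{\bcat}/\dfibI \to \DFib_{\bcat}/\dfib$ in the slice becomes base change along the functor $\map : \dfib \to \dfibI$, since pullbacks in $\DFib_{\bcat}$ are computed as pullbacks of total categories (\cref{prop:dfib-base-change}). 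The pushforward $\map_{*}$ is by definition the right adjoint of $\map^{*}$, so it suffices to show that base change along $\radj^{\map}$, namely $(\radj^{\map})^{*} : \DFib_{\dfib} \to \DFib_{\dfibI}$, is right adjoint to base change along $\map$.

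Next I would build this adjunction from the adjunction $\map \adj \radj^{\map}$ on total categories. Writing $\unit : \id_{\dfib} \To \radj^{\map}\map$ and $\counit : \map\radj^{\map} \To \id_{\dfibI}$ for its unit and counit, I would invoke the second clause of \cref{prop:dfib-base-change}, which turns a natural transformation between base functors into a comparison map between the corresponding base changes, contravariantly. Applied to $\counit$ it yields the unit $\id \To (\radj^{\map})^{*}\map^{*}$ of the desired adjunction, using the pseudofunctorial coherence $(\map\radj^{\map})^{*} \cong (\radj^{\map})^{*}\map^{*}$; applied to $\unit$ it yields the counit $\map^{*}(\radj^{\map})^{*} \To \id$, using $(\radj^{\map}\map)^{*} \cong \map^{*}(\radj^{\map})^{*}$. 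The uniqueness clause in \cref{prop:dfib-base-change} guarantees that these comparison maps are natural and compatible with the coherence isomorphisms for base change.

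Finally I would verify the two triangle identities for the pair constructed above. These follow from the triangle identities of $\map \adj \radj^{\map}$ together with the uniqueness of the induced maps in \cref{prop:dfib-base-change}, which reduces each triangle to an equality of maps of discrete fibrations that both satisfy the same universal (pullback) property and hence coincide. Concluding by uniqueness of right adjoints, $\map_{*} \cong (\radj^{\map})^{*}$, which is exactly the assertion. I expect the main obstacle to be the bookkeeping in this last step: keeping the pseudofunctorial coherence isomorphisms for base change aligned with the triangle identities, for which the uniqueness part of \cref{prop:dfib-base-change} is the essential tool.
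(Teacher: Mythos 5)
Your proposal is correct and follows essentially the same route as the paper: reduce via \cref{lem:dfib-cancelation} to base change of discrete fibrations, then apply the contravariant action of \cref{prop:dfib-base-change} to the adjunction \(\map \adj \radj^{\map}\). The paper is terser at the final step, applying \cref{prop:dfib-base-change} only to the unit \(1 \To \radj^{\map}\map\) to obtain the map \(\map^{*}(\radj^{\map})^{*}\dfibII \to \dfibII\) and observing directly that it is the universal arrow from \(\map^{*}\) to \(\dfibII\), whereas you construct both unit and counit and verify the triangle identities---the same tools, with somewhat more bookkeeping.
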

\begin{proof}
  By \cref{cor:dfib-functorial}, the adjunction \(\map \adj
  \radj^{\map}\) is mapped to the adjunction
  \[
    \begin{tikzcd}
      \DFib_{\dfib}
      \arrow[rr, bend right, "(\radj^{\map})^{*}"'] &
      \rotatebox[origin=c]{270}{\(\adj\)} &
      \DFib_{\dfibI}.
      \arrow[ll, bend right, "\map^{*}"']
    \end{tikzcd}
  \]
  By \cref{lem:dfib-cancelation}, \(\map^{*}\) is isomorphic to the
  pullback functor
  \((\DFib_{\bcat})/\dfibI \to (\DFib_{\bcat})/\dfib\), and thus
  \((\radj^{\map})^{*}\) is the pushforward along \(\map\).
\end{proof}

Let \(\map : \dfib \to \dfibI\) be a representable map of discrete
fibrations over a category \(\bcat\). Consider the discrete fibration
\(\poly_{\map}\dfibI\) over \(\bcat\). It is the pullback
\[
  \begin{tikzcd}
    \poly_{\map}\dfibI \arrow[r] \arrow[d] &
    \dfib \times_{\bcat} \dfibI \arrow[d] \\
    \dfibI \arrow[r,"\radj^{\map}"'] &
    \dfib
  \end{tikzcd}
\]
by the construction of \(\map_{*}\) given in
\cref{prop:dfib-rep-pushforward}. Hence, an element of
\(\poly_{\map}\dfibI\) over \(\bobj \in \bcat\) is a pair \((\elI_{1}, \elI_{2})\) of
elements \(\elI_{1} \in \dfibI(\bobj)\) and \(\elI_{2} \in \dfibI(\{\elI_{1}\}^{\map})\). One
can think of \(\elI_{2}\) as a \emph{type family over \(\elI_{1}\)}. Then a
map \(\mapI : \poly_{\map}\dfibI \to \dfibI\) is thought of as a type constructor that
takes types \(\elI_{1} \in \dfibI(\bobj)\) and \(\elI_{2} \in \dfibI(\{\elI_{1}\}^{\map})\) and
returns a type \(\mapI(\elI_{1}, \elI_{2}) \in \dfibI(\bobj)\). Syntactically, \(\mapI\) is a
type constructor of the form
\[
  \inferrule
  {\bobj \vdash \elI_{1} \ \s{Type} \\
    \bobj, x : \elI_{1} \vdash \elI_{2} \ \s{Type}}
  {\bobj \vdash \mapI(\elI_{1}, x.\elI_{2}) \ \s{Type}}
\]
where the expression \(x.\elI_{2}\) means that the variable \(x\) is
bound. Similarly, a map \(\mapII : \poly_{\map}\dfib \to \dfib\) is a term constructor
that takes a type \(\elI_{1} \in \dfibI(\bobj)\) and a term
\(\el_{2} \in \dfib(\{\elI_{1}\}^{\map})\) and returns a term
\(\mapII(\elI_{1}, \el_{2}) \in \dfib(\bobj)\). For example, dependent products are
modeled by maps \(\Pi : \poly_{\map}\dfibI \to \dfibI\) and \(\s{abs} : \poly_{\map}\dfib \to \dfib\)
of discrete fibrations over \(\bcat\) such that the square
\[
  \begin{tikzcd}
    \poly_{\map}\dfib \arrow[r,"\s{abs}"]
    \arrow[d,"\poly_{\map}(\map)"'] &
    \dfib \arrow[d,"\map"] \\
    \poly_{\map}\dfibI \arrow[r,"\Pi"'] &
    \dfibI
  \end{tikzcd}
\]
commutes and is a pullback \parencite[Section 2.1]{awodey2018natural}. The commutativity means that \(\s{abs}\)
has a typing rule
\[
  \inferrule
  {\bobj \vdash \elI_{1} \ \s{Type} \\
    \bobj, x : \elI_{1} \vdash \elI_{2} \ \s{Type} \\
    \bobj, x : \elI_{1} \vdash \el_{2} : \elI_{2}}
  {\bobj \vdash \s{abs}(\elI_{1}, x.\el_{2}) : \Pi(\elI_{1}, x.\elI_{2})}
\]
and being a pullback means that \(\s{abs}\) induces a bijection
between the set of terms \(\bobj, x : \elI_{1} \vdash \el_{2} : \elI_{2}\) and the
set of terms \(\bobj \vdash \el : \Pi(\elI_{1}, x.\elI_{2})\). We refer the reader
to \textcite{awodey2018natural,newstead2018thesis} for further examples.

\section{Type Theories and Their Models}
\label{sec:type-theories-their}

In this section we introduce notions of a \emph{type theory} and a
\emph{model of a type theory}.

We have seen in \cref{sec:natural-models-type} that the vocabulary for
describing models of type theories is:
\begin{itemize}
\item representable maps;
\item finite limits;
\item pushforwards along representable maps
\end{itemize}
in categories of discrete fibrations. The idea is to identify a type
theory with a category equipped with structures of representable
arrows, finite limits and pushforwards along representable arrows so
that a model of a type theory is just a structure-preserving functor
into a category of discrete fibrations.

\begin{definition}
  Let \(\cat\) be a cartesian category. A \emph{stable class of
    exponentiable arrows in \(\cat\)} is a class \(R\) of arrows in
  \(\cat\) satisfying the following conditions:
  \begin{itemize}
  \item identity arrows are in \(R\) and \(R\) are closed under
    composition;
  \item arrows in \(R\) are stable under pullbacks: if
    \[
      \begin{tikzcd}
        \obj'
        \arrow[r]
        \arrow[d,"\arr'"'] &
        \obj \arrow[d,"\arr"] \\
        \objI'
        \arrow[r] &
        \objI
      \end{tikzcd}
    \]
    is a pullback square and \(\arr\) is in \(R\), then \(\arr'\) is
    in \(R\);
  \item arrows in \(R\) are exponentiable.
  \end{itemize}
\end{definition}

\begin{definition}
  A \emph{representable map category} consists of the following data:
  \begin{itemize}
  \item a cartesian category \(\cat\);
  \item a stable class of exponentiable arrows of \(\cat\). Arrows in
    this class are called \emph{representable arrows} or
    \emph{representable maps}.
  \end{itemize}
  A \emph{representable map functor} \(\cat \to \catI\) between
  representable map categories is a functor
  \(\fun : \cat \to \catI\) preserving finite limits, representable
  arrows and pushforwards along representable arrows. For
  representable map categories \(\cat\) and \(\catI\), we write
  \(\RMCat(\cat, \catI)\) for the category of representable map
  functors \(\cat \to \catI\) and natural transformations. We
  will refer to the \(2\)-category of small representable map
  categories, representable map functors and natural transformations
  as \(\RMCat\).
\end{definition}

\begin{example}
  For a small category \(\bcat\), the category \(\DFib_{\bcat}\) is a
  representable map category where the representable maps are defined
  in \cref{def:representable-map}.
\end{example}

\begin{definition}
  \label{def:type-theory}
  A \emph{type theory} is a small representable map category.
\end{definition}

\begin{definition}
  \label{def:model-of-type-theory}
  Let \(\tth\) be a type theory. A \emph{model of
    \(\tth\)} consists of the following data:
  \begin{itemize}
  \item a small category \(\model\) called the \emph{base category}
    with a terminal object \(1\);
  \item a representable map functor \(\tth \to \DFib_{\model}\)
    denoted by \(\obj \mapsto \obj^{\model}\).
  \end{itemize}
\end{definition}

\begin{definition}
  Let \(\model\) be a model of a type theory \(\tth\). For a
  representable arrow \(\arr : \obj \to \objI\) in \(\tth\) and an object
  \(\elI \in \objI^{\model}\), we simply write \(\{\elI\}^{\arr}\) for the
  context extension \(\{\elI\}^{\arr^{\model}}\) of \(\elI\) with respect to
  \(\arr^{\model} : \obj^{\model} \to \objI^{\model}\) and use similar
  notations for \(\proj^{\arr^{\model}}_{\elI}\) and
  \(\radj^{\arr^{\model}}_{\elI}\).
\end{definition}

\begin{remark}
  \label{rem:base-of-model}
  Since a model \(\model\) of a type theory \(\tth\) sends the
  terminal object \(1 \in \tth\) to the terminal discrete fibration
  which is the identity functor \(\id_{\model} : \model \to \model\),
  the base category \(\model\) is a redundant piece of data. We may
  define a model of \(\tth\) as a \(2\)-functor
  \(\model : \tth \to \Cat\) satisfying, among other things, that
  \(\model(\obj) \to \model(1)\) is a discrete fibration for any
  \(\obj \in \tth\). Thus, the \(2\)-category of models of \(\tth\)
  (\cref{def:morphism-model}) will simply be a sub-\(2\)-category of
  the \(2\)-category of \(\Cat\)-valued \(2\)-functors. This clear
  description of the \(2\)-category of models is the reason why we
  prefer discrete fibrations to presheaves.
\end{remark}

We will give interesting examples of representable map categories in
\cref{sec:exampl-basic-depend,sec:logical-framework}. Here we
introduce a couple of constructions of representable map categories.

\begin{example}
  Let \(\cat\) be a representable map category. For an object
  \(\obj \in \cat\), the slice category \(\cat/\obj\) carries a
  structure of a representable map category: an arrow in \(\cat/\obj\)
  is representable if it is a representable arrow in \(\cat\). For
  an arrow \(\arr : \obj \to \objI\), the pullback functor
  \(\arr^{*} : \cat/\objI \to \cat/\obj\) is a representable map
  functor. Thus, \(\obj \mapsto \cat/\obj\) is part of a pseudo-functor
  \(\cat^{\op} \to \RMCat\) when \(\cat\) is small.
\end{example}

\begin{example}
  It is known that exponentiable arrows in a cartesian category
  \(\cat\) are stable under pullbacks
  \parencite[Corollary 1.4]{niefield1982cartesianness}. By
  definition all the identity arrows are exponentiable and
  exponentiable arrows are closed under composition. Hence, for any
  class \(E\) of exponentiable arrows in \(\cat\), we can take the
  smallest stable class of exponentiable arrows containing \(E\), that
  is, the class of composites of pullbacks of arrows from \(E\).
\end{example}

We introduce some notations and terminology for future use.

\begin{definition}
  Let \(\cat\) be a representable map category. We denote by
  \((\cat^{\to})_{\rep}\) the full subcategory of \(\cat^{\to}\)
  consisting of the representable arrows. For an object \(\obj \in
  \cat\), we denote by \((\cat/\obj)_{\rep}\) the full subcategory
  of \(\cat/\obj\) consisting of the representable arrows \(\objI \to
  \obj\).
\end{definition}

\begin{definition}
  Let \(\cat_{0}\) be a representable map category. A
  \emph{representable map category under \(\cat_{0}\)} is a
  representable map category \(\cat\) equipped with a representable
  map functor \(\inc_{\cat} : \cat_{0} \to \cat\). A
  \emph{representable map functor \(\cat \to \catI\) under
    \(\cat_{0}\)} between representable map categories under
  \(\cat_{0}\) is a pair \((\fun, \trans)\) consisting of a
  representable map functor \(\fun : \cat \to \catI\) and a natural
  isomorphism \(\trans : \fun \inc_{\cat} \cong \inc_{\catI}\). A
  \emph{natural transformation \((\fun, \trans) \To (\funI, \transI)\)
    under \(\cat_{0}\)} between representable map functors under
  \(\cat_{0}\) is a natural transformation
  \(\transII : \fun \To \funI\) such that
  \(\transI \circ \transII \inc_{\cat} = \trans\). For representable
  map categories \(\cat\) and \(\catI\) under \(\cat_{0}\), we denote
  by \((\cat_{0} \under \RMCat)(\cat, \catI)\) the category of
  representable map functors under \(\cat_{0}\) and natural
  transformations under \(\cat_{0}\). For a representable map category
  \(\cat\) under \(\cat_{0}\) and a representable map functor
  \(\fun : \cat_{0} \to \catI\), we say \(\fun\) \emph{extends to a
    representable map functor \(\funI : \cat \to \catI\)} when
  \(\funI\) is part of a representable map functor \(\cat \to \catI\)
  under \(\cat_{0}\).
\end{definition}

\subsection{Example: Basic Dependent Type Theory}
\label{sec:exampl-basic-depend}

Most examples of type theories in the sense of
\cref{def:type-theory} are constructed using a logical framework
introduced in \cref{sec:logical-framework}. Here we only give a simple
example which naturally arises from the syntax of dependent type
theory.

Let \(\thgat\) denote the opposite of the category of finite
\emph{generalized algebraic theories}
\parencite{cartmell1978generalised} and interpretations between
them. We do not need the precise definition of a generalized algebraic
theory, but remember that a generalized algebraic theory consists of
sets of type constants, term constants, type equations and term
equations. \(\thgat\) has finite limits, or equivalently
\(\thgat^{\op}\) has finite colimits: coproducts of generalized
algebraic theories are given by disjoint union; coequalizers of
generalized algebraic theories are obtained by adjoining equations.

Let \(\Ty_{n}\) be the generalized algebraic theory consisting of type
constants
\(({} \vdash A_{0}), (x_{0} : A_{0} \vdash A_{1}), \dots, (x_{0} : A_{0}, \dots,
x_{n-1} : A_{n-1}(x_{0}, \dots, x_{n-2}) \vdash A_{n})\) and let
\(\El_{n}\) be the extension of \(\Ty_{n}\) by a term constant
\((x_{0} : A_{0}, \dots, x_{n-1} : A_{n-1}(x_{0}, \dots, x_{n-2}) \vdash
a_{n} : A_{n}(x_{0}, \dots, x_{n-1}))\). We denote by
\(\typeof_{n} : \Ty_{n} \to \El_{n}\) and
\(\ctxof_{n} : \Ty_{n-1} \to \Ty_{n}\) the obvious inclusions (in
\(\thgat^{\op}\)), where we define \(\Ty_{-1}\) to be the empty
theory. We also regard \(\typeof_{n}\) and \(\ctxof_{n}\) as arrows
\(\typeof_{n} : \El_{n} \to \Ty_{n}\) and
\(\ctxof_{n} : \Ty_{n} \to \Ty_{n-1}\), respectively, in \(\thgat\). The
category \(\thgat\) is ``freely generated by an exponentiable arrow''
in the following sense.

\begin{theorem}[\textcite{uemura2019exponentiability}]
  \label{thm:gat-exp}
  \begin{enumerate}
  \item \label{item:5} The arrow \(\typeof_{0} : \El_{0} \to \Ty_{0}\)
    in \(\thgat\) is exponentiable.
  \item \label{item:17} For any cartesian category \(\cat\) and
    exponentiable arrow \(\arr : \obj \to \objI\) in \(\cat\), there
    exists a unique, up to unique isomorphism, cartesian functor
    \(\fun : \thgat \to \cat\) that sends \(\typeof_{0}\) to \(\arr\)
    and pushforwards along \(\typeof_{0}\) to those along \(\arr\).
  \item \label{item:20} \(\poly_{\typeof_{0}}\Ty_{n} \cong \Ty_{n+1}\)
    and \(\poly_{\typeof_{0}}\El_{n} \cong \El_{n+1}\).
  \end{enumerate}
\end{theorem}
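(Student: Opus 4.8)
The statement asserts that $\thgat$, equipped with the arrow $\typeof_{0}$, is the free cartesian category on an exponentiable arrow: part (2) is exactly the universal property of such a free object, while (1) and (3) record that the distinguished arrow is genuinely exponentiable and that its iterated polynomial functors reproduce the tower $\Ty_{n}, \El_{n}$. The plan is to work throughout with the concrete presentation $\thgat \simeq (\GAT_{\mathrm{fp}})^{\op}$ coming from the theory of locally finitely presentable categories, and to establish the three parts in the order (1), (3), (2), since the later parts rest on the explicit description of the pushforward obtained in the first.

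For (1), I would compute the pullback functor $\typeof_{0}^{*} : \thgat/\Ty_{0} \to \thgat/\El_{0}$ under the duality $\thgat \simeq (\GAT_{\mathrm{fp}})^{\op}$. An object of $\thgat/\Ty_{0}$ is a finitely presented generalized algebraic theory equipped with a distinguished type, and $\typeof_{0}^{*}$ corresponds to the syntactic operation of adjoining a term of that type (a pushout against $\typeof_{0}$ in $\GAT$). Its right adjoint, the pushforward $\typeof_{0*}$, is then the dual operation of binding a fresh variable of the distinguished type; the essential point is that this involves only finitely many symbols, so it preserves finite presentability and stays inside $\thgat$. With $\typeof_{0*}$ in hand, part (3) follows by comparing universal properties: a type in the generic context of $\Ty_{n}$ extended by one fresh variable of the first type is precisely the data classified by $\Ty_{n+1}$, and likewise a term there is the data of $\El_{n+1}$. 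Unwinding $\poly_{\typeof_{0}} = (\Ty_{0})_{!}\,\typeof_{0*}\,(\El_{0})^{*}$ then identifies its values on $\Ty_{n}$ and $\El_{n}$ with $\Ty_{n+1}$ and $\El_{n+1}$.

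The technical core of (2) is a \emph{generation lemma}: every object of $\thgat$ is a finite limit of objects of the form $\Ty_{n}$, and hence is built from $\typeof_{0}$ alone using finite limits and pushforwards along pullbacks of $\typeof_{0}$. Dually this says that every finitely presented generalized algebraic theory is a finite colimit of the $\Ty_{n}$, which I would prove by induction on a presentation: each generating type symbol in a length-$n$ context is an instance of $\Ty_{n}$, each generating term symbol an instance of $\El_{n}$, and each equation is imposed by a coequalizer, while (3) already exhibits the $\Ty_{n}, \El_{n}$ as iterated polynomials of $\typeof_{0}$.

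Granting the generation lemma, (2) splits cleanly. For uniqueness, a cartesian $\fun$ preserving pushforwards along $\typeof_{0}$ with $\fun\typeof_{0} = \arr$ is forced on the tower by (3) — one gets $\fun\Ty_{n} \cong \poly_{\arr}^{\,n}\objI$ and $\fun\El_{n} \cong \poly_{\arr}^{\,n}\obj$ — and hence, by the generation lemma, on all of $\thgat$, the coherence isomorphisms pinning it down up to unique isomorphism. For existence, I would define $\fun$ on generators by the corresponding finite-limit-and-polynomial expressions in $\arr$ (which exist precisely because $\arr$ is exponentiable in $\cat$) and check that it respects the defining relations of $\thgat$. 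I expect the main obstacle to be exactly this verification together with the generation lemma itself: showing that the free-on-an-exponentiable-arrow construction reproduces the finitely presented generalized algebraic theories on the nose, with no missing relations and no objects outside the span of the $\Ty_{n}$. This combinatorial comparison is the heart of the matter; once it is settled, the remaining categorical bookkeeping in (2) is routine.
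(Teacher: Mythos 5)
A preliminary remark: this paper contains no proof of the statement---the theorem is imported from the companion paper \textcite{uemura2019exponentiability}, which is cited precisely because the argument is too long to include here---so your proposal can only be judged on its own merits and against that external argument. Much of your skeleton is sound: working with \(\thgat \simeq (\GAT_{\mathrm{fp}})^{\op}\), identifying \(\typeof_{0}^{*}\) dually with pushout along \(\Ty_{0} \hookrightarrow \El_{0}\) (adjoining a generic term) and the pushforward with binding a fresh variable (with the implicit identifications: the distinguished type becomes a constant family and the generic term the bound variable) is the right picture for part (1); part (3) then follows by unwinding as you say; and your generation lemma is true---the paper itself records its object-level form in \cref{exm:G-theories}, as a \emph{consequence} of this theorem.

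The genuine gap is the existence half of part (2). Defining \(\fun\) ``on generators by the corresponding finite-limit-and-polynomial expressions'' and then ``checking that it respects the defining relations of \(\thgat\)'' presupposes that \(\thgat\) comes equipped with a presentation by generators and relations \emph{as a cartesian category}; but exhibiting such a presentation is essentially the entire content of the theorem, so as stated this step is circular. What is actually needed---and what the companion paper supplies---is an independent construction of a small cartesian category freely generated by an exponentiable arrow (concretely, the classifying category of a syntactically presented dependent type theory), a proof of its universal property by semantic interpretation, and then an identification of that category with \((\GAT_{\mathrm{fp}})^{\op}\), e.g.\ by showing that its category of cartesian functors into \(\Set\) is equivalent to \(\GAT\) and invoking Gabriel--Ulmer duality. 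Note moreover that interpreting such syntax in an arbitrary cartesian \(\cat\) meets the standard coherence problem: substitution is strictly functorial in syntax while pullback in \(\cat\) is defined only up to isomorphism, so the existence proof requires a splitting device such as Hofmann's right adjoint splitting---exactly the technique this paper deploys in the proof sketch of \cref{thm:syntactic-LF-category}---and your sketch is silent on it. A smaller point: your generation lemma controls only objects, but uniqueness in part (2) also requires control of arrows; you must know that the maps among the \(\Ty_{n}\) and \(\El_{n}\) are generated by the limit and polynomial structure and that every arrow of \(\thgat\) is determined by its composites with limit projections, since object-level generation alone does not force \(\fun\) on morphisms.
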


By \cref{item:5} of \cref{thm:gat-exp}, we regard \(\thgat\) as a
representable map category with the smallest stable class of
exponentiable arrows containing \(\typeof_{0} : \El_{0} \to \Ty_{0}\)
and call it the \emph{basic dependent type theory}. \Cref{item:17} of
\cref{thm:gat-exp} can be rephrased as follows.

\begin{corollary}
  \label{thm:thgat-ump}
  For any representable map category \(\cat\) and representable
  arrow \(\arr : \obj \to \objI\) in \(\cat\), there exists a unique,
  up to unique isomorphism, representable map functor \(\fun : \thgat
  \to \cat\) that sends \(\typeof_{0}\) to \(\arr\). \qed
\end{corollary}

By this universal property, a model \(\model\) of the type theory
\(\thgat\) consists of the following data:
\begin{itemize}
\item a category \(\model\) with a terminal object;
\item a representable map \(\typeof_{0}^{\model} : \El_{0}^{\model}
  \to \Ty_{0}^{\model}\) of discrete fibrations over \(\model\).
\end{itemize}
This is precisely a natural model (category with families).

\subsection{The \(2\)-category of Models of a Type Theory}
\label{sec:2-category-models}

The models of a type theory are part of a \(2\)-category.

\begin{definition}
  \label{def:morphism-model}
  Let \(\model\) and \(\modelI\) be models of a type theory
  \(\tth\). A \emph{morphism \(\model \to \modelI\) of models of
    \(\tth\)} consists of the following data:
  \begin{itemize}
  \item a functor \(\fun : \model \to \modelI\) between the base categories;
  \item for each object \(\obj \in \tth\), a map
    \(\fun_{\obj} : \obj^{\model} \to \obj^{\modelI}\) of discrete fibrations
    over \(\fun : \model \to \modelI\)
  \end{itemize}
  satisfying the following conditions:
  \begin{itemize}
  \item the functor \(\fun : \model \to \modelI\) preserves terminal objects;
  \item \(\obj \mapsto \fun_{\obj}\) is natural: for any arrow \(\arr : \obj \to \objI\) in
    \(\tth\), the diagram
    \begin{equation}
      \begin{tikzcd}
        \obj^{\model} \arrow[r,"\fun_{\obj}"]
        \arrow[d,"\arr^{\model}"'] &
        \obj^{\modelI} \arrow[d,"\arr^{\modelI}"] \\
        \objI^{\model} \arrow[r,"\fun_{\objI}"'] &
        \objI^{\modelI}
      \end{tikzcd}
      \label{eq:1}
    \end{equation}
    commutes;
  \item for any representable arrow \(\arr : \obj \to \objI\) in \(\tth\),
    the naturality square \eqref{eq:1} satisfies the Beck-Chevalley
    condition.
  \end{itemize}
  A \emph{\(2\)-morphism
    \(\trans : \fun \To \funI : \model \to \modelI\)} of morphisms of models
  of \(\tth\) is a natural transformation \(\trans : \fun \To \funI\)
  between the underlying functors such that, for any object
  \(\obj \in \tth\), there exists a (necessarily unique) natural
  transformation \(\trans_{\obj} : \fun_{\obj} \To \funI_{\obj}\) such that
  \(\pr_{(\obj^{\modelI})} \trans_{\obj} = \trans \pr_{(\obj^{\model})}\).
  \[
    \begin{tikzcd}
      \obj^{\model}
      \arrow[r,bend left,"\fun_{\obj}",""'{name=FA}]
      \arrow[r,bend right,"\funI_{\obj}"',""{name=GA}]
      \arrow[from=FA,to=GA,Rightarrow,"\trans_{\obj}"]
      \arrow[d,"\pr_{(\obj^{\model})}"'] &
      \obj^{\modelI}
      \arrow[d,"\pr_{(\obj^{\modelI})}"] \\
      \model
      \arrow[r,bend right,"\funI"'] &
      \modelI
    \end{tikzcd}
    =
    \begin{tikzcd}
      \obj^{\model}
      \arrow[r,bend left,"\fun_{\obj}"]
      \arrow[d,"\pr_{(\obj^{\model})}"'] &
      \obj^{\modelI}
      \arrow[d,"\pr_{(\obj^{\modelI})}"] \\
      \model
      \arrow[r,bend left,"\fun",""'{name=F}]
      \arrow[r,bend right,"\funI"',""{name=G}]
      \arrow[from=F,to=G,Rightarrow,"\trans"] &
      \modelI
    \end{tikzcd}
  \]
  We denote by \(\Mod_{\tth}\) the \(2\)-category of models of
  \(\tth\) and their morphisms and \(2\)-morphisms.
\end{definition}

\begin{remark}
  The Beck-Chevalley condition for a morphism
  \(\fun : \model \to \modelI\) of models of a type theory \(\tth\)
  forces \(\fun\) to \emph{preserve context extensions} up to
  isomorphism: for a representable arrow \(\arr : \obj \to \objI\) in
  \(\tth\) and an element \(\elI \in \objI^{\model}(\bobj)\), the
  canonical arrow
  \(\fun \{\elI\}^{\arr} \to \{\fun_{\objI} \elI\}^{\arr}\) is an
  isomorphism. As a special case, it will turn out in
  \cref{exm:models-of-dtt} that morphisms of models of \(\thgat\)
  correspond to pseudo cwf-morphisms of cwfs
  \parencite{clairambault2014biequivalence} and, equivalently, to weak
  morphisms of natural models \parencite{newstead2018thesis}.
\end{remark}

\subsection{Another Definition of Morphisms of Models}
\label{sec:anoth-defin-morph}

Results in this subsection are required only in proofs of some
propositions, so the reader may skip this subsection until needed.

A model of a type theory is defined to be a representable map functor
into a category of discrete fibrations. In this subsection we see that
morphisms and \(2\)-morphisms of models of a type theory are also
regarded as representable map functors into suitable representable map
categories.

\begin{definition}
  \label{def:representable-map-diagram}
  Let \(\idxtwocat\) be a small \(2\)-category and
  \(\bcat : \idxtwocat \to \Cat\) a \(2\)-functor. We define a category
  \((\DFib^{\idxtwocat})_{\bcat}\) as follows:
  \begin{itemize}
  \item the objects are the \(2\)-functors \(\dfib : \idxtwocat \to \Cat\)
    equipped with a \(2\)-natural transformation \(\pr_{\dfib} : \dfib \To \bcat\)
    such that each component \((\pr_{\dfib})_{\idx} : \dfib \idx \to \bcat \idx\) is a discrete
    fibration;
  \item the maps \(\dfib \to \dfibI\) are the \(2\)-natural transformations
    \(\map : \dfib \To \dfibI\) such that \(\pr_{\dfibI} \map = \pr_{\dfib}\).
  \end{itemize}
  We say a map \(\map : \dfib \to \dfibI\) in \((\DFib^{\idxtwocat})_{\bcat}\) is
  \emph{representable} if every component \(\map_{\idx} : \dfib \idx \to \dfibI \idx\) is a
  representable map of discrete fibrations over \(\bcat \idx\) and, for any
  \(1\)-cell \(\iarr : \idx \to \idx'\) in \(\idxtwocat\), the square
  \[
    \begin{tikzcd}
      \dfib \idx
      \arrow[r,"\dfib \iarr"]
      \arrow[d,"\map_{\idx}"'] &
      \dfib \idx'
      \arrow[d,"\map_{\idx'}"] \\
      \dfibI \idx
      \arrow[r,"\dfibI \iarr"'] &
      \dfibI \idx'
    \end{tikzcd}
  \]
  satisfies the Beck-Chevalley condition.
\end{definition}

\begin{proposition}
  Representable maps in \((\DFib^{\idxtwocat})_{\bcat}\) are stable
  under pullbacks.
\end{proposition}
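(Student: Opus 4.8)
The plan is to show that pullbacks in \((\DFib^{\idxtwocat})_{\bcat}\) are computed componentwise and that both clauses in the definition of representability transfer to the pullback. Given a representable map \(\map : \dfib \To \dfibI\) and an arbitrary map \(\mapII : \dfibI' \To \dfibI\), I would define the pullback \(\mapII^{*}\dfib\) by setting its value at \(\idx\) to be the base change \(\mapII_{\idx}^{*}(\dfib\idx)\) in \(\DFib_{\bcat\idx}\), which is again a discrete fibration over \(\bcat\idx\) by \cref{prop:dfib-base-change}. The action on \(1\)- and \(2\)-cells of \(\idxtwocat\) is forced by the universal property of pullbacks, so \(\mapII^{*}\dfib\) is a \(2\)-functor carrying a \(2\)-natural projection to \(\bcat\) with discrete-fibration components; since limits in the category of \(2\)-functors and pullbacks of discrete fibrations are both computed pointwise, this is the pullback of \(\map\) and \(\mapII\) in \((\DFib^{\idxtwocat})_{\bcat}\), with projection \(\overline{\mapII} : \mapII^{*}\dfib \To \dfib\) and second leg \(\mapII^{*}\map : \mapII^{*}\dfib \To \dfibI'\).

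For the first clause, each component \((\mapII^{*}\map)_{\idx}\) is by construction the pullback of the representable map \(\map_{\idx}\) along \(\mapII_{\idx}\) in \(\DFib_{\bcat\idx}\), hence representable by \cref{cor:rep-pullback-stable}; write \(\radj_{\idx}\) for the right adjoint of \(\map_{\idx}\) and \(\radj'_{\idx}\) for that of \((\mapII^{*}\map)_{\idx}\). It then remains to verify the Beck--Chevalley clause: for each \(1\)-cell \(\iarr : \idx \to \idx'\) the canonical \(2\)-cell \(\theta_{\iarr} : (\mapII^{*}\dfib)\iarr \circ \radj'_{\idx} \To \radj'_{\idx'} \circ \dfibI'\iarr\) must be invertible. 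The crucial first move is to reduce this to a single projection: the component \(\overline{\mapII}_{\idx'} : (\mapII^{*}\dfib)\idx' \to \dfib\idx'\) is a map of discrete fibrations over \(\bcat\idx'\), hence itself a discrete fibration by \cref{lem:dfib-cancelation}, and therefore reflects isomorphisms by \cref{prop:dfib-conservative}. Consequently \(\theta_{\iarr}\) is invertible if and only if its whiskering \(\overline{\mapII}_{\idx'}\theta_{\iarr}\) is, and it suffices to treat the latter.

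Finally I would assemble the commutative cube whose four vertical edges are \(\map_{\idx}\), \(\map_{\idx'}\), \((\mapII^{*}\map)_{\idx}\) and \((\mapII^{*}\map)_{\idx'}\): its back face is the naturality square of \(\map\) along \(\iarr\), its left and right faces are the defining pullback squares of \(\mapII^{*}\dfib\) at \(\idx\) and \(\idx'\), and its top and bottom faces commute by \(2\)-naturality of \(\overline{\mapII}\) and of \(\mapII\). The back face is Beck--Chevalley by the hypothesis that \(\map\) is representable, while the left and right faces are pullbacks of discrete fibrations over a \emph{fixed} base whose two vertical legs are representable, hence Beck--Chevalley by \cref{cor:representable-map-pullback}. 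By the calculus of mates — the mate of a pasting composite is the pasting composite of the mates — whiskering \(\theta_{\iarr}\) by \(\overline{\mapII}_{\idx'}\) and splicing in the left- and right-face mate isomorphisms together with the commuting top and bottom faces should identify \(\overline{\mapII}_{\idx'}\theta_{\iarr}\) with the back-face Beck--Chevalley isomorphism whiskered by \(\mapII_{\idx}\); as the latter is invertible, so are \(\overline{\mapII}_{\idx'}\theta_{\iarr}\) and hence \(\theta_{\iarr}\).

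The main obstacle is exactly this last step: verifying the cube-of-mates identity, that is, checking that the mate construction is compatible with three-dimensional pasting by tracking the units and counits of the four adjunctions across the cube and invoking the triangle identities. The earlier reduction to the single projection \(\overline{\mapII}_{\idx'}\) is what makes this manageable, since the complementary projection \((\mapII^{*}\map)_{\idx'}\) is itself the left-adjoint leg and so its whiskering does not simplify through any Beck--Chevalley mate; routing the argument through the isomorphism-reflecting projection \(\overline{\mapII}_{\idx'}\) sidesteps that difficulty entirely.
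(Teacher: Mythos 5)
Your proposal is correct and follows essentially the same route as the paper's proof: componentwise pullbacks, representability of each component via \cref{cor:rep-pullback-stable}, and the commutative cube whose pullback faces satisfy Beck--Chevalley by \cref{cor:representable-map-pullback}, with the conclusion transferred through the isomorphism-reflecting projection obtained from \cref{lem:dfib-cancelation} and \cref{prop:dfib-conservative}. The mate-pasting compatibility you flag as the main obstacle is exactly the step the paper leaves implicit (``it follows that the back square satisfies the Beck--Chevalley condition''), and your sketch of it is sound.
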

\begin{proof}
  Let
  \[
    \begin{tikzcd}
      \dfib' \arrow[r,"\mapI"] \arrow[d,"\map'"'] &
      \dfib \arrow[d,"\map"] \\
      \dfibI' \arrow[r,"\mapII"'] &
      \dfibI
    \end{tikzcd}
  \]
  be a pullback in \((\DFib^{\idxtwocat})_{\bcat}\) and suppose that
  \(\map\) is representable. One can check that pullbacks in
  \((\DFib^{\idxtwocat})_{\bcat}\) are componentwise, which means that
  \[
    \begin{tikzcd}
      \dfib'\idx \arrow[r,"\mapI_{\idx}"] \arrow[d,"\map'_{\idx}"'] &
      \dfib\idx \arrow[d,"\map_{\idx}"] \\
      \dfibI'\idx \arrow[r,"\mapII_{\idx}"'] &
      \dfibI\idx
    \end{tikzcd}
  \]
  is a pullback in \(\DFib_{\bcat\idx}\) for every
  \(\idx \in \idxtwocat\). By \cref{cor:rep-pullback-stable} each
  \(\map'_{\idx}\) is representable. To see the Beck-Chevalley
  condition, let \(\iarr : \idx \to \idx'\) be a \(1\)-cell in
  \(\idxtwocat\). Consider the following commutative diagram.
  \[
    \begin{tikzcd}
      \dfib'\idx
      \arrow[rr,"\dfib'\iarr"]
      \arrow[dr,"\mapI_{\idx}"]
      \arrow[dd,"\map'_{\idx}"'] & &
      \dfib'\idx'
      \arrow[dr,"\mapI_{\idx'}"]
      \arrow[dd,"\map'_{\idx'}"'{near start}] \\
      & \dfib\idx
      \arrow[rr,crossing over,"\dfib\iarr"{near start}] & &
      \dfib\idx'
      \arrow[dd,"\map_{\idx'}"] \\
      \dfibI'\idx
      \arrow[rr,"\dfibI'\iarr"{near start}]
      \arrow[dr,"\mapII_{\idx}"'] & &
      \dfibI'\idx'
      \arrow[dr,"\mapII_{\idx'}"] \\
      & \dfibI\idx
      \arrow[rr,"\dfibI\iarr"']
      \arrow[from=uu,crossing over,"\map_{\idx}"{near start}] & &
      \dfibI\idx'
    \end{tikzcd}
  \]
  The front square satisfies the Beck-Chevalley condition by
  assumption. The left and right squares satisfy the Beck-Chevalley
  condition by \cref{cor:representable-map-pullback}. The functor
  \(\mapI_{\idx'} : \dfib'\idx' \to \dfib\idx'\) is a discrete
  fibration by \cref{lem:dfib-cancelation} and thus reflects
  isomorphisms by \cref{prop:dfib-conservative}. Hence, it follows
  that the back square satisfies the Beck-Chevalley condition.
\end{proof}

Let \(\map : \dfib \to \dfibI\) be a representable map in
\((\DFib^{\idxtwocat})_{\bcat}\). Although the right adjoint
\(\radj^{\map} : \dfibI \to \dfib\) to \(\map\) is only a pseudo-natural
transformation, we can define the base change
\((\radj^{\map})^{*}\dfibII \to \dfibI\) of \(\dfibII\) along \(\radj^{\map}\) for a map
\(\dfibII \to \dfib\) in \((\DFib^{\idxtwocat})_{\bcat}\) as follows:
\begin{itemize}
\item for an object \(\idx \in \idxtwocat\), we define
  \(((\radj^{\map})^{*}\dfibII)\idx = (\radj^{\map}_{\idx})^{*}(\dfibII \idx)\);
\item for a \(1\)-cell \(\iarr : \idx \to \idx'\) in \(\idxtwocat\), we have a
  natural isomorphism
  \[
    \begin{tikzcd}
      \dfibI \idx
      \arrow[r,"\radj^{\map}_{\idx}"]
      \arrow[d,"\dfibI \iarr"'] &
      \dfib \idx
      \arrow[d,"\dfib \iarr"] \\
      \dfibI \idx'
      \arrow[r,"\radj^{\map}_{\idx'}"']
      \arrow[ur,phantom,"\underset{\radj^{\map}_{\iarr}}{\cong}"{description}] &
      \dfib \idx'.
    \end{tikzcd}
  \]
  Using \cref{prop:dfib-base-change}, we have a unique
  pair \(((\radj^{\map}_{\iarr})^{*}(\dfibII \iarr), \overline{\radj}^{\map}_{\iarr})\)
  consisting of a map \((\radj^{\map}_{\iarr})^{*}(\dfibII \iarr) :
  (\radj^{\map}_{\idx})^{*}(\dfibII \idx) \to (\radj^{\map}_{\idx'})^{*}(\dfibII \idx')\) of
  discrete fibrations over \(\dfibI \iarr\) and a natural isomorphism
  \[
    \begin{tikzcd}
      (\radj^{\map}_{\idx})^{*}(\dfibII \idx)
      \arrow[r]
      \arrow[d,"(\radj^{\map}_{\iarr})^{*}(\dfibII \iarr)"'] &
      \dfibII \idx
      \arrow[d,"\dfibII \iarr"] \\
      (\radj^{\map}_{\idx'})^{*}(\dfibII \idx')
      \arrow[r]
      \arrow[ur,phantom,"\underset{\overline{\radj}^{\map}_{\iarr}}{\cong}"{description}] &
      \dfibII \idx'
    \end{tikzcd}
  \]
  over \(\radj^{\map}_{\iarr}\). We define \(((\radj^{\map})^{*} \dfibII) \iarr =
  (\radj^{\map}_{\iarr})^{*}(\dfibII \iarr)\);
\item the \(2\)-cell part is defined in a natural way.
\end{itemize}
Then we can prove the following in the same way as
\cref{prop:dfib-rep-pushforward}.

\begin{proposition}
  Let \(\map : \dfib \to \dfibI\) be a representable map in
  \((\DFib^{\idxtwocat})_{\bcat}\). Then the pushforward along \(\map\)
  exists and is given by the base change along the right adjoint
  \(\radj^{\map} : \dfibI \to \dfib\) to \(\map\). \qed
\end{proposition}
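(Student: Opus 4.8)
The plan is to mimic the proof of \cref{prop:dfib-rep-pushforward} at each object of \(\idxtwocat\) and then verify that the pointwise data cohere into a single map in \((\DFib^{\idxtwocat})_{\bcat}\). Fix a map \(\mapI : \dfibII \to \dfib\) in \((\DFib^{\idxtwocat})_{\bcat}\), regarded as an object of the slice over \(\dfib\); the task is to exhibit the object \((\radj^{\map})^{*}\dfibII\) over \(\dfibI\) constructed just above, together with a suitable counit, as the value of the right adjoint to the pullback functor \(\map^{*}\), that is, as a universal map from \(\map^{*}\) to \(\dfibII\).

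First I would reduce to the pointwise situation. For each object \(\idx \in \idxtwocat\) the functor \(\mapI_{\idx} : \dfibII\idx \to \dfib\idx\) is a discrete fibration by \cref{lem:dfib-cancelation}, so \cref{prop:dfib-rep-pushforward} applies verbatim and shows that \((\radj^{\map}_{\idx})^{*}(\dfibII\idx)\) is the pushforward of \(\dfibII\idx\) along \(\map_{\idx}\). In particular the unit of \(\map_{\idx} \adj \radj^{\map}_{\idx}\) induces a universal map \(\transII_{\idx} : \map_{\idx}^{*}(\radj^{\map}_{\idx})^{*}(\dfibII\idx) \to \dfibII\idx\) of discrete fibrations over \(\dfib\idx\); these are the intended components of the counit. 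The carrier object \((\radj^{\map})^{*}\dfibII\) together with its transition maps \((\radj^{\map}_{\iarr})^{*}(\dfibII\iarr)\) and comparison isomorphisms \(\overline{\radj}^{\map}_{\iarr}\) has already been assembled above using the \(2\)-functoriality of base change from \cref{prop:dfib-base-change}, so what remains is to glue the \(\transII_{\idx}\) into a map \(\transII : \map^{*}(\radj^{\map})^{*}\dfibII \To \dfibII\) over \(\dfib\) and to check the universal property. Since maps and pullbacks in \((\DFib^{\idxtwocat})_{\bcat}\) are computed componentwise, the universal property reduces, for each test object \(\dfibI' \to \dfibI\), to the pointwise bijection between maps \(\dfibI'\idx \To (\radj^{\map}_{\idx})^{*}(\dfibII\idx)\) over \(\dfibI\idx\) and maps \(\map_{\idx}^{*}(\dfibI'\idx) \To \dfibII\idx\) over \(\dfib\idx\) furnished by the pointwise adjunction of the previous step.

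The main obstacle is the coherence bookkeeping, namely showing that \(\transII\) is genuinely \(2\)-natural and that the pointwise bijections are natural in the test object. For each \(1\)-cell \(\iarr : \idx \to \idx'\) I would have to compare the two ways of transporting \(\transII\) across \(\iarr\): the composite obtained from \(\transII_{\idx'}\) and the transition map \((\radj^{\map}_{\iarr})^{*}(\dfibII\iarr)\), and the composite obtained from \(\transII_{\idx}\) and \(\dfibII\iarr\). I expect this to follow from the uniqueness of the pair \((\trans^{*}, \overline{\trans})\) in the second part of \cref{prop:dfib-base-change}: both composites are maps of discrete fibrations lying over \(\dfib\iarr\) and compatible with the same natural transformation, hence they coincide. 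Once this single naturality square is verified, \(\transII\) is a well-defined map in \((\DFib^{\idxtwocat})_{\bcat}\), the componentwise bijections patch together, and the universal property of the pushforward follows immediately.
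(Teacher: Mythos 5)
Your proposal is correct and takes essentially the same route as the paper: the paper assembles the carrier \((\radj^{\map})^{*}\dfibII\) componentwise beforehand (precisely because \(\radj^{\map}\) is only pseudo-natural) and then declares the proposition proved ``in the same way as'' \cref{prop:dfib-rep-pushforward}, which is exactly your pointwise application of that proposition together with the coherence bookkeeping via the uniqueness clause of \cref{prop:dfib-base-change}. You have merely made explicit the componentwise reduction and naturality checks that the paper leaves implicit.
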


\begin{corollary}
  \begin{enumerate}
  \item For a \(2\)-functor \(\bcat : \idxtwocat \to \Cat\), the
    category \((\DFib^{\idxtwocat})_{\bcat}\) is a representable map
    category where the representable maps are defined in
    \cref{def:representable-map-diagram}.
  \item For \(2\)-functors \(\bcat : \idxtwocat \to \Cat\) and \(\fun :
    \idxtwocat' \to \idxtwocat\), the precomposition with \(\fun\)
    induces a representable map functor \(\fun^{*} :
    (\DFib^{\idxtwocat})_{\bcat} \to (\DFib^{\idxtwocat'})_{\bcat \fun}\).
  \end{enumerate}
  \qed
\end{corollary}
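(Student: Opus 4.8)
The plan is to obtain both statements by assembling the results established above. Recall that a representable map category is precisely a cartesian category equipped with a stable class of exponentiable arrows, and that such a class must contain the identities, be closed under composition, be stable under pullback, and consist of exponentiable arrows. Stability under pullback for the representable maps of \((\DFib^{\idxtwocat})_{\bcat}\) has already been proven, and the proposition immediately preceding this corollary shows that every representable map admits a pushforward, hence is exponentiable. So for the first statement it remains to verify that \((\DFib^{\idxtwocat})_{\bcat}\) has finite limits and that the representable maps contain the identities and are closed under composition.

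First I would exhibit the finite limits. The terminal object is \(\bcat\) itself with the identity \(2\)-natural transformation, whose components \(\id_{\bcat\idx}\) are discrete fibrations; it is terminal because a map into it over \(\bcat\) is forced to be the structure map. For pullbacks I use the observation recorded in the stability proof that they are computed componentwise: at each \(\idx\) one forms the pullback in \(\DFib_{\bcat\idx}\), which exists since \(\DFib_{\bcat\idx}\) is equivalent to a presheaf category, the resulting functor to \(\bcat\idx\) is again a discrete fibration by \cref{prop:dfib-base-change}, and the \(2\)-functorial and \(2\)-naturality data assemble uniquely because the limit is computed pointwise in \([\idxtwocat, \Cat]\). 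Since all finite limits are generated by the terminal object and pullbacks, \((\DFib^{\idxtwocat})_{\bcat}\) is cartesian. That identities are representable is immediate, each component being an identity map of discrete fibrations and the naturality squares being trivially Beck-Chevalley; closure under composition holds componentwise because representable maps of discrete fibrations are closed under composition, while the Beck-Chevalley condition for a composite follows from the horizontal pasting of two Beck-Chevalley squares.

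For the second statement, \(\fun^{*}\) sends an object \(\dfib\) to the reindexed \(2\)-functor \(\dfib\fun\) with structure map \(\pr_{\dfib}\fun\), and sends a map \(\map\) to \(\map\fun\); since \((\bcat\fun)\idx' = \bcat(\fun\idx')\) and \((\fun^{*}\dfib)\idx' = \dfib(\fun\idx')\), all the fibrewise data is simply relabelled along \(\fun\). Preservation of finite limits is then immediate from the componentwise description above. Preservation of representable maps also follows by relabelling: each component \((\fun^{*}\map)_{\idx'} = \map_{\fun\idx'}\) is representable over \(\bcat(\fun\idx')\), and for a \(1\)-cell \(\iarr' : \idx' \to \idx''\) in \(\idxtwocat'\) the associated square is exactly the one attached to \(\fun\iarr'\) in \(\idxtwocat\), which satisfies the Beck-Chevalley condition by hypothesis.

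The one point requiring care, which I expect to be the main obstacle, is preservation of pushforwards. Here I would argue that the right adjoint transports along reindexing: for \(\map\) representable with pointwise right adjoints \(\radj^{\map}_{\idx}\), the relabelled family \(\radj^{\map}_{\fun\idx'}\) is pointwise right adjoint to \(\map_{\fun\idx'}\), so \(\fun^{*}\radj^{\map}\) is a right adjoint to \(\fun^{*}\map\). By the preceding proposition the pushforward along \(\fun^{*}\map\) is computed as base change along \(\fun^{*}\radj^{\map}\); since this base change is itself defined componentwise using \cref{prop:dfib-base-change}, exactly as in the construction preceding the corollary, and \(\fun^{*}\) merely relabels components, it agrees with \(\fun^{*}\) applied to the base change along \(\radj^{\map}\), that is, with \(\fun^{*}\) applied to the pushforward along \(\map\). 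Verifying that the coherence isomorphisms \(\overline{\radj}^{\map}_{\iarr}\) relabel correctly along \(\fun\) is the delicate but routine bookkeeping; once it is done, \(\fun^{*}\) preserves finite limits, representable arrows and pushforwards, and is therefore a representable map functor.
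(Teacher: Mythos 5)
Your proposal is correct and takes essentially the same route as the paper, which states this corollary without proof as an immediate consequence of the two preceding propositions (stability of representable maps under pullbacks and the existence of pushforwards via base change along the right adjoint); you simply make explicit the routine verifications the paper leaves to the reader, namely componentwise finite limits, identities and composites of representable maps, and the fact that precomposition relabels all componentwise data. (One cosmetic slip: the Beck-Chevalley squares for a composite paste \emph{vertically} along the common edge \(\dfibI\iarr\), not horizontally, but your mates argument is otherwise sound.)
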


We consider the case that \(\idxtwocat\) is the category
\(\{0 \to 1\}\). In this case we write \((\DFib^{\to})_{\fun}\) for
\((\DFib^{\idxtwocat})_{\fun}\). Let \(\fun : \bcat \to \bcatI\) be a
functor between small categories. By definition, an object of
\((\DFib^{\to})_{\fun}\) is a commutative square
\[
  \begin{tikzcd}
    \dfib \arrow[r,"\funI"]
    \arrow[d,"\pr_{\dfib}"'] &
    \dfibI \arrow[d,"\pr_{\dfibI}"] \\
    \bcat \arrow[r,"\fun"'] &
    \bcatI
  \end{tikzcd}
\]
of categories such that \(\pr_{\dfib}\) and \(\pr_{\dfibI}\) are small discrete
fibrations, and a map \((\dfib, \dfibI, \funI) \to (\dfib', \dfibI', \funI')\) is a square
\[
  \begin{tikzcd}
    \dfib
    \arrow[r,"\funI"]
    \arrow[d,"\map"'] &
    \dfibI
    \arrow[d,"\mapI"] \\
    \dfib'
    \arrow[r,"\funI'"'] &
    \dfibI'
  \end{tikzcd}
\]
of categories such that \(\map\) and \(\mapI\) are maps of discrete
fibrations over \(\bcat\) and \(\bcatI\) respectively. Such a
square is representable if \(\map\) and \(\mapI\) are representable maps of
discrete fibrations and the square satisfies the Beck-Chevalley
condition. There are representable map functors
\(\dom : (\DFib^{\to})_{\fun} \to \DFib_{\bcat}\) and
\(\cod : (\DFib^{\to})_{\fun} \to \DFib_{\bcatI}\) induced by the
inclusions \(\{0\} \to \{0 \to 1\}\) and \(\{1\} \to \{0 \to 1\}\)
respectively.

\begin{proposition}
  \label{prop:morphism-of-models}
  Let \(\tth\) be a type theory, \(\model\) and \(\modelI\) models of
  \(\tth\), and \(\fun : \model \to \modelI\) be a functor between the
  base categories preserving terminal objects. There is a bijection
  between the following sets:
  \begin{enumerate}
  \item \label{item:26} the set of morphisms of models
    \(\model \to \modelI\) whose underlying functor is \(\fun\);
  \item \label{item:27} the set of representable map functors
    \(\fun_{(\argu)} : \tth \to (\DFib^{\to})_{\fun}\) such that
    \(\dom \fun_{(\argu)} = (\argu)^{\model}\) and
    \(\cod \fun_{(\argu)} = (\argu)^{\modelI}\).
    \[
      \begin{tikzcd}
        &
        [8ex]
        {(\DFib^{\to})_{\fun}}
        \arrow[d,"{(\dom, \cod)}"] \\
        \tth
        \arrow[ur,"\fun_{(\argu)}"]
        \arrow[r,"{((\argu)^{\model}, (\argu)^{\modelI})}"'] &
        \DFib_{\model} \times \DFib_{\modelI}
      \end{tikzcd}
    \]
  \end{enumerate}
\end{proposition}
\begin{proof}
  By the definition of representable maps in \((\DFib^{\to})_{\fun}\),
  \cref{item:27} is just another way of describing a morphism of
  models of \(\tth\). Note that, because \((\dom, \cod)\) reflects
  isomorphisms, \(\fun_{(\argu)}\) automatically preserves finite
  limits and pushforwards along representable maps whenever it
  preserves representable maps.
\end{proof}

\begin{remark}
  \label{rem:morphism-of-model-up-to-iso}
  Note that the functor
  \((\dom, \cod) : (\DFib^{\to})_{\fun} \to \DFib_{\model} \times
  \DFib_{\modelI}\) is a discrete isofibration: for any object
  \(\funI \in (\DFib^{\to})_{\fun}\) and isomorphisms
  \(\map : \dom \funI \cong \dfib\) in \(\DFib_{\model}\) and
  \(\mapI : \cod \funI \cong \dfibI\) in \(\DFib_{\modelI}\), there exists
  a unique isomorphism \(\mapII : \funI \cong \funII\) in
  \(\DFib_{\model}\) such that \(\dom \mapII = \map\) and
  \(\cod \mapII = \mapI\). Thus, to extend the functor \(\fun\) to a
  morphism \(\model \to \modelI\) of models of \(\tth\), it suffices to
  give a representable map functor
  \(\fun'_{(\argu)} : \tth \to (\DFib^{\to})_{\fun}\) and natural
  isomorphisms \(\dom \fun'_{(\argu)} \cong (\argu)^{\model}\) and
  \(\cod \fun'_{(\argu)} \cong (\argu)^{\modelI}\).
\end{remark}

Consider the \(2\)-category \(\Theta\) consisting of two \(0\)-cells
\(0\) and \(1\), two \(1\)-cells \(a, b : 0 \to 1\), and one
\(2\)-cell \(a \To b\).
\[
  \begin{tikzcd}
    0
    \arrow[r,bend left,"a",""'{name=f0}]
    \arrow[r,bend right,"b"',""{name=f1}]
    \arrow[from=f0,to=f1,Rightarrow] &
    1.
  \end{tikzcd}
\]
For a natural transformation
\(\trans : \fun \To \funI : \model \to \modelI\), we have the
representable map functors
\(\dom : (\DFib^{\Theta})_{\trans} \to (\DFib^{\to})_{\fun}\) and
\(\cod : (\DFib^{\Theta})_{\trans} \to (\DFib^{\to})_{\funI}\) induced by the
inclusions \(\{0 \overset{a}{\to} 1\} \to \Theta\) and
\(\{0 \overset{b}{\to} 1\} \to \Theta\) respectively.

\begin{proposition}
  \label{prop:2-morphisms-of-model}
  Let \(\tth\) be a type theory, \(\model\) and \(\modelI\) models of
  \(\tth\), \(\fun, \funI : \model \to \modelI\) morphism of models of
  \(\tth\), and \(\trans : \fun \To \funI\) a natural transformation
  between the underlying functors. Then \(\trans\) (necessarily
  uniquely) extends to a \(2\)-morphism \(\fun \To \funI\) of models
  of \(\tth\) if and only if there exists a (necessarily unique)
  representable map functor
  \(\tilde{\trans} : \tth \to (\DFib^{\Theta})_{\trans}\) such that
  \(\dom \tilde{\trans} = \fun_{(\argu)}\) and
  \(\cod \tilde{\trans} = \funI_{(\argu)}\).
  \[
    \begin{tikzcd}
      &
      [8ex]
      {(\DFib^{\Theta})_{\trans}}
      \arrow[d,"{(\dom, \cod)}"] \\
      \tth
      \arrow[ur,"\tilde{\trans}"]
      \arrow[r,"{(\fun_{(\argu)}, \funI_{(\argu)})}"'] &
      {(\DFib^{\to})_{\fun} \times_{(\DFib_{\model} \times
          \DFib_{\modelI})} (\DFib^{\to})_{\funI}}
    \end{tikzcd}
  \]
\end{proposition}
\begin{proof}
  This is a rephrasing of the definition of a \(2\)-morphism of models
  of \(\tth\).
\end{proof}

\section{Logical Framework}
\label{sec:logical-framework}

This section is devoted to giving examples of representable map
categories using a \emph{logical framework}. We will not use the
results of this section in the rest of the paper other than for giving
examples.

Our logical framework is classified as a \emph{semantic} logical
framework and close to Martin-L{\"o}f's logical framework
\parencite{nordstrom1990programming}. Another kind of logical
framework is a \emph{syntactic} logical framework whose typical
example is the Edinburgh Logical Framework
\parencite{harper1993framework}. A syntactic logical framework does
not have equality types so that terms exactly correspond to
derivations in the object language. A semantic logical framework has
equality types and terms correspond to semantic derivations, that is,
derivations modulo equality in the object language. Since the subject
of this paper is the semantics of type theory, a semantic logical
framework is our choice.

We design our logical framework to give syntactic counterparts of
representable map categories. Since representable map categories have
finite limits, the logical framework is a dependent type theory with
equality types \(a = b\). Corresponding to representable arrows, the
logical framework has a notion of a \emph{representable type}. We
write \(A : \largetype\) when \(A\) is a type and \(A : \smalltype\)
when \(A\) is a representable type. \(\smalltype\) is considered to be
a subsort of \(\largetype\) in the sense that the rule
\[
  \inferrule
  {\Gamma \vdash A : \smalltype}
  {\Gamma \vdash A : \largetype}
\]
is derivable. Pushforwards along representable arrows correspond to
dependent product types of the form
\[
  \inferrule
  {\Gamma \vdash A : \smalltype \\
    \Gamma, x : A \vdash B : \largetype}
  {\Gamma \vdash (x : A) \to B : \largetype}.
\]

The way to encode a type theory in our framework is to declare
\emph{symbols}. Each symbol \(\alpha\) must have its
\emph{context} \(\Gamma\) and \emph{sort} \(s\). When \(\alpha\) is a
symbol of a sort \(s\) over a context \(\Gamma\), we write
\(\alpha : \Gamma \To s\). So an encoding of a type theory is a
well-ordered set of symbols like
\begin{align*}
  \begin{autobreak}
    \MoveEqLeft
    \alpha_{0} :
    \Gamma_{0}
    \To s_{0}
  \end{autobreak}
  \\
  \begin{autobreak}
    \MoveEqLeft
    \alpha_{1} :
    \Gamma_{1}
    \To s_{1}
  \end{autobreak}
  \\
  \begin{autobreak}
    \MoveEqLeft
    \alpha_{2} :
    \Gamma_{2}
    \To s_{2}
  \end{autobreak}
  \\
  \vdots
\end{align*}
which we call a \emph{signature}. \(\Gamma_{i}\) must be a context
defined only using symbols \((\alpha_{j})_{j < i}\). The sort
\(s_{i}\) can be \(\largetype\), \(\smalltype\) or a type \(A\) over
\(\Gamma_{i}\) defined only using \((\alpha_{j})_{j < i}\). An
equation is encoded to a symbol of the form
\(\alpha : \Gamma \To a = b\), but we just write
\(\_ : \Gamma \To a = b\) when the name \(\alpha\) of the equation is
irrelevant.

We give a formal definition of our logical framework in
\cref{sec:formal-definition}. In \cref{sec:coding-type-theories}, we
give several examples of encodings of type theories in our logical
framework. In \cref{sec:synt-repr-map}, the \emph{syntactic
  representable map category} of a signature is constructed and shown
to satisfy an appropriate universal property. We further describe the
\(2\)-category of models of the syntactic representable map category
in \cref{sec:semantic-adequacy}.

\subsection{Formal Definition}
\label{sec:formal-definition}

We assume that we are given an infinite set of \emph{variables}
\(x, y, \dots\) and sufficiently many \emph{symbols}
\(\alpha, \beta, \dots\).

\begin{definition}
  \emph{Pre-terms} are defined by the following grammar:
  \begin{align*}
    A, B, a, b ::= {}
    & \smalltype \mid \largetype \mid \alpha(a_{1}, \dots, a_{n}) \mid x
      \mid \\
    & \Pi(A, x.B) \mid \s{abs}(A, x.b) \mid \s{app}(A, x.B, b, a)
      \mid \\
    & \s{Eq}(A, a, b) \mid \s{refl}_{a}
  \end{align*}
  The expression \(x.B\) means that the variable \(x\) is considered
  to be bound. We always identify \(\alpha\)-equivalent
  pre-terms. We use the following notations:
  \begin{itemize}
  \item \(((x : A) \to B) :\equiv \Pi(A, x.B)\)
  \item \((\lambda(x : A).b) :\equiv \s{abs}(A, x.b)\)
  \item \((a =_{A} b) :\equiv \s{Eq}(A, a, b)\)
  \end{itemize}
  We write \(b a\) for \(\s{app}(A, x.B, b, a)\) when the terms \(A\)
  and \(x.B\) are clear from the context. The type annotations in
  \(\lambda(x : A).b\) and \(a =_{A} b\) are often omitted, and we
  simply write \(\lambda x.b\) and \(a = b\) respectively. For
  pre-terms \(a, b\) and a variable \(x\), the \emph{substitution}
  \(a[b/x]\) is defined in the ordinary way.
\end{definition}

\begin{definition}
  A \emph{pre-context} is a finite sequence of the form
  \[
    (x_{1} : A_{1}, \dots, x_{n} : A_{n})
  \]
  with pre-terms \(A_{1}, \dots, A_{n}\) and distinct variables
  \(x_{1}, \dots, x_{n}\). We denote pre-contexts by \(\Gamma,
  \Delta, \dots\). We write \(x \in \Gamma\) when \(\Gamma = (x_{1} :
  A_{1}, \dots, x_{n} : A_{n})\) and \(x = x_{i}\) for some \(i\).
\end{definition}

\begin{definition}
  A \emph{pre-signature} \(\Sigma\) consists of a well-ordered set
  \(|\Sigma|\) of symbols and a function \(\Sigma\) that assigns to each symbol in
  \(|\Sigma|\) a pair consisting of a pre-context and a pre-term. We write
  \[
    \Sigma = (\alpha_{0} : \Gamma_{0} \To A_{0}, \alpha_{1} : \Gamma_{1} \To A_{1}, \alpha_{2} :
    \Gamma_{2} \To A_{2}, \dots)
  \]
  to mean that
  \(|\Sigma| = \{\alpha_{0} < \alpha_{1} < \alpha_{2} < \dots\}\) and the value
  \(\Sigma(\alpha_{i})\) is \((\Gamma_{i}, A_{i})\) for each
  \(\alpha_{i} \in |\Sigma|\). We write
  \((\alpha : \Gamma \To A) \in \Sigma\) when \(\alpha \in |\Sigma|\) and
  \(\Sigma(\alpha) = (\Gamma, A)\). For a symbol
  \(\alpha \in |\Sigma|\), we write \(\Sigma|_{\alpha}\) for the restriction of
  \(\Sigma\) to \(\{\beta \in |\Sigma| \mid \beta < \alpha\}\).
\end{definition}

\begin{definition}
  A \emph{judgment} is one of the following forms.
  \begin{mathpar}
    \Sigma \vdash \sig
    \and
    \Sigma \mid \Gamma \vdash \ctx
    \and
    \Sigma \mid \Gamma \vdash a : A
    \and
    \Sigma \mid \Gamma \vdash a \equiv b : A
  \end{mathpar}
  For pre-contexts \(\Gamma\) and \(\Delta = (y_{1} : B_{1}, \dots,
  y_{m} : B_{m})\) and a finite sequence of pre-terms \(f = (f_{1},
  \dots, f_{m})\), we write \(\Sigma \mid f : \Gamma \to \Delta\) for
  the finite sequence of judgments
  \[
    ((\Sigma \mid \Gamma \vdash f_{1} : B_{1}), \dots, (\Sigma \mid
    \Gamma \vdash f_{m} : B_{m}[f_{1}/y_{1}, \dots,
    f_{m-1}/y_{m-1}])).
  \]
  For such \(\Sigma \mid f : \Gamma \to \Delta\), we write \([f]\) for
  the substitution operator \([f_{1}/y_{1}, \dots, f_{m}/y_{m}]\). We
  define the set of \emph{legal judgments} to be the smallest set
  of judgments closed under the rules listed in
  \cref{fig:legal-judgments}. Here we omit the obvious rules for
  \(\equiv\) to be a congruence relation.
  \begin{figure}
    \begin{mathpar}
      \inferrule
      {\Sigma \mid \Gamma \vdash \ctx}
      {\Sigma, \alpha : \Gamma \To s \vdash \sig}
      \ (\alpha \not\in |\Sigma|, s = \smalltype, \largetype)
      \and
      \inferrule
      {\Sigma \mid \Gamma \vdash A : \largetype}
      {\Sigma, \alpha : \Gamma \To A \vdash \sig}
      \ (\alpha \not\in |\Sigma|)
      \and
      \inferrule
      {(\Sigma|_{\alpha} \vdash \sig)_{\alpha \in |\Sigma|}}
      {\Sigma \vdash \sig}
      \ (\text{\(|\Sigma|\) is unbounded})
      \and
      \inferrule
      {\Sigma \vdash \sig}
      {\Sigma \mid () \vdash \ctx}
      \and
      \inferrule
      {\Sigma \mid \Gamma \vdash A : \largetype}
      {\Sigma \mid \Gamma, x : A \vdash \ctx}
      \ (x \not\in \Gamma)
      \and
      \inferrule
      {\Sigma \mid \Gamma \vdash A : \smalltype}
      {\Sigma \mid \Gamma \vdash A : \largetype}
      \and
      \inferrule
      {\Sigma \mid \Gamma \vdash a : A \\
        \Sigma \mid \Gamma \vdash A \equiv B : \largetype}
      {\Sigma \mid \Gamma \vdash a : B}
      \and
      \inferrule
      {\Sigma \mid \Gamma \vdash a \equiv b : A \\
        \Sigma \mid \Gamma \vdash A \equiv B : \largetype}
      {\Sigma \mid \Gamma \vdash a \equiv b : B}
      \and
      \inferrule
      {\Sigma \mid f : \Gamma \to \Delta}
      {\Sigma \mid \Gamma \vdash \alpha(f) : A[f]}
      \ ((\alpha : \Delta \To A) \in \Sigma)
      \and
      \inferrule
      {\Sigma \mid \Gamma, x : A, \Delta \vdash \ctx}
      {\Sigma \mid \Gamma, x : A, \Delta \vdash x : A}
      \and
      \inferrule
      {\Sigma \mid \Gamma \vdash A : \smalltype \\
        \Sigma \mid \Gamma, x : A \vdash B : \largetype}
      {\Sigma \mid \Gamma \vdash (x : A) \to B : \largetype}
      \and
      \inferrule
      {\Sigma \mid \Gamma \vdash A : \smalltype \\
        \Sigma \mid \Gamma, x : A \vdash b : B}
      {\Sigma \mid \Gamma \vdash \lambda(x : A).b : (x : A) \to B}
      \and
      \inferrule
      {\Sigma \mid \Gamma \vdash A : \smalltype \\
        \Sigma \mid \Gamma, x : A \vdash B : \largetype \\
        \Sigma \mid \Gamma \vdash b : (x : A) \to B \\
        \Sigma \mid \Gamma \vdash a : A}
      {\Sigma \mid \Gamma \vdash \s{app}(A, x.B, b, a) : B[a/x]}
      \and
      \inferrule
      {\Sigma \mid \Gamma \vdash A : \smalltype \\
        \Sigma \mid \Gamma, x : A \vdash b : B \\
        \Sigma \mid \Gamma \vdash a : A}
      {\Sigma \mid \Gamma \vdash \s{app}(A, x.B, \lambda(x : A). b, a)
        \equiv b[a/x] : B[a/x]}
      \and
      \inferrule
      {\Sigma \mid \Gamma \vdash b : (x : A) \to B \\
        \Sigma \mid \Gamma \vdash b' : (x : A) \to B \\
        \Sigma \mid \Gamma, x' : A \vdash \s{app}(A, x.B, b, x')
        \equiv \s{app}(A, x.B, b', x') : B[x'/x]}
      {\Sigma \mid \Gamma \vdash b \equiv b' : (x : A) \to B}
      \and
      \inferrule
      {\Sigma \mid \Gamma \vdash A : \largetype \\
        \Sigma \mid \Gamma \vdash a : A \\
        \Sigma \mid \Gamma \vdash b : A}
      {\Sigma \mid \Gamma \vdash a =_{A} b : \largetype}
      \and
      \inferrule
      {\Sigma \mid \Gamma \vdash A : \largetype \\
        \Sigma \mid \Gamma \vdash a : A}
      {\Sigma \mid \Gamma \vdash \s{refl}_{a} : a =_{A} a}
      \and
      \inferrule
      {\Sigma \mid \Gamma \vdash c : a =_{A} b}
      {\Sigma \mid \Gamma \vdash a \equiv b : A}
      \and
      \inferrule
      {\Sigma \mid \Gamma \vdash b : a =_{A} a}
      {\Sigma \mid \Gamma \vdash b \equiv \s{refl}_{a} : a =_{A} a}
    \end{mathpar}
    \caption{\label{fig:legal-judgments}
      Legal judgments}
  \end{figure}
\end{definition}

\begin{definition}
  A \emph{signature} is a pre-signature \(\Sigma\) such that
  \(\Sigma \vdash \sig\) is a legal judgment. A \emph{context over
    \(\Sigma\)} is a pre-context \(\Gamma\) such that
  \(\Sigma \mid \Gamma \vdash \ctx\) is a legal judgment. For contexts
  \(\Gamma\) and \(\Delta\) over \(\Sigma\), a \emph{context morphism}
  \(\Gamma \to \Delta\) is a finite sequence of pre-terms \(f\)
  such that \(\Sigma \mid f : \Gamma \to \Delta\) is a finite sequence
  of legal judgments. Assume
  \(\Delta = (y_{1} : B_{1}, \dots, y_{m} : B_{m})\).  We say context
  morphisms \(f, g : \Gamma \to \Delta\) are \emph{equivalent},
  written \(f \equiv g\), if
  \((\Sigma \mid \Gamma \vdash f_{1} \equiv g_{1} : B_{1}), \dots,
  (\Sigma \mid \Gamma \vdash f_{m} \equiv g_{m} : B_{m}[f_{1}/y_{1},
  \dots, f_{m-1}/y_{m-1}])\) are legal judgments. A \emph{type} over a
  context \(\Gamma\) over a signature \(\Sigma\) is a pre-term
  \(A\) such that \(\Sigma \mid \Gamma \vdash A : \largetype\) is a
  legal judgment. We say a type \(A\) is \emph{representable} if
  \(\Sigma \mid \Gamma \vdash A : \smalltype\) is a legal
  judgment. For a type \(A\), a \emph{term} of \(A\) is a pre-term
  \(a\) such that \(\Sigma \mid \Gamma \vdash a : A\) is a legal
  judgment.
\end{definition}

Our logical framework has the usual weakening and substitution
properties. The following weakening on signatures might be
non-standard since signatures can be infinite.

\begin{proposition}[Weakening on signatures]
  Let \(\Sigma, \Sigma', \Sigma''\) be pre-signatures with pairwise
  disjoint domains. If \(\Sigma, \Sigma' \vdash \sig\) and
  \(\Sigma, \Sigma'' \vdash \judg\) are legal judgments, then so is
  \(\Sigma, \Sigma', \Sigma'' \vdash \judg\), where
  \(\Sigma \vdash \judg\) denotes a judgment of the form
  \(\Sigma \vdash \sig\), \(\Sigma \mid \Gamma \vdash \ctx\),
  \(\Sigma \mid \Gamma \vdash a : A\) or
  \(\Sigma \mid \Gamma \vdash a \equiv b : A\).
\end{proposition}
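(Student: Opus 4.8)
The plan is to fix the pre-signatures $\Sigma$ and $\Sigma'$ with $\Sigma, \Sigma' \vdash \sig$ and to prove, by induction on the derivation of the legal judgment $\Sigma, \Sigma'' \vdash \judg$, that $\Sigma, \Sigma', \Sigma'' \vdash \judg$ is again legal, where $\Sigma''$ ranges over all pre-signatures whose domain is disjoint from and placed above $|\Sigma| \cup |\Sigma'|$ (so that $\Sigma, \Sigma', \Sigma''$ is the ordinal sum of the three well-orders). Because the legal judgments form the smallest set closed under the rules of \cref{fig:legal-judgments}, rule induction is legitimate even though the unbounded rule has an infinite family of premises. I state the claim \emph{uniformly} in $\Sigma''$, since inside the induction the signature $\Sigma''$ will shrink and I will reapply the inductive hypothesis to the appropriate restriction $\Sigma''|_{\gamma}$.

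For the routine cases, observe that every rule other than the three signature-formation rules and the symbol-application rule is \emph{signature-generic}: its premises and conclusion share the signature $\Sigma, \Sigma''$ and refer to it only as an opaque parameter. For each such rule (context extension, the subsort rule, both conversion rules, the rules for $\Pi$, $\s{app}$, $=_{A}$ and $\s{refl}$, and the congruence rules for $\equiv$) the inductive hypothesis applied to the premises with the \emph{same} $\Sigma''$ yields the weakened premises over $\Sigma, \Sigma', \Sigma''$, and reapplying the rule gives the weakened conclusion; all side conditions (freshness of bound variables, and so on) are untouched by enlarging the signature. The symbol-application rule needs one extra remark: its side condition $(\alpha : \Delta \To A) \in \Sigma, \Sigma''$ is preserved because inserting $\Sigma'$ neither deletes symbols nor, by disjointness, collides with any, so $(\alpha : \Delta \To A) \in \Sigma, \Sigma', \Sigma''$, while the premise $\Sigma, \Sigma'' \mid f : \Gamma \to \Delta$ weakens componentwise. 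Finally the base rule for the empty context reduces the claim for $\Sigma, \Sigma'' \mid () \vdash \ctx$ to the claim for $\Sigma, \Sigma'' \vdash \sig$, i.e. to the signature case treated next.

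For the signature case, where the standing hypothesis $\Sigma, \Sigma' \vdash \sig$ is consumed, I split on the last rule of $\Sigma, \Sigma'' \vdash \sig$. If $|\Sigma, \Sigma''|$ has a maximum $\beta$ it was added by a symbol rule; when $\Sigma'' \neq ()$ this top element lies in $\Sigma''$, so $\beta = \max|\Sigma''|$, $\Sigma''|_{\beta}$ extended by $\beta$ equals $\Sigma''$, and the premise is a $\ctx$ or type judgment over $\Sigma, \Sigma''|_{\beta}$; the inductive hypothesis (with $\Sigma''|_{\beta}$) weakens it, and the same symbol rule—whose freshness condition $\beta \notin |\Sigma, \Sigma', \Sigma''|_{\beta}|$ holds by disjointness—rebuilds $\Sigma, \Sigma', \Sigma'' \vdash \sig$. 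If instead $|\Sigma, \Sigma''|$ has no maximum, the unbounded rule fired; when $\Sigma'' \neq ()$ then $|\Sigma''|$, hence $|\Sigma, \Sigma', \Sigma''|$, has no maximum either, so I reapply the unbounded rule and supply its premises $(\Sigma, \Sigma', \Sigma'')|_{\gamma} \vdash \sig$ according to the location of $\gamma$: by the original premise $\Sigma|_{\gamma} \vdash \sig$ if $\gamma \in |\Sigma|$; by $\Sigma, \Sigma'|_{\gamma} \vdash \sig$, an initial segment of the hypothesis, if $\gamma \in |\Sigma'|$; and by the inductive hypothesis applied to $\Sigma, \Sigma''|_{\gamma} \vdash \sig$ if $\gamma \in |\Sigma''|$. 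In both rules the remaining subcase $\Sigma'' = ()$ makes the goal literally $\Sigma, \Sigma' \vdash \sig$, which is the standing hypothesis.

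The only auxiliary fact needed is that every initial segment of a legal signature is legal, i.e. $\Theta \vdash \sig$ implies $\Theta|_{\gamma} \vdash \sig$; this is a short separate induction (each symbol of $\Theta|_{\gamma}$ is introduced exactly as in $\Theta$). I expect the main obstacle to be the transfinite bookkeeping in the signature case rather than any single calculation: one must keep the three blocks in their prescribed order, check that ``having a maximum'' and ``being unbounded'' transfer correctly when $\Sigma'$ is spliced into the middle, and route each premise of the infinitary unbounded rule to its correct source among the original derivation, the hypothesis $\Sigma, \Sigma' \vdash \sig$, and the inductive hypothesis. Everything else is a mechanical commutation of the rules with weakening.
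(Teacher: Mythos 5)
Your proposal is correct and takes exactly the paper's route: the paper's entire proof is the single line ``By induction on the derivation of \(\Sigma, \Sigma'' \vdash \judg\)'', and your argument is a careful, accurate elaboration of that same induction (stating the claim uniformly in \(\Sigma''\), splitting the signature case according to whether \(|\Sigma, \Sigma''|\) has a maximum or the unbounded rule fired, and supplying the initial-segment lemma needed to route the premises of the infinitary rule). There is nothing to flag beyond noting that the details you supply are precisely the ones the paper leaves implicit.
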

\begin{proof}
  By induction on the derivation of \(\Sigma, \Sigma'' \vdash
  \judg\).
\end{proof}

\subsection{Coding Type Theories}
\label{sec:coding-type-theories}

We give several examples of encodings of type theories in our logical
framework.

\begin{example}
  \label{exm:sig-basic-dtt}
  We define \(\Sigma_{DTT}\) to be the following signature.
  \begin{align*}
    \begin{autobreak}
      \MoveEqLeft
      \s{Type} :
      ()
      \To \largetype
    \end{autobreak}
    \\
    \begin{autobreak}
      \MoveEqLeft
      \s{el} :
      (A : \s{Type})
      \To \smalltype
    \end{autobreak}
  \end{align*}
  (\(\s{el}\) stands for \underline{el}ement.) We call
  \(\Sigma_{DTT}\) the \emph{signature for basic dependent type theory}. We
  will see that \(\Sigma_{DTT}\) represents the basic dependent type theory
  \(\thgat\) (\cref{exm:dtt-ump}).
\end{example}

We consider extending \(\Sigma_{DTT}\) by adjoining type
constructors.

\begin{example}
  \(\Pi\)-types in \(\s{Type}\) are encoded as follows.
  \begin{align*}
    \begin{autobreak}
      \MoveEqLeft
      \Pi :
      (A : \s{Type},
      B : \s{el}(A) \to \s{Type})
      \To \s{Type}
    \end{autobreak}
    \\
    \begin{autobreak}
      \MoveEqLeft
      \s{abs} :
      (A : \s{Type},
      B : \s{el}(A) \to \s{Type},
      b : (x : \s{el}(A)) \to \s{el}(B x))
      \To \s{el}(\Pi(A, B))
    \end{autobreak}
    \\
    \begin{autobreak}
      \MoveEqLeft
      \s{app} :
      (A : \s{Type},
      B : \s{el}(A) \to \s{Type},
      b : \s{el}(\Pi(A, B)),
      a : \s{el}(A))
      \To \s{el}(B a)
    \end{autobreak}
    \\
    \begin{autobreak}
      \MoveEqLeft
      \_ :
      (A : \s{Type},
      B : \s{el}(A) \to \s{Type},
      b : (x : \s{el}(A)) \to \s{el}(B x),
      a : \s{el}(A))
      \To \s{app}(A, B, \s{abs}(A, B, b), a) = b a
    \end{autobreak}
    \\
    \begin{autobreak}
      \MoveEqLeft
      \_ :
      (A : \s{Type},
      B : \s{el}(A) \to \s{Type},
      b : \s{el}(\Pi(A, B)),
      b' : \s{el}(\Pi(A, B)),
      p : (x : \s{el}(A)) \to \s{app}(A, B, b, x) = \s{app}(A, B, b', x))
      \To b = b'
    \end{autobreak}
  \end{align*}
\end{example}

\begin{example}
  Identity types in \(\s{Type}\) are encoded as follows.
  \begin{align*}
    \begin{autobreak}
      \MoveEqLeft
      \s{Id} :
      (A : \s{Type},
      a : \s{el}(A),
      b : \s{el}(A))
      \To \s{Type}
    \end{autobreak}
    \\
    \begin{autobreak}
      \MoveEqLeft
      \s{refl} :
      (A : \s{Type},
      a : \s{el}(A))
      \To \s{el}(\s{Id}(A, a, a))
    \end{autobreak}
    \\
    \begin{autobreak}
      \MoveEqLeft
      \s{ind_{Id}} :
      (A : \s{Type},
      a : \s{el}(A),
      b : \s{el}(A),
      p : \s{el}(\s{Id}(A, a, b)),
      C : (x : \s{el}(A), y : \s{el}(\s{Id}(A, a, x))) \to \s{Type},
      c : \s{el}(C(a, \s{refl}(A, a))))
      \To \s{el}(C(b, p))
    \end{autobreak}
    \\
    \begin{autobreak}
      \MoveEqLeft
      \_ :
      (A : \s{Type},
      a : \s{el}(A),
      C : (x : \s{el}(A), y : \s{el}(\s{Id}(A, a, x))) \to \s{Type},
      c : \s{el}(C(a, \s{refl}(A, a))))
      \To \s{ind_{Id}}(A, a, a, \s{refl}(A, a), C, c) = c
    \end{autobreak}
  \end{align*}
  Equality types are encoded as identity types with the following
  equations called equality reflection.
  \begin{align*}
    \begin{autobreak}
      \MoveEqLeft
      \_ :
      (A : \s{Type},
      a : \s{el}(A),
      b : \s{el}(A),
      p : \s{el}(\s{Id}(A, a, b)))
      \To a = b
    \end{autobreak}
    \\
    \begin{autobreak}
      \MoveEqLeft
      \_ :
      (A : \s{Type},
      a : \s{el}(A),
      p : \s{el}(\s{Id}(A, a, a)))
      \To p = \s{refl}(A, a)
    \end{autobreak}
  \end{align*}
\end{example}

\begin{example}
  A universe (\`{a} la Tarski) is encoded by the following symbols.
  \begin{align*}
    \begin{autobreak}
      \MoveEqLeft
      \s{U} :
      ()
      \To \s{Type}
    \end{autobreak}
    \\
    \begin{autobreak}
      \MoveEqLeft
      \s{el_{U}} :
      (A : \s{el}(\s{U}))
      \To \s{Type}
    \end{autobreak}
  \end{align*}
  For nested universes \(\s{U}_{0} : \s{U}_{1}\), we add two pairs
  of such symbols \((\s{U}_{0}, \s{el}_{\s{U}_{0}})\) and
  \((\s{U}_{1}, \s{el}_{\s{U}_{1}})\) and a ``name'' of \(\s{U}_{0}\)
  in \(\s{U}_{1}\):
  \begin{align*}
    \begin{autobreak}
      \MoveEqLeft
      \s{u}_{0} :
      ()
      \To \s{el}(\s{U}_{1})
    \end{autobreak}
    \\
    \begin{autobreak}
      \MoveEqLeft
      \_ :
      ()
      \To \s{el}_{\s{U}_{1}}(\s{u}_{0}) = \s{U}_{0}.
    \end{autobreak}
  \end{align*}
  Since our logical framework can have an infinitely long signature,
  one can encode infinitely many universes
  \[
    \s{U}_{0} : \s{U}_{1} : \s{U}_{2} : \dots.
  \]

  One can add dependent products on \(\s{U}\) in two ways. In both
  ways we add a type constructor
  \[
    \Pi_{\s{U}} : (A : \s{el}(\s{U}), B : \s{el}(\s{el_{U}}(A)) \to
    \s{el}(\s{U})) \To \s{el}(\s{U}).
  \]
  One way is to add an equation
  \[
    \_ : (A : \s{el}(\s{U}), B :
    \s{el}(\s{el_{U}}(A)) \to \s{el}(\s{U})) \To
    \s{el_{U}}(\Pi_{\s{U}}(A, B)) = \Pi(\s{el_{U}}(A), \lambda
    x.\s{el_{U}}(B x))
  \]
  assuming \(\s{Type}\) has dependent products. The other way is to
  add symbols and equations in a similar manner to dependent
  products in \(\s{Type}\). In the latter way the equation
  \[
    A : \s{el}(\s{U}), B : \s{el}(\s{el_{U}}(A)) \to \s{el}(\s{U})
    \vdash \s{el_{U}}(\Pi_{\s{U}}(A, B)) \equiv \Pi(\s{el_{U}}(A),
    \lambda x.\s{el_{U}}(B x)) : \s{Type}
  \]
  need not hold, but one can show that \(\s{el_{U}}(\Pi_{\s{U}}(A,
  B))\) and \(\Pi(\s{el_{U}}(A), \lambda x.\s{el_{U}}(B x))\) are
  isomorphic in an appropriate sense.
\end{example}

\begin{example}
  Various \emph{two-level type theories}
  \parencite{altenkirch2016extending,annenkov2017two-level,voevodsky2013simple}
  have a sort of \emph{fibrant types} as well as a sort of types. We
  extend \(\Sigma_{DTT}\) by the symbols
  \begin{align*}
    \begin{autobreak}
      \MoveEqLeft
      \s{Fib} :
      ()
      \To \largetype
    \end{autobreak}
    \\
    \begin{autobreak}
      \MoveEqLeft
      \iota :
      (A : \s{Fib})
      \To \s{Type}
    \end{autobreak}
  \end{align*}
  and several type constructors. For readability, we think of
  \(\s{Fib}\) as a subtype of \(\s{Type}\) and often omit \(\iota\). A
  major difference between \(\s{Type}\) and \(\s{Fib}\) is that
  identity types \(\s{Id_{Type}}(A, a, b)\) in \(\s{Type}\) satisfy
  axiom K or equality reflection but identity types
  \(\s{Id_{Fib}}(A, a, b)\) in \(\s{Fib}\) do not. The induction
  principle for identity types in \(\s{Fib}\) only works for families
  of \(\s{Fib}\):
  \begin{align*}
    \begin{autobreak}
      \MoveEqLeft
      \s{ind_{Id_{Fib}}} :
      (A : \s{Fib},
      a : \s{el}(A),
      b : \s{el}(A),
      p : \s{el}(\s{Id_{Fib}}(A, a, b)),
      C : (x : \s{el}(A), y : \s{el}(\s{Id_{Fib}}(A, a, x))) \to \s{Fib},
      c : \s{el}(C(a, \s{refl}(A, a))))
      \To \s{el}(C(b, p)).
    \end{autobreak}
  \end{align*}
\end{example}

\begin{example}
  To encode propositional logic, we begin with the following signature.
  \begin{align*}
    \begin{autobreak}
      \MoveEqLeft
      \s{Prop} :
      ()
      \To \largetype
    \end{autobreak}
    \\
    \begin{autobreak}
      \MoveEqLeft
      \s{true} :
      (P : \s{Prop})
      \To \smalltype
    \end{autobreak}
    \\
    \begin{autobreak}
      \MoveEqLeft
      \s{mono} :
      (P : \s{Prop},
      x : \s{true}(P),
      y : \s{true}(P))
      \To x = y
    \end{autobreak}
  \end{align*}
  The equation \(\s{mono}\) implies that \(\s{true}(P)\) has at most
  one element. One may extend this signature by adding logical
  connectives like \(\top\), \(\bot\), \(\land\), \(\lor\) and
  \(\supset\). For example, \(\land\) and \(\lor\) are encoded as follows.
  \begin{align*}
    \begin{autobreak}
      \MoveEqLeft
      {\land} :
      (P : \s{Prop},
      Q : \s{Prop})
      \To \s{Prop}
    \end{autobreak}
    \\
    \begin{autobreak}
      \MoveEqLeft
      \s{in}_{\land} :
      (P : \s{Prop},
      Q : \s{Prop},
      p : \s{true}(P),
      q : \s{true}(Q))
      \To \s{true}({\land}(P, Q))
    \end{autobreak}
    \\
    \begin{autobreak}
      \MoveEqLeft
      \s{out}_{{\land}, 1} :
      (P : \s{Prop},
      Q : \s{Prop},
      r : \s{true}({\land}(P, Q)))
      \To \s{true}(P)
    \end{autobreak}
    \\
    \begin{autobreak}
      \MoveEqLeft
      \s{out}_{{\land}, 2} :
      (P : \s{Prop},
      Q : \s{Prop},
      r : \s{true}({\land}(P, Q)))
      \To \s{true}(Q)
    \end{autobreak}
    \\
    \begin{autobreak}
      \MoveEqLeft
      {\lor} :
      (P : \s{Prop},
      Q : \s{Prop})
      \To \s{Prop}
    \end{autobreak}
    \\
    \begin{autobreak}
      \MoveEqLeft
      \s{in}_{{\lor}, 1} :
      (P : \s{Prop},
      Q : \s{Prop},
      p : \s{true}(P))
      \To \s{true}({\lor}(P, Q))
    \end{autobreak}
    \\
    \begin{autobreak}
      \MoveEqLeft
      \s{in}_{{\lor}, 2} :
      (P : \s{Prop},
      Q : \s{Prop},
      q : \s{true}(Q))
      \To \s{true}({\lor}(P, Q))
    \end{autobreak}
    \\
    \begin{autobreak}
      \MoveEqLeft
      \s{out}_{\lor} :
      (P : \s{Ptop},
      Q : \s{Prop},
      r : \s{true}({\lor}(P, Q)),
      R : \s{Prop},
      p : \s{true}(P) \to \s{true}(R),
      q : \s{true}(Q) \to \s{true}(R))
      \To \s{true}(R)
    \end{autobreak}
  \end{align*}
\end{example}

\begin{example}
  \label{exm:predicate-logic}
  To encode predicate logic, we extend the union of the signatures for
  basic dependent type theory and propositional logic by adding
  equality and quantifiers. For example, equality and \(\forall\) are
  encoded as follows.
  \begin{align*}
    \begin{autobreak}
      \MoveEqLeft
      \s{eq} :
      (A : \s{Type},
      a : \s{el}(A),
      b : \s{el}(B))
      \To \s{Prop}
    \end{autobreak}
    \\
    \begin{autobreak}
      \MoveEqLeft
      \s{in_{eq}} :
      (A : \s{Type},
      a : \s{el}(A))
      \To \s{true}(\s{eq}(A, a, a))
    \end{autobreak}
    \\
    \begin{autobreak}
      \MoveEqLeft
      \s{out_{eq}} :
      (A : \s{Type},
      a : \s{el}(A),
      b : \s{el}(B),
      p : \s{eq}(A, a, b),
      Q : (x : \s{el}(A)) \to \s{Prop},
      q : \s{true}(Q a))
      \To \s{true}(Q b)
    \end{autobreak}
    \\
    \begin{autobreak}
      \MoveEqLeft
      \forall :
      (A : \s{Type},
      P : \s{el}(A) \to \s{Prop})
      \To \s{Prop}
    \end{autobreak}
    \\
    \begin{autobreak}
      \MoveEqLeft
      \s{in}_{\forall} :
      (A : \s{Type},
      P : \s{el}(A) \to \s{Prop},
      p : (x : \s{el}(A)) \to \s{true}(P x))
      \To \s{true}(\forall(A, P))
    \end{autobreak}
    \\
    \begin{autobreak}
      \MoveEqLeft
      \s{out}_{\forall} :
      (A : \s{Type},
      P : \s{el}(A) \to \s{Prop},
      p : \s{true}(\forall(A, P)),
      a : \s{el}(A))
      \To \s{true}(P a)
    \end{autobreak}
  \end{align*}
  In this encoding a term can depend on the validity of a proposition,
  allowing us to write a partial function. For example, a term of type
  \((x : \s{el}(A)) \to \s{true}(P x) \to \s{el}(B)\) is a partial
  function from \(A\) to \(B\) defined on those \(a : \s{el}(A)\)
  satisfying \(P\), where \(A : \s{Type}\), \(B : \s{Type}\) and
  \(P : \s{el}(A) \to \s{Prop}\). One may add the following symbols
  so that \(\bot\) becomes the initial object and \(P \lor Q\) the
  pushout of \(P\) and \(Q\) under \(P \land Q\).
  \begin{align*}
    \begin{autobreak}
      \MoveEqLeft
      \s{elim}^{\s{Type}}_{\bot} :
      (p : \s{true}(\bot),
      A : \s{Type})
      \To \s{el}(A)
    \end{autobreak}
    \\
    \begin{autobreak}
      \MoveEqLeft
      \_ :
      (p : \s{true}(\bot),
      A : \s{Type},
      a : \s{el}(A),
      b : \s{el}(A))
      \To a = b
    \end{autobreak}
    \\
    \begin{autobreak}
      \MoveEqLeft
      \s{elim}^{\s{Type}}_{\lor} :
      (P : \s{Prop},
      Q : \s{Prop},
      r : \s{true}({\lor}(P, Q)),
      A : \s{Type},
      a : \s{true}(P) \to \s{el}(A),
      b : \s{true}(Q) \to \s{el}(A),
      s : (p : \s{true}(P), q : \s{true}(Q)) \to a p = b q)
      \To \s{el}(A)
    \end{autobreak}
    \\
    \begin{autobreak}
      \MoveEqLeft
      \_ :
      (P : \s{Prop},
      Q : \s{Prop},
      p : \s{true}(P),
      A : \s{Type},
      a : \s{el}(A),
      b : \s{true}(Q) \to \s{el}(A),
      s : (q : \s{true}(Q)) \to a = b q)
      \To \s{elim}^{\s{Type}}_{\lor}(P, Q, \s{in}_{{\lor}, 1}(P, Q, p), A, \lambda x.a, b, \lambda xy.sy) = a
    \end{autobreak}
    \\
    \begin{autobreak}
      \MoveEqLeft
      \_ :
      (P : \s{Prop},
      Q : \s{Prop},
      q : \s{true}(Q),
      A : \s{Type},
      a : \s{true}(P) \to \s{el}(A),
      b : \s{el}(A),
      s : (p : \s{true}(P)) \to a p = b)
      \To \s{elim}^{\s{Type}}_{\lor}(P, Q, \s{in}_{{\lor}, 2}(P, Q, q), A, a, \lambda y.b, \lambda xy.sx) = b
    \end{autobreak}
    \\
    \begin{autobreak}
      \MoveEqLeft
      \_ :
      (P : \s{Prop},
      Q : \s{Prop},
      r : \s{true}({\lor}(P, Q)),
      A : \s{Type},
      a : \s{el}(A),
      b : \s{el}(A),
      s : \s{true}(P) \to a = b,
      t : \s{true}(Q) \to a = b)
      \To a = b
    \end{autobreak}
  \end{align*}
\end{example}

\begin{example}
  \label{exm:cubical-type-theory}
  \emph{Cubical type
    theory} \parencite{cohen2016cubical} is an extension of dependent type
  theory with a formal interval and cofibrant predicates. So we
  extend the signature for basic dependent type theory with
  \begin{align*}
    \begin{autobreak}
      \MoveEqLeft
      \I :
      ()
      \To \smalltype
    \end{autobreak}
    \\
    \begin{autobreak}
      \MoveEqLeft
      \s{Cof} :
      ()
      \To \largetype
    \end{autobreak}
    \\
    \begin{autobreak}
      \MoveEqLeft
      \s{true} :
      (P : \s{Cof})
      \To \smalltype
    \end{autobreak}
    \\
    \begin{autobreak}
      \MoveEqLeft
      \_ :
      (P : \s{Cof},
      x : \s{true}(P),
      y : \s{true}(P))
      \To x = y.
    \end{autobreak}
  \end{align*}
  \(\I\) carries a de Morgan algebra structure
  \((0, 1, \sqcap, \sqcup, (-)')\), and \(\s{Cof}\) has logical
  connectives \(\top, \land, \bot, \lor\) and equalities and
  quantifiers of the form
  \begin{align*}
    \begin{autobreak}
      \MoveEqLeft
      \s{eq}_{0} :
      (i : \I)
      \To \s{Cof}
    \end{autobreak}
    \\
    \begin{autobreak}
      \MoveEqLeft
      \s{eq}_{1} :
      (i : \I)
      \To \s{Cof}
    \end{autobreak}
    \\
    \begin{autobreak}
      \MoveEqLeft
      \forall_{\I} :
      (P : \I \to \s{Cof})
      \To \s{Cof}.
    \end{autobreak}
  \end{align*}
  Moreover, \(\s{eq}_{0}\) and \(\s{eq}_{1}\) satisfy equality
  reflection, elimination operators \(\s{elim}_{\bot}^{\s{Type}}\) and
  \(\s{elim}_{\lor}^{\s{Type}}\) are added as in
  \cref{exm:predicate-logic}, and \(\s{Cof}\) satisfies propositional
  extensionality:
  \begin{align*}
    \begin{autobreak}
      \MoveEqLeft
      \_ :
      (P : \s{Cof},
      Q : \s{Cof},
      f : \s{true}(P) \to \s{true}(Q),
      g : \s{true}(Q) \to \s{true}(P))
      \To P = Q.
    \end{autobreak}
  \end{align*}
  The \emph{composition operation} is encoded as follows.
  \begin{align*}
    \begin{autobreak}
      \MoveEqLeft
      \s{comp} :
      (A : \I \to \s{Type},
      P : \s{Cof},
      a : \s{true}(P) \to (i : \I) \to \s{el}(A i),
      a_{0} : \s{el}(A 0),
      q : (x : \s{true}(P)) \to a x 0 = a_{0})
      \To \s{el}(A 1)
    \end{autobreak}
    \\
    \begin{autobreak}
      \MoveEqLeft
      \_ :
      (A : \I \to \s{Type},
      P : \s{Cof},
      p : \s{true}(P),
      a : (i : \I) \to \s{el}(A i))
      \To \s{comp}(A, P, \lambda xi.ai, a 0, \lambda x.\s{refl}_{a 0}) = a 1
    \end{autobreak}
  \end{align*}
  Note that the type of \(\s{comp}\) is essentially the same as the
  type of composition structures in the axiomatic approach to the
  semantics of cubical type theory given by
  \textcite{orton2018axioms}. The \emph{gluing operation} is encoded
  as follows, assuming that \(\s{Type}\) has enough type constructors
  to define the type \(\s{Equiv}(A, B) : \s{Type}\) of equivalences
  between types \(A : \s{Type}\) and \(B : \s{Type}\).
  \begin{align*}
    \begin{autobreak}
      \MoveEqLeft
      \s{Glue} :
      (P : \s{Cof},
      A : \s{true}(P) \to \s{Type},
      B : \s{Type},
      f : (x : \s{true}(P)) \to \s{el}(\s{Equiv}(A x, B)))
      \To \s{Type}
    \end{autobreak}
    \\
    \begin{autobreak}
      \MoveEqLeft
      \_ :
      (A : \s{Type},
      B : \s{Type},
      f : \s{el}(\s{Equiv}(A, B)))
      \To \s{Glue}(\top, \lambda \_.A, B, \lambda \_.f) = A
    \end{autobreak}
    \\
    \begin{autobreak}
      \MoveEqLeft
      \s{unglue} :
      (P : \s{Cof},
      A : \s{true}(P) \to \s{Type},
      B : \s{Type},
      f : (x : \s{true}(P)) \to \s{el}(\s{Equiv}(A x, B)),
      a : \s{el}(\s{Glue}(P, A, B, f)))
      \To \s{el}(B)
    \end{autobreak}
    \\
    \begin{autobreak}
      \MoveEqLeft
      \_ :
      (A : \s{Type},
      B : \s{Type},
      f : \s{el}(\s{Equiv}(A, B)),
      a : A)
      \To \s{unglue}(\top, \lambda \_.A, B, \lambda \_.f, a) = f a
    \end{autobreak}
    \\
    \begin{autobreak}
      \MoveEqLeft
      \s{glue} :
      (P : \s{Cof},
      A : \s{true}(P) \to \s{Type},
      B : \s{Type},
      f : (x : \s{true}(P)) \to \s{el}(\s{Equiv}(A x, B)),
      a : (x : \s{true}(P)) \to \s{el}(A x),
      b : \s{el}(B),
      p : (x : \s{true}(P)) \to f x a = b)
      \To \s{el}(\s{Glue}(P, A, B, f))
    \end{autobreak}
    \\
    \begin{autobreak}
      \MoveEqLeft
      \_ :
      (A : \s{Type},
      B : \s{Type},
      f : \s{el}(\s{Equiv}(A, B)),
      a : A)
      \To \s{glue}(\top, \lambda \_.A, B, \lambda \_.f, \lambda \_.a, a, \lambda \_.\s{refl}_{\_}) = a
    \end{autobreak}
    \\
    \begin{autobreak}
      \MoveEqLeft
      \_ :
      (P : \s{Cof},
      A : \s{true}(P) \to \s{Type},
      B : \s{Type},
      f : (x : \s{true}(P)) \to \s{el}(\s{Equiv}(A x, B)),
      a : \s{el}(\s{Glue}(P, A, B, f)))
      \To \s{glue}(P, A, B, f, \lambda \_. a, \s{unglue}(P, A, B, f, a), \lambda \_.\s{refl}_{\_}) = a
    \end{autobreak}
    \\
    \begin{autobreak}
      \MoveEqLeft
      \_ :
      (P : \s{Cof},
      A : \s{true}(P) \to \s{Type},
      B : \s{Type},
      f : (x : \s{true}(P)) \to \s{el}(\s{Equiv}(A x, B)),
      a : (x : \s{true}(P)) \to \s{el}(A x),
      b : \s{el}(B),
      p : (x : \s{true}(P)) \to f x a = b)
      \To \s{unglue}(P, A, B, f, \s{glue}(P, A, B, f, a, b, p)) = b
    \end{autobreak}
  \end{align*}
\end{example}

\subsection{Syntactic Representable Map Categories}
\label{sec:synt-repr-map}

\begin{definition}
  \label{def:syntactic-representable-map}
  For a signature \(\Sigma\), we define a small category
  \(\sRM(\Sigma)\) as follows.
  \begin{itemize}
  \item The objects are the contexts over \(\Sigma\).
  \item The morphisms \(\Gamma \to \Delta\) are the equivalence
    classes of context morphisms \(\Gamma \to \Delta\).
  \item The identity on
    \(\Gamma = (x_{1} : A_{1}, \dots, x_{n} : A_{n})\) is represented
    by the context morphism \((x_{1}, \dots, x_{n})\).
  \item For morphisms \(f : \Gamma_{1} \to \Gamma_{2}\) and
    \(g : \Gamma_{2} \to \Gamma_{3}\), the composition \(g \circ f\)
    is represented by the substitution \(g[f]\).
  \end{itemize}
  A \emph{generating representable morphism} in \(\sRM(\Sigma)\) is a
  morphism isomorphic to the projection
  \[
    (\Gamma, x : A) \to \Gamma
  \]
  with \(\Sigma \mid \Gamma \vdash A : \smalltype\). A
  \emph{representable morphism} in \(\sRM(\Sigma)\) is a composite of
  generating representable morphisms.
\end{definition}

It is well-known that the syntactic category of a type theory with
dependent product types and equality types is locally
cartesian closed \parencite{seely1984locally}. The same argument shows
that \(\sRM(\Sigma)\) is a representable map category.

\begin{proposition}
  Let \(\Sigma\) be a signature.
  \begin{enumerate}
  \item \(\sRM(\Sigma)\) is a representable map category where the
    representable maps are defined in
    \cref{def:syntactic-representable-map}.
  \item For any \(\alpha \in |\Sigma|\), the functor
    \(\sRM(\Sigma|_{\alpha}) \to \sRM(\Sigma)\) induced by the
    weakening on signatures is a representable map functor.
  \end{enumerate}
  \qed
\end{proposition}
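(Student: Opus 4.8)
The plan is to follow the classical argument that the syntactic category of a dependent type theory with $\Pi$-types and extensional identity types is locally cartesian closed \parencite{seely1984locally}, adapted to the fact that our $\Pi$-types are available only over representable (small) types. First I would exhibit the finite limit structure directly from the syntax. The empty context $()$ is terminal, since the unique context morphism $\Gamma \to ()$ is the empty sequence. Given contexts $\Gamma$ and $\Delta$ with disjoint variables, their concatenation $(\Gamma, \Delta)$ is a binary product: a context morphism $\Xi \to (\Gamma, \Delta)$ is a substitution for the variables of both, and since the types in $\Delta$ mention only $\Delta$'s own variables this is exactly a pair of morphisms $\Xi \to \Gamma$ and $\Xi \to \Delta$. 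Equalizers come from extensional identity types: for $f, g : \Gamma \to \Delta$ with $\Delta = (y_1 : B_1, \dots, y_m : B_m)$, adjoining to $\Gamma$ variables $p_i : f_i =_{B_i} g_i$ one at a time yields the equalizer, where the reflection rule of \cref{fig:legal-judgments} makes each successive identity type well formed and forces $f \equiv g$ on the extension, and the uniqueness rule makes the equalizing morphism unique up to $\equiv$. Hence $\sRM(\Sigma)$ has all finite limits.

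\emph{The class of representable arrows.} Closure under identities and composition is immediate from the definition, reading the identity as the empty composite of generating representable morphisms. For stability under pullback, the pasting lemma reduces the claim to a single generating $\pi_A : (\Gamma, x : A) \to \Gamma$ with $\Gamma \vdash A : \smalltype$: its pullback along $f : \Gamma' \to \Gamma$ is the projection $(\Gamma', x : A[f]) \to \Gamma'$, and $\Gamma' \vdash A[f] : \smalltype$ by the substitution property, so the standard weakening--substitution square is a pullback exhibiting a generating representable morphism. For exponentiability I would again reduce to a generating $\pi_A$ and construct a right adjoint to $\pi_A^{*}$ from the $\Pi$-types of the framework. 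The key lemma is that every object of a slice $\sRM(\Sigma)/\Theta$ is isomorphic to an iterated projection: given $h : \Psi \to \Theta$, the graph context obtained by adjoining to $\Psi$ the variables of $\Theta$ together with identity proofs equating them to the components of $h$ is, by reflection and uniqueness of identity proofs, isomorphic to $\Psi$ over $\Theta$, while its projection onto the $\Theta$-variables is a display map. Representing a slice object over $(\Gamma, x : A)$ as a context extension $(\Gamma, x : A, \vec{C})$, its pushforward along $\pi_A$ is then the extension of $\Gamma$ by the telescope of $\Pi$-types $(x : A) \to \vec{C}$, which are legal precisely because $A : \smalltype$, and the adjunction bijection is witnessed by the $\beta$- and $\eta$-rules for $\s{app}$ and $\s{abs}$. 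Exponentiability of an arbitrary representable arrow follows since composites of exponentiable arrows are exponentiable.

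\emph{Preservation by weakening.} For the second statement, the functor $\inc : \sRM(\Sigma|_{\alpha}) \to \sRM(\Sigma)$ is well defined on objects and on $\equiv$-classes of morphisms by weakening on signatures: every legal judgment over $\Sigma|_{\alpha}$ is legal over $\Sigma$. It sends the empty context, concatenations and identity-type extensions to the corresponding constructions over $\Sigma$, so it preserves the terminal object, products and equalizers, hence all finite limits. A generating representable projection $(\Gamma, x : A) \to \Gamma$ with $A : \smalltype$ over $\Sigma|_{\alpha}$ is sent to the same projection with $A : \smalltype$ over $\Sigma$, so representable arrows are preserved; and since the former $(x : A) \to B$ is literally the same syntactic operation in both signatures, the pushforward computed above is preserved as well. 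Thus $\inc$ is a representable map functor.

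\emph{Main obstacle.} The delicate step is the exponentiability clause: constructing the right adjoint to $\pi_A^{*}$ and verifying its universal property. This requires the display-map replacement lemma, whose isomorphisms rest on extensional identity reflection and uniqueness of identity proofs, and checking that the $\Pi$-type satisfies the adjunction up to the equivalence relation $\equiv$ on morphisms rather than on the nose, using function extensionality as encoded by the $\eta$-rule for $\s{abs}$. Everything else is a routine transcription of the Seely argument, with the single proviso that $\Pi$-types are available only when the domain is representable, which is exactly why only representable arrows are claimed to be exponentiable.
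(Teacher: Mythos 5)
Your proposal is correct and follows exactly the route the paper intends: the paper gives no detailed proof of this proposition, saying only that ``the same argument'' as Seely's local cartesian closedness of syntactic categories with \(\Pi\)-types and extensional identity types applies, and your writeup is a faithful elaboration of that argument, including the two points where care is genuinely needed --- the reflection rule making successive identity types in the equalizer well formed, and the restriction of \(\Pi\)-formation to representable (small) domains, which is precisely what limits exponentiability to the representable arrows. The display-map replacement via graph contexts and the telescope-of-\(\Pi\)-types pushforward are the standard ingredients of the cited Seely argument, so nothing here diverges from the paper's approach.
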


We call \(\sRM(\Sigma)\) the \emph{syntactic representable map category} of
\(\Sigma\). It is the representable map category ``freely generated by
\(\Sigma\)'' formulated by the following universal property.

\begin{theorem}
  \label{thm:syntactic-LF-category}
  Let \(\cat\) be a representable map category and \(\Sigma\) a
  signature.
  \begin{enumerate}
  \item \label{item:6}
    If \(|\Sigma|\) is unbounded, then the functor
    \(|\RMCat(\sRM(\Sigma), \cat)| \to \lim_{\alpha \in
      |\Sigma|}|\RMCat(\sRM(\Sigma|_{\alpha}, \cat))|\) induced by
    the weakening functors \(\sRM(\Sigma|_{\alpha}) \to \sRM(\Sigma)\)
    is an equivalence.
  \item \label{item:7}
    If \(\Sigma = (\Sigma', \alpha : \Gamma \To \largetype)\), then
    for any representable map functor \(\fun : \sRM(\Sigma') \to
    \cat\), the functor \(|(\sRM(\Sigma') \under
    \RMCat)(\sRM(\Sigma), \cat)| \ni \funI \mapsto \funI \alpha
    \in |\cat/\fun \Gamma|\) is an equivalence.
  \item \label{item:8}
    If \(\Sigma = (\Sigma', \alpha : \Gamma \To \smalltype)\), then
    for any representable map functor \(\fun : \sRM(\Sigma') \to
    \cat\), the functor
    \(|(\sRM(\Sigma') \under \RMCat)(\sRM(\Sigma), \cat)| \ni \funI
    \mapsto \funI \alpha \in |(\cat/\fun \Gamma)_{\rep}|\) is an
    equivalence.
  \item \label{item:9}
    If \(\Sigma = (\Sigma', \alpha : \Gamma \To A)\) with
    \(\Sigma' \mid \Gamma \vdash A : \largetype\), then for any
    representable map functor \(\fun : \sRM(\Sigma') \to \cat\), the
    functor \(|(\sRM(\Sigma') \under \RMCat)(\sRM(\Sigma), \cat)| \ni
    \funI \mapsto \funI \alpha \in \cat/\fun \Gamma(1, \fun A)\) is an
    equivalence.
  \end{enumerate}
\end{theorem}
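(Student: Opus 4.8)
The plan is to prove all four parts by a single method: in each case the displayed evaluation functor is shown to be an equivalence of groupoids by producing a quasi-inverse that \emph{interprets the syntax over \(\Sigma\)} inside \(\cat\). Essential surjectivity amounts to constructing, from the chosen datum, a representable map functor \(\funI\) extending \(\fun\) (or the given compatible family) with the prescribed value at \(\alpha\); fullness and faithfulness amount to matching natural isomorphisms of extensions with isomorphisms of the corresponding data. Because everything is phrased with maximal groupoids \(|{-}|\) and ``equivalence'' rather than ``isomorphism'', I only ever need to pin down the relevant structure up to coherent isomorphism, which is exactly what the universal properties of finite limits and of pushforwards along representable arrows supply.

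For the successor cases (\cref{item:7,item:8,item:9}) the heart is the interpretation. Fixing \(\fun : \sRM(\Sigma') \to \cat\) and a datum \(d\) (an object of \(\cat/\fun\Gamma\) in \cref{item:7}, a representable such object in \cref{item:8}, a section of \(\fun A\) in \(\cat/\fun\Gamma\) in \cref{item:9}), I define by induction on the derivations of \cref{fig:legal-judgments} an interpretation sending each context over \(\Sigma\) to an object of \(\cat\), each type to an object of a slice, and each term to a section, agreeing with \(\fun\) on all \(\Sigma'\)-judgments and sending the generic instance \(\alpha(f)\) to the reindexing of \(d\) along \(\funI f\) (a pullback of an object, resp.\ of a section). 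Context extension is interpreted by the corresponding finite-limit context extension in \(\cat\); the framework's dependent function type \((x : A) \to B\) with \(A\) representable is interpreted by the pushforward along the representable arrow interpreting the projection of \(A\), which exists by \cref{prop:dfib-rep-pushforward} together with the hypothesis that representable arrows are exponentiable; and \(\s{Eq}\), \(\s{refl}\) are interpreted by the diagonal/equalizer forced by the extensional identity-type rules at the bottom of \cref{fig:legal-judgments}. That this assignment descends to \(\sRM(\Sigma)\) — that is, respects \(\equiv\) and the equivalence relation on context morphisms — and preserves finite limits, representable arrows and pushforwards is checked rule by rule; the representability clause in \cref{item:8} is precisely what makes the new projection \((\Gamma, x : \alpha) \to \Gamma\) land among representable arrows. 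Evaluating the resulting \(\funI\) at \(\alpha\) returns \(d\) up to canonical isomorphism, and conversely any extension is reconstructed from its value at \(\alpha\) by the same induction, yielding the quasi-inverse.

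For the limit case (\cref{item:6}) the argument is structural rather than syntactic. Since each legal judgment mentions only finitely many symbols, every context, context morphism, type and term over \(\Sigma\) already lives over some restriction \(\Sigma|_{\alpha}\); hence \(\sRM(\Sigma)\) is the filtered union of the \(\sRM(\Sigma|_{\alpha})\) along the weakening functors. A representable map functor out of \(\sRM(\Sigma)\) therefore restricts to a compatible family, and conversely a compatible family glues (evaluate on each object or arrow at any stage \(\Sigma|_{\alpha}\) containing it, well defined by compatibility) to a functor that automatically preserves finite limits, representable arrows and pushforwards, since each of these is detected on a finite diagram living at some stage. Passing to maximal groupoids turns the transition isomorphisms of the directed system into the coherence data of the limit groupoid, and the restriction/gluing pair exhibits the desired equivalence.

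The main obstacle is the well-definedness and structure-preservation of the interpretation in the successor cases: one must verify that the inductive clauses respect definitional equality \(\equiv\) and the equivalence of context morphisms, and that the interpretations of dependent function types and of \(\s{Eq}\) cohere with substitution, i.e.\ the base-change and Beck--Chevalley compatibilities. This is the usual coherence problem for interpreting dependent type theory, and it is the reason the statement is framed up to isomorphism: the universal properties of pullbacks and of pushforwards along representable arrows determine the comparison isomorphisms \emph{uniquely}, so the induction goes through without strict preservation, and the uniqueness of these isomorphisms simultaneously delivers the faithfulness and the naturality required for the equivalence.
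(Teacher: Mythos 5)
Your limit case (\cref{item:6}) is sound and matches the paper's argument: every legal judgment mentions finitely many symbols, so \(\sRM(\Sigma)\) is the filtered union of the \(\sRM(\Sigma|_{\alpha})\), and restriction/gluing gives the equivalence (the paper even notes the resulting functor is an isomorphism of groupoids). The gap is in the successor cases, exactly at the point you dismiss in your final paragraph. You assert that because the statement is only up to isomorphism, ``the universal properties of pullbacks and of pushforwards along representable arrows determine the comparison isomorphisms uniquely, so the induction goes through without strict preservation.'' This is not enough. The syntactic category \(\sRM(\Sigma)\) has \emph{strictly} functorial substitution: \(B[f][g]\) and \(B[f[g]]\) are the same pre-term, and composition of context morphisms is substitution on the nose. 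A functor \(\funI : \sRM(\Sigma) \to \cat\) must therefore assign values compatibly with this strict structure, while chosen pullbacks and pushforwards in \(\cat\) are only pseudo-functorial. The inductive interpretation consequently needs a substitution lemma at every rule of \cref{fig:legal-judgments} --- that the interpretation of a substituted type agrees with the reindexing of its interpretation --- and these agreements are mediated by comparison isomorphisms whose \emph{mutual coherence} (across iterated substitutions, under the binders of \((x:A)\to B\), and through \(\s{Eq}\)) is an infinite family of conditions. Uniqueness of each individual comparison map does not imply that the family coheres; this is the classical coherence problem of interpreting dependent type theory in a non-split structure, and it is precisely what the paper flags as the obstacle.

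The paper's proof resolves it with a strictification that your proposal omits: Hofmann's right adjoint splitting \(U\) of the codomain fibration \(\cat^{\to} \to \cat\), which models extensional identity types strictly, together with the split fibration \(R\) defined by pulling back \((\cat^{\to})_{\rep}\) along \(U \simeq \cat^{\to}\), so that representable types are interpreted in \(R\) and dependent products along a type from \(R\) are supported strictly in \(U\) (Hofmann's argument adapts because the exponentiability of representable arrows is all that is used). With this split target, your induction does go through, contexts landing in \(\cat\), types in \(U(\funI\Gamma)\), representable types in \(R(\funI\Gamma)\), and terms as sections; without it, your second paragraph is a plan rather than a proof. A minor point: existence of the pushforwards you need in \(\cat\) follows directly from the definition of a representable map category (representable arrows are exponentiable); \cref{prop:dfib-rep-pushforward} concerns discrete fibrations and plays no role here.
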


\begin{example}
  \label{exm:dtt-ump}
  Consider the signature
  \(\Sigma_{DTT} = (\s{Type} : () \To \largetype, \s{el} : (A :
  \s{Type}) \To \smalltype)\) from \cref{exm:sig-basic-dtt}. By
  \cref{thm:syntactic-LF-category}, for a representable map category
  \(\cat\), the groupoid \(|\RMCat(\sRM(\Sigma_{DTT}), \cat)|\) is
  equivalent to the following:
  \begin{itemize}
  \item the objects are the representable arrows \(\arr : \obj \to
    \objI\) in \(\cat\);
  \item the arrows are isomorphisms in \(\cat^{\to}\).
  \end{itemize}
  Hence, \(\sRM(\Sigma_{DTT})\) has the same universal property as
  \(\thgat\) (\cref{thm:thgat-ump}), and thus we have an equivalence
  \(\sRM(\Sigma_{DTT}) \simeq \thgat\) in the \(2\)-category
  \(\RMCat\).
\end{example}

The idea of the proof of \cref{thm:syntactic-LF-category} is the same
as the interpretation of extensional Marin-L{\"o}f type theory in
locally cartesian closed categories \parencite{seely1984locally}. The
main difficulty is the \emph{coherence problem}: categorical
structures often satisfy equations up to isomorphism, but equations in
type theory are strict. A solution to the coherence problem
is the splitting technique of \textcite{hofmann1995interpretation}
which replaces a categorical structure by another structure satisfying
equations strictly. We adapt the splitting technique for representable
map categories.

\begin{proof}
  [Sketch of the proof of \cref{thm:syntactic-LF-category}]
  \labelcref{item:6}. If \(\Sigma = ()\), then \(\sRM(\Sigma)\) contains
  only the empty context, which is the terminal object, and thus
  \(\RMCat(\sRM(\Sigma), \cat)\) is contractible. If \(\Sigma\) is
  non-empty and unbounded, then the set of
  objects of \(\sRM(\Sigma)\) is the union of the sets of objects of
  \(\sRM(\Sigma|_{\alpha})\) indexed over \(\alpha \in |\Sigma|\) and,
  for objects \(\Gamma, \Delta \in \sRM(\Sigma|_{\alpha})\), the
  hom-set \(\sRM(\Sigma)(\Gamma, \Delta)\) is the filtered colimit
  \(\colim_{\substack{\beta \in |\Sigma| \\ \alpha <
      \beta}}\sRM(\Sigma|_{\beta})(\Gamma, \Delta)\). From this
  description one can see that the functor
  \(|\RMCat(\sRM(\Sigma), \cat)| \to \lim_{\alpha \in
    |\Sigma|}|\RMCat(\sRM(\Sigma|_{\alpha}), \cat)|\) is in fact an
  isomorphism.

  The others are proved using standard techniques of the semantics of
  dependent type theory. Consider \cref{item:7} which claims that,
  given a representable map functor \(\fun : \sRM(\Sigma') \to \cat\) and an
  object \(A \in \cat/\fun \Gamma\), one can extend \(\fun\) to a
  representable map functor \(\funI : \sRM(\Sigma) \to \cat\) such that
  \(\funI \alpha \cong A\) and such an extension is unique up to unique
  isomorphism. To avoid coherence problems, we use Hofmann's
  splitting technique \parencite{hofmann1995interpretation}. Since
  \(\cat\) has pullbacks, we have the pseudo-functor
  \(\cat/\argu : \cat^{\op} \to
  \Cat\). \Citeauthor{hofmann1995interpretation}'s splitting gives a
  \(2\)-functor \(U : \cat^{\op} \to \Cat\) and a pseudo-natural
  equivalence \( U \to (\cat/\argu)\). We define a \(2\)-functor
  \(R : \cat^{\op} \to \Cat\) by the pullback
  \[
    \begin{tikzcd}
      R
      \arrow[r,"\simeq"]
      \arrow[d,hook] &
      {(\cat/\argu)_{\rep}}
      \arrow[d,hook] \\
      U
      \arrow[r,"\simeq"'] &
      (\cat/\argu).
    \end{tikzcd}
  \]

  We interpret a context \(\Sigma \mid \Gamma \vdash \ctx\) as an object
  \(\funI \Gamma \in \cat\), a type
  \(\Sigma \mid \Gamma \vdash B : \largetype\) as an object
  \(\funI B \in U(\funI \Gamma)\), a representable type
  \(\Sigma \mid \Gamma \vdash B : \smalltype\) as an object
  \(\funI B \in R(\funI \Gamma)\) and a term
  \(\Sigma \mid \Gamma \vdash b : B\) as a section of
  \(\funI B \to \funI \Gamma\). \Textcite{hofmann1995interpretation}
  constructs sufficient structures on \(U\) to interpret equality
  types and dependent product types, when \(\cat\) is locally
  cartesian closed. The same construction works for a representable
  map category \(\cat\) to interpret dependent product types over
  representable types in \(U\) and \(R\). To interpret a symbol from
  \(\Sigma'\), apply \(\fun\) and choose a corresponding element of
  \(U\) via the pseudo-natural equivalence \(U \simeq (\cat/\argu)\). To
  interpret \(\alpha\), choose an element of \(U(\fun \Gamma)\) corresponding to
  \(A \in \cat / \fun \Gamma\). This completes the interpretation of the
  logical framework in \(U\). Applying the pseudo-natural equivalence
  \(U \simeq (\cat/\argu)\), we have a representable map functor
  \(\funI : \sRM(\Sigma) \to \cat\). All the choices made in this
  construction are unique up to unique isomorphism, so \(\funI\) is
  unique up to unique isomorphism.
\end{proof}

\subsection{Models of Syntactic Representable Map Categories}
\label{sec:semantic-adequacy}

Given a signature \(\Sigma\), we concretely describe the \(2\)-category
\(\Mod_{\sRM(\Sigma)}\) (\cref{thm:models-of-syntactic-lf-category}).

Suppose \(\Sigma\) is a signature of the form
\(\Sigma = (\Sigma', \alpha : \Gamma \To \largetype)\). We define a
\(2\)-category \((\DFib \downarrow \Gamma^{(-)})\) as follows:
\begin{itemize}
\item the objects are the pairs \((\model, \alpha^{\model})\)
  consisting of \(\model \in \Mod_{\sRM(\Sigma')}\) and
  \(\alpha^{\model} \in \DFib_{\model}/\Gamma^{\model}\);
\item the morphisms \((\model, \alpha^{\model}) \to (\modelI,
  \alpha^{\modelI})\) are the pairs \((\fun, \fun_{\alpha})\) consisting
  of a morphism \(\fun : \model \to \modelI\) of models of
  \(\sRM(\Sigma')\) and a map \(\fun_{\alpha} : \alpha^{\model} \to
  \alpha^{\modelI}\) of discrete fibrations over \(\fun : \model \to
  \modelI\) such that the diagram
  \begin{equation}
    \begin{tikzcd}
      \alpha^{\model}
      \arrow[r,"\fun_{\alpha}"]
      \arrow[d] &
      \alpha^{\modelI}
      \arrow[d] \\
      \Gamma^{\model}
      \arrow[r,"\fun_{\Gamma}"'] &
      \Gamma^{\modelI}
    \end{tikzcd}
    \label[diagram]{eq:3}
  \end{equation}
  commutes;
\item the \(2\)-morphisms \((\fun, \fun_{\alpha}) \To (\funI, \funI_{\alpha}) :
  (\model, \alpha^{\model}) \to (\modelI,
  \alpha^{\modelI})\) are the \(2\)-morphisms \(\trans : \fun \To \funI\)
  in \(\Mod_{\sRM(\Sigma')}\) such that there exists a (necessarily
  unique) natural transformation \(\trans_{\alpha} : \fun_{\alpha} \To
  \funI_{\alpha}\) over \(\trans\).
\end{itemize}
There is the obvious \(2\)-functor \(\Mod_{\sRM(\Sigma)} \to (\DFib
\downarrow \Gamma^{(-)})\).

When \(\Sigma = (\Sigma', \alpha : \Gamma \To \smalltype)\), the
\(2\)-functor
\(\Mod_{\sRM(\Sigma)} \to (\DFib \downarrow \Gamma^{(-)})\) factors
through the locally full sub-\(2\)-category
\((\DFib \downarrow \Gamma^{(-)})_{\rep} \subset (\DFib \downarrow
\Gamma^{(-)})\) consisting of those objects
\((\model, \alpha^{\model})\) such that
\(\alpha^{\model} \to \Gamma^{\model}\) is a representable map
of discrete fibrations over \(\model\) and those morphisms
\((\fun, \fun_{\alpha}) : (\model, \alpha^{\model}) \to (\modelI,
\alpha^{\modelI})\) such that \cref{eq:3} satisfies the
Beck-Chevalley condition.

Suppose \(\Sigma = (\Sigma', \alpha : \Gamma \To A)\) with \(\Sigma'
\mid \Gamma \vdash A : \largetype\). We define a \(2\)-category
\(\Sect(A^{(-)})\) as follows:
\begin{itemize}
\item the objects are the pairs \((\model, \alpha^{\model})\)
  consisting of a model \(\model\) of \(\sRM(\Sigma')\) and a
  section \(\alpha^{\model}\) of the map \(A^{\model} \to
  \Gamma^{\model}\) of discrete fibrations over \(\model\);
\item the morphisms \((\model, \alpha^{\model}) \to (\modelI,
  \alpha^{\modelI})\) are the morphisms \(\fun : \model \to
  \modelI\) of models of \(\sRM(\Sigma')\) such that the diagram
  \[
    \begin{tikzcd}
      \Gamma^{\model}
      \arrow[r,"\fun_{\Gamma}"]
      \arrow[d,"\alpha^{\model}"'] &
      \Gamma^{\modelI}
      \arrow[d,"\alpha^{\modelI}"] \\
      A^{\model}
      \arrow[r,"\fun_{A}"'] &
      A^{\modelI}
    \end{tikzcd}
  \]
  commutes;
\item the \(2\)-morphisms \(\fun \To \funI\) are the \(2\)-morphisms of
  models of \(\sRM(\Sigma')\).
\end{itemize}
There is the obvious \(2\)-functor \(\Mod_{\sRM(\Sigma')} \to
\Sect(A^{(-)})\).

\begin{theorem}
  \label{thm:models-of-syntactic-lf-category}
  Let \(\Sigma\) be a signature.
  \begin{enumerate}
  \item \label{item:16}
    If \(\Sigma = ()\), then the \(2\)-functor
    \(\Mod_{\sRM(\Sigma)} \to \Cat_{1}\) is a
    bi-equivalence, where \(\Cat_{1}\) denotes the \(2\)-category of
    small categories with terminal objects.
  \item \label{item:12}
    If \(|\Sigma|\) is non-empty and unbounded,
    then the induced \(2\)-functor \(\Mod_{\sRM(\Sigma)} \to
    \lim_{\alpha \in |\Sigma|}\Mod_{\sRM(\Sigma|_{\alpha})}\) is a
    bi-equivalence.
  \item \label{item:13}
    If \(\Sigma = (\Sigma', \alpha : \Gamma \To \largetype)\),
    then the \(2\)-functor \(\Mod_{\sRM(\Sigma)} \to (\DFib \downarrow
    \Gamma^{(-)})\) is a bi-equivalence.
  \item \label{item:14}
    If \(\Sigma = (\Sigma', \alpha : \Gamma \To \smalltype)\),
    then the \(2\)-functor \(\Mod_{\sRM(\Sigma)} \to (\DFib
    \downarrow \Gamma^{(-)})_{\rep}\) is a bi-equivalence.
  \item \label{item:15}
    If \(\Sigma = (\Sigma', \alpha : \Gamma \To A)\) with
    \(\Sigma' \mid \Gamma \vdash A : \largetype\), then the
    \(2\)-functor \(\Mod_{\sRM(\Sigma)} \to \Sect(A^{(-)})\) is a
    bi-equivalence.
  \end{enumerate}
\end{theorem}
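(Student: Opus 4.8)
The plan is to deduce each clause from the matching clause of \cref{thm:syntactic-LF-category}, using the reinterpretation of morphisms and $2$-morphisms of models developed in \cref{sec:anoth-defin-morph}. The key observation is that all three layers of the $2$-category $\Mod_{\sRM(\Sigma)}$ are representable map functors out of $\sRM(\Sigma)$: an object is a representable map functor $\sRM(\Sigma) \to \DFib_{\model}$; a morphism lying over a terminal-preserving functor $\fun : \model \to \modelI$ is a representable map functor $\sRM(\Sigma) \to (\DFib^{\to})_{\fun}$ lifting the given pair of models; and a $2$-morphism lying over a natural transformation $\trans : \fun \To \funI$ is a representable map functor $\sRM(\Sigma) \to (\DFib^{\Theta})_{\trans}$ lifting the given pair of morphisms. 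Each of $\DFib_{\model}$, $(\DFib^{\to})_{\fun}$ and $(\DFib^{\Theta})_{\trans}$ is a representable map category by the results of \cref{sec:anoth-defin-morph}, so I would verify the three conditions for a bi-equivalence (bi-essential surjectivity on objects, local essential surjectivity, local full faithfulness) by applying the relevant clause of \cref{thm:syntactic-LF-category} once at each of these three layers.

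For \cref{item:16} I would observe that $\sRM(())$ consists of the empty context alone, which is terminal, so $\RMCat(\sRM(()), \cat)$ is contractible for every $\cat$; hence a model of $\sRM(())$ is just its base category, a morphism just a terminal-preserving functor, and a $2$-morphism just a natural transformation, yielding the bi-equivalence with $\Cat_{1}$ at once. For \cref{item:12} I would apply \cref{item:6} of \cref{thm:syntactic-LF-category} at all three layers: it identifies the groupoid of models (respectively morphisms, $2$-morphisms) of $\sRM(\Sigma)$ with the limit over $\alpha \in |\Sigma|$ of the corresponding groupoids for $\sRM(\Sigma|_{\alpha})$. Since a (bi)limit of the $2$-categories $\Mod_{\sRM(\Sigma|_{\alpha})}$ is computed componentwise on objects and on hom-categories, these three identifications assemble into the three conditions for $\Mod_{\sRM(\Sigma)} \to \lim_{\alpha}\Mod_{\sRM(\Sigma|_{\alpha})}$ to be a bi-equivalence.

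The heart of the argument is \cref{item:13,item:14,item:15}; these I would prove together, each reducing to the matching clause among \cref{item:7,item:8,item:9} of \cref{thm:syntactic-LF-category}. Bi-essential surjectivity on objects is the assertion that a model $\model$ of $\sRM(\Sigma')$ equipped with the extra datum over $\Gamma^{\model}$ --- an object of $\DFib_{\model}/\Gamma^{\model}$, a representable such object, or a section of $A^{\model} \to \Gamma^{\model}$, according to the case --- extends to a model of $\sRM(\Sigma)$; this is the essential-surjectivity half of the cited equivalence with $\cat = \DFib_{\model}$. Local essential surjectivity is the same statement one layer up, with $\cat = (\DFib^{\to})_{\fun}$: a morphism in the target furnishes an object of the slice of $(\DFib^{\to})_{\fun}$ over the image of $\Gamma$ --- a lift $\fun_{\alpha}$, which in case \cref{item:14} is moreover representable and satisfies Beck--Chevalley --- and the cited equivalence promotes it to a morphism of models of $\sRM(\Sigma)$. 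Local full faithfulness is the same statement with $\cat = (\DFib^{\Theta})_{\trans}$: a $2$-morphism of models is rigid, being fully determined by its underlying natural transformation because each projection $\pr_{(\obj^{\model})}$ is faithful by \cref{prop:dfib-conservative}, so the ``unique up to unique isomorphism'' of \cref{thm:syntactic-LF-category} collapses to an honest bijection between $2$-morphisms of $\sRM(\Sigma)$-models and $2$-morphisms in the target.

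The step I expect to be the main obstacle is \cref{item:14}, precisely because of the failure of the $2$-categorical universal property recorded in the remark after \cref{thm:syntactic-LF-category}: a map $f : \obj \to \objI$ of representable objects induces no arrow $\obj^{\obj} \to \objI^{\objI}$, hence does not extend to a natural transformation of the associated functors. The reinterpretation of \cref{sec:anoth-defin-morph} is exactly the device that avoids this pitfall, since it encodes a $2$-morphism of models as an \emph{object}, rather than a morphism, of $\RMCat(\sRM(\Sigma), (\DFib^{\Theta})_{\trans})$; thus I only ever appeal to the object-level, $(2,1)$-categorical universal property of \cref{item:8}, never to the forbidden extension of a natural transformation inside $\DFib_{\model}$. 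Concretely, once a representable lift $\alpha^{\tilde{\trans}}$ over $\Gamma$ has been chosen, the function-type object $(\alpha \to \alpha)^{\tilde{\trans}}$ is computed as a base change along the right adjoint to $\alpha^{\tilde{\trans}}$ (as in \cref{prop:dfib-rep-pushforward}), and its lift over $\trans$ is produced canonically, with the correct variance, by the second part of \cref{prop:dfib-base-change}; the nonexistent arrow $\obj^{\obj} \to \objI^{\objI}$ is therefore never required. Confirming that the Beck--Chevalley conditions arising this way are precisely those recorded in the subcategory $(\DFib \downarrow \Gamma^{(-)})_{\rep}$ is the single point at which I would take the greatest care.
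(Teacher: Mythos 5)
Your proposal is correct and follows essentially the same route as the paper's own proof: reduce each clause to the matching clause of \cref{thm:syntactic-LF-category}, applied once at each of the three layers \(\DFib_{\model}\), \((\DFib^{\to})_{\fun}\) and \((\DFib^{\Theta})_{\trans}\) via the reinterpretation of morphisms and \(2\)-morphisms of models from \cref{sec:anoth-defin-morph}, which is exactly how the paper sidesteps the merely \((2,1)\)-categorical universal property of \(\sRM(\Sigma)\). The only detail you leave implicit --- that after extending, say, \(\fun_{(\argu)}\) to \(\widetilde{\fun}_{(\argu)} : \sRM(\Sigma) \to (\DFib^{\to})_{\fun}\) one must invoke the uniqueness part of \cref{thm:syntactic-LF-category} to identify \(\dom \widetilde{\fun}_{(\argu)}\) and \(\cod \widetilde{\fun}_{(\argu)}\) with the given models, and then strictify using the discrete-isofibration property of \((\dom, \cod)\) --- is precisely the content of the section you cite, so your argument matches the paper's.
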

\begin{proof}[Sketch of the proof]
  We use the fact that (\(2\)-)morphisms of models of a type theory
  are also representable map functors
  (\cref{sec:anoth-defin-morph}). Then we can apply
  \cref{thm:syntactic-LF-category} for building not only models of
  \(\sRM(\Sigma)\) but also (\(2\)-)morphisms of models of \(\sRM(\Sigma)\).

  Note that the \(2\)-functors in the statement are locally
  faithful. It remains to show that those \(2\)-functors are
  bi-essentially surjective on objects, locally essentially surjective
  on objects and locally full. We only demonstrate the
  statement~\labelcref{item:13}. The others can be proved using the same
  idea.

  To show that the \(2\)-functor
  \(\Mod_{\sRM(\Sigma)} \to (\DFib \downarrow \Gamma^{(-)})\) is
  bi-essentially surjective on objects, let \(\model\) be a model
  of \(\sRM(\Sigma')\) and \(p : A \to \Gamma^{\model}\) a map of
  small discrete fibrations over \(\model\). By
  \cref{thm:syntactic-LF-category}, the representable map
  functor \((-)^{\model} : \sRM(\Sigma') \to \DFib_{\model}\)
  extends to a representable map functor
  \((-)^{\widetilde{\model}} : \sRM(\Sigma) \to \DFib_{\model}\) such
  that \(\alpha^{\widetilde{\model}} \cong A\). This defines a model
  \(\widetilde{\model}\) of \(\sRM(\Sigma)\) such that the restriction of
  \(\widetilde{\model}\) to \(\sRM(\Sigma')\) is isomorphic to
  \(\model\) and \(\alpha^{\widetilde{\model}} \cong A\).

  To show that the \(2\)-functor
  \(\Mod_{\sRM(\Sigma)} \to (\DFib \downarrow \Gamma^{(-)})\) is
  locally essentially surjective on objects, let
  \(\model, \modelI\) be models of \(\sRM(\Sigma)\),
  \(\fun : \model \to \modelI\) a morphism of models of
  \(\sRM(\Sigma')\), and
  \(\funI : \alpha^{\model} \to \alpha^{\modelI}\) a map of discrete
  fibrations such that the diagram
  \[
    \begin{tikzcd}
      {\alpha^{\model}}
      \arrow[r,"\funI"]
      \arrow[d] &
      {\alpha^{\modelI}}
      \arrow[d] \\
      \Gamma^{\model}
      \arrow[r,"\fun_{\Gamma}"'] &
      \Gamma^{\modelI}
    \end{tikzcd}
  \]
  commutes. Then \(\fun_{(-)}\) is a representable map functor
  \(\sRM(\Sigma') \to (\DFib^{\to})_{\fun}\) by \cref{prop:morphism-of-models}
  and \(\funI\) is an object of
  \((\DFib^{\to})_{\fun}/\fun_{\Gamma}\). By
  \cref{thm:syntactic-LF-category}, the representable map functor
  \(\fun_{(-)} : \sRM(\Sigma') \to (\DFib^{\to})_{\fun}\) extends to a
  representable map functor
  \(\widetilde{\fun}_{(-)} : \sRM(\Sigma) \to (\DFib^{\to})_{\fun}\) such that
  \(\widetilde{\fun}_{\alpha} \cong \funI\). From
  \(\dom \widetilde{\fun}_{\alpha} \cong \alpha^{\model}\) and
  \(\cod \widetilde{\fun}_{\alpha} \cong \alpha^{\modelI}\), we have
  \(\dom \widetilde{\fun}_{(\argu)} \cong (\argu)^{\model}\) and
  \(\cod \widetilde{\fun}_{(\argu)} \cong (\argu)^{\modelI}\) again by
  \cref{thm:syntactic-LF-category}. By
  \cref{prop:morphism-of-models,rem:morphism-of-model-up-to-iso},
  \(\widetilde{\fun}_{(-)}\) defines a morphism \(\model \to \modelI\)
  of models of \(\sRM(\Sigma)\) such that the restriction of
  \(\widetilde{\fun}_{(-)}\) to \(\sRM(\Sigma')\) is \(\fun\) and
  \(\widetilde{\fun}_{\alpha} \cong \funI\).

  The local fullness is similarly proved using the
  representable map category \((\DFib^{\Theta})_{\trans}\) instead of
  \((\DFib^{\to})_{\fun}\) and \cref{prop:2-morphisms-of-model}.
\end{proof}

\begin{example}
  \label{exm:models-of-dtt}
  Consider the signature
  \(\Sigma_{DTT} = (\s{Type} : () \To \largetype, \s{el} : (A :
  \s{Type}) \To \smalltype)\) from \cref{exm:sig-basic-dtt}. By
  \cref{thm:models-of-syntactic-lf-category}, the
  \(2\)-category \(\Mod_{\sRM(\Sigma_{DTT})}\) is bi-equivalent to the
  \(2\)-category defined as follows:
  \begin{itemize}
  \item the objects are the triples
    \((\model, \s{Type}^{\model}, \s{el}^{\model})\)
    consisting of a small category \(\model\) with a terminal
    object, a small discrete fibration \(\s{Type}^{\model}\) over
    \(\model\) and a representable map
    \(\s{el}^{\model} \to \s{Type}^{\model}\) of small discrete
    fibrations over \(\model\), that is, the categories with
    families or the natural models;
  \item the morphisms \(\model \to \modelI\) are the triples
    \((\fun, \fun_{\s{Type}}, \fun_{\s{el}})\) consisting of a functor
    \(\fun : \model \to \modelI\) preserving terminal objects and
    maps
    \(\fun_{\s{Type}} : \s{Type}^{\model} \to \s{Type}^{\modelI}\)
    and \(\fun_{\s{el}} : \s{el}^{\model} \to \s{el}^{\modelI}\) of
    discrete fibrations over \(\fun\) such that the diagram
    \[
      \begin{tikzcd}
        \s{el}^{\model}
        \arrow[r,"\fun_{\s{el}}"]
        \arrow[d] &
        \s{el}^{\modelI}
        \arrow[d] \\
        \s{Type}^{\model}
        \arrow[r,"\fun_{\s{Type}}"'] &
        \s{Type}^{\modelI}
      \end{tikzcd}
    \]
    commutes and satisfies the Beck-Chevalley condition, that is, the
    pseudo cwf-morphisms;
  \item the \(2\)-morphisms \(\fun \To \funI : \model \to \modelI\) are
    the natural transformations \(\trans : \fun \To \funI\) between the
    underlying functors such that there exist (necessarily unique)
    natural transformations
    \(\trans_{\s{Type}} : \fun_{\s{Type}} \To \funI_{\s{Type}}\) and
    \(\trans_{\s{el}} : \fun_{\s{el}} \To \funI_{\s{el}}\) over \(\trans\).
  \end{itemize}
  Our choice of \(2\)-morphisms of categories with families is quite
  natural, but there is another choice: the indexed natural
  transformations between the associated indexed categories
  \parencite{clairambault2014biequivalence}. A difference is that a
  collection of types is regarded as a set in our definition while it
  is regarded as a category in the other definition.
\end{example}

\begin{example}
  We describe core components of a model \(\model\) of cubical type
  theory (\cref{exm:cubical-type-theory}). In addition to the natural
  model structure \((\model, \s{Type}^{\model}, \s{el}^{\model})\), it
  is equipped with a discrete fibration \(\I^{\model}\) over
  \(\model\) such that the map \(\I^{\model} \to \model\) is
  representable and a representable map
  \(\s{true}^{\model} \to \s{Cof}^{\model}\) of discrete fibrations over
  \(\model\) that is also a monomorphism.
  \[
    \begin{tikzcd}
      \s{true}^{\model}
      \arrow[d, hook] & &
      \s{el}^{\model}
      \arrow[d] \\
      \s{Cof}^{\model}
      \arrow[dr]&
      \I^{\model}
      \arrow[d] &
      \s{Type}^{\model}
      \arrow[dl] \\
      & \model
    \end{tikzcd}
  \]
  By \cref{rem:repr-obj-and-repr-map}, \(\I^{\model}\) is regarded as
  an object of the category \(\model\) with which \(\model\) has
  products. The cubical set model of cubical type theory
  \parencite[Section 8]{cohen2016cubical} is turned into this
  structure. \(\model\) is the category of cubical
  sets. \(\s{Type}^{\model}\) is the discrete fibration of
  \emph{fibrant} families of cubical sets. \(\s{el}^{\model}\) is the
  discrete fibration of sections of fibrant families of cubical
  sets. \(\I^{\model}\) is the formal interval. In this example,
  \(\s{Cof}^{\model}\) is also representable and constructed from what
  is called the face lattice. There is a map
  \(\s{Cof}^{\model} \to \Omega\) to the subobject classifier
  \(\Omega\) in \(\model\), and the map
  \(\s{true}^{\model} \to \s{Cof}^{\model}\) is the monomorphism
  classified by it.
\end{example}

\section{Bi-initial Models}
\label{sec:bi-initial-models}

In this section and the next section, we develop the semantics of type
theory with the definitions of a type theory and a model of a type
theory introduced in \cref{sec:type-theories-their}. The first
step is to construct a \emph{bi-initial model} of a type theory.

\subsection{Democratic Models}
\label{sec:democratic-models}

Usually a bi-initial model of a type theory is a syntactic one and
has a special property: every object is represented by a finite
sequence of types. We introduce a class of models of a type theory
satisfying this property, generalizing the notion of a democratic
category with families
\parencite{clairambault2014biequivalence}.

\begin{definition}
  Let \(\model\) be a model of a type theory \(\tth\). We
  inductively define \emph{contextual objects} in \(\model\) as
  follows:
  \begin{enumerate}
  \item the terminal object \(1 \in \model\) is a contextual object;
  \item if \(\bobj \in \model\) is a contextual object, \(\arr : \obj \to \objI\)
    is a representable arrow in \(\tth\) and
    \(\elI \in \objI^{\model}(\bobj)\) is an element over \(\bobj\), then the
    context extension \(\{\elI\}^{\arr} \in \model\) is a contextual
    object.
  \end{enumerate}
  Note that the terminal object \(1\) and the context extension
  \(\{\elI\}^{\arr}\) are determined only up to unique isomorphism. We
  include all the terminal objects and all the context extensions in
  the class of contextual objects so that the class of contextual
  objects is closed under isomorphisms. We say \(\model\) is
  \emph{democratic} if every object of \(\model\) is contextual and
  denote by \(\Mod_{\tth}^{\dem}\) the full sub-\(2\)-category
  of \(\Mod_{\tth}\) consisting of the democratic models.
\end{definition}

\begin{proposition}
  \label{prop:morphism-preserves-contextual-objects}
  Any morphism \(\fun : \model \to \modelI\) of models of a type
  theory \(\tth\) carries contextual objects in \(\model\) to
  contextual objects in \(\modelI\).
\end{proposition}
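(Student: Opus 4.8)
The plan is to proceed by induction on the inductive definition of contextual objects, the essential input being that $\fun$ preserves terminal objects and context extensions up to isomorphism.

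For the base case I would use that, by the very definition of a morphism of models, $\fun$ preserves terminal objects; hence $\fun(1)$ is a terminal object of $\modelI$ and therefore contextual. For the inductive step, suppose $\bobj \in \model$ is contextual with $\fun(\bobj)$ contextual in $\modelI$, let $\arr : \obj \to \objI$ be a representable arrow in $\tth$, and let $\elI \in \objI^{\model}(\bobj)$. Since $\fun_{\objI} : \objI^{\model} \to \objI^{\modelI}$ is a map of discrete fibrations over $\fun$, the image $\fun_{\objI}(\elI)$ lies over $\fun(\bobj)$, so $\fun_{\objI}(\elI) \in \objI^{\modelI}(\fun(\bobj))$ and the context extension $\{\fun_{\objI}\elI\}^{\arr}$ is contextual, being built from the contextual object $\fun(\bobj)$. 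The crucial point is the observation recorded in the remark following the definition of a morphism of models: the Beck-Chevalley condition for the representable arrow $\arr$ makes the canonical comparison arrow $\fun\{\elI\}^{\arr} \to \{\fun_{\objI}\elI\}^{\arr}$ an isomorphism. As the contextual objects of $\modelI$ form an isomorphism-closed class, it follows that $\fun(\{\elI\}^{\arr})$ is contextual.

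Finally I would note that the predicate ``$\fun(\argu)$ is contextual in $\modelI$'' is itself closed under isomorphism in $\model$, since $\fun$ sends isomorphisms to isomorphisms and contextual objects are closed under isomorphism; this is what lets the structural induction account for all contextual objects. I do not anticipate a real obstacle: essentially all of the work has already been done in extracting the preservation of context extensions from the Beck-Chevalley condition, and the only routine verification is that $\fun_{\objI}(\elI)$ lies over $\fun(\bobj)$, which is immediate from $\fun_{\objI}$ being a map of discrete fibrations over $\fun$.
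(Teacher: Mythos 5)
Your proof is correct and matches the paper's intent: the paper disposes of this proposition with ``Immediate from the definition,'' relying on exactly the two facts you spell out, namely that morphisms preserve terminal objects by definition and that the Beck-Chevalley condition makes the canonical arrow \(\fun\{\elI\}^{\arr} \to \{\fun_{\objI}\elI\}^{\arr}\) an isomorphism (as recorded in the remark following the definition of morphisms of models), combined with the stipulated isomorphism-closure of the class of contextual objects. Your write-up is simply the explicit induction the paper leaves implicit.
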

\begin{proof}
  Immediate from the definition.
\end{proof}

Democratic models have the following interesting property.

\begin{proposition}
  \label{thm:democratic-codiscrete}
  Let \(\tth\) be a type theory, \(\model\) a democratic model
  of \(\tth\) and \(\modelI\) an arbitrary model of
  \(\tth\). Let \(\fun, \funI : \model \to \modelI\) be morphisms of
  models of \(\tth\).
  \begin{enumerate}
  \item There is at most one \(2\)-morphism \(\fun \To \funI\).
  \item Every \(2\)-morphism \(\fun \To \funI\) is invertible.
  \end{enumerate}
  Consequently, \(\Mod_{\tth}(\model, \modelI)\) is equivalent to a
  discrete category.
\end{proposition}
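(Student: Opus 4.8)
The plan is to prove both statements simultaneously by induction on the construction of contextual objects, exploiting democracy of $\model$. Concretely, for a $2$-morphism $\trans : \fun \To \funI$ I will show that for every contextual $\bobj \in \model$ the component $\trans_{\bobj} : \fun\bobj \to \funI\bobj$ is (i) uniquely determined by $\fun$ and $\funI$ and (ii) an isomorphism. Since $\model$ is democratic every object is contextual (up to isomorphism, and the two properties are isomorphism-invariant), so (i) forces $\trans$ to be unique, giving part~1, and (ii) makes $\trans$ invertible, giving part~2. I will repeatedly use two recorded facts: a morphism of models preserves terminal objects and context extensions (the Remark after the definition of $\Mod_{\tth}$); and the datum $\trans_{\obj} : \fun_{\obj} \To \funI_{\obj}$ witnessing that $\trans$ is a $2$-morphism amounts, since $\obj^{\modelI}$ is a discrete fibration, to the pointwise identity $\fun_{\obj}(\el) = \funI_{\obj}(\el) \act \trans_{c}$ for every object $\obj \in \tth$ and every $\el \in \obj^{\model}$ with $c = \pr_{(\obj^{\model})}(\el)$.

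For the base case $\bobj = 1$, both $\fun 1$ and $\funI 1$ are terminal, so there is a unique arrow $\fun 1 \to \funI 1$ and it is invertible; hence $\trans_{1}$ is determined and iso. For the inductive step write $\bobj = \{\elI\}^{\arr}$ with $\arr : \obj \to \objI$ representable and $\elI \in \objI^{\model}(\bobjI)$, $\bobjI$ contextual, and assume $\trans_{\bobjI}$ is determined and invertible. Because $\funI$ preserves context extensions, $\funI\bobj$ carries a context-extension structure with projection $\funI(\proj^{\arr}_{\elI})$ and variable $\funI_{\obj}(\radj^{\arr}_{\elI})$, so an arrow into $\funI\bobj$ is pinned down by its composite with the projection together with the reindexing of the variable. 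Applying this to $\trans_{\bobj}$: naturality of $\trans$ along $\proj^{\arr}_{\elI}$ gives $\funI(\proj^{\arr}_{\elI}) \circ \trans_{\bobj} = \trans_{\bobjI} \circ \fun(\proj^{\arr}_{\elI})$, which is determined by the inductive hypothesis, while the compatibility above applied to $\el = \radj^{\arr}_{\elI}$ gives $\funI_{\obj}(\radj^{\arr}_{\elI}) \act \trans_{\bobj} = \fun_{\obj}(\radj^{\arr}_{\elI})$, which is determined by $\fun$. Thus $\trans_{\bobj}$ is uniquely determined, proving~(i).

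For invertibility I apply the same compatibility to $\elI$ itself, obtaining $\fun_{\objI}(\elI) = \funI_{\objI}(\elI) \act \trans_{\bobjI}$; that is, $\fun_{\objI}\elI$ is the reindexing of $\funI_{\objI}\elI$ along $\trans_{\bobjI}$. Since context extensions are stable under reindexing (a pasting of the defining pullback squares), the canonical square
\[
  \begin{tikzcd}
    \fun\bobj \arrow[r] \arrow[d,"\fun(\proj^{\arr}_{\elI})"'] &
    \funI\bobj \arrow[d,"\funI(\proj^{\arr}_{\elI})"] \\
    \fun\bobjI \arrow[r,"\trans_{\bobjI}"'] &
    \funI\bobjI
  \end{tikzcd}
\]
exhibits $\fun\bobj$ as the pullback of $\funI\bobj$ along $\trans_{\bobjI}$, and the two data computed above identify $\trans_{\bobj}$ with its top edge. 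As $\trans_{\bobjI}$ is invertible by the inductive hypothesis and pullbacks of isomorphisms are isomorphisms, $\trans_{\bobj}$ is invertible, proving~(ii). Finally, a category in which there is at most one arrow between any two objects and every arrow is invertible is equivalent to the discrete category on its set of isomorphism classes, so $\Mod_{\tth}(\model, \modelI)$ is equivalent to a discrete category.

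I expect the main obstacle to be the inductive step's reliance on $\funI$ (and $\fun$) transporting the context-extension structure \emph{strictly enough} for the universal property to be applied: namely that $\funI\bobj$ really is a context extension with projection $\funI(\proj^{\arr}_{\elI})$ and variable $\funI_{\obj}(\radj^{\arr}_{\elI})$, and that the displayed square is genuinely a pullback. This is precisely where the Beck--Chevalley condition in the definition of a morphism of models is needed (through the Remark on preservation of context extensions), and it must be handled carefully since context extensions, terminal objects, and comparison maps are all specified only up to canonical isomorphism.
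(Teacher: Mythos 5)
Your proof is correct and takes essentially the same route as the paper's: induction over the inductive generation of contextual objects, with the terminal object as base case, and in the inductive step the universal property of the context extension \(\funI\{\elI\}^{\arr}\) (available exactly because the Beck--Chevalley condition makes morphisms preserve context extensions) pinning \(\trans_{\{\elI\}^{\arr}}\) down via naturality along \(\proj^{\arr}_{\elI}\) together with \(\funI_{\obj}(\radj^{\arr}_{\elI}) \act \trans_{\{\elI\}^{\arr}} = \fun_{\obj}(\radj^{\arr}_{\elI})\). The only difference is cosmetic: where you get invertibility by identifying \(\trans_{\{\elI\}^{\arr}}\) with the top edge of a pullback of the isomorphism \(\trans_{\bobjI}\), the paper constructs the inverse directly as the unique arrow into the context extension \(\fun\{\elI\}^{\arr}\) satisfying the mirrored pair of equations --- two packagings of the same preservation fact.
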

\begin{proof}
  Let \(\trans : \fun \To \funI\) be a \(2\)-morphism. Each component
  \(\trans_{\bobj} : \fun \bobj \to \funI \bobj\) is uniquely determined
  and invertible by induction on the contextual object \(\bobj\).
  \begin{itemize}
  \item \(\trans_{1} : \fun 1 \to \funI 1\) must be the unique arrow into
    the terminal object \(\funI 1\). Since \(\fun 1\) is also the terminal
    object, \(\trans_{1}\) is invertible.
  \item Let \(\bobj \in \model\) be an object,
    \(\arr : \obj \to \objI\) a representable arrow in \(\tth\) and
    \(\elI \in \objI^{\model}(\bobj)\) an element. Since
    \(\funI(\{\elI\}^{\arr}) \cong \{\funI_{\objI}\elI\}^{\arr}\), the
    component
    \(\trans_{\{\elI\}^{\arr}} : \fun(\{\elI\}^{\arr}) \to
    \funI(\{\elI\}^{\arr})\) is uniquely determined by
    \(\trans_{\bobj}\) and by the property that the diagram
    \[
      \begin{tikzcd}
        \fun(\{\elI\}^{\arr})
        \arrow[r,"\trans_{\{\elI\}^{\arr}}"]
        \arrow[d,"{\fun(\proj^{\arr}_{\elI})}"'] &
        \funI(\{\elI\}^{\arr})
        \arrow[d,"{\funI(\proj^{\arr}_{\elI})}"] \\
        \fun \bobj
        \arrow[r,"\trans_{\bobj}"'] &
        \funI \bobj
      \end{tikzcd}
    \]
    commutes and
    \(\funI_{\obj}(\radj^{\arr}_{\elI}) \act \trans_{\{\elI\}^{\arr}}
    = \fun_{\obj}(\radj^{\arr}_{\elI})\). If \(\trans_{\bobj}\) is
    invertible, then so is \(\trans_{\{\elI\}^{\arr}}\): the inverse
    \(\trans_{\{\elI\}^{\arr}}^{-1} : \funI(\{\elI\}^{\arr}) \to
    \fun(\{\elI\}^{\arr})\) is the unique arrow such that
    \(\fun(\proj^{\arr}_{\elI}) \circ \trans_{\{\elI\}^{\arr}}^{-1} =
    \trans_{\bobj}^{-1} \circ \funI(\proj^{\arr}_{\elI})\) and
    \(\fun_{\obj}(\radj^{\arr}_{\elI}) \act
    \trans_{\{\elI\}^{\arr}}^{-1} =
    \funI_{\obj}(\radj^{\arr}_{\elI})\).
  \end{itemize}
\end{proof}

An easy way to construct a democratic model is to throw away
non-contextual objects from an arbitrary model.

\begin{definition}
  Let \(\model\) be a model of a type theory \(\tth\). We
  define a model \(\model^{\heart}\) of \(\tth\) as
  follows.
  \begin{itemize}
  \item The base category \(\model^{\heart}\) is the full subcategory
    of \(\model\) consisting of the contextual objects.
  \item For an object \(\obj \in \tth\), we define
    \(\obj^{\model^{\heart}}\) to be the pullback
    \[
      \begin{tikzcd}
        \obj^{\model^{\heart}}
        \arrow[r,hook] \arrow[d] &
        \obj^{\model} \arrow[d] \\
        \model^{\heart}
        \arrow[r,hook] &
        \model.
      \end{tikzcd}
    \]
  \end{itemize}
  \(\model^{\heart}\) is indeed a model of \(\tth\) because
  \(\model^{\heart}\) is closed under context extensions. We call
  \(\model^{\heart}\) the \emph{heart of \(\model\)}.
\end{definition}

Let \(\model\) be a model of a type theory \(\tth\). By
definition \(\model^{\heart}\) is a democratic model of
\(\tth\) and the obvious inclusion
\(\model^{\heart} \to \model\) is a morphism of models of
\(\tth\). The heart \(\model^{\heart}\) is the largest
democratic model contained in \(\model\) in the following sense.

\begin{proposition}
  Let \(\model\) be a model of a type theory \(\tth\). The
  inclusion \(\model^{\heart} \to \model\) induces an
  isomorphism of categories
  \[
    \Mod_{\tth}(\modelI, \model^{\heart}) \cong
    \Mod_{\tth}(\modelI, \model)
  \]
  for any democratic model \(\modelI\) of \(\tth\).
\end{proposition}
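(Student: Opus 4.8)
The plan is to show that the functor
$\iota_{*} \colon \Mod_{\tth}(\modelI, \model^{\heart}) \to \Mod_{\tth}(\modelI, \model)$
given by post-composition with the inclusion morphism $\iota \colon \model^{\heart} \to \model$ is an isomorphism of categories. It suffices to prove that $\iota_{*}$ is bijective on objects and fully faithful, since any such functor admits a two-sided inverse. Throughout I rely on two properties of $\iota$ coming from the definition of the heart: each component $\iota_{\obj} \colon \obj^{\model^{\heart}} \to \obj^{\model}$ is fully faithful (it is the base change of the full inclusion $\model^{\heart} \hookrightarrow \model$ along $\pr_{\obj^{\model}}$, so its image consists exactly of the elements lying over contextual objects, with all morphisms between them), and for every arrow $\arr \colon \obj \to \objI$ in $\tth$ the naturality square of $\iota$ is a pullback (because $(\argu)^{\model^{\heart}}$ is defined as a base change along $\model^{\heart} \hookrightarrow \model$; cf.\ \cref{prop:dfib-base-change}).

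For surjectivity on objects, start from a morphism $\fun \colon \modelI \to \model$ of models. Since $\modelI$ is democratic every object of $\modelI$ is contextual, and by \cref{prop:morphism-preserves-contextual-objects} the underlying functor $\fun$ sends every object of $\modelI$ into the full subcategory $\model^{\heart}$; fullness of $\model^{\heart}$ then yields a unique functor $\funI \colon \modelI \to \model^{\heart}$ with $\iota \circ \funI = \fun$. For each $\obj \in \tth$, the pullback square defining $\obj^{\model^{\heart}}$ together with the map $\fun_{\obj} \colon \obj^{\modelI} \to \obj^{\model}$ (which lies over $\fun = \iota \funI$) induces, by the universal property of the pullback, a unique map $\funI_{\obj} \colon \obj^{\modelI} \to \obj^{\model^{\heart}}$ over $\funI$ with $\iota_{\obj} \circ \funI_{\obj} = \fun_{\obj}$. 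It then remains to check that $(\funI, \funI_{(\argu)})$ is a morphism of models. Preservation of terminal objects follows from that of $\fun$, since the terminal object of $\model^{\heart}$ is the terminal object $1$ of $\model$. Each naturality square for $\funI$ commutes because it does after post-composition with the faithful $\iota_{\objI}$, using naturality of $\fun$ and of $\iota$. Finally, for a representable arrow $\arr$, the Beck--Chevalley condition is equivalent to the naturality square being a pullback (\cref{cor:representable-map-pullback}); pasting the $\funI$-square onto the $\iota$-square (a pullback) reproduces exactly the $\fun$-square, which is a pullback because $\fun$ is a morphism of models, so by the pasting law the $\funI$-square is a pullback as well. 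Thus $\iota_{*}(\funI) = \fun$.

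Injectivity on objects is immediate: if $\iota \circ \funI = \iota \circ \funII$ as morphisms of models, then the underlying functors agree (as $\iota$ is injective on objects and faithful) and $\iota_{\obj} \funI_{\obj} = \iota_{\obj} \funII_{\obj}$ forces $\funI_{\obj} = \funII_{\obj}$ (as $\iota_{\obj}$ is injective on objects and faithful). For full faithfulness, recall that a $2$-morphism is a natural transformation of the underlying functors satisfying the property that it lifts (necessarily uniquely) to each $\obj^{(\argu)}$. Since $\iota$ is fully faithful, whiskering by $\iota$ is a bijection between natural transformations of the underlying functors $\modelI \to \model^{\heart}$ and those of their composites with $\iota$. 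The lift condition transfers in both directions through the fully faithful $\iota_{\obj}$: a lift over a transformation $\trans$ whiskers by $\iota_{\obj}$ to a lift over $\iota \trans$, and conversely any lift over $\iota \trans$ has all components landing in the full sub-fibration $\obj^{\model^{\heart}} \hookrightarrow \obj^{\model}$ and hence restricts to a lift over $\trans$. This gives the bijection on $2$-morphisms, so $\iota_{*}$ is fully faithful.

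The only step requiring genuine care is the transfer of the Beck--Chevalley condition in the construction of $\funI$: it hinges on recognizing $(\argu)^{\model^{\heart}}$ as a base change, so that the naturality squares of $\iota$ are pullbacks and the pasting law applies. Everything else is bookkeeping with the universal property of the defining pullbacks and the full faithfulness of $\iota$ together with its components $\iota_{\obj}$.
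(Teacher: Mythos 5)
Your overall strategy is the same as the paper's (which simply observes, via \cref{prop:morphism-preserves-contextual-objects}, that every morphism from a democratic model factors through the heart), and most of your expanded bookkeeping is sound: the factorization of the underlying functor, the lift of each \(\fun_{\obj}\) through the defining pullback of \(\obj^{\model^{\heart}}\), the cancellation arguments for naturality and for injectivity, and the treatment of \(2\)-morphisms via full faithfulness of \(\iota\) and \(\iota_{\obj}\) are all correct. But there is a genuine gap at precisely the step you flag as the crucial one: the transfer of the Beck--Chevalley condition. You invoke \cref{cor:representable-map-pullback} to assert that Beck--Chevalley is equivalent to the naturality square being a pullback, but that corollary applies only to commutative squares of discrete fibrations \emph{over a single base category} \(\bcat\). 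The naturality square \eqref{eq:1} of a morphism of models lies over a functor \(\fun : \model \to \modelI\), and for such squares Beck--Chevalley is strictly weaker than being a pullback. Concretely, for \(\tth = \thgat\) the Beck--Chevalley condition for \(\typeof_{0}\) says only that \(\fun\) preserves context extensions, whereas the naturality square being a pullback in \(\Cat\) would force the terms of a type \(A\) in \(\model\) to biject with the terms of \(\fun_{\Ty_{0}}A\) in \(\modelI\) --- false already for the interpretation of a syntactic cwf in a set-theoretic one. So your claim that ``the \(\fun\)-square is a pullback because \(\fun\) is a morphism of models'' fails, the pasting-law argument collapses, and even if the \(\funI\)-square were a pullback you could not conclude Beck--Chevalley from it by the same misapplied corollary.

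The conclusion you want is nevertheless true, and the repair is the argument the paper itself uses when proving that representable maps in \((\DFib^{\idxtwocat})_{\bcat}\) are stable under pullbacks: paste Beck--Chevalley squares and reflect isomorphisms. The \(\iota\)-naturality square satisfies Beck--Chevalley not because it is a pullback of categories but because \(\model^{\heart}\) is closed under context extensions, so the right adjoint \(\radj^{\arr^{\model}}\) restricts to the heart; this is exactly what makes \(\iota\) a morphism of models in the first place. The composite of the \(\funI\)-square with the \(\iota\)-square is the \(\fun\)-square, whose Beck--Chevalley transformation is invertible by hypothesis and factors as the pasting of the two canonical transformations. Since the Beck--Chevalley transformation of the \(\iota\)-square is invertible and \(\iota_{\obj}\) is fully faithful (hence reflects isomorphisms), the canonical transformation of the \(\funI\)-square is invertible as well. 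With this substitution your proof goes through and matches the content the paper leaves implicit in its one-line argument.
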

\begin{proof}
  By \cref{prop:morphism-preserves-contextual-objects},
  every morphism \(\modelI \to \model\) from a democratic model
  \(\modelI\) factors through \(\model^{\heart}\).
\end{proof}

\subsection{The Bi-initial Model of a Type Theory}
\label{sec:bi-initial-model}

The bi-initial model of a type theory \(\tth\) is obtained from the
Yoneda embedding \(\tth \to \DFib_{\tth}\).

\begin{lemma}
  \label{lem:yoneda-LF-functor}
  Let \(\bcat\) be a small cartesian category.
  \begin{enumerate}
  \item The Yoneda embedding \(\bcat \to \DFib_{\bcat}\) preserves
    finite limits.
  \item For any arrow \(\barr : \bobj \to \bobjI\) in \(\bcat\), the map of
    discrete fibrations \(\barr : \bcat/\bobj \to \bcat/\bobjI\) is
    representable with right adjoint \(\barr^{*} : \bcat/\bobjI \to
    \bcat/\bobj\).
  \item The Yoneda embedding preserves existing pushforwards.
  \end{enumerate}
\end{lemma}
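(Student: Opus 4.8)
The plan is to prove the three parts of Lemma~\ref{lem:yoneda-LF-functor} largely by appealing to the characterizations of representability and pushforwards already established in the excerpt, so that each part reduces to a near-immediate verification.

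For part~(1), the Yoneda embedding $\bcat \to \DFib_{\bcat}$ sends $\bobj$ to the representable discrete fibration $\bcat/\bobj$, and this corresponds under the equivalence $\DFib_{\bcat} \simeq [\bcat^{\op}, \Set]$ to the ordinary presheaf Yoneda embedding. The classical Yoneda embedding preserves all limits that exist, in particular finite limits; I would state this directly, or alternatively verify it by hand using that limits of representable presheaves are computed pointwise and that $\bcat(\argu, \lim_i \bobj_i) \cong \lim_i \bcat(\argu, \bobj_i)$ by the universal property of limits in $\bcat$. Either way this is routine.

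For part~(2), given $\barr : \bobj \to \bobjI$, the induced map $\barr : \bcat/\bobj \to \bcat/\bobjI$ is postcomposition with $\barr$, and its candidate right adjoint is the pullback functor $\barr^{*} : \bcat/\bobjI \to \bcat/\bobj$. The adjunction $\Sigma_{\barr} \adj \barr^{*}$ is the standard slice-category adjunction: a map $(\objII \to \bobj) \to \barr^{*}(\objIII \to \bobjI)$ over $\bobj$ corresponds to a map $(\objII \to \bobj) \to (\objIII \to \bobjI)$ over $\barr$, which is exactly a map $\barr \circ (\objII \to \bobj) \to (\objIII \to \bobjI)$ over $\bobjI$. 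I would note that this slice adjunction exists in any category (no completeness needed beyond the relevant pullbacks, which exist since $\bcat$ is cartesian), and that having a right adjoint is precisely Definition~\ref{def:representable-map} of representability. So $\barr$ is representable with right adjoint $\barr^{*}$.

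For part~(3), I would use Proposition~\ref{prop:dfib-rep-pushforward}: the pushforward along a representable map of discrete fibrations is computed by base change along its right adjoint. Suppose $\arr : \obj \to \objI$ is exponentiable in $\bcat$ with pushforward $\arr_{*}$; I must show the Yoneda embedding sends $\arr_{*}$ to the pushforward $\barr_{*}$ along the representable map $\arr : \bcat/\obj \to \bcat/\objI$. By part~(2) the right adjoint of this representable map is $\arr^{*}$, so by Proposition~\ref{prop:dfib-rep-pushforward} the pushforward along it is base change along $\arr^{*}$, i.e.\ $(\arr^{*})^{*}$. The main point, and the one place requiring care, is to check that the Yoneda embedding intertwines the slice-category pushforward $\arr_{*}$ in $\bcat$ with this discrete-fibration pushforward; I expect this to follow by unwinding the definition of the polynomial/pushforward structure together with part~(2) and the fact that slicing $\DFib_{\bcat}$ over a representable $\bcat/\objI$ recovers $\DFib_{\bcat/\objI}$, so that representability and right adjoints are computed compatibly in the slice. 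I anticipate this compatibility-in-the-slice bookkeeping is the only genuine obstacle; the adjoint-functor manipulations themselves are formal.
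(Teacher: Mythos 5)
Your proposal is correct and takes essentially the same route as the paper: parts (1) and (2) are dismissed there as obvious (your slice adjunction \(\Sigma_{u} \dashv u^{*}\) is exactly what is meant, and having a right adjoint is the definition of representability), and part (3) is likewise reduced via \cref{prop:dfib-rep-pushforward} to base change along the right adjoint \(u^{*}\) supplied by part (2). The one step you defer as ``compatibility-in-the-slice bookkeeping'' needs none of the slicing-of-\(\DFib_{\bcat}\) machinery you anticipate: in the paper, for arrows \(u : I \to J\) and \(v : J \to K\) with \(v_{*}I\) existing, the defining bijection \(\bcat/K(L, v_{*}I) \cong \bcat/J(v^{*}L, I)\), natural in \(L \to K\), says precisely that the square with top arrow \(\bcat/v_{*}I \to \bcat/I\), right arrow \(u : \bcat/I \to \bcat/J\), bottom arrow \(v^{*} : \bcat/K \to \bcat/J\) and left arrow the structure map is a pullback of discrete fibrations over \(\bcat\), i.e.\ that \(\bcat/v_{*}I\) is the base change of \(\bcat/I\) along \(v^{*}\) and hence the pushforward, closing your deferred step in one line.
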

\begin{proof}
  The first two claims are obvious. To prove the third, let
  \(\barr : \bobj \to \bobjI\) and \(\barrI : \bobjI \to \bobjII\) be arrows in \(\bcat\) and
  suppose that the pushforward \(\barrI_{*}\bobj \in \bcat/\bobjII\) exists. By
  definition, for any arrow \(\barrII : \bobjIII \to \bobjII\) in \(\bcat\), we have a
  bijection \(\bcat/\bobjII(\bobjIII, \barrI_{*}\bobj) \cong \bcat/\bobjI(\barrI^{*}\bobjIII, \bobj)\). This
  means that we have a pullback
  \[
    \begin{tikzcd}
      \bcat/\barrI_{*}\bobj \arrow[r] \arrow[d] &
      \bcat/\bobj \arrow[d,"\barr"] \\
      \bcat/\bobjII \arrow[r,"\barrI^{*}"'] & \bcat/\bobjI
    \end{tikzcd}
  \]
  and thus \(\bcat/\barrI_{*}\bobj\) is the pushforward of
  \(\barr : \bcat/\bobj \to \bcat/\bobjI\) along
  \(\barrI : \bcat/\bobjI \to \bcat/\bobjII\) by
  \cref{prop:dfib-rep-pushforward}.
\end{proof}

\begin{definition}
  Let \(\tth\) be a type theory. The Yoneda embedding
  \(\tth/(\argu) : \tth \to \DFib_{\tth}\) is a
  representable map functor by \cref{lem:yoneda-LF-functor}, so
  we have a model \((\tth, \tth/(\argu))\) of
  \(\tth\). We denote by \(\iM(\tth)\) the heart of
  \((\tth, \tth/(\argu))\) and call it the \emph{bi-initial
    model of \(\tth\)} due to \cref{thm:initial-model}
  below.
\end{definition}

We describe the model \(\iM(\tth)\) in more detail. For an
object \(\bobj \in \tth\), a representable arrow \(\arr : \obj \to \objI\) in
\(\tth\) and an object \((\elI : \bobj \to \objI) \in \tth/\objI\), the
context extension \(\{\elI\}^{\arr}\) in the model
\((\tth, \tth/(\argu))\) is the pullback in \(\tth\)
\[
  \begin{tikzcd}
    \{\elI\}^{\arr}
    \arrow[r,"\radj^{\arr}_{\elI}"]
    \arrow[d,"\proj^{\arr}_{\elI}"'] &
    \obj \arrow[d,"\arr"] \\
    \bobj \arrow[r,"\elI"'] &
    \objI.
  \end{tikzcd}
\]
Thus the base category \(\iM(\tth)\) is the full subcategory of
\(\tth\) consisting of those objects \(\obj \in \tth\) such that the
unique arrow \(\obj \to 1\) is representable. For an object
\(\obj \in \tth\), the discrete fibration \(\obj^{\iM(\tth)}\) is the
comma category \(\iM(\tth)/\obj\) defined by the pullback
\[
  \begin{tikzcd}
    \iM(\tth)/\obj \arrow[r,hook] \arrow[d] &
    \tth/\obj \arrow[d,"\dom"] \\
    \iM(\tth) \arrow[r,hook] &
    \tth.
  \end{tikzcd}
\]

\begin{example}
  \label{exm:bi-initial-model-ctx}
  Suppose that \(\tth\) contains a representable arrow
  \(\typeof : \El \to \Ty\) and that every representable arrow in
  \(\tth\) is a composite of pullbacks of \(\typeof\). For example,
  the basic dependent type theory \(\thgat\) and its slices
  \(\thgat/\obj\) satisfy this assumption. Then the base category of
  \(\iM(\tth)\) is equivalent to the following:
  \begin{itemize}
  \item the objects are the finite sequences \((\obj_{1}, \dots,
    \obj_{n})\) of arrows \(\obj_{\idx} : |\obj_{\idx-1}| \to \Ty\)
    where \(|\obj_{0}| = 1\) and \(|\obj_{\idx}| =
    \obj_{\idx}^{*}\El\) for \(\idx \ge 1\);
  \item the arrows \((\obj_{1}, \dots, \obj_{n}) \to (\objI_{1},
    \dots, \objI_{m})\) are the arrows \(|\obj_{n}| \to |\objI_{m}|\)
    in \(\tth\).
  \end{itemize}
  When we think of \(\Ty\) as the object of types and \(\El\) as the
  object of elements, an object in \(\iM(\tth)\) is a finite sequence
  of types, that is, a context. For an object \(\bobj \in \iM(\tth)\),
  elements of \(\Ty^{\iM(\tth)}(\bobj)\) and
  \(\El^{\iM(\tth)}(\bobj)\) are types and elements, respectively,
  indexed over the context \(\bobj\). Hence, the bi-initial model
  \(\iM(\tth)\) generalizes the usual syntactic models of dependent
  type theories.
\end{example}

\begin{example}
  Let \(\Sigma\) be a signature of the logical framework
  (\cref{sec:logical-framework}) and take the syntactic representable
  map category \(\sRM(\Sigma)\) (\cref{sec:synt-repr-map}). The base
  category of \(\iM(\sRM(\Sigma))\) is the full subcategory of
  \(\sRM(\Sigma)\) spanned by those contexts
  \((x_{1} : A_{1}, \dots, x_{n} : A_{n})\) such that
  \(A_{1}, \dots, A_{n}\) are all representable types. When \(\Sigma\) is
  \(\Sigma_{DTT}\) (\cref{exm:sig-basic-dtt}) or its extension by type
  constructors, all of \(A_{i}\)'s are of the form \(\s{el}(B)\) for
  \(B : \s{Type}\). When \(\Sigma\) is the signature for cubical type
  theory (\cref{exm:cubical-type-theory}), \(A_{i}\)'s are either
  \(\s{el}(B)\) for \(B : \s{Type}\), \(\I\), or \(\s{true}(P)\) for
  \(P : \s{Cof}\).
\end{example}

\begin{theorem}
  \label{thm:initial-model}
  For any type theory \(\tth\), the model \(\iM(\tth)\) is a
  bi-initial object of \(\Mod_{\tth}\). That is, the category
  \(\Mod_{\tth}(\iM(\tth), \model)\) is contractible for any model
  \(\model\) of \(\tth\).
\end{theorem}
\begin{proof}
  We show that there exists a morphism
  \(\iM(\tth) \to \model\) and that morphisms
  \(\iM(\tth) \to \model\) are unique up to unique
  isomorphism.

  We first show the existence of a morphism \(\iM(\tth) \to
  \model\). For every object \(\bobj \in \iM(\tth)\), the unique map
  \(\bobj^{\model} \to \model\) of discrete fibrations over \(\model\)
  is representable. In particular, the discrete fibration
  \(\bobj^{\model} \in \DFib_{\model}\) is representable because the
  category \(\model\) has a terminal object. Hence, the restriction of
  \((\argu)^{\model} : \tth \to \DFib_{\model}\) to \(\iM(\tth)\)
  factors, up to natural isomorphism, as a functor
  \(\fun : \iM(\tth) \to \model\) followed by the Yoneda embedding
  \(\model \to \DFib_{\model}\). For objects \(\obj \in \tth\) and
  \(\bobj \in \iM(\tth)\) and an arrow \(\el : \bobj \to \obj\), we
  define \(\fun_{\obj}(\el) \in \obj^{\model}(\fun\bobj)\) to be
  \(\model/\fun\bobj \cong \bobj^{\model}
  \overset{\el^{\model}}{\longrightarrow} \obj^{\model}\), yielding a
  map \(\fun_{A} : \iM(\tth)/\obj \to \obj^{\model}\) of discrete
  fibrations over \(\fun : \iM(\tth) \to \model\).

  Clearly \(\fun : \iM(\tth) \to \model\) preserves terminal
  objects and \(\obj \mapsto \fun_{\obj}\) is natural. To see the Beck-Chevalley
  condition, let \(\arr : \obj \to \objI\) be a representable arrow in
  \(\tth\). We have to show that the diagram
  \[
    \begin{tikzcd}
      \iM(\tth)/\obj \arrow[r,"\fun_{\obj}"]
      \arrow[d,"\arr"'] &
      \obj^{\model}
      \arrow[d,"\arr^{\model}"] \\
      \iM(\tth)/\objI \arrow[r,"\fun_{\objI}"'] &
      \objI^{\model}
    \end{tikzcd}
  \]
  satisfies the Beck-Chevalley condition. It suffices to show that the
  composite of squares
  \begin{equation}
    \label[diagram]{eq:6}
    \begin{tikzcd}
      \iM(\tth)/\elI^{*}\obj
      \arrow[r,"\arr^{*}\elI"]
      \arrow[d,"\elI^{*}\arr"'] &
      \iM(\tth)/\obj
      \arrow[r,"\fun_{\obj}"]
      \arrow[d,"\arr"] &
      \obj^{\model}
      \arrow[d,"\arr^{\model}"] \\
      \iM(\tth)/\bobj
      \arrow[r,"\elI"'] &
      \iM(\tth)/\objI
      \arrow[r,"\fun_{\objI}"'] &
      \objI^{\model}
    \end{tikzcd}
  \end{equation}
  satisfies the Beck-Chevalley condition for all objects \((\elI :
  \bobj \to \objI) \in \iM(\tth)/\objI\). By the definition of
  \(\fun_{(\argu)}\), \cref{eq:6} is equal to the following composite
  of squares.
  \begin{equation}
    \label[diagram]{eq:7}
    \begin{tikzcd}
      \iM(\tth)/\elI^{*}\obj \arrow[r,"\fun"]
      \arrow[d,"\elI^{*}\arr"'] & \model/\fun(\elI^{*}\obj)
      \arrow[r,"\cong"] \arrow[d,"\fun(\elI^{*}\arr)"'] &
      [-2ex]
      (\elI^{*}\obj)^{\model} \arrow[r,"(\arr^{*}\elI)^{\model}"]
      \arrow[d,"(\elI^{*}\arr)^{\model}"] & \obj^{\model}
      \arrow[d,"\arr^{\model}"] \\
      \iM(\tth)/\bobj \arrow[r,"\fun"'] & \model/\fun\bobj
      \arrow[r,"\cong"'] & \bobj^{\model} \arrow[r,"\elI^{\model}"'] &
      \objI^{\model}
    \end{tikzcd}
  \end{equation}
  To show that \cref{eq:7} satisfies the Beck-Chevalley condition for
  all \((\elI : \bobj \to \objI) \in \iM(\tth)/\objI\), it suffices to
  check that the canonical natural transformation
  \((\arr^{*}\elI)^{\model}\fun(\elI^{*}\arr)^{*} \To
  \radj^{\arr}\elI^{\model}\fun\) induced by \cref{eq:7} is invertible
  at \(\id_{\bobj} \in \iM(\tth)/\bobj\) for all
  \((\elI : \bobj \to \objI) \in \iM(\tth)/\objI\). This is
  straightforward because the right-most square of \cref{eq:7} is a
  pullback in \(\DFib_{\model}\) and thus satisfies the Beck-Chevalley
  condition by \cref{cor:representable-map-pullback}.

  To show the uniqueness of morphisms \(\iM(\tth) \to \model\), let
  \(\funI : \iM(\tth) \to \model\) be another morphism of models of
  \(\tth\). By \cref{thm:democratic-codiscrete}, it suffices to show
  that there exists a \(2\)-morphism \(\fun \To \funI\). Let
  \(\bobj \in \iM(\tth)\) be an object and let \(\barr : \bobj \to 1\)
  denote the unique arrow to the terminal object. By the
  Beck-Chevalley condition for the square
  \[
    \begin{tikzcd}
      \iM(\tth)/\bobj
      \arrow[r,"\funI_{\bobj}"]
      \arrow[d, "\barr_{!}"'] &
      \bobj^{\model}
      \arrow[d, "\barr^{\model}"] \\
      \iM(\tth)
      \arrow[r,"\funI"'] &
      \model,
    \end{tikzcd}
  \]
  we have the natural isomorphism
  \(\funI_{\bobj} \barr^{*} \cong \radj^{\barr} \funI : \iM(\tth) \to
  \bobj^{\model}\). Since \(\radj^{\barr} \funI\) preserves terminal
  objects,
  \(\funI_{\bobj}(\id_{\bobj}) \cong \funI_{\bobj}(\barr^{*} 1)\) is the
  terminal object. Thus, by \cref{prop:representability}, we have
  \(\model/\funI\bobj \cong \bobj^{\model}\). For an arrow
  \(\el : \bobj \to \obj\) in \(\tth\) with \(\bobj \in \iM(\tth)\), the
  diagram
  \[
    \begin{tikzcd}
      \iM(\tth)/\bobj
      \arrow[r,"\funI"]
      \arrow[dr,"\funI_{\bobj}"']
      \arrow[dd,"\el"'] &
      \model/\funI\bobj
      \arrow[d,"\cong"] \\
      & \bobj^{\model}
      \arrow[d,"\el^{\model}"] \\
      \iM(\tth)/\obj
      \arrow[r,"\funI_{\obj}"'] &
      \obj^{\model}
    \end{tikzcd}
  \]
  commutes. This means that
  \(\funI_{\obj}(\el) \in \obj^{\model}(\funI\bobj)\) is given by the
  composite
  \(\model/\funI\bobj \cong \bobj^{\model}
  \overset{\el^{\model}}{\longrightarrow} \obj^{\model}\). Hence,
  \(\funI\) has the same definition as \(\fun\), and thus
  \(\fun \cong \funI\).
\end{proof}

\section{Internal Languages}
\label{sec:internal-languages}

In this section we establish a correspondence between theories and
models for every type theory \(\tth\). We begin with a definition of a
\emph{theory over \(\tth\)} or \emph{\(\tth\)-theory}.

\begin{definition}
  Let \(\tth\) be a type theory. A \emph{theory over
    \(\tth\)} or \emph{\(\tth\)-theory} is a cartesian
  functor \(\theory : \tth \to \Set\). We denote by
  \(\Theory_{\tth}\) the category of \(\tth\)-theories and
  their maps, that is, natural transformations.
\end{definition}

For a \(\tth\)-theory \(\theory\), we think of \(\theory(\obj)\) for
an object \(\obj \in \tth\) as a set of closed derivations of judgment
form \(\obj\). While \(\theory\) respects finite limits, representable
maps and pushforward along them are disregarded. Indeed, for a
representable map \(\obj \to 1\) and an arbitrary object \(\objI\) in
\(\tth\), not all set-theoretic functions
\(\theory(\obj) \to \theory(\objI)\) should be closed derivations of the
exponential \(\obj \To \objI\).

\begin{example}
  \label{exm:G-theories}
  Consider the basic dependent type theory \(\thgat\)
  (\cref{sec:exampl-basic-depend}). The theory of locally presentable
  categories \parencite{adamek1994locally} shows that
  \(\Theory_{\thgat}\) is equivalent to the category of generalized
  algebraic theories and interpretations between them; see also
  \parencite[Remark 3.26]{uemura2019exponentiability}. Concretely, for
  a \(\thgat\)-theory \(\theory : \thgat \to \Set\), the corresponding
  generalized algebraic theory \(\Sigma_{\theory}\) can be described as
  follows:
  \begin{itemize}
  \item a closed type \(({} \vdash A)\) in \(\Sigma_{\theory}\)
    corresponds to an element of \(\theory(\Ty_{0})\). Thus,
    \(\theory(\Ty_{0})\) is the set of closed types;
  \item a closed term \(({} \vdash a : A)\) in \(\Sigma_{\theory}\)
    corresponds to an element of \(\theory(\El_{0})\) such that
    \(\typeof_{0} \act a = A\). Thus, \(\theory(\El_{0})\) is the set
    of closed terms;
  \item a type
    \((x_{0} : A_{0}, \dots, x_{n-1} : A_{n-1} \vdash A_{n})\) in
    \(\Sigma_{\theory}\) corresponds to an element of
    \(\theory(\Ty_{n})\) such that \(\ctxof_{i} \act A_{i} = A_{i-1}\)
    for \(i = n, \dots, 1\). Thus, \(\theory(\Ty_{n})\) is the set of
    types over a context of length \(n\);
  \item a term
    \((x_{0} : A_{0}, \dots, x_{n-1} : A_{n-1} \vdash a_{n} : A_{n})\)
    in \(\Sigma_{\theory}\) corresponds to an element of
    \(\theory(\El_{n})\) such that \(\typeof_{n} \act a_{n} = A_{n}\)
    and \(\ctxof_{i} \act A_{i} = A_{i-1}\) for \(i = n, \dots,
    1\). Thus, \(\theory(\El_{n})\) is the set of terms over a context
    of length \(n\).
  \end{itemize}
  From \cref{thm:gat-exp} one can see that every object
  \(\obj \in \thgat\) is a finite limit of \(\Ty_{n}\) and
  \(\El_{n}\), and thus the other sets \(\theory(\obj)\) are finite
  limits of \(\theory(\Ty_{n})\) and \(\theory(\El_{n})\).
\end{example}

\begin{example}
  Let \(\Sigma\) be a signature of the logical framework
  (\cref{sec:logical-framework}) and take the syntactic representable
  map category \(\sRM(\Sigma)\) (\cref{sec:synt-repr-map}). Every context
  \(\Sigma \mid \Gamma \vdash \ctx\) induces the
  \(\sRM(\Sigma)\)-theory
  \(\sRM(\Sigma)(\Gamma, \argu) : \sRM(\Sigma) \to \Set\). When
  \(\Gamma = (x_{1} : A_{1}, \dots, x_{n} : A_{n})\), we think of
  \(\sRM(\Sigma)(\Gamma, \argu)\) as the theory consisting of constants
  \(c_{1} : A_{1}, c_{2} : A_{2}[c_{1}/x_{1}], \dots, c_{n} :
  A_{n}[c_{1}/x_{1}, \dots, c_{n-1}/x_{n-1}]\). Indeed,
  \(\sRM(\Sigma)(\Gamma, \Delta)\) is the set of derivations from
  \(\Gamma\) to \(\Delta\), but they are equivalent to closed derivations of
  \(\Delta\) where \(x_{1}, \dots, x_{n}\) are regarded as constants. A
  \(\sRM(\Sigma)\)-theory of the form \(\sRM(\Sigma)(\Gamma, \argu)\) is thus a
  finite \(\sRM(\Sigma)\)-theory. The theory of locally presentable
  categories \parencite{adamek1994locally} shows that any
  \(\sRM(\Sigma)\)-theory is written as a filtered colimit of finite
  \(\sRM(\Sigma)\)-theories.
\end{example}

The internal language of a model of \(\tth\) is then easily defined.

\begin{definition}
  Let \(\tth\) be a type theory and \(\model\) a model of \(\tth\). We
  define a \(\tth\)-theory \(\iL_{\tth}\model\) to be
  \(\DFib_{\model}(\model, (\argu)^{\model})\), where we regard
  \(\model\) as a discrete fibration over \(\model\) with the identity
  functor \(\model \to \model\). Note that
  \(\iL_{\tth}\model(\obj) \cong \obj^{\model}(1)\) because \(\model\) has
  the terminal object \(1\). We call \(\iL_{\tth}\model\) the
  \emph{internal language of \(\model\)}.
\end{definition}

\begin{example}
  Let \(\model\) be a model of the type theory \(\thgat\), that is, a
  category with families. We think of \(\Ty_{0}^{\model}\) as a
  discrete fibration of types and \(\El_{0}^{\model}\) as a discrete
  fibration of terms. Consider the set
  \(\iL_{\thgat}\model(\Ty_{n}) \cong \Ty_{n}^{\model}(1)\). By
  \cref{item:20} of \cref{thm:gat-exp}, it is isomorphic to
  \((\poly_{\typeof_{0}}^{n}\Ty_{0})^{\model}(1)\). By
  \cref{prop:dfib-rep-pushforward} an element of
  \((\poly_{\typeof_{0}}^{n}\Ty_{0})^{\model}(1)\) is a sequence
  \((\el_{0}, \dots, \el_{n})\) of elements
  \(\el_{0} \in \Ty_{0}^{\model}(1), \el_{1} \in
  \Ty_{0}^{\model}(\{\el_{0}\}^{\typeof_{0}}), \dots, \el_{n} \in
  \Ty_{0}^{\model}(\{\el_{n-1}\}^{\typeof_{0}})\). In other words,
  \(\iL_{\thgat}\model(\Ty_{n})\) is the set of types in \(\model\)
  over a context of length \(n\). Similarly,
  \(\iL_{\thgat}\model(\El_{n})\) is the set of terms in \(\model\)
  over a context of length \(n\).
\end{example}

\begin{proposition}
  For a type theory \(\tth\), the map
  \(\model \mapsto \iL_{\tth}\model\) is part of a
  \(2\)-functor
  \(\iL_{\tth} : \Mod_{\tth} \to \Theory_{\tth}\),
  where we regard \(\Theory_{\tth}\) as a locally discrete
  \(2\)-category.
\end{proposition}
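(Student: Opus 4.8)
The plan is to supply the three pieces of data of a $2$-functor---its action on objects, $1$-cells and $2$-cells---and to verify strict functoriality, the only genuinely nontrivial point being that $2$-morphisms must be sent to identities, as they must since $\Theory_{\tth}$ is locally discrete. On objects we have already noted $\iL_{\tth}\model(\obj)\cong\obj^{\model}(1)$; to see that $\iL_{\tth}\model$ is a $\tth$-theory, i.e.\ a cartesian functor $\tth\to\Set$, I would exhibit it as the composite of $(\argu)^{\model}\colon\tth\to\DFib_{\model}$ with the global-sections functor $\DFib_{\model}(\model,\argu)\colon\DFib_{\model}\to\Set$. The former preserves finite limits because it is a representable map functor, and the latter preserves all limits because it is the hom-functor out of the terminal discrete fibration (equivalently, evaluation at $1$, and limits of discrete fibrations are computed fibrewise); hence the composite is cartesian.

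For the action on $1$-cells, given a morphism $\fun\colon\model\to\modelI$ of models, the object $\fun 1$ is terminal in $\modelI$ since $\fun$ preserves terminal objects, so there is a unique arrow $\barr\colon 1\to\fun 1$. I would define $\iL_{\tth}\fun\colon\iL_{\tth}\model\To\iL_{\tth}\modelI$ componentwise by
\[
  (\iL_{\tth}\fun)_{\obj}\colon \obj^{\model}(1)\longrightarrow \obj^{\modelI}(1),
  \qquad \el\longmapsto \fun_{\obj}(\el)\act\barr ,
\]
the transport of $\fun_{\obj}(\el)\in\obj^{\modelI}(\fun 1)$ back along $\barr$. Naturality of this family in $\obj$ is exactly the naturality square~\eqref{eq:1} of the morphism of models together with the fact that each $\arr^{\modelI}$, being a map of discrete fibrations over $\modelI$, commutes with the action $\act\barr$; thus $\iL_{\tth}\fun$ is a map of $\tth$-theories. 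Strict preservation of identities is immediate, since the canonical arrow is then $\id_{1}$. For composition, writing $\funII=\funI\fun$ and using that $\funI_{\obj}$ commutes with reindexing, one has $\funI_{\obj}\bigl(\fun_{\obj}(\el)\act\barr\bigr)=\funII_{\obj}(\el)\act\funI(\barr)$, after which $\funI(\barr)$ precomposed with the canonical arrow $1\to\funI 1$ equals the canonical arrow $1\to\funI\fun 1$ by uniqueness of maps into a terminal object, giving $(\iL_{\tth}\funI)\circ(\iL_{\tth}\fun)=\iL_{\tth}\funII$.

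The main step is the collapse of $2$-cells. Let $\trans\colon\fun\To\funI$ be a $2$-morphism of models; I must show $\iL_{\tth}\fun=\iL_{\tth}\funI$, so that $\iL_{\tth}$ may send $\trans$ to the identity. Fix $\obj\in\tth$ and $\el\in\obj^{\model}(1)$. The defining datum of $\trans$ supplies a natural transformation $\trans_{\obj}\colon\fun_{\obj}\To\funI_{\obj}$ lying over $\trans$, whose component at $\el$ is an arrow $\fun_{\obj}(\el)\to\funI_{\obj}(\el)$ in $\obj^{\modelI}$ over $\trans_{1}\colon\fun 1\to\funI 1$. Since $\obj^{\modelI}$ is a discrete fibration this arrow is forced to be the cartesian lift of $\trans_{1}$, whence $\funI_{\obj}(\el)\act\trans_{1}=\fun_{\obj}(\el)$. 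Writing $\barr\colon 1\to\fun 1$ and $\barr'\colon 1\to\funI 1$ for the canonical arrows, uniqueness of arrows into the terminal objects $\fun 1,\funI 1$ gives $\trans_{1}\circ\barr=\barr'$, and therefore
\[
  (\iL_{\tth}\funI)_{\obj}(\el)
  = \funI_{\obj}(\el)\act\barr'
  = \funI_{\obj}(\el)\act(\trans_{1}\circ\barr)
  = \bigl(\funI_{\obj}(\el)\act\trans_{1}\bigr)\act\barr
  = \fun_{\obj}(\el)\act\barr
  = (\iL_{\tth}\fun)_{\obj}(\el).
\]
Hence the two maps of theories coincide. The crux is this observation that the internal language sees only the fibres over $1$, on which any mediating $2$-cell is trivialized by the transport back along the canonical arrows; I expect this to be the one place requiring real care, the remaining verifications being routine bookkeeping with discrete fibrations and terminal objects.
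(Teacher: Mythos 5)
Your proposal is correct and follows essentially the same route as the paper: you define \((\iL_{\tth}\fun)_{\obj}(\el)\) as \(\fun_{\obj}(\el)\) transported along the canonical arrow into the terminal object \(\fun 1\) (the paper's identification \(\obj^{\modelI}(\fun 1)\cong\obj^{\modelI}(1)\)), and you collapse \(2\)-cells via exactly the paper's key identity \(\fun_{\obj}(\el)=\funI_{\obj}(\el)\act\trans_{1}\), forced by discreteness of the fibration \(\obj^{\modelI}\). Your additional verifications (cartesianness of \(\iL_{\tth}\model\) via the global-sections hom-functor, naturality in \(\obj\), and strict functoriality on \(1\)-cells) are details the paper leaves implicit, and they check out.
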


\begin{proof}
  For a morphism \(\fun : \model \to \modelI\) of models of
  \(\tth\), an object \(\obj \in \tth\) and a map
  \(\cst : \model \to \obj^{\model}\), we regard \(\cst\) as an element
  \(\cst \in \obj^{\model}(1)\) and define
  \(\iL_{\tth}\fun(\cst) : \modelI \to \obj^{\modelI}\) to be the map
  corresponding to the element
  \(\fun_{\obj} \cst \in \obj^{\modelI}(\fun 1) \cong \obj^{\modelI}(1)\). In other
  words, \(\iL_{\tth}\fun(\cst) : \modelI \to \obj^{\modelI}\) is the
  unique map such that the diagram
  \[
    \begin{tikzcd}
      \model \arrow[r,"\fun"] \arrow[d,"\cst"'] & \modelI
      \arrow[d,"\iL_{\tth}\fun(\cst)"] \\
      \obj^{\model} \arrow[r,"\fun_{\obj}"'] & \obj^{\modelI}
    \end{tikzcd}
  \]
  commutes. For a \(2\)-morphism
  \(\trans : \fun \To \funI : \model \to \modelI\) of models of
  \(\tth\), we have \(\iL_{\tth}\fun = \iL_{\tth}\funI\)
  because \(\fun_{\obj} \cst = \funI_{\obj} \cst \act \trans_{1}\) for any element
  \(\cst \in \obj^{\model}(1)\).
\end{proof}

The goal of this section is to show that the internal language
\(2\)-functor \(\iL_{\tth} : \Mod_{\tth} \to \Theory_{\tth}\) has a
left bi-adjoint \(\sM_{\tth} : \Theory_{\tth} \to \Mod_{\tth}\)
(\cref{thm:internal-language-adjunction}) and induces a bi-equivalence
\(\Mod_{\tth}^{\dem} \simeq \Theory_{\tth}\)
(\cref{thm:bi-equivalence-th-mod}).

The idea of the construction of the left bi-adjoint \(\sM_{\tth}\) of
\(\iL_{\tth}\) is as follows. We consider the case of
\(\tth = \thgat\). From \cref{exm:G-theories} a \(\tth\)-theory
\(\theory\) consists of sets \(\theory(\Ty_{n})\) of types, sets
\(\theory(\El_{n})\) of terms, and so on. We adjoin to \(\tth\) types
of \(\theory\) as type constructors and terms of \(\theory\) as term
constructors, yielding a new type theory \({\tth}[\theory]\) together
with an inclusion \(\tth \to {\tth}[\theory]\). We take the bi-initial
model \(\iM({\tth}[\theory])\) and then obtain a model
\(\sM_{\tth}\theory\) of \(\tth\) by restricting
\((\argu)^{\iM({\tth}[\theory])} : {\tth}[\theory] \to
\DFib_{\iM({\tth}[\theory])}\) along \(\tth \to {\tth}[\theory]\).

In \cref{sec:filt-pseudo-colim} we review filtered pseudo-colimits of
categories and show that representable map categories are closed under
filtered pseudo-colimits. The type theory \({\tth}[\theory]\) is
defined as a filtered pseudo-colimit in
\cref{sec:theories-over-type}. In \cref{sec:intern-lang-type} we show
that \(\sM_{\tth}\) is left bi-adjoint to \(\iL_{\tth}\). In
\cref{sec:bi-equiv-theor} we show that \(\iL_{\tth}\) induces a
bi-equivalence \(\Mod_{\tth}^{\dem} \simeq \Theory_{\tth}\).

\subsection{Filtered Pseudo-colimits of Representable Map Categories}
\label{sec:filt-pseudo-colim}

In this preliminary subsection we show that the \(2\)-category of
representable map categories has filtered pseudo-colimits.

\begin{definition}
  Let \(\cat : \idxcat \to \Cat\) be a pseudo-functor from a small
  category \(\idxcat\). We define a small category
  \(\plim_{\idx \in \idxcat}\cat_{\idx}\) called the \emph{pseudo-limit of
    \(\cat\)} as follows.
  \begin{itemize}
  \item An object of \(\plim_{\idx \in \idxcat}\cat_{\idx}\) consists of the
    following data:
    \begin{itemize}
    \item for each object \(\idx \in \idxcat\), an object
      \(\obj_{\idx} \in \cat_{\idx}\);
    \item for each arrow \(\iarr : \idx \to \idx'\) in \(\idxcat\), an isomorphism
      \(\obj_{\iarr} : \iarr \act \obj_{\idx} \cong \obj_{\idx'}\)
    \end{itemize}
    satisfying the following conditions:
    \begin{itemize}
    \item for any object \(\idx \in \idxcat\), the diagram
      \[
        \begin{tikzcd}
          \obj_{\idx} \arrow[r,"\cong"] \arrow[dr,equal] &
          \id_{\idx} \act \obj_{\idx} \arrow[d,"\obj_{\id_{\idx}}"] \\
          & \obj_{\idx}
        \end{tikzcd}
      \]
      commutes;
    \item for any arrows \(\iarr : \idx \to \idx'\) and \(\iarrI : \idx' \to \idx''\) in
      \(\idxcat\), the diagram
      \[
        \begin{tikzcd}
          \iarrI \act (\iarr \act \obj_{\idx}) \arrow[r,"\cong"] \arrow[d,"\iarrI \act
          \obj_{\iarr}"'] &
          (\iarrI\iarr) \act \obj_{\idx} \arrow[d,"\obj_{\iarrI\iarr}"] \\
          \iarrI \act \obj_{\idx'} \arrow[r,"\obj_{\iarrI}"'] & \obj_{\idx''}
        \end{tikzcd}
      \]
      commutes.
    \end{itemize}
  \item An arrow \(\obj \to \objI\) in \(\plim_{\idx \in \idxcat}\cat_{\idx}\)
    consists of an arrow \(\arr_{\idx} : \obj_{\idx} \to \objI_{\idx}\) for each object
    \(\idx \in \idxcat\) such that, for any arrow \(\iarr : \idx \to \idx'\) in \(\idxcat\),
    the diagram
    \[
      \begin{tikzcd}
        \iarr \act \obj_{\idx}
        \arrow[r,"\iarr \act \arr_{\idx}"]
        \arrow[d,"\obj_{\iarr}"'] &
        \iarr \act \objI_{\idx}
        \arrow[d,"\objI_{\iarr}"] \\
        \obj_{\idx'}
        \arrow[r,"\arr_{\idx'}"'] &
        \objI_{\idx'}
      \end{tikzcd}
    \]
    commutes.
  \end{itemize}
\end{definition}

\begin{definition}
  Let \(\idxcat\) be a category. We say \(\idxcat\) is \emph{filtered} if every
  finite diagram in \(\idxcat\) has a cocone. \(\idxcat\) is \emph{cofiltered} if
  \(\idxcat^{\op}\) is filtered.
\end{definition}

\begin{definition}
  Let \(\cat : \idxcat \to \Cat\) be a pseudo-functor from a filtered
  small category \(\idxcat\). We define a small category
  \(\pcolim_{\idx \in \idxcat}\cat_{\idx}\) as follows.
  \begin{itemize}
  \item The objects of \(\pcolim_{\idx \in \idxcat}\cat_{\idx}\) are the pairs
    \((\idx, \obj)\) of objects \(\idx \in \idxcat\) and \(\obj \in \cat_{\idx}\).
  \item For objects
    \((\idx_{1}, \obj_{1}), (\idx_{2}, \obj_{2}) \in \pcolim_{\idx \in
      \idxcat}\cat_{\idx}\) we define
    \[
      \pcolim_{\idx \in \idxcat}\cat_{\idx}((\idx_{1}, \obj_{1}), (\idx_{2}, \obj_{2})) =
      \colim_{\substack{\idx \in \idxcat \\ \iarr_{1} : \idx_{1} \to \idx \\ \iarr_{2} :
          \idx_{2} \to \idx}}\cat_{\idx}(\iarr_{1} \act \obj_{1}, \iarr_{2} \act
      \obj_{2}).
    \]
  \end{itemize}
  There are the obvious functors
  \(\inj_{\idx} : \cat_{\idx} \to \pcolim_{\idx \in \idxcat}\cat_{\idx}\) for
  objects \(\idx \in \idxcat\) and natural isomorphisms
  \(\inj_{\iarr} : \inj_{\idx} \cong \inj_{\idx'} \circ \cat_{\iarr}\) for
  arrows \(\iarr : \idx \to \idx'\) in \(\idxcat\), yielding an object
  \(\inj \in \plim_{\idx \in \idxcat}\Cat(\cat_{\idx}, \pcolim_{\idx \in
    \idxcat}\cat_{\idx})\). For a category \(\catI\), the canonical
  functor
  \[
    \inj^{*} : \Cat(\pcolim_{\idx \in \idxcat}\cat_{\idx}, \catI) \to
    \plim_{\idx \in \idxcat}\Cat(\cat_{\idx}, \catI)
  \]
  is an isomorphism of categories. We call
  \(\pcolim_{\idx \in \idxcat}\cat_{\idx}\) the \emph{filtered pseudo-colimit
    of \(\cat\)}.
\end{definition}

An important property of filtered pseudo-colimits is that filtered
pseudo-colimits in \(\Cat\) commute with finite bi-limits
\parencite[Theorem 3.2]{descotte2010construction}. We only use the
following special cases.

\begin{lemma}
  \label{lem:filtered-pseudo-colimit-vs-finit-limit}
  Filtered pseudo-colimits commute with finite cotensors. More
  precisely, for a pseudo-functor \(\cat : \idxcat \to \Cat\) from a
  filtered small category \(\idxcat\) and a finite category
  \(\idxcatI\), the canonical functor
  \[
    \pcolim_{\idx \in \idxcat}\cat_{\idx}^{\idxcatI} \to \left(\pcolim_{\idx \in
        \idxcat}\cat_{\idx}\right)^{\idxcatI}
  \]
  is an equivalence of categories. \qed
\end{lemma}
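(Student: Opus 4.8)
The plan is to analyze the canonical comparison functor
\[
  \Phi : \pcolim_{\idx \in \idxcat}\cat_{\idx}^{\idxcatI} \to
  \Bigl(\pcolim_{\idx \in \idxcat}\cat_{\idx}\Bigr)^{\idxcatI}
\]
directly and show it is essentially surjective and fully faithful. Write $\catI$ for the filtered pseudo-colimit $\pcolim_{\idx \in \idxcat}\cat_{\idx}$, with coprojections $\inj_{\idx} : \cat_{\idx} \to \catI$ and coherence isomorphisms $\inj_{\iarr} : \inj_{\idx} \cong \inj_{\idx'}\cat_{\iarr}$; here $\Phi$ sends an object $(\idx, \fun)$, with $\fun : \idxcatI \to \cat_{\idx}$, to the composite $\inj_{\idx}\fun : \idxcatI \to \catI$. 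The whole argument rests on one principle: since $\idxcatI$ is finite it has only finitely many objects, morphisms and composable pairs, and since $\idxcat$ is filtered, any finite collection of objects, morphisms and equations in $\catI$ — each living in a filtered colimit of sets — can be realized simultaneously at a single stage $\idx \in \idxcat$. This is exactly the special case of the commutation of filtered pseudo-colimits with finite bi-limits of \parencite{descotte2010construction}, and below I spell out the two statements it provides.

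\emph{Essential surjectivity.} Let $\funI : \idxcatI \to \catI$ be a functor. Each value $\funI j$ is a pair $(\idx_{j}, \obj_{j})$; as $\idxcatI$ has finitely many objects and $\idxcat$ is filtered, choose $\idx \in \idxcat$ with arrows $\iarr_{j} : \idx_{j} \to \idx$ and use $\inj_{\iarr_{j}}$ to obtain isomorphisms $\funI j \cong \inj_{\idx}(\iarr_{j}\act\obj_{j})$. Transporting $\funI$ along these, we may assume $\funI j = \inj_{\idx}(\obj_{j})$ for all $j$, with $\obj_{j} \in \cat_{\idx}$. Each morphism $\funI f$ for $f : j \to j'$ is then an element of the filtered colimit computing $\catI(\inj_{\idx}\obj_{j}, \inj_{\idx}\obj_{j'})$, hence represented by a morphism of $\cat_{\idx''}$ for some $\idx''$ past $\idx$; as there are finitely many $f$, after passing to a later common stage, still denoted $\idx$, we may assume $\funI f = \inj_{\idx}\phi_{f}$ for morphisms $\phi_{f}$ of $\cat_{\idx}$. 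Functoriality of $\funI$ gives finitely many equations $\funI(\id_{j}) = \id$ and $\funI(f'f) = \funI f' \circ \funI f$; each holds in the relevant hom-colimit, hence strictly after a further stage, and by finiteness we enforce all of them at one stage $\idx$. The data $(\obj_{j}, \phi_{f})$ is then a genuine functor $\fun : \idxcatI \to \cat_{\idx}$ with $\Phi(\idx, \fun) = \inj_{\idx}\fun \cong \funI$.

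\emph{Full faithfulness.} By construction of $\pcolim$, $\Phi$ is fully faithful iff, for all $\fun_{1} : \idxcatI \to \cat_{\idx_{1}}$ and $\fun_{2} : \idxcatI \to \cat_{\idx_{2}}$, the canonical map
\[
  \colim_{\substack{\idx \in \idxcat \\ \iarr_{1} : \idx_{1} \to \idx \\ \iarr_{2} : \idx_{2} \to \idx}}
  \cat_{\idx}^{\idxcatI}(\iarr_{1}\act\fun_{1}, \iarr_{2}\act\fun_{2})
  \to \catI^{\idxcatI}(\inj_{\idx_{1}}\fun_{1}, \inj_{\idx_{2}}\fun_{2})
\]
is a bijection. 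A natural transformation $\trans : \inj_{\idx_{1}}\fun_{1} \To \inj_{\idx_{2}}\fun_{2}$ has finitely many components $\trans_{j}$ in $\catI$ and satisfies finitely many naturality equations; by the realization principle all components and squares lift to a single stage $\idx$, giving a natural transformation $\iarr_{1}\act\fun_{1} \To \iarr_{2}\act\fun_{2}$ in $\cat_{\idx}^{\idxcatI}$ over $\trans$, so the map is surjective. For injectivity, if two representatives induce the same $\trans$, then for each $j$ their $j$-components are equal in $\catI$, hence become equal at a later stage of the hom-colimit; finiteness of the object set of $\idxcatI$ yields one stage equalizing all components, identifying the two representatives in the colimit.

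The point demanding care — and the main obstacle — is the interaction of these lifting arguments with the coherence isomorphisms $\inj_{\iarr}$ and the pseudo-functoriality comparisons $\cat_{\iarr}$: composition in $\catI$ is defined only after inserting these cells, so every functoriality or naturality equation above is really an equation among composites involving them. The crucial observation is that such cells are themselves particular morphisms of the categories $\cat_{\idx}$ and are tracked through the same filtered colimits, so with the bookkeeping kept finite each equation still becomes strict at a sufficiently late stage. This is precisely what \parencite{descotte2010construction} guarantees, so one may alternatively deduce the lemma by noting that a cotensor by a finite category is a finite bi-limit.
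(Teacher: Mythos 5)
Your proof is correct, but it takes a genuinely different route from the paper: the paper offers no independent argument for this lemma at all, justifying it instead as a special case of the theorem of \textcite{descotte2010construction} that filtered pseudo-colimits in \(\Cat\) commute with finite bi-limits (the cotensor by a finite category being a finite bi-limit) --- precisely the alternative you mention in your closing sentence. Your direct argument is the elementary proof that this citation packages: using the explicit description of hom-sets in \(\pcolim_{\idx \in \idxcat}\cat_{\idx}\) as filtered colimits of sets, you realize the finitely many objects, morphisms, and functoriality or naturality equations of an \(\idxcatI\)-shaped diagram or transformation at a single stage, and you correctly reduce full faithfulness to a bijection of hom-colimits. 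What the citation buys the paper is brevity and uniformity (all finite bi-limits at once, with the coherence bookkeeping handled once and for all); what your proof buys is self-containedness and an explicit picture of where finiteness of \(\idxcatI\) and filteredness of \(\idxcat\) enter. You also correctly flag the one delicate point, namely that the transition maps in the hom-colimits conjugate representatives by the pseudo-functoriality constraints \(\cat_{\iarrI}\cat_{\iarr} \cong \cat_{\iarrI\iarr}\) and the cocone isomorphisms \(\inj_{\iarr}\); to close this cleanly you should note that these conjugating isomorphisms depend only on the objects involved and not on the morphism being transported, so conjugation by them preserves identities and composites, whence data satisfying the functor or naturality equations ``in the colimit'' satisfies them strictly at a sufficiently late stage --- making that observation explicit would replace the somewhat informal ``tracked through the same filtered colimits'' with a complete argument.
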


\begin{lemma}
  \label{lem:filtered-pseudo-colimit-slice}
  Filtered pseudo-colimits commute with slicing. More precisely, for a
  pseudo-functor \(\cat : \idxcat \to \Cat\) from a filtered small
  category \(\idxcat\) and objects \(\idx_{0} \in \idxcat\) and
  \(\obj \in \cat_{\idx_{0}}\), the canonical functor
  \[
    \pcolim_{(\iarr : \idx_{0} \to \idx) \in \idx_{0} \under \idxcat}\left(\cat_{\idx}/\iarr
      \act \obj\right) \to \left(\pcolim_{\idx \in
        \idxcat}\cat_{\idx}\right)/(\idx_{0}, \obj)
  \]
  is an equivalence of categories.
\end{lemma}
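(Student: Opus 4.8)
The plan is to show that the canonical functor, which I denote $\Phi$, is fully faithful and essentially surjective by working directly with the explicit description of the filtered pseudo-colimit and of its hom-sets. First I would record that $\idx_0 \under \idxcat$ is filtered whenever $\idxcat$ is, so that the source of $\Phi$ is a genuine filtered pseudo-colimit, and that $(\iarr : \idx_0 \to \idx) \mapsto \cat_\idx/\iarr \act \obj$ is a pseudo-functor on $\idx_0 \under \idxcat$: an arrow $w : \idx \to \idx'$ over $\idx_0$ (that is, $w \circ \iarr = \iarrI$) acts by the transition functor $w \act (\argu)$ together with the coherence isomorphism $w \act (\iarr \act \obj) \cong (w \circ \iarr) \act \obj = \iarrI \act \obj$ supplied by $\cat$. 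On objects, $\Phi$ sends a pair $((\iarr : \idx_0 \to \idx),\, a : A \to \iarr \act \obj)$ to the object $(\idx, A)$ of $\pcolim_\idx \cat_\idx$ equipped with the arrow $(\idx, A) \to (\idx_0, \obj)$ obtained by composing $a$ with the structure isomorphism $\inj_\iarr : (\idx_0, \obj) \cong (\idx, \iarr \act \obj)$.

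For essential surjectivity, an object of the target is an arrow $f : (\idx_1, X) \to (\idx_0, \obj)$ in $\pcolim_\idx \cat_\idx$. By the formula for hom-sets in a filtered pseudo-colimit, $f$ is represented by an index $\idx$, arrows $\iarr_1 : \idx_1 \to \idx$ and $\iarr_2 : \idx_0 \to \idx$ in $\idxcat$, and an arrow $g : \iarr_1 \act X \to \iarr_2 \act \obj$ in $\cat_\idx$. Viewing $g$ as an object of $\cat_\idx/\iarr_2 \act \obj$ lying over the object $(\iarr_2 : \idx_0 \to \idx)$ of $\idx_0 \under \idxcat$, its image under $\Phi$ is isomorphic to $f$ via $\inj_{\iarr_1}$. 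Hence $\Phi$ is essentially surjective.

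For full faithfulness I would compare the two hom-sets, both of which are filtered colimits of hom-sets of the $\cat_\idx$. Fix objects $((\iarr : \idx_0 \to \idx),\, a : A \to \iarr \act \obj)$ and $((\iarrI : \idx_0 \to \idx'),\, a' : A' \to \iarrI \act \obj)$ on the left. A morphism between their images in $\left(\pcolim_\idx \cat_\idx\right)/(\idx_0, \obj)$ is a morphism $k : (\idx, A) \to (\idx', A')$ in $\pcolim_\idx \cat_\idx$ making the evident triangle over $(\idx_0, \obj)$ commute. Represent $k$ by arrows $w : \idx \to \idx[j]$, $w' : \idx' \to \idx[j]$ and an arrow $h : w \act A \to w' \act A'$ in $\cat_{\idx[j]}$. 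Since $\idxcat$ is filtered, the parallel pair $w \circ \iarr,\; w' \circ \iarrI : \idx_0 \to \idx[j]$ is coequalized by some $s : \idx[j] \to \idx[k]$, and then $t := s \circ w \circ \iarr = s \circ w' \circ \iarrI : \idx_0 \to \idx[k]$ is an object of $\idx_0 \under \idxcat$ over which both $s \circ w$ and $s \circ w'$ lie; replacing $(w, w', h)$ by $(s \circ w,\, s \circ w',\, s \act h)$ exhibits $k$ by a morphism living over $\idx_0$. Moreover the triangle over $(\idx_0, \obj)$ is an equation in a filtered colimit of hom-sets, so after passing to a still larger stage it becomes a literal equation in some $\cat_\idx$, which says precisely that $h$ is a morphism in $\cat_{\idx[k]}/t \act \obj$. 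This proves fullness, and the same ``push to a common stage'' principle applied to equalities proves injectivity of the comparison map on hom-sets, so $\Phi$ is fully faithful and therefore an equivalence.

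The step I expect to be the main obstacle is the coherence bookkeeping in full faithfulness: one must track the pseudo-functor constraints of $\cat$ together with the structure isomorphisms $\inj_\iarr$ and $\obj_\iarr$ coming from the definitions of $\pcolim$ and of the slice pseudo-functor, and check that commutativity of the triangle over $(\idx_0, \obj)$ in $\pcolim_\idx \cat_\idx$ translates, after passing to the index $\idx[k]$, into commutativity of the corresponding triangle over $t \act \obj$ in $\cat_{\idx[k]}$. Conceptually this is an instance of the commutation of filtered pseudo-colimits with the finite bi-limit computing a slice---the comma object of $\id$ and the object $\obj : 1 \to \cat$---reindexed along the cofinal forgetful functor $\idx_0 \under \idxcat \to \idxcat$; invoking that general interchange (as already used for \cref{lem:filtered-pseudo-colimit-vs-finit-limit}) would yield a shorter but less explicit argument.
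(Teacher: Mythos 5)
Your proof is correct, but it takes a genuinely different route from the paper's. The paper argues conceptually: by the bicategorical Yoneda lemma the pair \((\idx_{0}, \obj)\) corresponds to a pseudo-natural transformation \(\obj : \idxcat(\idx_{0}, \argu) \to \cat\), and the relevant slice is the oplax bi-limit of this \(1\)-cell; this oplax limit is computed pointwise as the comma pseudo-functor \((\cat \downarrow \obj)\), whose value at \(\idx\) is the category of pairs \((\iarr : \idx_{0} \to \idx,\ \objI \in \cat_{\idx}/\iarr \act \obj)\) and whose filtered pseudo-colimit is your source category, while the pseudo-colimit of \(\idxcat(\idx_{0}, \argu)\) is terminal; the lemma then follows from the commutation of filtered pseudo-colimits with oplax limits of a \(1\)-cell --- the same interchange theorem already invoked for \cref{lem:filtered-pseudo-colimit-vs-finit-limit}. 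This is precisely the shorter alternative you sketch in your closing paragraph, except that the paper never needs an explicit cofinality statement for \(\idx_{0} \under \idxcat \to \idxcat\): that role is played by the terminality of \(\pcolim \idxcat(\idx_{0}, \argu)\). Your direct argument --- essential surjectivity by choosing a representative cospan for an arrow into \((\idx_{0}, \obj)\), and full faithfulness by coequalizing the parallel pair \(w \circ \iarr,\ w' \circ \iarrI\) with some \(s\) and pushing the commuting-triangle equation to a later stage where it becomes a literal equation in some \(\cat_{\idx}\) --- is sound: coslices of filtered categories are filtered, the hom-set formula you use is the paper's definition of \(\pcolim\), and your ``push to a common stage'' steps are the standard manipulations of filtered colimits of hom-sets, in effect proving by hand the cofinality that the conceptual proof gets for free. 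What your approach buys is a self-contained, elementary proof avoiding the black-box commutation theorem; what it costs is the coherence bookkeeping you yourself flag: the replacement of \((w, w', h)\) by \((s \circ w, s \circ w', s \act h)\) needs the pseudo-functor constraints of \(\cat\) inserted, your description of \(\Phi\) on objects should compose with the inverse of the structure isomorphism \(\inj_{\iarr}(\obj) : (\idx_{0}, \obj) \cong (\idx, \iarr \act \obj)\), and the translation of the triangle condition into a slice morphism at the later stage must conjugate by these isomorphisms --- all routine, but it must be written out.
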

\begin{proof}
  By the bicategorical Yoneda lemma, the pair \((\idx_{0}, \obj)\)
  corresponds to a pseudo-natural transformation
  \(\obj : \idxcat(\idx_{0}, \argu) \to \cat\). Let
  \((\cat \downarrow \obj)\) be the oplax bi-limit of the \(1\)-cell
  \(\obj : \idxcat(\idx_{0}, \argu) \to \cat\) in the \(2\)-category
  of pseudo-functors \(\idxcat \to \Cat\), pseudo-natural
  transformations and modifications. \((\cat \downarrow \obj)\) is
  calculated pointwise, so \((\cat \downarrow \obj)_{\idx}\) is the
  category of pairs \((\iarr, \objI)\) consisting of an arrow
  \(\iarr : \idx_{0} \to \idx\) in \(\idxcat\) and an object
  \(\objI \in \cat/\iarr \act \obj\). One can check that the filtered
  pseudo-colimit of \((\cat \downarrow \obj) : \idxcat \to \Cat\) is
  equivalent to
  \(\pcolim_{(\iarr : \idx_{0} \to \idx) \in \idx_{0} \under
    \idxcat}(\cat_{\idx}/\iarr \act \obj)\) and that the filtered
  pseudo-colimit of \(\idxcat(\idx_{0}, \argu) : \idxcat \to \Cat\) is
  equivalent to the terminal category. Thus, it follows from the
  commutation of filtered pseudo-colimits and oplax bi-limits of a
  \(1\)-cell that
  \(\pcolim_{(\iarr : \idx_{0} \to \idx) \in \idx_{0} \under
    \idxcat}(\cat_{\idx}/\iarr \act \obj)\) is canonically equivalent
  to the oplax limit of the \(1\)-cell
  \((\idx_{0}, \obj) : 1 \to \pcolim_{\idx \in \idxcat}\cat_{\idx}\)
  in \(\Cat\), that is, the slice category
  \(\left(\pcolim_{\idx \in \idxcat}\cat_{\idx}\right)/(\idx_{0},
  \obj)\).
\end{proof}

\begin{proposition}
  \label{prop:finite-limits-in-pseudo-colimit}
  Let \(\cat : \idxcat \to \Cat\) be a pseudo-functor from a filtered
  small category \(\idxcat\).
  \begin{enumerate}
  \item \label{item:3}
    If all \(\cat_{\idx}\) are cartesian categories and all
    \(\cat_{\iarr} : \cat_{\idx} \to \cat_{\idx'}\) are cartesian
    functors, then \(\pcolim_{\idx \in \idxcat}\cat_{\idx}\) is a cartesian
    category and the functors \(\inj_{\idx} : \cat_{\idx} \to \pcolim_{\idx
      \in \idxcat}\cat_{\idx}\) are cartesian functors.
  \item \label{item:4}
    Suppose the hypotheses of \labelcref{item:3} hold. Let \(\idx_{0}\) be
    an object of \(\idxcat\) and \(\arr : \obj \to \objI\) an arrow in
    \(\cat_{\idx_{0}}\). Suppose that, for any arrow
    \(\iarr : \idx_{0} \to \idx\), the pushforwards along \(\iarr \act \arr\) exist
    and that, for any arrows \(\iarr : \idx_{0} \to \idx\) and
    \(\iarrI : \idx \to \idx'\), the functor
    \(\cat_{\iarrI} : \cat_{\idx} \to \cat_{\idx'}\) carries
    pushforwards along \(\iarr \cdot \arr\) to those along
    \(\iarrI\iarr \act \arr\). Then pushforwards along \(\inj_{\idx_{0}}(\arr)\) exist
    and the functor \(\inj_{\idx_{0}} : \cat_{\idx_{0}} \to \pcolim_{\idx
      \in \idxcat}\cat_{\idx}\) carries pushforwards along \(\arr\) to
    those along \(\inj_{\idx_{0}}(\arr)\).
  \end{enumerate}
\end{proposition}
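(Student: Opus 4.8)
The plan is to treat both parts uniformly. The existence of finite limits in a cartesian category and the existence of a pushforward along an exponentiable arrow are both instances of \emph{a specified functor admitting a right adjoint}, and the filtered pseudo-colimit, being a bicolimit, turns a pseudo-natural family of such adjunctions into a single adjunction. Concretely, the pseudo-colimit is the value of a pseudo-functor from the $2$-category of pseudo-functors $\idxcat \to \Cat$ to $\Cat$ (the left biadjoint to the constant-diagram functor), and pseudo-functors preserve adjunctions. The only nontrivial input is the criterion for when component-wise right adjoints glue: given a pseudo-natural transformation $L \colon \fun \To \funI$ between pseudo-functors $\idxcat \to \Cat$ whose components $L_{\idx}$ each have a right adjoint $R_{\idx}$, the $R_{\idx}$ assemble into a pseudo-natural transformation constituting an adjunction $L \adj R$ in the functor $2$-category precisely when the mate of each pseudo-naturality square of $L$ is invertible, i.e.\ a Beck--Chevalley condition. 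The cartesianness of the transition functors in \labelcref{item:3} and the pushforward-preservation hypothesis in \labelcref{item:4} are exactly these Beck--Chevalley conditions, and applying $\pcolim_{\idx \in \idxcat}$ to the resulting adjunction, then unwinding the comparison equivalences, identifies the left adjoint with the one we want.

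For \labelcref{item:3}, fix a finite category $\idxcatI$. Each $\cat_{\idx}$ being cartesian means the diagonal $\diag_{\idx} \colon \cat_{\idx} \to \cat_{\idx}^{\idxcatI}$ has a right adjoint $\lim_{\idx}$, and the cartesianness of every $\cat_{\iarr}$ furnishes the invertible mates that make $\idx \mapsto \diag_{\idx}$ and $\idx \mapsto \lim_{\idx}$ into an adjunction in the $2$-category of pseudo-functors $\idxcat \to \Cat$. Taking pseudo-colimits and using \cref{lem:filtered-pseudo-colimit-vs-finit-limit} to identify $\pcolim_{\idx}\cat_{\idx}^{\idxcatI} \simeq (\pcolim_{\idx}\cat_{\idx})^{\idxcatI}$, the transformation $\pcolim_{\idx}\diag_{\idx}$ corresponds to the diagonal of $\pcolim_{\idx}\cat_{\idx}$ and acquires a right adjoint; hence $\pcolim_{\idx}\cat_{\idx}$ has limits of shape $\idxcatI$. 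Letting $\idxcatI$ range over all finite categories yields finite limits. That each $\inj_{\idx}$ is cartesian follows from the pseudo-naturality of the adjunction data, which makes the coprojections commute with the right adjoints up to coherent isomorphism, so the canonical comparison out of $\inj_{\idx}$ applied to a limit is invertible.

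For \labelcref{item:4}, first note that the coslice $\idx_{0} \under \idxcat$ is again filtered. By \cref{lem:filtered-pseudo-colimit-slice} there are equivalences $(\pcolim_{\idx}\cat_{\idx})/\inj_{\idx_{0}}(\obj) \simeq \pcolim_{(\iarr \colon \idx_{0} \to \idx) \in \idx_{0} \under \idxcat}(\cat_{\idx}/\iarr \act \obj)$ and likewise over $\objI$, under which the pullback functor $\inj_{\idx_{0}}(\arr)^{*}$ is identified with the pseudo-colimit over $\idx_{0} \under \idxcat$ of the pullback functors $(\iarr \act \arr)^{*}$. The first hypothesis provides a right adjoint $(\iarr \act \arr)_{*}$ to each $(\iarr \act \arr)^{*}$, and the second hypothesis is exactly the Beck--Chevalley condition that makes these right adjoints pseudo-natural in $\idx_{0} \under \idxcat$. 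Passing to the pseudo-colimit produces a right adjoint to $\inj_{\idx_{0}}(\arr)^{*}$, so the pushforward along $\inj_{\idx_{0}}(\arr)$ exists; evaluating the $\id_{\idx_{0}}$-component of the comparison then shows that $\inj_{\idx_{0}}$ carries pushforwards along $\arr$ to those along $\inj_{\idx_{0}}(\arr)$.

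The main obstacle is the verification underlying the uniform step: confirming that the component-wise right adjoints genuinely glue into a pseudo-natural transformation whose units and counits satisfy the triangle identities as modifications. This is precisely where the Beck--Chevalley hypotheses must be shown to supply the required invertible mates, and where one must check that $\pcolim$ transports the adjunction coherently — in particular that $\pcolim_{\idx}\diag_{\idx}$ and $\inj_{\idx_{0}}(\arr)^{*}$ really correspond to the diagonal and the pullback functor under the equivalences of \cref{lem:filtered-pseudo-colimit-vs-finit-limit,lem:filtered-pseudo-colimit-slice}. The remaining bookkeeping — filteredness of the coslice and the coherence of the pseudo-natural isomorphisms — is routine.
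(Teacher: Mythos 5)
Your proof is correct and follows essentially the same route as the paper: part \labelcref{item:3} via the diagonal functor and \cref{lem:filtered-pseudo-colimit-vs-finit-limit}, and part \labelcref{item:4} via the same commuting square identifying \(\inj_{\idx_{0}}(\arr)^{*}\) with \(\pcolim_{\iarr}(\iarr \act \arr)^{*}\) through \cref{lem:filtered-pseudo-colimit-slice}. Your explicit doctrinal-adjunction/mates bookkeeping merely spells out what the paper leaves implicit when it asserts that \(\pcolim_{\iarr}(\iarr \act \arr)^{*}\) has a right adjoint ``by assumption.''
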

\begin{proof}
  Since limits in a category \(\cat\) are given by the right adjoint
  to the diagonal functor \(\cat \to \cat^{\idxcatI}\), the
  statement~\labelcref{item:3} is an immediate consequence of
  \cref{lem:filtered-pseudo-colimit-vs-finit-limit}. For
  \labelcref{item:4} consider the diagram
  \[
    \begin{tikzcd}
      \pcolim_{(\iarr : \idx_{0} \to \idx) \in \idx_{0} \under
        \idxcat}\left(\cat_{\idx}/\iarr \act \objI\right)
      \arrow[r]
      \arrow[d,"\pcolim_{\iarr}(\iarr \act \arr)^{*}"'] &
      \left(\pcolim_{\idx \in \idxcat}\cat_{\idx}\right)/\objI
      \arrow[d,"\inj_{\idx_{0}}(\arr)^{*}"] \\
      \pcolim_{(\iarr : \idx_{0} \to \idx) \in \idx_{0} \under
        \idxcat}\left(\cat_{\idx}/\iarr \act \obj\right)
      \arrow[r] &
      \left(\pcolim_{\idx \in \idxcat}\cat_{\idx}\right)/\obj.
    \end{tikzcd}
  \]
  This diagram commutes up to canonical isomorphism by
  \labelcref{item:3}. The horizontal functors are equivalences by
  \cref{lem:filtered-pseudo-colimit-slice}. Thus, \(\inj_{\idx_{0}}(\arr)^{*}\) has a
  right adjoint because \(\pcolim_{\iarr}(\iarr \act \arr)^{*}\) has a right
  adjoint by assumption.
\end{proof}

Let \(\cat : \idxcat \to \RMCat\) be a pseudo-functor from a filtered
small category \(\idxcat\). We define an arrow in
\(\pcolim_{\idx \in \idxcat}\cat_{\idx}\) to be representable if it is
isomorphic to the image of a representable arrow in \(\cat_{\idx}\) by
\(\inj_{\idx}\) for some \(\idx \in \idxcat\). Then, by
\cref{prop:finite-limits-in-pseudo-colimit},
\(\pcolim_{\idx \in \idxcat}\cat_{\idx}\) forms a representable map category
and the functors
\(\inj_{\idx} : \cat_{\idx} \to \pcolim_{\idx \in \idxcat}\cat_{\idx}\) are
representable map functors. The following is immediate from the
construction.

\begin{proposition}
  Let \(\cat : \idxcat \to \RMCat\) be a pseudo-functor from a filtered
  small category \(\idxcat\) and \(\catI\) a representable map
  category. Then the canonical functor
  \[
    \inj^{*} : \RMCat(\pcolim_{\idx \in \idxcat}\cat_{\idx}, \catI) \to
    \plim_{\idx \in \idxcat}\RMCat(\cat_{\idx}, \catI)
  \]
  is an isomorphism of categories. \qed
\end{proposition}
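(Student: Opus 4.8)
The plan is to reduce the statement to the corresponding isomorphism at the level of \(\Cat\), which is already recorded in the definition of the filtered pseudo-colimit: the canonical functor
\[
  \inj^{*} : \Cat(\pcolim_{\idx \in \idxcat}\cat_{\idx}, \catI) \to
  \plim_{\idx \in \idxcat}\Cat(\cat_{\idx}, \catI)
\]
is an isomorphism of categories. Since a morphism of \(\RMCat(\cat, \catI)\) is merely a natural transformation between the underlying functors, \(\RMCat(\cat, \catI)\) is the full subcategory of \(\Cat(\cat, \catI)\) spanned by the representable map functors, and likewise \(\plim_{\idx}\RMCat(\cat_{\idx}, \catI)\) is the full subcategory of \(\plim_{\idx}\Cat(\cat_{\idx}, \catI)\) spanned by those objects all of whose components are representable map functors. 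As the functor \(\inj^{*}\) in the statement is the restriction of the \(\Cat\)-level one, and the latter is an isomorphism, I would only need to check that these two full subcategories correspond to each other. Since the \(\Cat\)-level \(\inj^{*}\) sends a functor \(\fun\) to the family \((\fun\inj_{\idx})_{\idx}\) equipped with the canonical isomorphisms, this correspondence is exactly the following claim: \emph{a functor \(\fun : \pcolim_{\idx \in \idxcat}\cat_{\idx} \to \catI\) is a representable map functor if and only if \(\fun\inj_{\idx}\) is a representable map functor for every \(\idx \in \idxcat\).}

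The forward direction is immediate, as each \(\inj_{\idx}\) is a representable map functor by \cref{prop:finite-limits-in-pseudo-colimit} together with the definition of representable arrows in the pseudo-colimit, so \(\fun\inj_{\idx}\) is a composite of representable map functors. For the converse I would assume every \(\fun\inj_{\idx}\) is a representable map functor and verify the three preservation conditions. Preservation of representable arrows is easiest: a representable arrow of \(\pcolim_{\idx}\cat_{\idx}\) is by definition isomorphic to \(\inj_{\idx}(\arr)\) for a representable \(\arr\) in some \(\cat_{\idx}\), and then \((\fun\inj_{\idx})(\arr)\) is representable, hence so is its isomorph under \(\fun\). For finite limits I would use \cref{lem:filtered-pseudo-colimit-vs-finit-limit}: any finite diagram in \(\pcolim_{\idx}\cat_{\idx}\) is isomorphic to \(\inj_{\idx}\) applied to a diagram in a single \(\cat_{\idx}\), so, since \(\inj_{\idx}\) preserves finite limits, the limit is computed as \(\inj_{\idx}\) of the limit in \(\cat_{\idx}\); as \(\fun\inj_{\idx}\) preserves that limit, \(\fun\) preserves it.

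The step I expect to be the main obstacle is preservation of pushforwards, because the relevant comparison must be checked on a whole slice category rather than on a single object. Given a representable arrow, write it up to isomorphism as \(\inj_{\idx}(\arrI)\) with \(\arrI\) representable in \(\cat_{\idx}\). By \cref{lem:filtered-pseudo-colimit-slice} every object of the slice over \(\dom(\inj_{\idx}\arrI)\) is isomorphic to \(\inj_{\idx'}(\el)\) for some \(\iarr : \idx \to \idx'\) and some \(\el\) over \(\iarr \act \dom\arrI\), and through \(\inj_{\idx'}\) the arrow is represented by \(\iarr \act \arrI\). Both \(\inj_{\idx'}\) and \(\fun\inj_{\idx'}\) preserve the pushforward along \(\iarr \act \arrI\): the former by \cref{prop:finite-limits-in-pseudo-colimit}, whose hypotheses hold because each \(\cat_{\iarr}\) is a representable map functor and hence preserves representable arrows and their pushforwards, and the latter because it is a representable map functor by assumption. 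Composing these two preservation statements shows that the canonical comparison for \(\fun\) at the pushforward along the given arrow is an isomorphism on every \(\inj_{\idx'}(\el)\), and since every object of the slice has this form, the comparison is a natural isomorphism. This establishes the claim, and with it the desired isomorphism of categories.
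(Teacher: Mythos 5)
Your proposal is correct and takes essentially the same approach the paper intends: the paper gives no written proof beyond ``immediate from the construction,'' and the construction it points to is precisely your reduction to the \(\Cat\)-level isomorphism \(\inj^{*} : \Cat(\pcolim_{\idx \in \idxcat}\cat_{\idx}, \catI) \to \plim_{\idx \in \idxcat}\Cat(\cat_{\idx}, \catI)\) together with the verification, via \cref{lem:filtered-pseudo-colimit-vs-finit-limit}, \cref{lem:filtered-pseudo-colimit-slice} and \cref{prop:finite-limits-in-pseudo-colimit}, that a functor out of the pseudo-colimit is a representable map functor if and only if each composite with \(\inj_{\idx}\) is. Your treatment of the pushforward case, including checking the hypotheses of \cref{prop:finite-limits-in-pseudo-colimit} and arguing pointwise on the slice before concluding the comparison is a natural isomorphism, is exactly the detail the paper elides.
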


\subsection{Type Theory Generated by a Theory}
\label{sec:theories-over-type}

Given a theory \(\theory\) over a type theory \(\tth\), we obtain
another type theory \({\tth}[\theory]\) by adjoining to \(\tth\) the
constants of \(\theory\) as inference rules with no premises. To make
it precise, we will show that, for an object \(\obj \in \tth\), the
slice category \(\tth/\obj\) is the type theory obtained from \(\tth\)
by freely adjoining a global section of \(\obj\)
(\cref{prop:indeterminate-on-lf-category}). Then the type theory
\({\tth}[\theory]\) is defined to be a suitable filtered
pseudo-colimit of slices \(\tth/\obj\).

\begin{lemma}
  \label{lem:cartesian-functor-el-filtered}
  Let \(\cat\) be a small cartesian category and
  \(\theory : \cat \to \Set\) a functor. Then \(\theory\) is cartesian if and
  only if its category of elements \(\int_{\cat}\theory\) is cofiltered.
\end{lemma}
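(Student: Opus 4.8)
The plan is to prove both directions by reducing ``cartesian'' to preservation of the three generating finite limits---terminal object, binary products, and equalizers---and ``cofiltered'' to its three defining clauses: (a) $\int_{\cat}\theory$ is nonempty; (b) every pair of objects admits a cone (a common source); and (c) every parallel pair of arrows is equalized on the left by some arrow. Here $\int_{\cat}\theory$ is the covariant category of elements, whose objects are pairs $(\obj, \el)$ with $\el \in \theory(\obj)$ and whose arrows $(\obj, \el) \to (\objI, \elI)$ are maps $\arr : \obj \to \objI$ with $\arr \act \el = \elI$. I use throughout that a $\Set$-valued functor is cartesian iff it preserves the terminal object, binary products, and equalizers.

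For the forward direction, assume $\theory$ is cartesian. Clause (a) holds because $\theory(1)$ is a singleton, so $\theory(1)$ is nonempty and yields an object of $\int_{\cat}\theory$. For (b), given $(\obj, \el)$ and $(\objI, \elI)$, the isomorphism $\theory(\obj \times \objI) \cong \theory(\obj) \times \theory(\objI)$ induced by the projections produces a unique $z \in \theory(\obj \times \objI)$ over $(\el, \elI)$, and the two projections $\obj \times \objI \to \obj, \objI$ form a cone. For (c), given parallel $\arr, \arrI : (\obj, \el) \rightrightarrows (\objI, \elI)$, preservation of the equalizer $e : \obj' \to \obj$ of $\arr, \arrI$ shows that $\el$ lifts uniquely along $\theory(e)$, supplying the required left factor. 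Each step is a direct translation of a finite-limit property, so this direction is routine.

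For the converse, assume $\int_{\cat}\theory$ is cofiltered and verify the three preservation properties. \emph{Terminal object}: $\theory(1)$ is nonempty by (a); and any two elements of $\theory(1)$ give two objects over $1$, so a cone (b) together with its two legs---which necessarily agree, being the only arrows into $1$---forces the two elements to coincide, whence $\theory(1)$ is a singleton. \emph{Binary products}: surjectivity of $\theory(\obj \times \objI) \to \theory(\obj) \times \theory(\objI)$ follows from (b) via the pairing $\langle \arr, \arrI \rangle$; injectivity is obtained by taking a cone (b) over two elements with equal projections, then applying (c) once for each projection and composing the two resulting left factors, so that the universal property of the product collapses the two competing maps. \emph{Equalizers}: surjectivity onto $\{\el \mid \arr \act \el = \arrI \act \el\}$ follows from (c) and the universal property of the equalizer $\obj' \to \obj$; injectivity follows from (b), (c), and the fact that an equalizer is a monomorphism, which lets an equality after composition descend.

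The main obstacle is the converse, and precisely the two injectivity claims, since these are where the cofiltered clauses must be combined with universal properties rather than used in isolation: for products one must equalize both projections \emph{separately} by two applications of (c) and then splice the two left factors before invoking the product's universal property, and for equalizers one must exploit that $\obj' \to \obj$ is monic in order to transport an equality across it. Everything else is the special case, for $\cat$ finitely complete, of the standard characterization of left-exact $\Set$-valued functors via cofilteredness of their category of elements, the finite-completeness of $\cat$ being what identifies this flatness with preservation of finite limits.
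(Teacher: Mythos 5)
Your proof is correct. Both directions check out: in the forward direction the three cofilteredness clauses are read off from preservation of $1$, of $\obj \times \objI$, and of equalizers exactly as you describe; in the converse, your two injectivity arguments are the genuinely nontrivial steps and you supply the right mechanism for each (for products, two successive applications of the equalizing clause, composed, before invoking the universal property of $\obj \times \objI$; for equalizers, using that $e : \obj' \to \obj$ is monic to cancel after finding a common left factor). The only compressed step is the claim that $\theory(1)$ is nonempty ``by (a)'': clause (a) gives some $(\obj, \el) \in \int_{\cat}\theory$, and one must push $\el$ forward along the unique arrow $\obj \to 1$; this is immediate but worth a half-sentence. Your route differs from the paper's in that the paper offers no proof at all: it cites Mac Lane and Moerdijk, \emph{Sheaves in Geometry and Logic}, Section VII.6, where the lemma sits inside the theory of flat functors --- cofilteredness of $\int_{\cat}\theory$ characterizes flatness of $\theory$ for an arbitrary small $\cat$ (via commutation of filtered colimits with finite limits in $\Set$), and flatness coincides with left exactness precisely when $\cat$ has finite limits, which is the hypothesis here. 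Your direct verification bypasses flatness and Kan extensions entirely and is self-contained and elementary; the citation's approach buys the more general statement (no completeness hypothesis on $\cat$) and the conceptual placement, at the cost of invoking the colimit machinery. For this paper's purposes --- $\cat = \tth$ is a representable map category, hence cartesian --- your elementary argument proves exactly what is used.
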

\begin{proof}
  The proof can be found, for instance, in
  \parencite[Section VII.6]{maclane1992sheaves}.
\end{proof}

\begin{definition}
  Let \(\theory\) be a theory over a type theory \(\tth\). We define a
  type theory \({\tth}[\theory]\) to be the filtered pseudo-colimit of
  the composite pseudo-functor
  \[
    \begin{tikzcd}
      \left(\int_{\tth}\theory\right)^{\op} \arrow[r] &
      \tth^{\op} \arrow[r,"\tth/(\argu)"] &
      [2ex] \RMCat.
    \end{tikzcd}
  \]
\end{definition}

By definition, an object of \({\tth}[\theory]\) is a triple
\((\obj, \cst, \arr)\) consisting of an object \(\obj \in \tth\), an element
\(\cst \in \theory(\obj)\) and an object \(\arr \in \tth/\obj\). We think of an
object \(\obj \in \tth\) as an object of \({\tth}[\theory]\) via the
inclusion \(\obj \mapsto (1, {*}, \obj)\), where \({*}\) is the unique
element of \(\theory(1)\).

\begin{lemma}
  \label{lem:theory-and-global-section}
  Let \(\theory\) be a theory over a type theory \(\tth\). For an
  object \(\obj \in \tth\), we have a natural bijection \(\theory(\obj)
  \cong {\tth}[\theory](1, \obj)\).
\end{lemma}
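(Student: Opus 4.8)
The plan is to compute the hom-set ${\tth}[\theory](1, \obj)$ directly from the construction of ${\tth}[\theory]$ as a filtered pseudo-colimit and then match it with $\theory(\obj)$. First I would settle the bookkeeping. By \cref{lem:cartesian-functor-el-filtered} the category $\int_{\tth}\theory$ is cofiltered, so the indexing category $(\int_{\tth}\theory)^{\op}$ is filtered and the explicit pseudo-colimit formula for hom-sets is available. Since the injections $\inj_{(\objII, \cstI)} : \tth/\objII \to {\tth}[\theory]$ are cartesian by \cref{prop:finite-limits-in-pseudo-colimit}, the terminal object $1$ of ${\tth}[\theory]$ is $\inj_{(1, {*})}(\id_{1})$, that is, the triple $(1, {*}, \id_{1})$; and under the inclusion $\obj \mapsto (1, {*}, \obj)$ the object $\obj$ is represented by $(1, {*}, \obj)$ with $\obj \in \tth/1$ the unique arrow $\obj \to 1$.

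Next I would unwind the hom-set formula
\[
  {\tth}[\theory](1, \obj) = \colim_{\substack{(\objII, \cstI) \\ \iarr_{1}, \iarr_{2} : (1, {*}) \to (\objII, \cstI)}} (\tth/\objII)(\iarr_{1} \act \id_{1}, \iarr_{2} \act \obj).
\]
Because $1$ is terminal in $\tth$ and the condition over $\theory(1)$ is vacuous, there is a unique arrow $(1, {*}) \to (\objII, \cstI)$ in $(\int_{\tth}\theory)^{\op}$ for every $(\objII, \cstI)$; hence the cocone category collapses to $(\int_{\tth}\theory)^{\op}$ itself and $\iarr_{1} = \iarr_{2}$ is this unique arrow, sitting over the unique $\tth$-arrow $\objII \to 1$. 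The pseudo-functor sends it to the pullback functor, which preserves the terminal object (so $\iarr_{1} \act \id_{1} = \id_{\objII}$) and carries $\obj \to 1$ to the projection $\objII \times \obj \to \objII$. A section of this projection is the same datum as an arrow $\objII \to \obj$, so $(\tth/\objII)(\id_{\objII}, \objII \times \obj) \cong \tth(\objII, \obj)$ and therefore
\[
  {\tth}[\theory](1, \obj) \cong \colim_{(\objII, \cstI) \in (\int_{\tth}\theory)^{\op}} \tth(\objII, \obj),
\]
where the transition along the underlying $\tth$-arrow $\map : \objII' \to \objII$ is precomposition $\el \mapsto \el \circ \map$.

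Finally I would exhibit the bijection with $\theory(\obj)$. The rule $(\objII, \cstI, \el) \mapsto \theory(\el)(\cstI) = \el \act \cstI$ is compatible with these transitions, hence descends to a map $\Psi$ from the colimit to $\theory(\obj)$. Surjectivity is immediate, since $\cst \in \theory(\obj)$ is the image of the class of $(\obj, \cst, \id_{\obj})$. For injectivity I would use that an arrow $\el : \objII \to \obj$ with $\el \act \cstI = \cst$ is precisely a morphism $(\objII, \cstI) \to (\obj, \cst)$ in $\int_{\tth}\theory$, i.e. an arrow $(\obj, \cst) \to (\objII, \cstI)$ in the indexing category; together with the identity at $(\objII, \cstI)$ this cospan witnesses, in the filtered colimit, that the class of $(\objII, \cstI, \el)$ equals that of $(\obj, \cst, \id_{\obj})$, so any two classes with the same $\Psi$-image coincide. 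Naturality of $\Psi$ in $\obj$ (with respect to the inclusion $\tth \to {\tth}[\theory]$ on the right) follows from $\theory(\map)(\el \act \cstI) = (\map \circ \el) \act \cstI$ for $\map : \obj \to \obj'$. The one step demanding care is the bookkeeping of the second paragraph: correctly identifying the pseudo-functor's action on the unique cocone arrow and verifying that the cocone category genuinely collapses, after which the remaining filtered-colimit argument is routine.
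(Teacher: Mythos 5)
Your proof is correct and is essentially the paper's own proof read in the opposite direction: the paper's three-step chain \(\theory(\obj) \cong \colim_{(\obj', \cst') \in \int_{\tth}\theory}\tth(\obj', \obj) \cong \colim_{(\obj', \cst')}\tth/\obj'(\obj', \obj' \times \obj) \cong {\tth}[\theory](1, \obj)\) is exactly your computation, with your bookkeeping (the collapse of the cocone indexing via the unique arrow \((1, {*}) \to (\objII, \cstI)\), and the identification of sections of \(\objII \times \obj \to \objII\) with arrows \(\objII \to \obj\)) being the implicit content of the paper's second and third isomorphisms. The only difference is that where the paper cites ``Yoneda'' (the co-Yoneda/density formula over the category of elements), you verify that step by hand with your map \(\Psi\) — a correct, if more verbose, substitute.
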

\begin{proof}
  \begin{align*}
    \theory(\obj)
    &\cong \colim_{(\obj', \cst') \in \int_{\tth}\theory}\tth(\obj', \obj)
      \tag{Yoneda} \\
    &\cong \colim_{(\obj', \cst') \in \int_{\tth}\theory}\tth/\obj'(\obj',
      \obj' \times \obj) \\
    &\cong {\tth}[\theory](1, \obj)
  \end{align*}
\end{proof}

By \cref{lem:theory-and-global-section} we identify an element
\(\cst \in \theory(\obj)\) with the corresponding arrow \(\cst : 1 \to \obj\)
in \({\tth}[\theory]\).

\begin{proposition}
  \label{prop:type-theory-generated-by-theory}
  Let \(\theory\) be a theory over a type theory \(\tth\) and
  \(\cat\) a locally small representable map category. For a
  representable map functor \(\fun : \tth \to \cat\), we have an
  equivalence of categories
  \[
    (\tth \under \RMCat)({\tth}[\theory], \cat) \simeq
    \Theory_{\tth}(\theory, \cat(1, \fun \argu))
  \]
  that sends a representable map functor
  \(\funI : {\tth}[\theory] \to \cat\) equipped with a natural
  isomorphism \(\trans_{\obj} : \funI \obj \cong \fun \obj\) for \(\obj \in \tth\)
  to the natural transformation
  \(\theory(\obj) \ni \cst \mapsto \trans_{\obj} \circ \funI \cst \in \cat(1, \fun \obj)\).
\end{proposition}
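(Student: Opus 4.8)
The plan is to compute the left-hand category by the universal property of the filtered pseudo-colimit defining ${\tth}[\theory]$, reduce it to a category of pseudo-cones, evaluate each leg by the universal property of the slice $\tth/\obj$ as the free type theory with a global section of $\obj$ (\cref{prop:indeterminate-on-lf-category}), and finally recognize the resulting (pseudo-)limit as the hom-set $\Theory_{\tth}(\theory, \cat(1, \fun\argu))$. The explicit formula in the statement will drop out of unwinding this chain of equivalences.

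First I would record the inputs. Since $\theory$ is cartesian, $\int_{\tth}\theory$ is cofiltered by \cref{lem:cartesian-functor-el-filtered}, so $(\int_{\tth}\theory)^{\op}$ is filtered and ${\tth}[\theory] = \pcolim_{(\obj, \cst) \in (\int_{\tth}\theory)^{\op}}\tth/\obj$ is a genuine filtered pseudo-colimit. Each slice $\tth/\obj$ is a representable map category under $\tth$ via the pullback functor $\obj^{*} : \tth \to \tth/\obj$, and the transition functors of the diagram, which are again pullbacks $\arr^{*} : \tth/\obj \to \tth/\obj'$, commute with these inclusions up to coherent isomorphism by pullback pasting; hence the defining pseudo-functor lifts to $\tth \under \RMCat$. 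The leg $\inj_{(1, {*})} : \tth/1 \cong \tth \to {\tth}[\theory]$ is exactly the structural inclusion $\inc_{{\tth}[\theory]}$, so ${\tth}[\theory]$ is the filtered pseudo-colimit computed in $\tth \under \RMCat$. Consequently the under-$\tth$ refinement of the pseudo-colimit universal property established above gives an equivalence between $(\tth \under \RMCat)({\tth}[\theory], \cat)$ and the category of pseudo-cones whose $(\obj, \cst)$-component lies in $(\tth \under \RMCat)(\tth/\obj, \cat)$.

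Next I would evaluate componentwise. By \cref{prop:indeterminate-on-lf-category} each leg category $(\tth \under \RMCat)(\tth/\obj, \cat)$ is equivalent to the discrete category $\cat(1, \fun\obj)$, sending a functor under $\tth$ to the image of its generic global section. I would then identify the transitions: precomposing a functor $\tth/\obj' \to \cat$ under $\tth$ with $\arr^{*} : \tth/\obj \to \tth/\obj'$ corresponds, on global sections, to postcomposition with $\fun\arr : \fun\obj' \to \fun\obj$. Feeding this into the coherence isomorphisms of a cone — which for a morphism $\arr : (\obj', \cst') \to (\obj, \cst)$ of $\int_{\tth}\theory$ read $\funI_{(\obj, \cst)} \cong \funI_{(\obj', \cst')} \circ \arr^{*}$ — turns the descent condition into $s_{(\obj, \cst)} = \fun\arr \circ s_{(\obj', \cst')}$ on the associated sections $s_{(\obj, \cst)} \in \cat(1, \fun\obj)$. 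This is precisely the condition defining a cone over the diagram $(\obj, \cst) \mapsto \cat(1, \fun\obj)$ on $\int_{\tth}\theory$, whose limit is $\lim_{(\obj, \cst) \in \int_{\tth}\theory}\cat(1, \fun\obj) \cong \Theory_{\tth}(\theory, \cat(1, \fun\argu))$ (local smallness of $\cat$ ensures $\obj \mapsto \cat(1, \fun\obj)$ is valued in small sets and, as $\fun$ and $\cat(1, \argu)$ preserve finite limits, is cartesian, hence a $\tth$-theory). Because every target $\cat(1, \fun\obj)$ is discrete, the pseudo-limit collapses to this ordinary limit and all coherence data is trivial, so the left-hand category is itself equivalent to a discrete category. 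Unwinding, the composite equivalence sends $\funI$ to the family $s_{(\obj, \cst)} = \trans_{\obj} \circ \funI\cst$, where $\cst : 1 \to \obj$ is the arrow associated to $\cst \in \theory(\obj)$ by \cref{lem:theory-and-global-section} (equivalently, $\inj_{(\obj, \cst)}$ of the generic section); this is exactly the natural transformation displayed in the statement.

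The main obstacle will be the first step: justifying that the filtered pseudo-colimit of the $\tth \under \RMCat$-valued diagram is computed by the underlying pseudo-colimit ${\tth}[\theory]$ and that its universal property is the expected one in the under-category, i.e.\ that restriction along $\inj_{(1, {*})}$ correctly encodes the under-$\tth$ structure. The second delicate point is the variance bookkeeping in identifying precomposition with $\arr^{*}$ as postcomposition with $\fun\arr$ on sections, which is what makes the cone condition line up with naturality for $\Theory_{\tth}$. Once the leg categories are seen to be discrete, the remaining $2$-categorical coherence is automatic and needs no further checking.
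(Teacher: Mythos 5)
Your proof is correct and takes essentially the same route as the paper's (much terser) argument: the paper likewise reduces \((\tth \under \RMCat)({\tth}[\theory], \cat)\) to the pseudo-limit \(\plim_{(\obj, \cst) \in \int_{\tth}\theory}(\tth \under \RMCat)(\tth/\obj, \cat)\), identifies \(\Theory_{\tth}(\theory, \cat(1, \fun \argu))\) with \(\lim_{(\obj, \cst) \in \int_{\tth}\theory}\cat(1, \fun \obj)\), and then applies \cref{prop:indeterminate-on-lf-category} componentwise. Your write-up merely makes explicit two points the paper leaves implicit, namely the under-\(\tth\) refinement of the pseudo-colimit's universal property and the variance bookkeeping whereby precomposition with \(\arr^{*}\) becomes postcomposition with \(\fun\arr\) on global sections.
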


To prove \cref{prop:type-theory-generated-by-theory}, we
show that the slice category \(\tth/\obj\) over an object
\(\obj \in \tth\) is the representable map category obtained from
\(\tth\) by \emph{freely adjoining an arrow \(1 \to \obj\)}. Let
\(\cat\) be a representable map category and \(\obj \in \cat\) an
object. We have a representable map functor
\(\obj^{*} : \cat \to \cat/\obj\) defined by \(\obj^{*}\objI = \obj \times \objI\)
and an arrow \(\diag_{\obj} : 1 \to \obj^{*}\obj\) in \(\cat/\obj\) represented
by the diagonal arrow \(\obj \to \obj \times \obj\).

\begin{lemma}
  \label{lem:slice-canonical-form}
  Let \(\cat\) be a cartesian category and \(\obj \in \cat\) an
  object. For every object \(\arr : \objI \to \obj\) of \(\cat/\obj\), we have
  the following pullback in \(\cat/\obj\).
  \[
    \begin{tikzcd}
      \arr \arrow[r]
      \arrow[d] &
      \obj^{*}\objI \arrow[d,"\obj^{*}\arr"] \\
      1 \arrow[r,"\diag_{\obj}"'] &
      \obj^{*}\obj
    \end{tikzcd}
  \]
\end{lemma}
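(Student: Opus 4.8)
The plan is to reduce the claim to a statement inside \(\cat\) by using the fact that the forgetful (domain) functor \(U : \cat/\obj \to \cat\) creates pullbacks: a commuting square in \(\cat/\obj\) is a pullback precisely when its image under \(U\) is a pullback in \(\cat\), since pullbacks are connected limits and are therefore computed in \(\cat/\obj\) exactly as in \(\cat\). As \(U\) sends \(\arr\) to its domain \(\objI\), the terminal object \(1 = \id_{\obj}\) to \(\obj\), and \(\obj^{*}(\argu) = \obj \times (\argu)\) to the evident products, applying \(U\) turns the square of the lemma into
\[
  \begin{tikzcd}
    \objI \arrow[r] \arrow[d,"\arr"'] & \obj\times\objI \arrow[d,"{\id_{\obj}\times\arr}"] \\
    \obj \arrow[r,"\diag_{\obj}"'] & \obj\times\obj
  \end{tikzcd}
\]
in \(\cat\), where the bottom edge is the diagonal \(\diag_{\obj}\) and the top edge is forced by commutativity to be the graph \(\langle\arr,\id_{\objI}\rangle\) (its composite with \(\id_{\obj}\times\arr\) is \(\langle\arr,\arr\rangle = \diag_{\obj}\circ\arr\), as required). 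So it suffices to prove that this latter square is a pullback in \(\cat\).

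First I would record the unwindings above, namely that \(\obj^{*}\arr\) is \(\id_{\obj}\times\arr\), that \(\diag_{\obj}\) is the diagonal, and that \(U\) carries the (a priori unlabelled) top leg to the graph \(\langle\arr,\id_{\objI}\rangle\) and the left leg \(\arr \to 1\) to \(\arr\) itself. Then I would verify the universal property directly. Given a test object \(T\) with maps \(s : T \to \obj\) and \(t = \langle t_{1}, t_{2}\rangle : T \to \obj\times\objI\) satisfying \((\id_{\obj}\times\arr)\circ t = \diag_{\obj}\circ s\), comparing the two components gives \(t_{1} = s\) and \(\arr\circ t_{2} = s\). The mediating arrow is then \(t_{2} : T \to \objI\): it satisfies \(\arr\circ t_{2} = s\) and \(\langle\arr,\id_{\objI}\rangle\circ t_{2} = \langle\arr\circ t_{2}, t_{2}\rangle = \langle t_{1}, t_{2}\rangle = t\), and it is unique because any mediating \(u\) must satisfy \(\langle\arr,\id_{\objI}\rangle\circ u = t\), whence \(u = \pi_{2}\circ t = t_{2}\) after postcomposing with the second projection.

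This is essentially the standard fact that the graph of a morphism is the pullback of the codomain's diagonal, so I do not anticipate a genuine obstacle. The single point that needs care is bookkeeping rather than mathematics: correctly identifying the legs of the square after applying \(U\) — in particular that the unlabelled top edge is the graph \(\langle\arr,\id_{\objI}\rangle\) and that the left leg to the terminal object becomes \(\arr\) itself — together with confirming that \(U\) genuinely reflects pullbacks, which is exactly the statement that connected limits in \(\cat/\obj\) are computed as in \(\cat\).
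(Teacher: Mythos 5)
Your proof is correct and takes essentially the same route as the paper, whose entire proof is the single observation that the square in \(\cat\) with top edge the graph \(\langle\arr, \id_{\objI}\rangle\), left edge \(\arr\), right edge \(\obj \times \arr\) and bottom edge the diagonal is a pullback in \(\cat\) --- you simply make explicit the two steps the paper leaves implicit, namely that the domain functor \(\cat/\obj \to \cat\) reflects pullbacks (as connected limits) and the direct universal-property verification. One cosmetic slip: the top edge is not \emph{forced} by commutativity to be the graph (commutativity only yields \(\arr \circ \pi_{2} \circ t = \arr\), which does not determine \(\pi_{2} \circ t\) unless \(\arr\) is monic), but this is harmless since you exhibit the graph explicitly and verify the pullback property with that choice.
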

\begin{proof}
  The square
  \[
    \begin{tikzcd}
      \objI \arrow[r,"{(\arr, \objI)}"]
      \arrow[d,"\arr"'] &
      \obj \times \objI \arrow[d,"\obj \times \arr"] \\
      \obj \arrow[r] &
      \obj \times \obj
    \end{tikzcd}
  \]
  is a pullback in \(\cat\).
\end{proof}

\begin{proposition}
  \label{prop:indeterminate-on-lf-category}
  Let \(\cat\) be a representable map category and
  \(\obj \in \cat\) an object. For any representable map category
  \(\catI\) and representable map functor
  \(\fun : \cat \to \catI\), the functor
  \[
    (\cat \under \RMCat)(\cat/\obj, \catI) \ni (\funI, \trans)
    \mapsto \trans_{\obj} \circ \funI \diag_{\obj} \in \catI(1, \fun \obj)
  \]
  is an equivalence of categories.
\end{proposition}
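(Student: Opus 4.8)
The plan is to establish the equivalence by constructing an explicit quasi-inverse and checking that the two composites are naturally isomorphic to the identities. The statement says that slicing $\cat/\obj$ is the representable map category freely generated over $\cat$ by an arrow $1 \to \obj$, so the content is a universal property: a representable map functor out of $\cat/\obj$ extending $\fun$ is the same data as a single global section of $\fun\obj$ in $\catI$.

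First I would analyze the evaluation functor itself. Given $(\funI, \trans) \in (\cat \under \RMCat)(\cat/\obj, \catI)$, it sends this to the arrow $\trans_\obj \circ \funI\diag_\obj : 1 \to \fun\obj$, using the natural isomorphism $\trans : \funI \inc \cong \inc'$ (here the inclusion is $\obj^*$, and $\inc'$ should be identified with $\fun$ under the coherence that $\funI \obj^* \cong \fun$). The key structural fact I would exploit is \cref{lem:slice-canonical-form}: every object $\arr : \objI \to \obj$ of $\cat/\obj$ sits in a canonical pullback square with corners $\obj^*\objI$, $1$, and $\obj^*\obj$, with the bottom map $\diag_\obj$. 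Since $\funI$ preserves finite limits, this means that $\funI\arr$ is \emph{forced} to be the pullback of $\funI\obj^*\objI \cong \fun\objI$ along the chosen section $1 \to \fun\obj$. This is the engine of the proof: the value of $\funI$ on an arbitrary slice object is determined, up to canonical isomorphism, by $\fun$ together with the single global section.

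Next I would construct the quasi-inverse. Given a global section $\cst : 1 \to \fun\obj$ in $\catI$, I define a representable map functor $\funI_\cst : \cat/\obj \to \catI$ by sending $\arr : \objI \to \obj$ to the pullback of $\fun\arr : \fun\objI \to \fun\obj$ along $\cst$, i.e.\ $\funI_\cst(\arr) = \cst^*(\fun\arr)$. I must check that $\funI_\cst$ preserves finite limits, representable arrows, and pushforwards along representable arrows: finite limits and representable arrows follow because pullback along $\cst$ is a representable map functor (representable arrows are stable under pullback, \cref{cor:rep-pullback-stable}) composed with $\fun$ restricted to the slice; pushforwards require that $\cst^*$ preserves them, which holds because $\fun$ preserves pushforwards along representable arrows and $\cst^*$ is a Beck--Chevalley-style base change. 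I also equip $\funI_\cst$ with the natural isomorphism $\trans$ witnessing $\funI_\cst \obj^* \cong \fun$, which on $\objI$ unwinds to $\cst^*(\fun(\obj\times\objI) \to \fun\obj) \cong \fun\objI$ since the relevant square is a pullback.

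Finally I would verify the two round-trips. One composite sends $\cst$ to $\funI_\cst$ and back to $\trans_\obj \circ \funI_\cst \diag_\obj$; evaluating $\funI_\cst$ at $\diag_\obj$ and using that $\diag_\obj$'s defining pullback (\cref{lem:slice-canonical-form} at $\arr = \id_\obj$) recovers $\cst$ up to the coherence isomorphism gives back $\cst$. The other composite sends $(\funI, \trans)$ to $\funI_{\trans_\obj \circ \funI\diag_\obj}$, and \cref{lem:slice-canonical-form} together with preservation of pullbacks by $\funI$ produces a natural isomorphism $\funI \cong \funI_{\trans_\obj \circ \funI\diag_\obj}$ compatible with the structure maps, i.e.\ an isomorphism in $(\cat \under \RMCat)(\cat/\obj, \catI)$. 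The main obstacle will be the coherence bookkeeping: one must track the natural isomorphisms $\trans$ through all of the pullback identifications and confirm that the preservation-of-pushforwards condition really does transfer across the base change $\cst^*$, since pushforwards are only defined up to isomorphism and the Beck--Chevalley comparison must be shown invertible using \cref{cor:representable-map-pullback}. Fullness and faithfulness on $2$-cells (natural transformations under $\cat$) is then a routine consequence of the same canonical-pullback description, since a natural transformation is likewise determined by its component on $\diag_\obj$.
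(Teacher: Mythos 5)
Your proposal is correct and takes essentially the same route as the paper: both proofs hinge on \cref{lem:slice-canonical-form} to force the value of \(\funI\) on any slice object \(\arr : \objI \to \obj\) to be the pullback of \(\fun\arr\) along the chosen section, and your quasi-inverse \(\cst \mapsto \cst^{*}(\fun(\argu))\) is exactly the paper's composite \(\el^{*} \circ (\fun/\obj)\), whose status as a representable map functor rests on the same facts you cite (stability of representable arrows under pullback and the slice/base-change structure from the example on slices of representable map categories). The only difference is organizational: since \(\catI(1, \fun\obj)\) is a discrete category, the paper skips your round-trip and \(2\)-cell bookkeeping by observing that it suffices to show each fiber of the evaluation functor is contractible, which the existence-plus-uniqueness-up-to-unique-isomorphism argument delivers in one stroke.
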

\begin{proof}
  Since \(\catI(1, \fun \obj)\) is a discrete category, it suffices to
  show that every fiber of the functor is contractible. Let
  \(\el : 1 \to \fun \obj\) be an arrow. By \cref{lem:slice-canonical-form},
  a representable map functor \(\funI : \cat/\obj \to \catI\) equipped
  with a natural isomorphism \(\trans : \funI \obj^{*} \cong \fun\) such that
  \(\trans_{\obj} \circ \funI \diag_{\obj} = \el\) must send an object \(\arr : \objI \to \obj\)
  of \(\cat/\obj\) to the pullback
  \[
    \begin{tikzcd}
      \funI \arr \arrow[r] \arrow[d] &
      \fun \objI \arrow[d,"\fun \arr"] \\
      1 \arrow[r,"\el"'] &
      \fun \obj.
    \end{tikzcd}
  \]
  Hence such a pair \((\funI, \trans)\) is unique up to unique
  isomorphism. Such a \((\funI, \trans)\) exists because the
  composite
  \begin{tikzcd}
    \cat/\obj \arrow[r,"\fun/\obj"] &
    \catI/\fun\obj \arrow[r,"\el^{*}"] &
    \catI
  \end{tikzcd}
  is a representable map functor such that
  \(\el^{*} (\fun/\obj) \obj^{*} \cong \fun\).
\end{proof}

\begin{proof}
  [Proof of \cref{prop:type-theory-generated-by-theory}]
  We have equivalences of categories
  \((\tth \under \RMCat)({\tth}[\theory], \cat) \simeq
  \plim_{(\obj, \cst) \in \int_{\tth}\theory}(\tth \under
  \RMCat)(\tth/\obj, \cat)\) and
  \(\Theory_{\tth}(\theory, \cat(1, \fun \argu)) \simeq \lim_{(\obj, \cst) \in
    \int_{\tth}\theory}\cat(1, \fun \obj)\). Then use
  \cref{prop:indeterminate-on-lf-category}.
\end{proof}

\subsection{The Bi-adjunction of Theories and Models}
\label{sec:intern-lang-type}

In this subsection we show the following theorem.

\begin{theorem}
  \label{thm:internal-language-adjunction}
  For a type theory \(\tth\), the \(2\)-functor
  \(\iL_{\tth} : \Mod_{\tth} \to \Theory_{\tth}\)
  has a left bi-adjoint.
\end{theorem}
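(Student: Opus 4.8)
The plan is to realise the left bi-adjoint by the construction sketched at the end of \cref{sec:internal-languages}: for a $\tth$-theory $\theory$, set $\sM_{\tth}\theory$ to be the restriction along the inclusion $\tth \to {\tth}[\theory]$ of the bi-initial model $\iM({\tth}[\theory])$ of the type theory ${\tth}[\theory]$. Explicitly, $\sM_{\tth}\theory$ has base category $\iM({\tth}[\theory])$ and structure functor the composite of $\tth \hookrightarrow {\tth}[\theory]$ with $(\argu)^{\iM({\tth}[\theory])}$. By the definition of a left bi-adjoint in \cref{sec:preliminaries}, it suffices to produce a unit $\unit_{\theory} : \theory \to \iL_{\tth}\sM_{\tth}\theory$ for which the comparison functor
\[
  \Mod_{\tth}(\sM_{\tth}\theory, \model)
  \xrightarrow{\iL_{\tth}}
  \Theory_{\tth}(\iL_{\tth}\sM_{\tth}\theory, \iL_{\tth}\model)
  \xrightarrow{\unit_{\theory}^{*}}
  \Theory_{\tth}(\theory, \iL_{\tth}\model),
  \qquad g \mapsto \iL_{\tth}(g) \circ \unit_{\theory},
\]
is an equivalence of categories for every model $\model$ of $\tth$.

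First I would define the unit. Since $\iL_{\tth}\sM_{\tth}\theory(\obj) \cong \obj^{\sM_{\tth}\theory}(1) = \obj^{\iM({\tth}[\theory])}(1)$, which by the explicit description of $\iM({\tth}[\theory])$ is the set of global sections $1 \to \obj$ in ${\tth}[\theory]$, \cref{lem:theory-and-global-section} supplies a natural bijection $\theory(\obj) \cong {\tth}[\theory](1, \obj) \cong \iL_{\tth}\sM_{\tth}\theory(\obj)$. This is the unit $\unit_{\theory}$, and it is in fact an \emph{isomorphism} of $\tth$-theories, which is what will eventually make $\sM_{\tth}$ locally an equivalence.

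The core of the argument is to factor the comparison functor, up to natural isomorphism, as a composite of two equivalences
\[
  \Mod_{\tth}(\sM_{\tth}\theory, \model)
  \xleftarrow{\ \sim\ }
  (\tth \under \RMCat)({\tth}[\theory], \DFib_{\model})
  \xrightarrow{\ \sim\ }
  \Theory_{\tth}(\theory, \iL_{\tth}\model).
\]
The right-hand equivalence is exactly \cref{prop:type-theory-generated-by-theory} applied to the representable map functor $(\argu)^{\model} : \tth \to \DFib_{\model}$, using that $\DFib_{\model}(1, (\argu)^{\model}) = \iL_{\tth}\model$ by definition. For the left-hand functor I would exploit bi-initiality: an object of $(\tth \under \RMCat)({\tth}[\theory], \DFib_{\model})$ is precisely a model $\widehat{\model}$ of ${\tth}[\theory]$ with base category $\model$ whose $\tth$-reduct is identified with $(\argu)^{\model}$; by \cref{thm:initial-model} there is an essentially unique morphism $\iM({\tth}[\theory]) \to \widehat{\model}$ of models of ${\tth}[\theory]$, and restricting it along $\tth \to {\tth}[\theory]$ produces a morphism $\sM_{\tth}\theory \to \model$ of models of $\tth$. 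This assignment is the left-hand functor, and checking that the resulting triangle commutes up to isomorphism is a diagram chase through the definitions of $\unit_{\theory}$ and of \cref{prop:type-theory-generated-by-theory}.

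The hard part will be showing that this last functor is an equivalence, and this is where I would invoke the reformulation of (2-)morphisms of models as representable map functors from \cref{sec:anoth-defin-morph}. For essential surjectivity, given a morphism $g = (\fun, g_{(\argu)}) : \sM_{\tth}\theory \to \model$ with $g_{(\argu)} : \tth \to (\DFib^{\to})_{\fun}$, I would extend $g_{(\argu)}$ to a representable map functor $\widetilde{g}_{(\argu)} : {\tth}[\theory] \to (\DFib^{\to})_{\fun}$ satisfying $\dom \widetilde{g}_{(\argu)} = (\argu)^{\iM({\tth}[\theory])}$; such an extension exists because, by \cref{prop:type-theory-generated-by-theory} applied to $(\DFib^{\to})_{\fun}$, it corresponds to a $\tth$-theory map $\theory \to (\DFib^{\to})_{\fun}(1, g_{(\argu)}\argu)$ that, under the unit isomorphism, splits the map induced by $\dom$, and the evident section $s \mapsto (s, \fun_{\obj}s)$ provides one. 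Then $\cod \widetilde{g}_{(\argu)}$ is the sought object $\widehat{\model}$ and $\widetilde{g}_{(\argu)}$ restricts to $g$. Full faithfulness then follows from the contractibility in \cref{thm:initial-model} together with the analogous description of $2$-morphisms via $(\DFib^{\Theta})_{\trans}$: any two lifts of a morphism are uniquely isomorphic as morphisms of ${\tth}[\theory]$-models, and the $2$-cells match on both sides. The delicate points are the coherence bookkeeping in these liftings and the verification that the Beck-Chevalley conditions are preserved, but these are controlled by the facts that $\dom$ and $\cod$ are representable map functors and that $(\dom, \cod)$ is a discrete isofibration.
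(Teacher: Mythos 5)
Correct, and essentially the paper's own proof: the paper likewise constructs \(\sM_{\tth}\theory\) by restricting the bi-initial model \(\iM({\tth}[\theory])\) and reduces the universal property to \cref{prop:type-theory-generated-by-theory}, \cref{thm:initial-model}, and the reformulation of (\(2\)-)morphisms of models as representable map functors into \((\DFib^{\to})_{\fun}\) and \((\DFib^{\Theta})_{\trans}\) --- exactly the ingredients you use, including the unit built from \cref{lem:theory-and-global-section}. The only divergence is packaging: the paper routes through the comma \(2\)-category \((\theory \downarrow \iL_{\tth})\), proving that \(\Mod_{{\tth}[\theory]} \to (\theory \downarrow \iL_{\tth})\) is a bi-equivalence (\cref{lem:models-of-type-theory-generated-by-theory}) and appealing to the characterization of bi-adjoints via bi-initial objects (\cref{lem:bi-adjoint-char}), whereas you verify the defining hom-wise equivalence directly, which is a cosmetic rearrangement rather than a different argument.
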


\begin{definition}
  Let \(\theory\) be a theory over a type theory \(\tth\). We define a
  \(2\)-category \((\theory \downarrow \iL_{\tth})\) as follows:
  \begin{itemize}
  \item the objects are the pairs \((\model, \thmap)\) consisting of a
    model \(\model\) of \(\tth\) and a map \(\thmap : \theory \to
    \iL_{\tth}\model\) of \(\tth\)-theories;
  \item the morphisms \((\model, \thmap) \to (\modelI, \thmapI)\) are the
    morphisms \(\fun : \model \to \modelI\) of models of
    \(\tth\) such that \(\iL_{\tth}\fun \circ \thmap = \thmapI\);
  \item the \(2\)-morphisms are those of \(\Mod_{\tth}\).
  \end{itemize}
\end{definition}

\begin{lemma}
  \label{lem:bi-adjoint-char}
  For a type theory \(\tth\), the \(2\)-functor
  \(\iL_{\tth}\) has a left bi-adjoint if and only if the
  \(2\)-category \((\theory \downarrow \iL_{\tth})\) has a bi-initial
  object for every \(\tth\)-theory \(\theory\).
\end{lemma}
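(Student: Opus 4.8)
The plan is to show that both sides of the biconditional are, for a fixed theory $\theory$, equivalent to one and the same contractibility statement about the fibers of a comparison functor, and then to quantify over $\theory$. Fix a $\tth$-theory $\theory$ and unwind the definition of a left bi-adjoint from the preliminaries in this situation. A candidate value of the left bi-adjoint at $\theory$ is a pair $(\sM, \unit)$ consisting of a model $\sM$ together with a map of theories $\unit : \theory \to \iL_{\tth}\sM$; but this is exactly the data of an object of $(\theory \downarrow \iL_{\tth})$. For such a candidate, the bi-adjoint condition asserts that for every model $\model$ the composite
\[
\Phi_{\model} : \Mod_{\tth}(\sM, \model) \xrightarrow{\iL_{\tth}} \Theory_{\tth}(\iL_{\tth}\sM, \iL_{\tth}\model) \xrightarrow{\unit^{*}} \Theory_{\tth}(\theory, \iL_{\tth}\model),
\]
which sends $\fun$ to $\iL_{\tth}\fun \circ \unit$, is an equivalence of categories.

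Next I would identify the hom-categories of $(\theory \downarrow \iL_{\tth})$ with the strict fibers of $\Phi_{\model}$. By definition an object of $(\theory \downarrow \iL_{\tth})((\sM, \unit), (\model, \thmap))$ is a morphism $\fun : \sM \to \model$ of models with $\iL_{\tth}\fun \circ \unit = \thmap$, that is, an object of $\Mod_{\tth}(\sM, \model)$ with $\Phi_{\model}(\fun) = \thmap$; a morphism there is merely a $2$-morphism of $\Mod_{\tth}$, carrying no extra compatibility condition. Here is where local discreteness of $\Theory_{\tth}$ enters: since $\Theory_{\tth}(\theory, \iL_{\tth}\model)$ is a discrete category, $\Phi_{\model}$ sends every $2$-morphism to an identity, so any $2$-morphism between two objects of a single fiber stays within that fiber. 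Hence $(\theory \downarrow \iL_{\tth})((\sM, \unit), (\model, \thmap))$ coincides with the strict fiber $\Phi_{\model}^{-1}(\thmap)$, a full subcategory of $\Mod_{\tth}(\sM, \model)$.

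The key step is then an elementary general fact: a functor $\Phi : \cat \to S$ into a discrete category $S$ is an equivalence if and only if each of its strict fibers $\Phi^{-1}(s)$ is contractible. Indeed, since $S$ is discrete no morphism of $\cat$ connects objects in distinct fibers, so $\cat = \coprod_{s \in S}\Phi^{-1}(s)$; reading off essential surjectivity and full faithfulness of $\Phi$ from this decomposition gives exactly that every fiber is nonempty with a unique arrow between any two of its objects, which is the definition of contractibility. Applying this to $\Phi_{\model}$ shows that, for the candidate $(\sM, \unit)$, the bi-adjoint condition ($\Phi_{\model}$ an equivalence for all $\model$) holds if and only if $\Phi_{\model}^{-1}(\thmap) = (\theory \downarrow \iL_{\tth})((\sM, \unit), (\model, \thmap))$ is contractible for all $\model$ and all $\thmap$, i.e.\ if and only if $(\sM, \unit)$ is bi-initial in $(\theory \downarrow \iL_{\tth})$. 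Quantifying over all $\theory$ yields both implications of the lemma.

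I do not expect a serious obstacle: the argument is bookkeeping together with the stated fact about functors into a discrete category. The one point needing care is the matching of data in the second paragraph, namely verifying that the comma $2$-category imposes no additional condition on $2$-morphisms, so that its hom-categories are the \emph{strict} fibers (rather than some lax variant) of $\Phi_{\model}$; this is precisely the place where the hypothesis that $\Theory_{\tth}$ is locally discrete is essential.
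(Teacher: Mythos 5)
Your proposal is correct and follows essentially the same route as the paper's proof: both identify the hom-categories \((\theory \downarrow \iL_{\tth})((\sM,\unit),(\model,\thmap))\) with the fibers of the composite \(\unit^{*} \circ \iL_{\tth}\) and invoke the fact that a functor into a discrete category is an equivalence if and only if every fiber is contractible. The only difference is that you spell out the disjoint-fiber decomposition and the role of local discreteness explicitly, where the paper states these steps without elaboration.
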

\begin{proof}
  Let \(\theory\) be a \(\tth\)-theory, \(\model\) a model of
  \(\tth\) and \(\thmap : \theory \to \iL_{\tth}\model\) a map of
  \(\tth\)-theories. For a model \(\modelI\) of
  \(\tth\), consider the functor
  \[
    \begin{tikzcd}
      \Mod_{\tth}(\model, \modelI)
      \arrow[r,"\iL_{\tth}"] &
      \Theory_{\tth}(\iL_{\tth}\model,
      \iL_{\tth}\modelI)
      \arrow[r,"\thmap^{*}"] &
      \Theory_{\tth}(\theory, \iL_{\tth}\modelI).
    \end{tikzcd}
  \]
  Since \(\Theory_{\tth}(\theory, \iL_{\tth}\modelI)\) is a
  discrete category, this functor is an equivalence if and only if
  every fiber is contractible. But the fiber over a map
  \(\thmapI : \theory \to \iL_{\tth}\modelI\) is just
  \((\theory \downarrow \iL_{\tth})((\model, \thmap), (\modelI,
  \thmapI))\). Thus, \((\model, \thmap)\) is a bi-universal map from \(\theory\) to
  \(\iL_{\tth}\) if and only if it is a bi-initial object of
  \((\theory \downarrow \iL_{\tth})\).
\end{proof}

We have a \(2\)-functor
\(\Mod_{{\tth}[\theory]} \ni \model \mapsto (\model|_{\tth},
\thmap_{\model}) \in (\theory \downarrow \iL_{\tth})\) where
\(\model|_{\tth}\) is the model of \(\tth\) obtained
from \(\model\) by restricting
\((\argu)^{\model} : {\tth}[\theory] \to \DFib_{\model}\) to
\(\tth\) and
\(\thmap_{\model} : \theory \to \iL_{\tth}(\model|_{\tth})\)
sends an element \(\cst \in \theory(\obj)\) to the map
\(\cst^{\model} : \model \to \obj^{\model}\) of discrete fibrations
over \(\model\).

\begin{lemma}
  \label{lem:models-of-type-theory-generated-by-theory}
  For a theory \(\theory\) over a type theory \(\tth\), the
  \(2\)-functor \(\Mod_{{\tth}[\theory]} \to (\theory \downarrow
  \iL_{\tth})\) is a bi-equivalence.
\end{lemma}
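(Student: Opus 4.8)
The plan is to apply the universal property of \({\tth}[\theory]\) (\cref{prop:type-theory-generated-by-theory}) at the three levels of objects, morphisms and \(2\)-morphisms, exploiting the reformulation of \((2\text{-})\)morphisms of models as representable map functors from \cref{sec:anoth-defin-morph}. Write \(\Phi : \Mod_{{\tth}[\theory]} \to (\theory \downarrow \iL_{\tth})\) for the \(2\)-functor under consideration. The first thing I would record is the basic identification that makes everything run: for a model \(\modelI\) of \(\tth\), the terminal discrete fibration over its base category is the base category itself, so \(\DFib_{\modelI}(1, (\argu)^{\modelI}) = \DFib_{\modelI}(\modelI, (\argu)^{\modelI}) = \iL_{\tth}\modelI\). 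Thus \cref{prop:type-theory-generated-by-theory}, taken with \(\cat = \DFib_{\modelI}\) and \(\fun = (\argu)^{\modelI}\), says exactly that extending the representable map functor \((\argu)^{\modelI} : \tth \to \DFib_{\modelI}\) to \({\tth}[\theory]\) is the same datum as a \(\tth\)-theory map \(\theory \to \iL_{\tth}\modelI\).

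For bi-essential surjectivity on objects, given \((\modelI, \thmap) \in (\theory \downarrow \iL_{\tth})\) I would use this identification to extend \((\argu)^{\modelI}\) along \(\thmap\) to a representable map functor \({\tth}[\theory] \to \DFib_{\modelI}\); this is a model \(\model\) of \({\tth}[\theory]\) over the same base category, with \(\model|_{\tth} \cong \modelI\) and \(\thmap_{\model} = \thmap\) by construction, so \(\Phi\model \cong (\modelI, \thmap)\).

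For the local structure, fix models \(\model, \modelI\) of \({\tth}[\theory]\) and pass to the representable map categories of \cref{sec:anoth-defin-morph}. A morphism \(\model \to \modelI\) of models of \({\tth}[\theory]\) is a terminal-preserving functor \(\fun\) together with a representable map functor \({\tth}[\theory] \to (\DFib^{\to})_{\fun}\) lying over \(((\argu)^{\model}, (\argu)^{\modelI})\), and \(2\)-morphisms are captured the same way by \((\DFib^{\Theta})_{\trans}\). Computing the terminal object of \((\DFib^{\to})_{\fun}\) as the square \(\fun\) over itself identifies \((\DFib^{\to})_{\fun}(1, \fun_{(\argu)})\) with \(\iL_{\tth}(\model|_{\tth})\) via \(\dom\), and under this identification \(\cod\) becomes post-composition with \(\iL_{\tth}\fun\). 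Feeding this into \cref{prop:type-theory-generated-by-theory} for \((\DFib^{\to})_{\fun}\) shows that a morphism of models of \(\tth\) lifts to a morphism of models of \({\tth}[\theory]\) precisely when \(\iL_{\tth}\fun \circ \thmap_{\model} = \thmap_{\modelI}\)---which is the defining condition of a morphism of \((\theory \downarrow \iL_{\tth})\)---giving local essential surjectivity on objects. An entirely parallel computation with \((\DFib^{\Theta})_{\trans}\), where the global-section hom is again \(\iL_{\tth}(\model|_{\tth})\) because each \(\trans_{\obj}\) is uniquely determined, shows that a \(2\)-morphism in \((\theory \downarrow \iL_{\tth})\) automatically satisfies the \(\trans_{\obj}\)-condition for every \(\obj \in {\tth}[\theory]\), hence is a \(2\)-morphism of models of \({\tth}[\theory]\); this is local fullness. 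Local faithfulness is immediate: a \(2\)-morphism in either \(2\)-category is determined by its underlying natural transformation, which \(\Phi\) preserves.

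The main obstacle I anticipate is the bookkeeping of strictness. \Cref{prop:type-theory-generated-by-theory} only produces extensions up to isomorphism, whereas a morphism of models of \({\tth}[\theory]\) requires \(\dom\) and \(\cod\) of the extending functor to agree with \((\argu)^{\model}\) and \((\argu)^{\modelI}\) on the nose. I would resolve this exactly as in \cref{sec:anoth-defin-morph}: the functor \((\dom, \cod)\) is a discrete isofibration, so the comparison isomorphisms can be transported to make the extension strictly compatible. The residual work---checking that these hom-set identifications are natural and interact correctly with the pullback presentation of \((\DFib^{\Theta})_{\trans}\)---is routine once \cref{prop:type-theory-generated-by-theory} and the descriptions in \cref{sec:anoth-defin-morph} are in hand.
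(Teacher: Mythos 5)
Your proposal is correct and follows essentially the same route as the paper's proof: apply \cref{prop:type-theory-generated-by-theory} with target \(\DFib_{\modelI}\) for bi-essential surjectivity on objects, with \((\DFib^{\to})_{\fun}\) for local essential surjectivity, and with \((\DFib^{\Theta})_{\trans}\) for local fullness, local faithfulness being immediate. Your explicit identification of the global-section theory \((\DFib^{\to})_{\fun}(1, \fun_{(\argu)}) \cong \iL_{\tth}(\model|_{\tth})\) (with \(\cod\) becoming postcomposition by \(\iL_{\tth}\fun\)) and your strictification via the discrete isofibration \((\dom, \cod)\) are exactly the points the paper leaves implicit, handled correctly.
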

\begin{proof}
  It is clear that the \(2\)-functor is locally faithful. It remains
  to show that the \(2\)-functor is bi-essentially surjective on
  objects, locally essentially surjective on objects and locally
  full.

  To show that the \(2\)-functor is bi-essentially surjective on
  objects, let \(\model\) be a model of \(\tth\) and
  \(\thmap : \theory \to \iL_{\tth}\model\) a map of
  \(\tth\)-theories. By
  \cref{prop:type-theory-generated-by-theory} the
  representable map functor
  \((\argu)^{\model} : \tth \to \DFib_{\model}\) extends to a
  representable map functor
  \((\argu)^{\widetilde{\model}} : {\tth}[\theory] \to \DFib_{\model}\) that
  sends \(\cst : 1 \to \obj\) to
  \(\thmap(\cst) : \model \to \obj^{\model} \cong \obj^{\widetilde{\model}}\) for
  \(\cst \in \theory(\obj)\). Thus, \(\widetilde{\model}\) is a model of \({\tth}[\theory]\)
  such that
  \((\widetilde{\model}|_{\tth}, \thmap_{\widetilde{\model}}) \simeq (\model, \thmap)\).

  To show that the \(2\)-functor is locally essentially surjective on
  objects, let \(\model\) and \(\modelI\) be models of
  \({\tth}[\theory]\) and \(\fun : \model|_{\tth} \to
  \modelI|_{\tth}\) a morphism of models of \(\tth\)
  such that \(\iL_{\tth}\fun \circ \thmap_{\model} =
  \thmap_{\modelI}\). This equation means that, for any element \(\cst \in
  \theory(\obj)\), the diagram
  \begin{equation}
    \label{eq:2}
    \begin{tikzcd}
      \model
      \arrow[r,"\fun"]
      \arrow[d,"\cst^{\model}"'] &
      \modelI
      \arrow[d,"\cst^{\modelI}"] \\
      \obj^{\model}
      \arrow[r,"\fun_{\obj}"'] &
      \obj^{\modelI}
    \end{tikzcd}
  \end{equation}
  commutes. Recall from \cref{sec:anoth-defin-morph} that
  \(\fun_{(\argu)}\) can be regarded as a representable map functor
  \(\fun_{(\argu)} : \tth \to (\DFib^{\to})_{\fun}\) (\cref{prop:morphism-of-models}). Then
  \(\theory(\obj) \ni \cst \mapsto (\cst^{\model}, \cst^{\modelI}) \in
  (\DFib^{\to})(\fun, \fun_{\obj})\) defines a map
  \(\theory \to (\DFib^{\to})_{\fun}(\fun, \fun_{(\argu)})\) of
  \(\tth\)-theories. By \cref{prop:type-theory-generated-by-theory}
  the representable map functor
  \(\fun_{(\argu)} : \tth \to (\DFib^{\to})_{\fun}\) extends to a
  representable map functor
  \(\widetilde{\fun}_{(\argu)} : {\tth}[\theory] \to
  (\DFib^{\to})_{\fun}\), giving a morphism
  \(\widetilde{\fun} : \model \to \modelI\) of models of
  \({\tth}[\theory]\) whose restriction to \(\tth\) is \(\fun\).

  The local fullness is similarly proved using \cref{prop:2-morphisms-of-model} and
  \((\DFib^{\Theta})_{\trans}\) instead of \((\DFib^{\to})_{\fun}\).
\end{proof}

\begin{proof}[Proof of \cref{thm:internal-language-adjunction}]
  Let \(\theory\) be a \(\tth\)-theory. We have a bi-equivalence
  \(\Mod_{{\tth}[\theory]} \simeq (\theory \downarrow \iL_{\tth})\) by
  \cref{lem:models-of-type-theory-generated-by-theory}. Hence,
  \((\theory \downarrow \iL_{\tth})\) has a bi-initial object by
  \cref{thm:initial-model}. By \cref{lem:bi-adjoint-char}
  the \(2\)-functor \(\iL_{\tth}\) has a left bi-adjoint.
\end{proof}

We can extract an explicit construction of the left bi-adjoint of
\(\iL_{\tth}\) from the proof of
\cref{thm:internal-language-adjunction}. Let \(\theory\) be a theory
over a type theory \(\tth\). We will denote by
\((\sM_{\tth}\theory, \unit_{\theory})\) the bi-initial object of
\((\theory \downarrow \iL_{\tth})\). The model \(\sM_{\tth}\theory\)
of \(\tth\) is obtained from the bi-initial model
\(\iM({\tth}[\theory])\) of \({\tth}[\theory]\) by restricting
\((\argu)^{\iM({\tth}[\theory])} : {\tth}[\theory] \to
\DFib_{\iM({\tth}[\theory])}\) to \(\tth\). We call
\(\sM_{\tth}\theory\) the \emph{syntactic model of \(\tth\)
  generated by \(\theory\)}. The map
\(\unit_{\theory} : \theory \to \iL_{\tth}(\sM_{\tth}\theory)\) sends an
element \(\cst \in \theory(\obj)\) over \(\obj \in \tth\) to the arrow
\(\cst : 1 \to \obj\) in \({\tth}[\theory]\), which is an element of
\(\obj^{\sM_{\tth}\theory}(1)\).

\begin{example}
  Let \(\theory\) be a theory over the basic dependent type theory
  \(\thgat\), that is, a generalized algebraic
  theory. \(\thgat[\theory]\) satisfies the assumption of
  \cref{exm:bi-initial-model-ctx}, because every representable arrow
  in \(\thgat[\theory]\) is isomorphic to a representable arrow from
  some slice \(\thgat/\obj\). Hence, the objects of
  \(\sM_{\thgat}\theory\) are the finite sequences
  \((\obj_{1}, \dots, \obj_{n})\) of arrows
  \(\obj_{\idx} : |\obj_{\idx-1}| \to \Ty\), which corresponds to an
  arrow \(\obj : 1 \to \poly_{\typeof}^{n}1\) via the adjunction
  \(\typeof^{*} \adj \typeof_{*}\). By
  \cref{lem:theory-and-global-section}, the arrows
  \(1 \to \poly_{\typeof_{0}}^{n}1\) in \(\thgat[\theory]\) correspond
  to the elements of \(\theory(\poly_{\typeof_{0}}^{n}1)\). Since
  \(\theory(\poly_{\typeof_{0}}^{n}1) \cong \theory(\Ty_{n-1})\) by
  \cref{item:20} of \cref{thm:gat-exp} and elements of
  \(\theory(\Ty_{n-1})\) are contexts of length \(n\), the base
  category of \(\sM_{\thgat}\theory\) is the category of contexts in
  the generalized algebraic theory \(\theory\).
\end{example}

\subsection{The Bi-equivalence of Theories and Models}
\label{sec:bi-equiv-theor}

In this section we study the unit and counit of the bi-adjunction
\(\sM_{\tth} \adj \iL_{\tth}\) in more detail. For a model \(\model\)
of a type theory \(\tth\), we denote by
\(\counit_{\model} : \sM_{\tth}(\iL_{\tth}\model) \to \model\) the
counit of the bi-adjunction \(\sM_{\tth} \adj \iL_{\tth}\), that is,
one of those morphisms of models of \(\tth\) such that
\(\iL_{\tth}\counit_{\model} \circ \unit_{\iL_{\tth}\model} =
\id_{\iL_{\tth}\model}\). We first show that the unit
\(\unit : \id \To \iL_{\tth}\sM_{\tth}\) is an isomorphism
(\cref{prop:unit-invertible}). This implies that
\(\sM_{\tth} : \Theory_{\tth} \to \Mod_{\tth}\) is locally an
equivalence and thus induces a bi-equivalence between
\(\Theory_{\tth}\) and the bi-essential image of \(\sM_{\tth}\). We
then determine the bi-essential image of \(\sM_{\tth}\) by
characterizing those models \(\model\) such that the counit
\(\counit_{\model} : \sM_{\tth}\iL_{\tth}\model \to \model\) is an
equivalence. We show that the counit \(\counit_{\model}\) is an
equivalence precisely when \(\model\) is democratic
(\cref{cor:counit-invertible}). Hence, the bi-adjunction
\(\sM_{\tth} \adj \iL_{\tth}\) induces a bi-equivalence between
\(\tth\)-theories and democratic models of \(\tth\)
(\cref{thm:bi-equivalence-th-mod}).

\begin{proposition}
  \label{prop:unit-invertible}
  Let \(\tth\) be a type theory and \(\theory\) a
  \(\tth\)-theory. Then the map \(\unit_{\theory} : \theory \to
  \iL_{\tth}(\sM_{\tth}\theory)\) is an isomorphism.
\end{proposition}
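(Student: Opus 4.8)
The plan is to verify that $\unit_{\theory}$ is a componentwise bijection. Since a map of $\tth$-theories is a natural transformation between functors $\tth \to \Set$, and such a transformation is invertible as soon as each of its components is, it suffices to fix an object $\obj \in \tth$ and show that
$\unit_{\theory, \obj} : \theory(\obj) \to \iL_{\tth}(\sM_{\tth}\theory)(\obj)$
is a bijection.

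First I would unwind the target. By the definition of the internal language we have $\iL_{\tth}(\sM_{\tth}\theory)(\obj) \cong \obj^{\sM_{\tth}\theory}(1)$, and since $\sM_{\tth}\theory$ is by construction the restriction of the bi-initial model $\iM({\tth}[\theory])$ along the inclusion $\tth \hookrightarrow {\tth}[\theory]$, we have $\obj^{\sM_{\tth}\theory} = \obj^{\iM({\tth}[\theory])}$ for $\obj \in \tth$. Now I would invoke the explicit description of the bi-initial model given after its definition: the discrete fibration $\obj^{\iM({\tth}[\theory])}$ is the comma category $\iM({\tth}[\theory])/\obj$, whose fiber over the terminal object $1$ is precisely the set ${\tth}[\theory](1, \obj)$ of arrows $1 \to \obj$ in ${\tth}[\theory]$ (note that $1$ lies in the base category of $\iM({\tth}[\theory])$, as identities are representable). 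This yields a natural isomorphism $\iL_{\tth}(\sM_{\tth}\theory)(\obj) \cong {\tth}[\theory](1, \obj)$.

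Next I would identify the source. By \cref{lem:theory-and-global-section} there is a natural bijection $\theory(\obj) \cong {\tth}[\theory](1, \obj)$ sending $\cst \in \theory(\obj)$ to the corresponding arrow $\cst : 1 \to \obj$. The remaining point is to check that, under the two identifications above, the unit component $\unit_{\theory, \obj}$ coincides with this bijection. But this is exactly the concrete description of the unit extracted from the proof of \cref{thm:internal-language-adjunction}: $\unit_{\theory}$ sends $\cst \in \theory(\obj)$ to the arrow $\cst : 1 \to \obj$ in ${\tth}[\theory]$, regarded as an element of $\obj^{\sM_{\tth}\theory}(1)$. Hence $\unit_{\theory, \obj}$ is the composite of two bijections and is itself a bijection; as all the identifications are natural in $\obj$, this exhibits $\unit_{\theory}$ as a natural isomorphism.

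I do not expect a genuine obstacle here, since all the substantive content has already been isolated: \cref{lem:theory-and-global-section} computes the global sections of ${\tth}[\theory]$ through the filtered pseudo-colimit presentation of ${\tth}[\theory]$, and the description of $\iM$ computes $\obj^{\iM({\tth}[\theory])}(1)$ as the same set of global sections. The only thing to confirm is that the abstractly-defined unit of the bi-adjunction agrees with the concrete global-section map, and this holds by the very construction of $\unit_{\theory}$ used in defining $\sM_{\tth}$.
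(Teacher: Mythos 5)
Your proof is correct and takes essentially the same route as the paper's: both reduce to a componentwise check and exhibit \(\unit_{\theory}(\obj)\) as the composite of isomorphisms \(\theory(\obj) \cong {\tth}[\theory](1, \obj) \cong \obj^{\sM_{\tth}\theory}(1) \cong \iL_{\tth}(\sM_{\tth}\theory)(\obj)\), with \cref{lem:theory-and-global-section} as the key input. You merely make explicit two steps the paper leaves implicit, namely the middle isomorphism (via the comma-category description of the bi-initial model) and the agreement of the abstractly-defined unit with this composite, both of which are justified exactly as you say.
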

\begin{proof}
  For an object \(\obj \in \tth\), the map \(\unit_{\theory}(\obj) : \theory(\obj) \to
  \iL_{\tth}(\sM_{\tth}\theory)(\obj)\) is the composite of
  isomorphisms
  \begin{align*}
    \theory(\obj)
    &\cong {\tth}[\theory](1, \obj)
      \tag{\cref{lem:theory-and-global-section}} \\
    &\cong \obj^{\sM_{\tth}\theory}(1) \\
    &\cong \iL_{\tth}(\sM_{\tth}\theory)(\obj).
  \end{align*}
\end{proof}

Thus, the unit of the bi-adjunction
\(\sM_{\tth} \adj \iL_{\tth}\) is always an
isomorphism. On the other hand, the counit is not an equivalence in
general, but we can say that it is always an embedding in the
following sense.

\begin{proposition}
  \label{prop:internal-language-counit}
  Let \(\model\) be a model of a type theory \(\tth\).
  \begin{enumerate}
  \item \label{item:10}
    The functor \(\counit_{\model} :
    \sM_{\tth}(\iL_{\tth}\model) \to \model\) is fully
    faithful.
  \item \label{item:11}
    For any object \(\obj \in \tth\), the square
    \[
      \begin{tikzcd}
        {\obj^{\sM_{\tth}(\iL_{\tth}\model)}}
        \arrow[r,"{(\counit_{\model})_{\obj}}"]
        \arrow[d] &
        \obj^{\model} \arrow[d] \\
        {\sM_{\tth}(\iL_{\tth}\model)}
        \arrow[r,"\counit_{\model}"'] &
        \model
      \end{tikzcd}
    \]
    is a pullback.
  \end{enumerate}
\end{proposition}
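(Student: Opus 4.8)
The plan is to make the counit completely explicit and then reduce both statements to a single fiberwise bijection. Write $\theory = \iL_{\tth}\model$ and work inside ${\tth}[\theory]$. By \cref{lem:models-of-type-theory-generated-by-theory} the object $(\model, \id_{\theory})$ of $(\theory \downarrow \iL_{\tth})$ corresponds to a model $\widetilde{\model}$ of ${\tth}[\theory]$ whose base category is $\model$, whose restriction along $\tth \to {\tth}[\theory]$ is $\model$, and which sends a global section $\cst : 1 \to \obj$ (equivalently an element $\cst \in \theory(\obj) \cong \obj^{\model}(1)$) to the corresponding section $\cst : \model \to \obj^{\model}$. Since $\sM_{\tth}\theory$ is the restriction of the bi-initial model $\iM({\tth}[\theory])$, the counit $\counit_{\model}$ is the restriction to $\tth$ of the unique morphism $\fun : \iM({\tth}[\theory]) \to \widetilde{\model}$ supplied by \cref{thm:initial-model}. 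In particular its underlying functor $\fun : \iM({\tth}[\theory]) \to \model$ satisfies $\bobj^{\widetilde{\model}} \cong \model/\fun\bobj$ for every contextual object $\bobj$, and $(\counit_{\model})_{\obj}$ sends an arrow $(\el : \bobj \to \obj)$ of ${\tth}[\theory]$ to $\el^{\widetilde{\model}}$ regarded as an element of $\obj^{\model}(\fun\bobj)$.

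I would then reduce everything to the claim that, for every contextual object $\bobj$ and every object $\obj \in {\tth}[\theory]$, the map
\[
  {\tth}[\theory](\bobj, \obj) \longrightarrow \DFib_{\model}(\bobj^{\widetilde{\model}}, \obj^{\widetilde{\model}}) \cong \obj^{\widetilde{\model}}(\fun\bobj),
  \qquad \el \mapsto \el^{\widetilde{\model}},
\]
is a bijection. For $\obj \in \tth$ this map is exactly the fiberwise action of $(\counit_{\model})_{\obj}$ over $\bobj$, so its bijectivity for all $\bobj$ says precisely that the canonical comparison $\obj^{\sM_{\tth}\theory} \to \fun^{*}\obj^{\model}$ is a fiberwise bijection, hence an isomorphism, which is the pullback statement \labelcref{item:11}. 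Taking $\obj$ to be another contextual object $\objI$ and composing with the Yoneda bijections $\model(\fun\bobj, \fun\objI) \cong \DFib_{\model}(\model/\fun\bobj, \model/\fun\objI) \cong \DFib_{\model}(\bobj^{\widetilde{\model}}, \objI^{\widetilde{\model}})$ yields the full faithfulness \labelcref{item:10}.

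I would prove the claim by induction on the contextual structure of $\bobj$. For the base case $\bobj = 1$, both ${\tth}[\theory](1, \argu)$ and $\obj \mapsto \obj^{\widetilde{\model}}(1)$ are cartesian functors ${\tth}[\theory] \to \Set$ (the first is representable; the second is $(\argu)^{\widetilde{\model}}$ followed by the fiber over the terminal object, both preserving finite limits), and $\el \mapsto \el^{\widetilde{\model}}$ is a natural transformation between them. On the image of $\tth$ this transformation is the unit isomorphism of \cref{prop:unit-invertible} (via \cref{lem:theory-and-global-section} and the definition of the internal language). Since by \cref{lem:slice-canonical-form} every object of ${\tth}[\theory]$ is a pullback of an arrow of $\tth$ along a global section $1 \to \obj_{0}$, and both functors preserve such pullbacks, the transformation is invertible everywhere. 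For the inductive step, write $\bobj = \{\elI\}^{\arr}$ as the context extension of a contextual object $\bobj'$ along a representable arrow $\arr$ of ${\tth}[\theory]$. As $\fun$ preserves context extensions we have $\fun\bobj \cong \{\fun\elI\}^{\arr}$; I would then use that $(\argu)^{\widetilde{\model}}$ preserves the pushforward $\poly_{\arr}$ (\cref{prop:dfib-rep-pushforward}) to identify $\obj^{\widetilde{\model}}(\fun\bobj)$ with the fiber of $(\poly_{\arr}\obj)^{\widetilde{\model}}(\fun\bobj')$ over $\fun\elI$, and the defining adjunction of $\poly_{\arr}$ to identify ${\tth}[\theory](\bobj, \obj)$ with the fiber of ${\tth}[\theory](\bobj', \poly_{\arr}\obj)$ over $\elI$. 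The induction hypothesis applied at $\poly_{\arr}\obj$ then closes the step.

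The main obstacle I anticipate is precisely this inductive step: one must match the context-extension and pushforward data of the syntactic model $\iM({\tth}[\theory])$ with those of $\widetilde{\model}$ and verify that all the comparison isomorphisms, which are governed by the Beck--Chevalley condition of \cref{cor:representable-map-pullback}, are compatible with $\el \mapsto \el^{\widetilde{\model}}$, so that the composite bijection is genuinely the map appearing in the claim. The base case and the two reductions are essentially formal; it is the bookkeeping of these coherences that carries the real content.
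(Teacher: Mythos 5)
Your proposal is correct in outline and does reach both statements, but it takes a genuinely different route from the paper. The paper proves a single technical lemma (\cref{lem:technical-lemma-1}) with no induction at all: using \cref{lem:slice-canonical-form} and the filteredness of \(\int_{\tth}\theory\), the contextual object \(\bobj\) and an arbitrary \(\obj \in {\tth}[\theory]\) are presented \emph{simultaneously} as pullbacks of \(\tth\)-arrows \(\arr : \objII \to \objI\) (with \(\arr\) representable) and \(\arrI : \objIII \to \objI\) along one common global section \(\cst : 1 \to \objI\); the entire telescope of context extensions making up \(\bobj\) is thereby absorbed into the single arrow \(\arr\), and the hom-set \(\sM_{\tth}\theory(\bobj, \obj)\) (resp.\ the fiber \(\obj^{\sM_{\tth}\theory}(\bobj)\)) is computed in one transposition as \(\cst^{*}\theory(\objIV)\), where \(\objIV = \arr_{*}\arr^{*}\objIII\) is a local exponent formed inside \(\tth\) itself; the parallel computation on the model side turns the comparison map into \(\thmap\), which is the identity for \(\fun = \counit_{\model}\), yielding both items at once. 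You instead strip one context extension per step, transposing along \(\poly_{\arr}\) each time. Both arguments run on the same engine---pushforward transposition reducing mapping data to theory elements---but the paper's one-shot presentation means the coherence verification (the explicit evaluation squares in the proof of \cref{lem:technical-lemma-1}) is carried out exactly once, and the lemma is stated for an arbitrary morphism \(\fun : \sM_{\tth}\theory \to \model\) with arbitrary \(\thmap\), making it reusable beyond the counit; your induction is more modular, and your base case (a natural transformation of cartesian functors, invertible on the image of \(\tth\), hence invertible everywhere via \cref{lem:slice-canonical-form}) is clean, but it replicates at every inductive step precisely the Beck--Chevalley bookkeeping you flag as the main obstacle---real work, though routine, and it is exactly the content the paper's lemma makes explicit. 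One small imprecision: on the image of \(\tth\) your base-case transformation is not literally the unit of \cref{prop:unit-invertible} (which concerns \(\sM_{\tth}\theory\), not \(\widetilde{\model}\)) but rather \(\thmap = \id\) transported along the bijection of \cref{lem:theory-and-global-section}; this is harmless, since that is exactly how \(\widetilde{\model}\) is constructed in the proof of \cref{lem:models-of-type-theory-generated-by-theory}, but it should be stated that way.
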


To prove \cref{prop:internal-language-counit} we need a
little more work. Let \(\theory\) be a theory over a type theory
\(\tth\) and \(\bobj \in \sM_{\tth}\theory\) and
\(\obj \in {\tth}[\theory]\) objects. Recall that \({\tth}[\theory]\) is the
filtered pseudo-colimit
\(\pcolim_{(\objI, \cst) \in \int_{\tth}\theory}\tth/\objI\). Then, by
\cref{lem:slice-canonical-form}, the objects
\(\bobj, \obj \in {\tth}[\theory]\) can be written as pullbacks
\[
  \begin{tikzcd}
    \bobj
    \arrow[r,"\proj_{\objII}"] \arrow[d] &
    \objII
    \arrow[d,"\arr"] \\
    1 \arrow[r,"\cst"'] &
    \objI
  \end{tikzcd}
  \quad
  \begin{tikzcd}
    \obj
    \arrow[r,"\proj_{\objIII}"] \arrow[d] &
    \objIII
    \arrow[d,"\arrI"] \\
    1 \arrow[r,"\cst"'] &
    \objI
  \end{tikzcd}
\]
for some objects \(\objI, \objII, \objIII \in \tth\), arrows
\(\arr : \objII \to \objI\) and \(\arrI : \objIII \to \objI\) and
element \(\cst \in \theory(\objI)\). By the definition of
representable arrows in \({\tth}[\theory]\), we may choose \(\arr\) to
be representable. Let \(\arrII : \objIV \to \objI\) be the local
exponent \(\objII \To_{\objI} \objIII\) in \(\tth/\objI\), that is,
\(\objIV = \arr_{*}\arr^{*}\objIII\). Let \(\model\) be a model of
\(\tth\) and \(\fun : \sM_{\tth}\theory \to \model\) a morphism of
models of \(\tth\). We denote by
\(\thmap : \theory \to \iL_{\tth}\model\) the corresponding map of
\(\tth\)-theories defined by \(\thmap(\cst') = \fun_{\obj'}(\cst')\)
for \(\cst' \in \theory(\obj')\).

\begin{lemma}
  \label{lem:technical-lemma-1}
  Under the assumptions above, the following properties hold.
  \begin{enumerate}
  \item Suppose \(\obj \in \sM_{\tth}\theory\). We may choose
    \(\arrI : \objIII \to \objI\) to be representable. Then we have
    isomorphisms
    \(\trans : \cst^{*}\theory(\objIV) \cong \sM_{\tth}\theory(\bobj,
    \obj)\) and
    \(\transI : \thmap(\cst)^{*}(\iL_{\tth}\model(\objIV)) \cong
    \model/\fun \obj\) such that the diagram
    \[
      \begin{tikzcd}
        \cst^{*}\theory(\objIV)
        \arrow[r,"\thmap"]
        \arrow[d,"\trans"',"\cong"] &
        \thmap(\cst)^{*}(\iL_{\tth}\model(\objIV))
        \arrow[d,"\transI","\cong"'] \\
        \sM_{\tth}\theory(\bobj, \obj)
        \arrow[r,"\fun"'] &
        \model(\fun \bobj, \fun \obj)
      \end{tikzcd}
    \]
    commutes.
  \item Suppose \(\obj \in \tth\). We may choose
    \(\objIII = \objI \times \obj\). Then we have isomorphisms
    \(\trans : \cst^{*}\theory(\objIV) \cong
    \obj^{\sM_{\tth}\theory}(\bobj)\) and
    \(\transI : \thmap(\cst)^{*}(\iL_{\tth}\model(\objIV)) \cong
    \obj^{\model}\) such that the diagram
    \[
      \begin{tikzcd}
        \cst^{*}\theory(\objIV)
        \arrow[r,"\thmap"]
        \arrow[d,"\trans"',"\cong"] &
        \thmap(\cst)^{*}(\iL_{\tth}\model(\objIV))
        \arrow[d,"\transI","\cong"'] \\
        \obj^{\sM_{\tth}\theory}(\bobj)
        \arrow[r,"\fun_{\obj}"'] &
        \obj^{\model}(\fun \bobj)
      \end{tikzcd}
    \]
    commutes.
  \end{enumerate}
\end{lemma}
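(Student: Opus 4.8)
The plan is to reduce both parts to a single computation of the hom-set ${\tth}[\theory](\bobj, \obj)$ and then transport it along the structure-preserving functors into $\Set$ and into $\DFib_{\model}$. First I would record two identifications that make the two parts uniform. Since $\iM({\tth}[\theory])$ has $\obj^{\iM({\tth}[\theory])} = \iM({\tth}[\theory])/\obj$, the fiber $\obj^{\sM_{\tth}\theory}(\bobj)$ in Part~2 is literally the set ${\tth}[\theory](\bobj, \obj)$ of arrows $\bobj \to \obj$; and in Part~1, where $\obj$ is contextual, $\sM_{\tth}\theory(\bobj, \obj)$ is again ${\tth}[\theory](\bobj, \obj)$ because the base of $\sM_{\tth}\theory$ is the full subcategory of contextual objects. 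Thus in both cases the target of $\trans$ is ${\tth}[\theory](\bobj, \obj)$, the only difference being the chosen presentation of $\obj$ as $\cst^{*}\objIII$ (with $\arrI$ representable in Part~1, and $\objIII = \objI \times \obj$ in Part~2 so that $\cst^{*}\objIII \cong \obj$).

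The technical core is a general bijection valid in any representable map category $\cat$ with terminal object $\mathbf 1$: given a representable $\arr : \objII \to \objI$, an arbitrary $\arrI : \objIII \to \objI$, a global element $\cst : \mathbf 1 \to \objI$, and writing $\arrII = \arr_{*}\arr^{*}\arrI : \objIV \to \objI$, $\bobj = \cst^{*}\objII$, $\obj = \cst^{*}\objIII$, there is a natural bijection
\[
  (\cat/\objI)(\cst, \arrII)
  \cong (\cat/\objII)(\arr^{*}\cst, \arr^{*}\arrI)
  \cong (\cat/\objI)(\arr_{!}\arr^{*}\cst, \arrI)
  \cong \cat(\bobj, \obj),
\]
using the adjunctions $\arr^{*} \adj \arr_{*}$ and $\arr_{!} \adj \arr^{*}$ together with $\arr^{*}\cst = (\bobj \to \objII)$ and the pullback defining $\obj$. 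Instantiating this in $\cat = {\tth}[\theory]$ and identifying $({\tth}[\theory]/\objI)(\cst, \arrII) \cong \cst^{*}\theory(\objIV)$ via \cref{lem:theory-and-global-section} gives $\trans$. Instantiating it in $\cat = \DFib_{\model}$, with $\cst$ replaced by $\thmap(\cst) : \model \to \objI^{\model}$ and $\arr, \arrI, \arrII$ by their images under $(\argu)^{\model}$, gives $\transI$: the left end is $\thmap(\cst)^{*}(\iL_{\tth}\model(\objIV))$ since $\iL_{\tth}\model(\objIV) \cong \objIV^{\model}(1)$, and the right end is $\model(\fun\bobj, \fun\obj)$ (Part~1) or $\obj^{\model}(\fun\bobj)$ (Part~2) by the Yoneda lemma, once we observe that $\fun$ preserves context extensions, so that $\fun\bobj = \{\thmap(\cst)\}^{\arr}$, $\model/\fun\bobj \cong \thmap(\cst)^{*}\objII^{\model}$, and $\fun\obj = \{\thmap(\cst)\}^{\arrI}$ in Part~1.

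To get the commuting square I would exhibit both vertical isomorphisms as instances of one natural transformation. By \cref{prop:type-theory-generated-by-theory} the map $\thmap : \theory \to \iL_{\tth}\model$ corresponds to a representable map functor $(\argu)^{\widetilde{\model}} : {\tth}[\theory] \to \DFib_{\model}$ that extends $(\argu)^{\model}$ and realizes $\thmap$ on global sections (it sends the arrow $\cst : 1 \to \objI$ to $\thmap(\cst) : \model \to \objI^{\model}$). This functor preserves terminal objects, finite limits and pushforwards along representable arrows, hence carries the entire adjunction chain above from ${\tth}[\theory]$ to $\DFib_{\model}$; applying it therefore takes $\trans$ to $\transI$. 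On the upper sets it induces $\thmap_{\objIV}$ restricted to the fibers over $\cst \mapsto \thmap(\cst)$, because $(\argu)^{\widetilde{\model}}$ recovers $\thmap$ on global sections, and on the lower sets it induces $\fun$, respectively $\fun_{\obj}$. The main obstacle is precisely this last identification: one must check that the map ${\tth}[\theory](\bobj, \obj) \to \model(\fun\bobj, \fun\obj)$ induced by $(\argu)^{\widetilde{\model}}$ coincides with the action of $\fun$, which I would deduce from the naturality of the extended morphism $\widetilde{\fun} : \iM({\tth}[\theory]) \to \widetilde{\model}$ of \cref{lem:models-of-type-theory-generated-by-theory} over the base functor $\fun$. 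The rest is the routine but lengthy bookkeeping that each of the three bijections is genuinely preserved by $(\argu)^{\widetilde{\model}}$, up to the coherence isomorphisms of a representable map functor.
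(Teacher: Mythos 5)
Your construction of the two isomorphisms is, modulo packaging, the paper's own: the paper's explicit chain --- \cref{lem:theory-and-global-section}, then transposition across the local exponent \(\objIV = (\objII \To_{\objI} \objIII)\) written out with the evaluation map \(\ev\), then the pullback defining \(\obj\) --- is precisely your abstract adjunction chain instantiated in \({\tth}[\theory]\) and in \(\DFib_{\model}\), including the same appeal to preservation of context extensions by \(\fun\) to rewrite the right-hand ends as \(\model(\fun\bobj, \fun\obj)\), resp.\ \(\obj^{\model}(\fun\bobj)\), and your observation that \(\sM_{\tth}\theory(\bobj,\obj)\) and \(\obj^{\sM_{\tth}\theory}(\bobj)\) are both literally \({\tth}[\theory](\bobj,\obj)\) is exactly the paper's ``by definition'' step. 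Where you genuinely diverge is the commutativity of the square. The paper proves it by hand: it displays the diagram that uniquely characterizes \(\trans(\cstI)\) (via the pullback defining \(\obj\) in Part~1, resp.\ as an explicit composite through \(\ev\) in Part~2), applies \(\fun_{(\argu)}\), and checks that \(\fun(\trans(\cstI))\) and \(\transI(\thmap(\cstI))\) satisfy the same universal property, using only naturality of \(\fun_{(\argu)}\) and the Beck--Chevalley condition; it never invokes \cref{prop:type-theory-generated-by-theory} or \cref{lem:models-of-type-theory-generated-by-theory} at this point. You instead transport the entire chain along the extension \((\argu)^{\widetilde{\model}}\) and identify its hom-action with that of \(\fun\) via the extended morphism \(\widetilde{\fun} : \iM({\tth}[\theory]) \to \widetilde{\model}\). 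That route is sound, and both auxiliary results are proved earlier in the paper, so there is no circularity: naturality of \(\widetilde{\fun}\) at \(\barr : \bobj \to \obj\) gives \(\barr^{\widetilde{\model}}(\widetilde{\fun}_{\bobj}(\id_{\bobj})) = \widetilde{\fun}_{\obj}(\id_{\obj}) \act \fun\barr\), and Beck--Chevalley for \(\widetilde{\fun}\) at the representable arrow \(\bobj \to 1\) makes \(\widetilde{\fun}_{\bobj}(\id_{\bobj})\) a universal element, which is what your ``induced hom-map equals \(\fun\)'' needs. But the bookkeeping you flag is where the real content sits: you must still check that the identification \(\bobj^{\widetilde{\model}} \cong \model/\fun\bobj\) used in \(\transI\) (pullback-preservation plus context-extension preservation of \(\fun\)) coincides with the one induced by the universal element \(\widetilde{\fun}_{\bobj}(\id_{\bobj})\), a diagram chase of essentially the same size as the paper's direct verification. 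In short, your argument is more conceptual and reuses heavier machinery already available, making the naturality manifest; the paper's is self-contained and, once the explicit formulas for \(\trans\) and \(\transI\) are on the table, shorter.
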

\begin{proof}
  We have an isomorphism \(\trans : \cst^{*}\theory(\objIV) \cong
  {\tth}[\theory](\bobj, \obj)\) by
  \begin{align*}
    \cst^{*}\theory(\objIV)
    &\cong \{\cstI \in {\tth}[\theory](1, \objIV) \mid \arrII\cstI =
      \cst\}
      \tag{\cref{lem:theory-and-global-section}} \\
    &\cong \{\cstI \in {\tth}[\theory](\bobj, \objIII) \mid
      \text{\(
      \begin{tikzcd}[ampersand replacement=\&,sep=2ex]
        \bobj
        \arrow[r,"\cstI"]
        \arrow[d] \&
        \objIII
        \arrow[d,"\arr"] \\
        1
        \arrow[r,"\cst"'] \&
        \objI
      \end{tikzcd}
    \) commutes}
    \}
    \tag{\(\objIV = (\objII \To_{\objI} \objIII)\)} \\
    &\cong {\tth}[\theory](\bobj, \obj).
  \end{align*}
  Concretely \(\trans\) sends \(\cstI \in \cst^{*}\theory(\objIV)\) to
  the dotted arrow below,
  \[
    \begin{tikzcd}
      \bobj
      \arrow[rr,"{(\cstI, \proj_{\objII})}"]
      \arrow[ddr]
      \arrow[dr,dotted,"\trans(\cstI)"] & &
      \objIV \times_{\objI} \objII
      \arrow[d,"\ev"] \\
      & \obj
      \arrow[r,"\proj_{\objIII}"]
      \arrow[d]
      \arrow[dr,phantom,"\lrcorner"{very near start}] &
      \objIII
      \arrow[d,"\arrI"] \\
      & 1
      \arrow[r,"\cst"'] &
      \objI
    \end{tikzcd}
  \]
  where
  \(\ev : \objIV \times_{\objI} \objII \cong (\objII \To_{\objI}
  \objIII) \times_{\objI} \objII \to \objIII\) is the
  evaluation. Similarly, we have an isomorphism
  \(\transI : \thmap(\cst)^{*}(\iL_{\tth}\model(\objIV)) \cong
  \DFib_{\model}(\model/\fun\bobj, \thmap(\cst)^{*}\objIII^{\model})\)
  which sends
  \(\cstI' \in \thmap(\cst)^{*}(\iL_{\tth}\model(\objIV))\) to the
  dotted arrow below.
  \[
    \begin{tikzcd}
      \model/\fun\bobj
      \arrow[rr,"{(\cstI', \fun_{\objII}(\proj_{\objII}))}"]
      \arrow[ddr]
      \arrow[dr,dotted,"\transI(\cstI')"] & &
      \objIV^{\model} \times_{\objI^{\model}} \objII^{\model}
      \arrow[d,"\ev^{\model}"] \\
      & \thmap(\cst)^{*}\objIII^{\model}
      \arrow[r]
      \arrow[d]
      \arrow[dr,phantom,"\lrcorner"{very near start}] &
      \objIII^{\model}
      \arrow[d,"\arrI^{\model}"] \\
      & \model
      \arrow[r,"\thmap(\cst) = \fun_{\objI}(\cst)"'] &
      \objI^{\model}
    \end{tikzcd}
  \]

  Suppose that \(\obj \in \sM_{\tth}\theory\). By definition
  \({\tth}[\theory](\bobj, \obj) \cong \sM_{\tth}\theory(\bobj,
  \obj)\), and thus we regard \(\trans\) as an isomorphism
  \(\cst^{*}\theory(\objIV) \cong \sM_{\tth}\theory(\bobj,
  \obj)\). Choose \(\arrI : \objIII \to \objI\) to be
  representable. Then \(\obj\) is the context extension of
  \(\cst \in \objI^{\sM_{\tth}\theory}(1)\) along
  \(\arrI : \objIII \to \objI\), and thus
  \(\thmap(\cst)^{*}\objIII^{\model} \cong \model/\fun\obj\) because
  \(\fun\) preserves context extensions. Hence,
  \(\DFib_{\model}(\model/\fun\bobj, \thmap(\cst)^{*}\objIII^{\model})
  \cong \DFib_{\model}(\model/\fun\bobj, \model/\fun\obj) \cong
  \model(\fun\bobj, \fun\obj)\) by Yoneda, and we regard \(\transI\) as
  an isomorphism
  \(\thmap(\cst)^{*}(\iL_{\tth}\model(\objIV)) \cong \model(\fun\bobj,
  \fun\obj)\). For any element \(\cstI \in \cst^{*}\theory(\objIV)\),
  both \(\fun(\trans(\cstI))\) and \(\transI(\thmap(\cstI))\) make the
  diagram
  \[
    \begin{tikzcd}
      \model/\fun\bobj
      \arrow[rr,"{(\fun_{\objIV}(\cstI), \fun_{\objII}(\proj_{\objII}))}"]
      \arrow[ddr]
      \arrow[dr,dotted] & &
      \objIV^{\model} \times_{\objI^{\model}} \objII^{\model}
      \arrow[d,"\ev^{\model}"] \\
      & \model/\fun\obj
      \arrow[r,"\fun_{\objIII}(\proj_{\objIII})"]
      \arrow[d]
      \arrow[dr,phantom,"\lrcorner"{very near start}] &
      \objIII^{\model}
      \arrow[d,"\arrI^{\model}"] \\
      & \model
      \arrow[r,"\fun_{\objI}(\cst)"'] &
      \objI^{\model}
    \end{tikzcd}
  \]
  commute, and thus \(\fun\trans = \transI\thmap\).

  Suppose that \(\obj \in \tth\). By definition
  \({\tth}[\theory](\bobj, \obj) \cong
  \obj^{\sM_{\tth}\theory}(\bobj)\), and thus we regard \(\trans\) as
  an isomorphism
  \(\cst^{*}\theory(\objIV) \cong
  \obj^{\sM_{\tth}\theory}(\bobj)\). Choose \(\objIII\) to be
  \(\objI \times \obj\). Then
  \(\thmap(\cst)^{*}\objIII^{\model} \cong \obj^{\model}\), and we
  regard \(\transI\) as an isomorphism
  \(\thmap(\cst)^{*}(\iL_{\tth}\model(\objIV)) \cong
  \obj^{\model}(\fun\bobj)\) by Yoneda. This time \(\trans(\cstI)\)
  and \(\transI(\cstI')\) are just composites
  \begin{gather*}
    \begin{tikzcd}[ampersand replacement=\&]
      \bobj
      \arrow[r,"{(\cstI, \proj_{\objII})}"] \&
      \objIV \times_{\objI} \objII
      \arrow[r,"\ev"] \&
      \objIII \cong \objI \times \obj
      \arrow[r] \&
      \obj
    \end{tikzcd}
    \\
    \begin{tikzcd}[ampersand replacement=\&]
      \model/\bobj
      \arrow[r,"{(\cstI', \fun_{\objII}(\proj_{\objII}))}"] \&
      [6ex]
      \objIV^{\model} \times_{\objI^{\model}} \objII^{\model}
      \arrow[r,"\ev^{\model}"] \&
      \objIII^{\model} \cong \objI^{\model} \times \obj^{\model}
      \arrow[r] \&
      \obj^{\model},
    \end{tikzcd}
  \end{gather*}
  respectively, for \(\cstI \in \cst^{*}\theory(\objIV)\) and \(\cstI'
  \in \thmap(\cst)^{*}(\iL_{\tth}\model(\objIV))\). Therefore,
  \(\fun_{\obj}\trans = \transI\thmap\).
\end{proof}

\begin{proof}
  [Proof of \cref{prop:internal-language-counit}]
  Use \cref{lem:technical-lemma-1} for \(\fun =
  \counit_{\model}\). In this case, \(\thmap\) is the identity.
\end{proof}

By \cref{prop:internal-language-counit}, \(\counit_{\model}\) is an
equivalence in the \(2\)-category \(\Mod_{\tth}\) if and only if the
underlying functor
\(\counit_{\model} : \sM_{\tth}(\iL_{\tth}\model) \to \model\) is
essentially surjective on objects, because the base change of a
discrete fibration along an equivalence induces a fibred
equivalence. The next goal is to determine the essential image of the
functor \(\counit_{\model}\).

\begin{proposition}
  \label{prop:syntactic-model-democratic}
  For any theory \(\theory\) over a type theory \(\tth\), the model
  \(\sM_{\tth}\theory\) of \(\tth\) is democratic.
\end{proposition}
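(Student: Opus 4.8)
The plan is to prove that every object of the base category of $\sM_{\tth}\theory$ is a contextual object. Recall that $\sM_{\tth}\theory$ is obtained from the bi-initial model $\iM({\tth}[\theory])$ of ${\tth}[\theory]$ by restricting the functor $(\argu)^{\iM({\tth}[\theory])}$ along the inclusion $\tth \to {\tth}[\theory]$, so the two models share the same base category, and for every $\objI \in \tth$ the discrete fibrations $\objI^{\sM_{\tth}\theory}$ and $\objI^{\iM({\tth}[\theory])}$ agree. Since $\iM({\tth}[\theory])$ is the heart of a model, it is democratic \emph{as a model of ${\tth}[\theory]$}; hence every object of this common base category is a contextual object with respect to ${\tth}[\theory]$. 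The entire task thus reduces to showing that contextual objects with respect to ${\tth}[\theory]$ coincide with contextual objects with respect to $\tth$. One inclusion is clear because every representable arrow of $\tth$ is a representable arrow of ${\tth}[\theory]$ and context extensions are computed identically; I would prove the other inclusion by induction on the inductive definition of contextual objects with respect to ${\tth}[\theory]$.

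The heart of the argument is that every representable arrow of ${\tth}[\theory]$ is, up to isomorphism, a base change of a representable arrow of $\tth$. Indeed, ${\tth}[\theory]$ is the filtered pseudo-colimit $\pcolim_{(\objIII, \cst) \in \int_{\tth}\theory}\tth/\objIII$, and, exactly as in the presentation of objects used before \cref{lem:technical-lemma-1}, any representable arrow of ${\tth}[\theory]$ is the image under some inclusion $\inj_{(\objIII, \cst)}$ of a representable arrow $\arr : \obj \to \objI$ of $\tth$; this image is the base change of $\arr$ along the canonical projection $t : \objI' \to \objI$ out of the pullback computing $\inj_{(\objIII,\cst)}$, whose codomain $\objI$ is an object of $\tth$. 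Given a contextual object $\bobj$ together with such a representable arrow $\map$ of ${\tth}[\theory]$ and an element $\elI$, the context extension $\{\elI\}^{\map}$ fits into a diagram
\[
  \begin{tikzcd}
    \{\elI\}^{\map}
    \arrow[r] \arrow[d] &
    \objII \arrow[r] \arrow[d,"\map"] &
    \obj \arrow[d,"\arr"] \\
    \bobj \arrow[r,"\elI"'] &
    \objI' \arrow[r,"t"'] &
    \objI
  \end{tikzcd}
\]
in which the right-hand square is the defining pullback of the base change and the left-hand square is the context extension. By pasting, the outer rectangle exhibits $\{\elI\}^{\map}$ as the context extension $\{t \circ \elI\}^{\arr}$ along the representable arrow $\arr$ of $\tth$, where $t \circ \elI \in \objI^{\sM_{\tth}\theory}(\bobj)$ since $\objI \in \tth$.

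With this reduction the induction is routine: the terminal object is contextual with respect to $\tth$, and if $\bobj$ is contextual with respect to $\tth$ by the inductive hypothesis, then the rewriting above shows that $\{\elI\}^{\map}$ is again a context extension along a representable arrow of $\tth$, hence contextual with respect to $\tth$. Therefore every object of $\sM_{\tth}\theory$ is contextual, so $\sM_{\tth}\theory$ is democratic. I expect the main obstacle to be bookkeeping rather than conceptual: one must carefully extract, from the filtered pseudo-colimit description of ${\tth}[\theory]$ and of its representable arrows, the presentation of an arbitrary representable arrow as a base change of a representable arrow of $\tth$ with codomain in $\tth$, and then confirm that the pasted outer rectangle is exactly the pullback computing the context extension in the model $\sM_{\tth}\theory$.
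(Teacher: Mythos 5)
Your proof is correct, but it takes a more roundabout route than the paper's. The paper argues in one step: by the canonical-form analysis already carried out (via \cref{lem:slice-canonical-form} and the pseudo-colimit description of \({\tth}[\theory]\), as recalled just before \cref{lem:technical-lemma-1}), \emph{every object} \(\bobj \in \sM_{\tth}\theory\) is a pullback of a representable arrow \(\arr : \objI \to \obj\) of \(\tth\) along a global element \(\cst : 1 \to \obj\) with \(\cst \in \theory(\obj) \cong \obj^{\sM_{\tth}\theory}(1)\), which exhibits \(\bobj\) directly as the context extension \(\{\cst\}^{\arr}\) of an element over the terminal object — so every object is contextual immediately, with no induction. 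You instead route through the democraticity of \(\iM({\tth}[\theory])\) as a model of \({\tth}[\theory]\) (it is a heart) and then prove a transfer statement: contextuality with respect to \({\tth}[\theory]\) implies contextuality with respect to \(\tth\), using the key fact that every representable arrow of \({\tth}[\theory]\) is, up to isomorphism, a base change of a representable arrow of \(\tth\) with codomain in \(\tth\), together with pasting of pullbacks. That key fact is right (it follows from the definition of representable arrows in a filtered pseudo-colimit together with \cref{lem:slice-canonical-form}), and your pasted rectangle does compute the context extension in \(\sM_{\tth}\theory\), so the induction goes through. What the two approaches buy: the paper's argument is shorter and actually shows something slightly sharper — each object is a context extension over \(1\) in a \emph{single} step — whereas your argument isolates a reusable lemma (the restriction along \(\tth \to {\tth}[\theory]\) does not change the class of contextual objects) at the cost of the induction and the bookkeeping you anticipate. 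Both proofs ultimately rest on the same structural input, applied by the paper to objects and by you to representable arrows.
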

\begin{proof}
  We have already seen that every object \(\bobj \in \sM_{\tth}\theory\)
  is written as a pullback
  \[
    \begin{tikzcd}
      \bobj
      \arrow[r] \arrow[d] &
      \objI
      \arrow[d,"\arr"] \\
      1
      \arrow[r,"\cst"'] &
      \obj
    \end{tikzcd}
  \]
  for some representable arrow \(\arr : \objI \to \obj\) in \(\tth\) and
  element \(\cst \in \theory(\obj)\). This means that \(\bobj\) is the context
  extension of \(\cst \in \obj^{\sM_{\tth}\theory}(1)\) with respect to
  \(\arr\).
\end{proof}

\begin{proposition}
  Let \(\model\) be a model of a type theory \(\tth\). Then
  the essential image of the functor
  \(\counit_{\model} :
  \sM_{\tth}(\iL_{\tth}\model) \to \model\) is the
  class of contextual objects.
\end{proposition}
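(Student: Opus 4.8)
The plan is to prove the two inclusions separately; throughout I write $\sM$ for $\sM_{\tth}(\iL_{\tth}\model)$ and $\counit$ for the counit $\counit_{\model} : \sM \to \model$.

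First I would show that the essential image of $\counit$ is contained in the class of contextual objects. By \cref{prop:syntactic-model-democratic} the model $\sM$ is democratic, so \emph{every} object of $\sM$ is contextual. Since $\counit$ is a morphism of models of $\tth$, \cref{prop:morphism-preserves-contextual-objects} tells us it carries contextual objects to contextual objects. As the class of contextual objects is closed under isomorphism, every object isomorphic to some $\counit(\bobj)$ with $\bobj \in \sM$ is contextual, giving one inclusion.

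For the reverse inclusion I would argue by induction on the inductive definition of contextual objects that every contextual object of $\model$ lies in the essential image of $\counit$. The base case is immediate: $\counit$ preserves terminal objects, so the terminal object of $\model$ is $\counit(1)$ and lies in the image. For the inductive step, suppose $\bobj \in \model$ is contextual and already isomorphic to $\counit(\bobj')$ for some $\bobj' \in \sM$, and let $\arr : \obj \to \objI$ be a representable arrow in $\tth$ together with an element $\elI \in \objI^{\model}(\bobj)$; I must show the context extension $\{\elI\}^{\arr}$ is in the image. By \cref{item:11} of \cref{prop:internal-language-counit} the square
\[
  \begin{tikzcd}
    \objI^{\sM} \arrow[r,"\counit_{\objI}"] \arrow[d] &
    \objI^{\model} \arrow[d] \\
    \sM \arrow[r,"\counit"'] & \model
  \end{tikzcd}
\]
is a pullback of discrete fibrations. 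Pullbacks of discrete fibrations are fiberwise, so the induced map $\objI^{\sM}(\bobj') \to \objI^{\model}(\counit\bobj') \cong \objI^{\model}(\bobj)$ is a bijection; hence there is an element $\elI' \in \objI^{\sM}(\bobj')$ with $\counit_{\objI}(\elI') = \elI$ under the isomorphism $\bobj \cong \counit\bobj'$. The context extension $\{\elI'\}^{\arr}$ is then a contextual object of $\sM$, and because $\counit$ satisfies the Beck-Chevalley condition for the representable arrow $\arr$ it preserves context extensions (as recorded in the remark following the definition of a morphism of models), so $\counit(\{\elI'\}^{\arr}) \cong \{\counit_{\objI}(\elI')\}^{\arr} = \{\elI\}^{\arr}$. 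Thus $\{\elI\}^{\arr}$ is in the essential image, completing the induction and hence the proof.

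The main obstacle is the inductive step, and precisely the lifting of the element $\elI$ through the pullback square followed by the compatibility of the resulting context extension with $\counit$. Both ingredients are already available: the fiberwise nature of the pullback in \cref{prop:internal-language-counit}\labelcref{item:11} supplies the lift $\elI'$, and the Beck-Chevalley condition built into the definition of a morphism of models supplies the canonical isomorphism $\counit(\{\elI'\}^{\arr}) \cong \{\elI\}^{\arr}$, so once the lift is in hand the remainder is formal.
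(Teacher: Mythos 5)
Your proof is correct and follows essentially the same route as the paper: the paper's own (very terse) argument cites \cref{prop:syntactic-model-democratic} for the inclusion of the essential image in the contextual objects and \cref{prop:internal-language-counit} for closure of the image under context extensions, which is exactly what your two inclusions unpack. Your version merely makes explicit the details the paper leaves implicit --- the use of \cref{prop:morphism-preserves-contextual-objects}, the fiberwise lifting of \(\elI\) through the pullback square, and the preservation of context extensions via the Beck--Chevalley condition --- all of which are sound.
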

\begin{proof}
  By \cref{prop:syntactic-model-democratic}, the essential
  image of the functor
  \(\counit_{\model} :
  \sM_{\tth}(\iL_{\tth}\model) \to \model\) consists
  of contextual
  objects. \Cref{prop:internal-language-counit} implies
  that the essential image of \(\counit_{\model}\) is closed
  under context extensions. Hence, the essential image of
  \(\counit_{\model}\) is precisely the class of contextual
  objects.
\end{proof}

\begin{corollary}
  \label{cor:counit-invertible}
  Let \(\model\) be a model of a type theory \(\tth\).
  \begin{enumerate}
  \item \(\counit_{\model} : \sM_{\tth}(\iL_{\tth}\model) \to \model\)
    induces an equivalence \(\sM_{\tth}(\iL_{\tth}\model) \simeq
    \model^{\heart}\) in \(\Mod_{\tth}\).
  \item \(\counit_{\model}\) is an equivalence in \(\Mod_{\tth}\) if
    and only if \(\model\) is democratic.
  \end{enumerate}
  \qed
\end{corollary}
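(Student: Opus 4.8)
The plan is to bootstrap both claims from the structural facts already recorded about the counit: by \cref{prop:internal-language-counit} the underlying functor \(\counit_{\model} : \sM_{\tth}(\iL_{\tth}\model) \to \model\) is fully faithful (\cref{item:10}) and each component square exhibiting \((\counit_{\model})_{\obj}\) is a pullback (\cref{item:11}), while the preceding proposition identifies the essential image of \(\counit_{\model}\) with the class of contextual objects. The remark following \cref{prop:internal-language-counit} is what lets me work entirely at the level of base categories, since base change of a discrete fibration along an equivalence of base categories yields a fibred equivalence.

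For part~1, I would first note that \(\sM_{\tth}(\iL_{\tth}\model)\) is democratic by \cref{prop:syntactic-model-democratic}, so every one of its objects is contextual, and hence \(\counit_{\model}\) sends them to contextual objects of \(\model\) by \cref{prop:morphism-preserves-contextual-objects}. Consequently \(\counit_{\model}\) factors, as a morphism of models of \(\tth\), through the inclusion \(\model^{\heart} \hookrightarrow \model\), producing a morphism \(\sM_{\tth}(\iL_{\tth}\model) \to \model^{\heart}\). Its underlying functor is fully faithful (a restriction of one that is) and essentially surjective, because the essential image of \(\counit_{\model}\) is exactly the contextual objects, i.e.\ the objects of \(\model^{\heart}\); thus it is an equivalence of base categories. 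To upgrade this to an equivalence in \(\Mod_{\tth}\) I would invoke \cref{item:11}: for each \(\obj \in \tth\) the discrete fibration \(\obj^{\sM_{\tth}(\iL_{\tth}\model)}\) is the pullback of \(\obj^{\model}\) along \(\counit_{\model}\), and since \(\obj^{\model^{\heart}}\) is by definition the pullback of \(\obj^{\model}\) along \(\model^{\heart} \hookrightarrow \model\), pasting of pullbacks identifies \(\obj^{\sM_{\tth}(\iL_{\tth}\model)}\) with the base change of \(\obj^{\model^{\heart}}\) along the equivalence \(\sM_{\tth}(\iL_{\tth}\model) \to \model^{\heart}\). By the cited remark this base change is a fibred equivalence, so the morphism is an equivalence in \(\Mod_{\tth}\).

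For part~2, I would read the characterization off part~1: \(\counit_{\model}\) is an equivalence in \(\Mod_{\tth}\) if and only if the inclusion \(\model^{\heart} \hookrightarrow \model\) is, which—being fully faithful—holds exactly when it is essentially surjective, i.e.\ when every object of \(\model\) is isomorphic to a contextual one. As contextual objects are closed under isomorphism, this is precisely the condition that every object of \(\model\) be contextual, which is the definition of democracy. (Equivalently, applying the remark directly, \(\counit_{\model}\) is an equivalence iff its underlying functor is essentially surjective, iff its essential image—the contextual objects—exhausts \(\model\).)

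I do not expect a serious obstacle, since the substantive work has been front-loaded into \cref{prop:internal-language-counit}, the preceding proposition, and \cref{prop:syntactic-model-democratic}. The only step demanding a little care is the transition from an equivalence of base categories to an equivalence in \(\Mod_{\tth}\), which is supplied by the fibred-equivalence remark together with the pullback-pasting bookkeeping for the discrete fibration components; verifying that the factorization through \(\model^{\heart}\) is genuinely a morphism of models is routine because the inclusion is fully faithful and so transports context extensions and the Beck--Chevalley condition faithfully.
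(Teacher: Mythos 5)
Your proposal is correct and follows exactly the route the paper intends: the corollary is meant to be immediate from \cref{prop:syntactic-model-democratic}, the proposition identifying the essential image of \(\counit_{\model}\) with the contextual objects, and the remark that (given \cref{prop:internal-language-counit}) an equivalence in \(\Mod_{\tth}\) amounts to essential surjectivity of the underlying functor via fibred equivalence under base change. Your spelled-out details---the factorization through \(\model^{\heart}\) using the universal property of the heart, and the pullback-pasting identification of \(\obj^{\sM_{\tth}(\iL_{\tth}\model)}\) with the base change of \(\obj^{\model^{\heart}}\)---are precisely the bookkeeping the paper leaves implicit.
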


In summary, we get a bi-equivalence of theories and democratic models.

\begin{theorem}
  \label{thm:bi-equivalence-th-mod}
  For a type theory \(\tth\), the \(2\)-functor
  \(\iL_{\tth} : \Mod_{\tth} \to \Theory_{\tth}\)
  induces a bi-equivalence
  \[
    \Mod^{\dem}_{\tth} \simeq \Theory_{\tth}.
  \]
  \qed
\end{theorem}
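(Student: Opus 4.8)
The statement follows quickly from the machinery already assembled, so the plan is to package the existing pieces rather than to prove anything substantially new. By \cref{thm:internal-language-adjunction} we have a left bi-adjoint $\sM_{\tth} \adj \iL_{\tth}$ with unit $\unit$ and counit $\counit$. The strategy is to restrict this bi-adjunction along the inclusion $\Mod^{\dem}_{\tth} \hookrightarrow \Mod_{\tth}$ and then to invoke the general criterion recorded in \cref{sec:preliminaries}: a $2$-functor that has a left bi-adjoint whose unit and counit are equivalences is a bi-equivalence.

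First I would check that $\sM_{\tth}$ corestricts to the democratic models. This is exactly \cref{prop:syntactic-model-democratic}, which says that $\sM_{\tth}\theory$ is democratic for every $\tth$-theory $\theory$. Since $\Mod^{\dem}_{\tth}$ is a full sub-$2$-category and $\sM_{\tth}\theory$ lies in it, for any democratic model $\model$ the hom-category $\Mod^{\dem}_{\tth}(\sM_{\tth}\theory, \model)$ coincides with $\Mod_{\tth}(\sM_{\tth}\theory, \model)$, which is equivalent to $\Theory_{\tth}(\theory, \iL_{\tth}\model)$ by the bi-adjunction. Hence $\sM_{\tth}$ is left bi-adjoint to the restriction $\iL_{\tth}|_{\Mod^{\dem}_{\tth}} : \Mod^{\dem}_{\tth} \to \Theory_{\tth}$, carrying the same unit and counit.

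Next I would verify that both the unit and the counit of this restricted bi-adjunction are equivalences. The unit $\unit_{\theory} : \theory \to \iL_{\tth}(\sM_{\tth}\theory)$ is even an isomorphism for every $\theory$ by \cref{prop:unit-invertible}. For the counit, the relevant components are now indexed by democratic models $\model$, and \cref{cor:counit-invertible} states precisely that $\counit_{\model}$ is an equivalence in $\Mod_{\tth}$ exactly when $\model$ is democratic; thus every component of the restricted counit is an equivalence. With both unit and counit equivalences, the criterion from \cref{sec:preliminaries} yields that $\iL_{\tth}|_{\Mod^{\dem}_{\tth}}$ is a bi-equivalence, which is the desired $\Mod^{\dem}_{\tth} \simeq \Theory_{\tth}$.

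The bulk of the work has already been discharged in \cref{prop:unit-invertible,prop:syntactic-model-democratic,cor:counit-invertible}, so no genuinely hard step remains; the only point demanding care is the bookkeeping of the restriction. Specifically I would confirm that an equivalence of the restricted counit in $\Mod^{\dem}_{\tth}$ is the same condition as its being an equivalence in the ambient $\Mod_{\tth}$. This holds because $\Mod^{\dem}_{\tth}$ is a \emph{full} sub-$2$-category: a pseudo-inverse of $\counit_{\model}$ together with the witnessing invertible $2$-cells, obtained in $\Mod_{\tth}$, automatically lives in $\Mod^{\dem}_{\tth}$, so equivalences between democratic models are reflected and created by the inclusion. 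Once this is noted, the bi-equivalence is immediate.
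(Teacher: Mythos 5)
Your proposal is correct and follows essentially the same route as the paper: the theorem is stated there as a direct summary of \cref{prop:unit-invertible}, \cref{prop:syntactic-model-democratic}, and \cref{cor:counit-invertible}, combined with the criterion from \cref{sec:preliminaries} that a bi-adjunction with invertible unit and counit is a bi-equivalence. Your extra remark that fullness of \(\Mod^{\dem}_{\tth} \subset \Mod_{\tth}\) makes the restricted counit's equivalence data live in the sub-\(2\)-category is exactly the bookkeeping the paper leaves implicit.
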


\section{Conclusion and Future Directions}
\label{sec:conclusion}

We proposed an abstract notion of a type theory and established a
correspondence between theories and models for each type theory. This
is the first step in a new development of categorical type theory.

We should first mention the development of the theory of
\emph{\(\infty\)-type theories} \parencite{nguyen2022type-arxiv}. Since our
definition of a type theory is purely categorical, it is
straightforward to generalize it to a higher dimensional one, and we
obtain analogous results of
\cref{sec:bi-initial-models,sec:internal-languages}. An advantage of
this higher dimensional generalization is that one can handle (higher)
categorical models of type theories in the style of (non-split)
comprehension category \parencite{jacobs1993comprehension} within the
same framework. Our notion of a model of a type theory
(\cref{def:model-of-type-theory}) is never a categorical model in the
sense that the notion of identification of types is equality rather
than isomorphism. Categorical models of a type theory should instead
be understood as models of a suitable \(\infty\)-type theory.
\(\infty\)-type theories are thus a more convenient framework for the
semantics of type theory.

The main tools for studying type theories are the \(2\)-category
\(\RMCat\), the categories \(\Theory_{\tth}\), and the
\(2\)-categories \(\Mod_{\tth}\). The \(2\)-category \(\RMCat\) allows
us to compare different kinds of type theory directly in the sense
that a morphism \(\fun : \tth \to \tthI\) in \(\RMCat\) is thought of as
an \emph{interpretation} of the type theory \(\tth\) in \(\tthI\). In
particular, an equivalence in the \(2\)-category \(\RMCat\) is a
natural notion of an equivalence of type theories. The universal
properties of syntactic representable map categories
(\cref{thm:syntactic-LF-category}) help us to build various
interpretations of type theories.

Sometimes we need weaker notions of equivalences of type theories. For
example, one can ask if the interpretation of the Book HoTT
\parencite{hottbook} in cubical type theory
\parencite{cohen2016cubical} is an equivalence in any sense. This
interpretation will never be an equivalence in the \(2\)-category
\(\RMCat\), but one would expect it to be \emph{conservative} in a
weak sense: every type in cubical type theory is homotopy-equivalent
to some type from the Book HoTT; every term in cubical type theory is
path-connected to some term from the Book HoTT. One approach to
formulate this conjecture is to equip \(\Theory_{\tth}\) with a
(semi-)model structure, following
\textcite{kapulkin2016homotopy,isaev2017model}, and discuss if a
functor between such (semi-)model categories is a Quillen equivalence
\parencite{isaev2018morita}. Another possibility is to work in the
framework of \(\infty\)-type theories. Suppose that type theories
\(\tth_{1}\) and \(\tth_{2}\) are equipped with classes of arrows
called weak equivalences. An interpretation \(\tth_{1} \to \tth_{2}\) is
conservative with respect to these weak equivalences if it induces an
equivalences between the \(\infty\)-type theories obtained from
\(\tth_{1}\) and \(\tth_{2}\) by freely inverting weak equivalences.

Comparing \(\Theory_{\tth} \simeq \Mod_{\tth}^{\dem}\) and
\(\Mod_{\tth}\), the former is easier to understand and more
convenient to work with since it is a full subcategory of
\(\Set^{\tth}\). However, \(\Theory_{\tth}\) throws away all the
information about representable arrows in \(\tth\), so we can never
reconstruct the type theory \(\tth\) from the category
\(\Theory_{\tth}\). The \(2\)-category \(\Mod_{\tth}\), on the other
hand, seems to keep information about \(\tth\), and we expect that the
type theory \(\tth\) can be reconstructed from \(\Mod_{\tth}\). A
precise formulation is as follows. First, we regard \(\Mod_{\tth}\) as
a \(2\)-category over \(\Cat_{1}\) with the forgetful \(2\)-functor
\(\Mod_{\tth} \to \Cat_{1}\) that maps a model of \(\tth\) to its base
category. Then a representable map functor \(\fun : \tthI \to \tth\)
between type theories induces a \(2\)-functor
\(\fun^{*} : \Mod_{\tth} \to \Mod_{\tthI}\) over \(\Cat_{1}\) defined by
\(\fun^{*}(\model, (\argu)^{\model}) = (\model, (\fun
\argu)^{\model})\). Thus, \(\tth \mapsto \Mod_{\tth}\) is part of a
\(2\)-functor \(\Mod_{(\argu)} : \RMCat^{\op} \to \twoCAT/\Cat_{1}\) to
the (huge) \(2\)-category of (large) \(2\)-categories over
\(\Cat_{1}\).

\begin{question}
  Is \(\Mod_{(\argu)} : \RMCat^{\op} \to \twoCAT/\Cat_{1}\) monic (in a
  suitable higher categorical sense) and can we characterize its
  image?
\end{question}

In unpublished work of John Bourke and the author, it is shown that
the \(2\)-category \(\Mod_{\tth}\) is locally presentable in a
bicategorical sense (the same idea is used in the
\(\infty\)-categorical setting \parencite{nguyen2022type-arxiv} to show the
presentability of \(\Mod_{\tth}\), but there only invertible
\(2\)-morphisms of models are considered). Analogously to the
Gabriel-Ulmer duality \parencite{gabriel1971lokal}, the type theory
\(\tth\) is expected to be reconstructed using finitely bi-presentable
objects in \(\Mod_{\tth}\).

\section*{Acknowledgement}
\label{sec:acknowledgement}

The author is grateful to Benno van den Berg for helpful feedback and
corrections on drafts of this paper. The author would also like to thank Daniel
Gratzer, Thomas Streicher and John Bourke for useful questions,
comments and discussions. The author also thanks the reviewers for
comments and corrections. This work is part of the research programme
``The Computational Content of Homotopy Type Theory'' with project
number 613.001.602, which is financed by the Netherlands Organisation
for Scientific Research (NWO).

\printbibliography

\end{document}